\documentclass[a4paper,12pt]{amsart}

\usepackage[english]{babel}

\usepackage{amssymb,amsfonts,amsthm,amsmath,latexsym}
\usepackage{enumerate, caption, array, esint, bm}
\usepackage{hyperref}\hypersetup{final}
\usepackage{expl3,xparse}

\usepackage{pgfplots}
\pgfplotsset{compat=1.15}
\usepackage{mathrsfs}
\usetikzlibrary{arrows}
\usetikzlibrary{external}
\usepackage{subcaption}

\usepackage[margin=1.1in]{geometry}

\theoremstyle{plain}
\newtheorem{theorem}{Theorem}[section]
\newtheorem{corollary}[theorem]{Corollary}
\newtheorem{definition}[theorem]{Definition}
\newtheorem{lemma}[theorem]{Lemma}
\newtheorem{proposition}[theorem]{Proposition}

\newtheorem{hypothesis}{Hypothesis}

\theoremstyle{remark}
\newtheorem{remark}[theorem]{Remark}
\newtheorem{example}[theorem]{Example}

\newcommand\numberthis{\stepcounter{equation}\tag{\theequation}}

\newcommand{\seq}{\subseteq}
\renewcommand{\setminus}{\smallsetminus}
\newcommand{\ssum}[1]{\sum_{\substack{#1}}}

\newcommand{\e}{{\rm e}}

\newcommand{\df}{\mathop{}\!\mathrm{d}}
\newcommand{\eps}{{\varepsilon}}
\renewcommand{\emptyset}{\varnothing}
\let\C\relax
\let\P\relax
\let\H\relax

\newcommand{\interval}[2]{\draw ({cos(#1)}, {sin(#1)}) arc ({270+#1}:{90+#2}:{tan((#2-#1)/2)})}
\newcommand{\intervalpartial}[3]{\draw[densely dotted] ({cos(#1)}, {sin(#1)}) arc ({270+#1}:{270-180*#3+#3*#2+(1-#3)*#1}:{tan((#2-#1)/2)})}

\newcommand{\angeodesic}[2]{
  \pgfmathsetmacro{\thetaA}{#1}
  \pgfmathsetmacro{\thetaB}{#2}
  \pgfmathsetmacro{\diff}{abs(\thetaA-\thetaB)}
  \pgfmathsetmacro{\halfdiff}{\diff>180 ? (360-\diff)/2 : \diff/2}
  \pgfmathsetmacro{\halfsum}{\diff>180 ? (\thetaA+\thetaB)/2 + 180 : (\thetaA+\thetaB)/2}
  \pgfmathsetmacro{\R}{1/cos(\halfdiff)}
  \pgfmathsetmacro{\r}{abs(tan(\halfdiff))}
  \draw[color=black, line width=0.5pt] (\halfsum:\R) circle (\r);
}

\ExplSyntaxOn
\NewDocumentCommand{\makebb}{m}{
  \tl_map_inline:nn {#1} { \__mk_bb_cmd:n {##1} }
}
\cs_new_protected:Npn \__mk_bb_cmd:n #1
{
  \regex_match:nnTF { \A [A-Z] \z } {#1}
  { \cs_set:cpn { #1 } { \mathbb{#1} } }
  { } 
}
\NewDocumentCommand{\makescr}{m}{
  \tl_map_inline:nn {#1} { \__mk_scr_cmd:n {##1} }
}
\cs_new_protected:Npn \__mk_scr_cmd:n #1
{
  \regex_match:nnTF { \A [A-Z] \z } {#1}
  { \cs_set:cpn { #1#1} { \mathscr{#1} } } { }
}
\NewDocumentCommand{\makecal}{m}{
  \tl_map_inline:nn {#1} { \__mk_cal_cmd:n {##1} }
}
\cs_new_protected:Npn \__mk_cal_cmd:n #1
{
  \regex_match:nnTF { \A [A-Z] \z } {#1}
  { \cs_set:cpn { c#1} { \mathcal{#1} } } { }
}
\ExplSyntaxOff

\makebb{EPHCDKRQUZNT}
\makescr{CKLWNPT}
\makecal{ABCDEFGHIJLOPMRSTUNW}

\newcommand{\1}{{\mathbf 1}}

\newcommand{\DcF}{{\mathcal D}}
\newcommand{\cFAMU}{\cF_{\operatorname{AMU}}}

\newcommand{\vpsi}{{\bm \psi}}
\newcommand{\vchi}{{\bm \chi}}

\newcommand{\cb}{{\mathfrak b}}
\newcommand{\qf}{{\mathfrak f}}
\newcommand{\fS}{{\mathfrak S}}
\newcommand{\fd}{{\mathfrak d}}
\newcommand{\fm}{{\mathfrak m}}

\newcommand{\dD}{\partial\D}

\DeclareMathOperator{\dist}{dist}
\DeclareMathOperator{\im}{Im}
\DeclareMathOperator{\Dom}{Dom}

\DeclareMathOperator{\id}{id}
\DeclareMathOperator{\Id}{Id}
\DeclareMathOperator{\card}{card}
\DeclareMathOperator{\den}{den}
\DeclareMathOperator{\vol}{vol}
\DeclareMathOperator{\supp}{supp}
\DeclareMathOperator{\GL}{GL}
\DeclareMathOperator{\SL}{SL}
\DeclareMathOperator{\SU}{SU}

\DeclareMathOperator{\meas}{meas}
\DeclareMathOperator{\spr}{spr}

\newcommand{\vth}{{\vartheta}}
\renewcommand{\tilde}{\widetilde}
\renewcommand{\bar}{\overline}
\renewcommand{\hat}{\widehat}
\newcommand{\hide}[1]{{}}

\newcommand{\norm}[1]{{\left\| {#1} \right\|}}

\newcommand{\vt}{{\bm t}}

\newcommand{\nvt}{|\vt|}
\newcommand{\vphi}{{\bm \phi}}
\newcommand{\inpd}[1]{{\left\langle #1 \right\rangle}}

\newcommand{\abs}[1]{{\left| {#1} \right|}}

\renewcommand\Re{\operatorname{\mathfrak{Re}}}
\renewcommand\Im{\operatorname{\mathfrak{Im}}}

\newcommand{\pmat}[1]{\begin{pmatrix} #1 \end{pmatrix}}
\newcommand{\smat}[1]{\left(\begin{smallmatrix} #1 \end{smallmatrix}\right)}

\numberwithin{equation}{section}

\title{Limit theorems for cuspidal orbits on Fuchsian groups}

\date{\today}

\author{Sandro Bettin}
\address{Dipartimento di Matematica, Università di Genova, via Dodecaneso 35, 16146 Genova, Italy}
\email{bettin@dima.unige.it}

\author{Sary Drappeau}
\address{Laboratoire de Mathématiques Blaise Pascal, Université Clermont-Auvergne, Institut Universitaire de France, 3 place Vasarely, 63178 Aubière, France}
\email{sary.drappeau@uca.fr}

\author{Jungwon Lee}
\address{Caltech, Division of PMA, 
1200 E California Blvd, Pasadena, CA 91125, USA}
\email{jungwon@caltech.edu}

\begin{document}

\begin{abstract}
  Let~$\Gamma\subset \SL(2, \R)$ be discrete, co-finite with cusps, and let~$\cb\in \R\cup\{\infty\}$ be a cusp of~$\Gamma$. Let~$f: \Gamma\cb \to \R$ be a quantum modular form in the sense that the maps
  $$ \phi_\gamma(x) = f(x) - f(\gamma x) \qquad (x\in \Gamma\cb, \gamma \in \Gamma) $$
  extend to Hölder-regular functions. Under bounded moments assumptions of the shape~$\sum_n n^{-2} \int \abs{\phi_{\gamma^n}(x)}^\alpha \df x < \infty$ for some~$\alpha>0$ and every fixed parabolic element~$\gamma\in \Gamma$, 
  we prove asymptotic formulas for the characteristic functions of the values~$f(x)$ for~$x\in \Gamma\cb$, ordered by growing denominators. These yield Gaussian limits when~$\alpha>2$ and other stable limit laws
  under suitable further assumptions. This generalizes to Fuchsian groups the work of Baladi-Vallée on rational orbits of the Gauss map.

  In companion papers, we use this result, and in particular its flexible assumptions on~$\phi_\gamma$, to deduce limit theorems for automorphic $L$-series at the central point.

  One ingredient in our proof is a particular cuspidal acceleration of the Bowen-Series map, whose inverse branches admit a convenient parametrization, and for which we describe explicitly a natural extension.
\end{abstract}

\maketitle

\tableofcontents

\section{Introduction}

Given an irrationnal number~$x\in (0, 1)\cap (\R \setminus \Q)$, the statistical behaviour of its continued fraction coefficients~$(a_j(x))$ in
\begin{equation}
  x = \frac1{a_1 + \frac1{a_2 + \dotsb}}.\label{eq:rational-CF}
\end{equation}
is an old theme in diophantine approximation. It is known that choosing~$x$ according to the absolutely continuous measure~$\frac{\df x}{(1+x)\log 2}$, the coefficients~$a_j(x)$ are distributed according to the Gauss-Kuzmin law on~$\N_{>0}$, and that~$(a_j(x), a_k(x))$ tend to be independently distributed as~$\abs{k-j}\to \infty$, to the effect that for sufficiently regular observables~$\phi:\N_{>0}\to\R$, the partial sums
$$ f_N(x) := \frac1N\sum_{n=0}^{N-1} \phi(a_j(x)) $$
tend to be distributed, as~$N\to \infty$, according to a Gaussian law with mean and variance of the order of~$N$. This follows from the dynamical properties of the Gauss map, and more precisely the spectral properties of its transfer operator, see~\cite{Khintchine1963,Broise1996,AaronsonDenker2001}.

Instead of picking~$x$ irrational, choose~$x\in (0, 1)\cap \Q$ rational of denominator~$\den(x) \leq Q$, independently at random among the~$(\frac{3}{\pi^2} + o(1)) Q^2$ such possibilities.
The decomposition~\eqref{eq:rational-CF} is then finite with length~$O(\log Q)$.
It is natural to ask what remains of the statistical independence of~$(a_j(x))_j$, or the limiting distribution of random variables of the shape
\begin{equation}
  f(x) = \sum_j \phi(a_j(x))\label{eq:f-sumcocycles-intro}
\end{equation}
given a fixed map~$\phi:\N \to \R$.

This question was initially studied due its link with the complexity, in various aspects, of the Euclidean algorithm for GCD~\cite{Hensley1994}.
It was fully resolved in the works of Vallée and Baladi-Vallée~\cite{Vallee2003,BaladiVallee2005}, who relate the problem to the spectra of a two-parameter family of twists of the Gauss-Kuzmin operator.

In~\cite{BettinDrappeau2022,LeeSun2019,DrappeauNordentoft2022}, these ideas were pushed further, motivated by questions of a different kind, namely the value distribution of trigonometric series formed with coefficients of modular forms,
\begin{equation}
  f(x) = \sum_{n\geq 1} \frac{a_n}{\sqrt{n}} \e^{2\pi i n x}, \qquad (x\in \Q) \label{eq:L-series}
\end{equation}
where~$a_n$ are Fourier coefficients of an automorphic form for a congruence subgroup of~$\SL(2, \Z)$.
The main observation which relates the two questions is the fact that the map
\begin{equation}
  h(x) := f(x) - f(-1/x)\label{eq:QMF-rational-intro}
\end{equation}
initially defined on~$\Q\setminus\{0\}$, extends to a continuous map on~$\R$. The relation~\eqref{eq:QMF-rational-intro}, along with the periodicity of~$f$, shows that~$f(x)$ is essentially of the shape~\eqref{eq:f-sumcocycles-intro} with~$\phi(a_j(x))$ replaced by~$h(T^j x)$, where~$T(x) := \{1/x\}$ is the Gauss map. The~$1$-periodic maps for which the period function defined in~\eqref{eq:QMF-rational-intro} satisfies some regularity property (in a purposedly vague sense) are referred to as quantum modular forms in Zagier's seminal work~\cite{Zagier2010}.

\medskip{}

Series of the shape~\eqref{eq:L-series} exist for automorphic forms on Fuchsian groups~$\Gamma$ other than~$\SL(2, \Z)$, and motivated by specific examples we are led wonder about the statistical distribution of those generalizations (Theorem~\ref{th:corollary-gl2} and Corollaries~\ref{cor:r2} and~\ref{cor:theta} below). It is natural to try to extend the proof of~\cite{BettinDrappeau2022,LeeSun2019} to this setting. The main obstacle comes from the dynamical systems arguments used in the proof, which originate from Baladi-Vallée's work~\cite{BaladiVallee2005} on the Gauss map. The purpose of the present paper is to work out a generalization of Baladi-Vallée's work, with flexible hypotheses on the observables, which can be used to understand series of the shape~\eqref{eq:L-series} for automorphic forms on more general Fuchsian groups.

Let~$\Gamma \subset \SL(2,\R)$ be a discrete and co-finite subgroup. Denote~$C(\Gamma) \subset \P^1(\R)$ the set of cusps of~$\Gamma$ (the fixed points of parabolic matrices), and assume that~$\infty$ is a cusp of~$\Gamma$.
The set of cusps is equiped with a notion of denominator, denoted~$\den(x)$, which will be recalled below, and which coincides with the denominator of numbers in~$C(\Gamma) = \Q$ in the case~$\Gamma=\SL(2,\Z)$.
Given a finite set~$S\subset C(\Gamma)$, a map~$f:C(\Gamma)\backslash S \rightarrow \C$ is called a quantum modular form (QMF) of weight~$0$ for~$\Gamma$ if for any~$\gamma=\smat{a &b \\ c & d} \in \Gamma$, the ``error to modularity'' function
\begin{equation*}
  h_\gamma(x):= f(x) - f(\gamma x) , \quad x \in C(\Gamma) \backslash (S \cup \gamma^{-1}S)  
\end{equation*}
satisfies certain regularity property (in this introduction we focus on the trivial multiplier system).

The main result of the present paper can be described informally as follows:

\medskip{}

\emph{Suppose that for some parameters~$\alpha,\kappa_0, \lambda>0$, the maps~$(h_\gamma)$ extend to~$\kappa_0$-Hölder continuous maps on~$\R\setminus\{\gamma^{-1}\infty\}$, in such a way that for every fixed parabolic matrix~$\gamma\in \Gamma$ we have
  $$ \sum_{n\geq 1} \frac1{n^2}(\|h_{\gamma^n}\|_\infty^\alpha + \|h_{\gamma^n}\|_{(\kappa_0)}^\lambda) < \infty $$
  where~$\|\cdot\|_{(\kappa_0)}$ denotes the Hölder semi-norm. Then as~$x$ varies among elements of~$C(\Gamma)$ of denominators at most~$Q$, and~$Q\to \infty$, the values~$f(x)$ tend, under suitable regularity assumptions, to equidistribute according to a stable law, whose parameters are determined from the asymptotic behaviour of~$h_{\gamma^n}(x)$ in certain ranges in~$n$ and~$x$.}

\medskip{}

We now introduce precisely our notations. The precise statement of our main theorem will be presented in Section~\ref{sec:mainst}, with an overview of its proof given in Section~\ref{sec:overview}.

\subsection{Disk model and assumptions on the group}\label{sec:disk-model}

First we switch to the disk model for the hyperbolic surface, which will be more convenient for our later arguments. Let~$\D$ be the open unit disc, and
$$ p_\D : \overline\C \to \overline\C,  \qquad p_\D(z) := \frac{z-i}{z+i}. $$
This maps conformally the set~$\H$ to~$\D$. We also denote~$p_\D$ the element of~$\GL(2, \C)$ which corresponds to the homography~$p_\D$.

We consider a discrete subgroup~$G \subset \SU(1, 1) = \big\{ \big(\begin{smallmatrix} \alpha & \bar\beta \\ \beta & \bar\alpha \end{smallmatrix}\big), \abs{\alpha}^2 - \abs{\beta}^2 = 1 \big\}$. We assume that~$G$ satisfies the following hypotheses:
\begin{enumerate}[(a)]
  \item $G$ is finitely generated.
  \item The surface~$G\backslash \D$ has finite volume.
  \item There is a cusp at~$1$.
  \item There is no elliptic element in~$G$.
  \item $-\id \notin G$.
\end{enumerate}
The main assumptions are~(a)-(c). The main point of condition~(c) is to ensure that we have at least one cusp, with respect to which the denominators of other cusps will be defined.
The fact that the cusp is at~$1$ simplifies the presentation of some of our arguments, and can be assumed without loss of generality by conjugation.
In all applications we consider, the remaining assumptions (d) and (e) will be implemented without loss of generality by restricting to a suitable subgroup; they are used to simplify some aspects of our arguments.

\subsection{Orbits, denominators and a probability space}

Let~$C(G)$ be the set of cusps of~$G$ on~$\dD$. Consider a cusp~$\cb\in C(G)$, with an associated scaling matrix~$s_\cb \in \SU(1, 1)$, meaning that the stabilizer~$G_\cb$ of~$\cb$ in~$G$ satisfies
$$ \cb = s_\cb 1, \qquad G_\cb = s_\cb \big\{\eta_\cb^k
 \big(\begin{smallmatrix} 1+ki/2 & -ki/2 \\ ki/2 & 1-ki/2 \end{smallmatrix} \big), k \in \Z\big\}s_\cb^{-1}, $$
 where $\eta_\cb\in\{\pm1\}$ is the sign of the trace of a generator of $G_\cb$.
 Given~$x\in G\cb$, we define a map by setting
 \begin{equation}
   \Upsilon(x) = \Upsilon_\cb^G(x) := \abs{\alpha + \beta}, \qquad \big(x = g\cb,\quad g\in G,\quad gs_\cb = \smat{\alpha & \beta \\ * & *}\big). \label{eq:def-Y}
 \end{equation}
From now on, we will keep~$G$ and~$\cb$ fixed and drop them from the notation.

A routine computation shows that~$\abs{\alpha + \beta}$ depends only the class~$g\in G / G_\cb$, which justifies the notation~$\Upsilon(x)$. Moreover, the quantity
\begin{equation}
  q(x) := \frac{\abs{x-1}}2\Upsilon(x) \label{eq:def-qx}
\end{equation}
depends only on the class~$x$ modulo left-translation by~$G_1$.

Next, we let
$$ \psi : \dD \to \R_+ $$
be a smooth and not identically zero test function. We assume that~$1\not\in\supp\psi$.
We define a set, depending on~$\cb$, by
$$ \Omega_Q := \{x\in G\cb, 0<q(x) \leq Q\}, \qquad (Q\geq 2). $$

Translating to the upper-half plane model, writing~$gs_\cb = p_\D \smat{a&b\\c&d} p_\D^{-1}$ for $g\in G$ we get
\begin{equation}
  \Upsilon(x)=\sqrt{a^2+c^2},  \qquad q(x)=|c|.\label{eq:Upsilon-SL2R}
\end{equation}
The set~$G_1 \backslash \Omega_Q$ thus corresponds, in the disk model, to the set~$T_{\infty\cb}(Q)$ defined in Sections~2.2 and~3.3 of~\cite{PetridisRisager2018} up to scaling at~$\infty$: more precisely, denoting~$\Gamma_\infty := p_\D^{-1}G_1 p_\D= \{\eta_1^k\smat{1&k\omega\\0&1} , k\in\Z\}$ with~$\omega>0$, we check that\footnote{Note that~$T_{\infty, \cb}$ in the right-hand side depends on a choice of scaling matrix~$\sigma_\infty$ at~$\infty$, although the left-hand side does not; the relation~\eqref{eq:rel-OmegaQ-T} as stated holds with the choice~$\sigma_\infty = \smat{\sqrt{\omega}&0\\0&1/\sqrt{\omega}}$.}
\begin{equation}
  G_1 \backslash \Omega_Q = \omega T_{\infty, \cb}(Q\sqrt{\omega}).\label{eq:rel-OmegaQ-T}
\end{equation}
By~\cite[Theorem~3.8]{PetridisRisager2018}, the set~$G_1\backslash \Omega_Q$ is finite.
On the other hand, since~$\psi$ vanishes on a neighborhood of~$1$, we have
$$ \sup_{x\in \dD} \card\{\tau\in G_1, \psi(\tau x)\neq 0\} < \infty. $$
Summing over a set of representatives of~$G_1 \backslash \Omega_Q$, we deduce that
$$ \card\{x\in \Omega_Q: \psi(x) \neq 0\} < \infty. $$

Using a result from~\cite{PetridisRisager2018}, we estimate the total mass of~$\psi$ on~$\Omega_Q$ by
\begin{equation}
  \sum_{x\in \Omega_Q} \psi(x) \sim c_\psi Q^2\label{eq:psi-mass-rough}
\end{equation}
as~$Q\to\infty$, for some explicit constant~$c_\psi > 0$ depending only on~$\psi$ and~$G$.
This will be made precise in Lemma~\ref{lem:psi-totalmass} below.

In view of the above, we may define, for~$Q$ large enough, a probability measure on~$\Omega_Q$ proportional to~$\psi$.
We denote by~$\P_Q$ and~$\E_Q$ the associated probability and expectation, so that for any map~$f$ on~$C(G)$, we have
$$ \E_Q[f(x)] = \frac{\sum_{x\in\Omega_Q} \psi(x) f(x)}{\sum_{x\in\Omega_Q} \psi(x)}. $$
\begin{remark}
  In~\cite{BaladiVallee2005,BettinDrappeau2022,LeeSun2019}, the average is instead performed over~$\Omega_Q \cap I$ where~$I$ is a fundamental interval for the action of~$G_1$, \emph{e.g.} the unit interval in the upper-half plane model. For $G_1$-invariant observables, we can recover this case by taking~$\psi$ such that~$\sum_{\tau\in G_1} \psi(\tau x) = 1$ (a smooth partition of unity), since~$\Omega_Q$ is invariant under left translation by~$G_1$.
  
Our method also applies to the case where cusps are ordered by~$\Upsilon(x)$, as in~\cite{PetridisRisager2004}, rather than by~$q(x)$. The only change needed in the argument is to remove the factor~$|(x-1)/2|^{-2s}$ in~\eqref{eq:def-psis}.
\end{remark}

\subsection{Fundamental domain}\label{sec:fundamental-domain}

Let~$R$ be a fundamental domain in~$\D$ under the action of~$G$, whose sides are non-intersecting bi-infinite geodesics.
The fact that such a domain~$R$ exists is proved in~\cite[Proposition p.~15]{Tukia1973} under the hypotheses (a)-(d).
Since~$R$ has finite volume, by Siegel's Theorem~\cite[Theorem~4.1.1]{Katok1992}, the closure of~$R$ in~$\overline{\D} = \D\cup \dD$ intersects~$\dD$ at a finite set~$V$, and therefore~$R$ is an ideal polygon.

A side~$s$ of~$R$ is paired with a unique side of~$R$, say~$\hat s$, by an element of~$g_s\in G\setminus\{1\}$, in such a way that
$$ s = g_s \hat s. $$
We gather in Section~\ref{sec:usef-prop-fund} below some useful facts about this pairing. In particular, the map~$s\mapsto \hat s$ is an involution with no fixed point. Given~$s$, let~$I(s)\subset \dD$ be the closed interval whose ends are the vertices delimiting~$s$, and which does not intersect other vertices of~$R$. The intervals~$\{ I(s) \}$ then form a partition of~$\dD$ up to vertices.

For each side~$s$, we define the subset~$L_s \subset \dD$ of the circle, consisting of all the arcs which do not touch~$I(\hat s)$:
$$ L_s := \bigcup_{s': I(s') \cap I(\hat s) = \varnothing} I(s'). $$
Under the action of~$g_s$, the set~$L_s$ is mapped into the interior of~$I(s)$. We let
$$ K_s := g_s(L_s), \qquad K := \bigcup_{s:\text{ side}} K_s. $$
Each~$K_s$ is an interval, bounded away from the vertices of~$R$, and the set~$(K_s)$ are disjoint.
In Figure~\ref{fig:example-Gamma3} on page~\pageref{fig:example-Gamma3}, we draw a fundamental domain for the principal congruence subgroup~$\Gamma(3)$.

\subsection{Multiplier system}

Fix an integer~$d\geq 1$ and let~$\Theta\subset \GL(d, \R)$ be a finite subgroup. We also let
\begin{align}\label{dfd'}
  \dD':=\dD\setminus V.
\end{align}
We consider a family of maps indexed by the elements of~$G$,
\begin{equation}
  (\vth_g)_{g\in G}, \qquad \vth_g : \dD' \to \Theta,\label{eq:def-vthg}
\end{equation}
satisfying the cocycle property, or derivative rule:
\begin{equation}
  \vth_{g_1 g_2}(x) = \vth_{g_2}(x) \vth_{g_1}(g_2 x), \qquad (x,\ g_2x \notin V).\label{eq:vth-cocycle}
\end{equation}

We assume two properties on the map~$\vth_g$, which are informally:
\begin{itemize}
  \item The map~$\vth_g$ being constant on each arc~$I(s)$, for each~$g \in G$;
  \item A form of strong mixing, in the sense that any value~$\vth_0\in \Theta$ is in the range of~$\vth_g$ with some constraints on the shape of~$g$.
\end{itemize}
The precise assumptions will be described in Section~\ref{sec:skew-products} below.

\subsection{Functional space}\label{fsp0}

The following norms will be relevant for our later arguments.

\begin{definition}[Piecewise~$\kappa$-Hölder space]
  Given~$\kappa\in (0, 1]$, let~$H^\kappa$ be the set of bounded maps~$f:K \times\Theta \to \R^d$ which have a finite~$\kappa$-Hölder semi-norm
  $$ \| f \|_{(\kappa)} := \sup_{\substack{s:\text{ side} \\ x, y \in K_s \\ \vth_0\in \Theta}} \frac{\abs{f(x, \vth_0) - f(y, \vth_0)}}{\abs{x-y}^\kappa} < \infty.$$
\end{definition}
See also~\cite[section~5.1]{Evans2010}. We equip~$H^\kappa$ with two norms:
\begin{align*}
  &\| f \|_\infty := \sup_{(x,\vth_0) \in K} |f(x,\vth_0)|, && \| f \|_\kappa := \| f \|_\infty + \| f \|_{(\kappa)}.
\end{align*}
With slight abuse of notation, we use the same notations for~$H^\kappa$ and its norms also for functions on $K$, extending them trivially to $K\times\Theta$.

\subsection{Quantum modular forms}

Consider a family of maps from~$K$ to~$\R^d$,
$$ (\vphi_g)_{g\in G} \subset H^{\kappa_0} $$
for some $\kappa_0>0$. We are interested in maps
$$ \qf : C(G) \to \R^d $$
which are quantum modular forms of weight~$0$ in the following sense.\footnote{For the behaviour and distribution of quantum modular forms of non-zero weight we refer to~\cite{BettinDrappeau2025}.}
\begin{hypothesis} \label{def:qmf}
  For all~$g\in G$, we have
  \begin{equation} 
    \vth_g(x) \qf(gx) = \qf(x) + \vphi_g(x), \qquad (x\in C(G)\cap K, g\in G) . \label{eq:qmf}
  \end{equation}
  Moreover, 
  \begin{enumerate}[(a)]
    \item the multiplier~$(\vth_g)$ satisfies the stability and mixing hypotheses described in Section~\ref{sec:skew-products} below;
    \item there exist~$\kappa_0, \alpha, \lambda>0$ such that the period functions~$(\vphi_g)$ belong to~$H^{\kappa_0}$ and have moments and Hölder-norm moments of order~$\alpha$ and~$\lambda$ respectively, where these moments are defined in Section~\ref{sec:moments-conditions} below.
  \end{enumerate}
\end{hypothesis}

We note that this hypothesis as stated is stronger than needed: the actual hypothesis concerns only a subsequence of~$(\vphi_g)$ along specific matrices related to~$K$; we defer to Sections~\ref{cfn} and~\ref{sec:moments-conditions} for the precise hypothesis and set-up.

Hypothesis~\ref{def:qmf} implies that the map~$(g, x) \mapsto \vphi_g(x)$ is a cocycle in the sense that
\begin{align}\label{xax}
  \vphi_{g_1 g_2}(x) = \vphi_{g_2}(x) + \vth_{g_2}(x) \vphi_{g_1}(g_2x)
\end{align}
for all~$g_1, g_2 \in G$, $x,g_2x\in K$.

\subsection{Main statement}\label{sec:mainst}

Our main result is an asymptotic estimate for the characteristic function of~$\qf(x)$, viewed as a random variable on~$\Omega_Q$, as~$Q\to \infty$.

For a vector~$\vt \in \R^d$, we let~$\nvt$ be the Euclidean norm of~$\vt$ and denote by $\inpd{\cdot,\cdot}$ the corresponding scalar product. 

\begin{theorem}\label{th:limitlaw:quasi}
  Assume~$\qf$ satisfies Hypothesis~\ref{def:qmf} with parameters~$\kappa_0,\alpha,\lambda$.
  Then there exist~$t_0, \delta_0>0$, and two maps
  $$ U_1, U_2 : [-t_0, t_0]^d \to \C, $$
  such that for all~$\vt\in \R^d$ with~$\nvt<t_0$, we have
  $$ \E_Q(\e^{ i \inpd{\vt, \qf(x)}}) = \exp\Big(U_1(\vt) \log Q + U_2(\vt) + O(Q^{-\delta_0}) \Big), $$
  where the implicit constant depends only on~$\kappa_0$, the values of the moments in Hypothesis~\ref{def:qmf}, the finite group~$\Theta$, and an upper bound for $|\qf|$ at finitely many cusps, as specified in Remark~\ref{fire}.

  Additionally, there is a constant~$\fd>0$ independent from~$\qf$ for which the following holds. There exists a random variable~$\Phi \in \R^d$ such that for all~$\eps\in(0,\alpha)$, we have
  $$ \E(\abs{\Phi}^{\alpha - \eps}) < \infty, $$
  and the following holds:
  \begin{itemize}
    \item The maps~$U_1, U_2$ satisfy
    \begin{align*}
      U_1(\vt) ={}& 2\fd^{-1}\E(\e^{i\inpd{\vt, \Phi}} - 1) + O_\eps(\nvt^2 + \nvt^{2\alpha - 2\eps}), \\
      U_2(\vt) ={}& O_\eps(\nvt + \nvt^{\alpha -\eps}).
    \end{align*}
    \item Moreover if~$\alpha > 2$, then there exists~$\mu_\Phi \in \R^d$ and a real~$d\times d$ positive semi-definite matrix~$\Sigma_\Phi$ such that
    $$ U_1(\vt) = i \inpd{\vt, \mu_\Phi} - \tfrac12 \vt^T \Sigma_\Phi \vt + O_\eps(\nvt^3 + \nvt^{\alpha - \eps}). $$
    We have~$\mu_\Phi = 2\fd^{-1}\E(\Phi)$.
  \end{itemize}
\end{theorem}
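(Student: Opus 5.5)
We follow the Baladi--Vallée strategy, replacing the Gauss map by a cuspidal acceleration $T$ of the Bowen--Series map on $K$. The key steps are as follows.

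\textbf{Coding.} We first build, from the side pairings $g_s$ of Section~\ref{sec:fundamental-domain}, an expanding Markov map $T:K\to K$. It acts by $g_s^{-1}$ on $K_s$, accelerated along parabolic fixed points. Its inverse branches $h$ are then indexed by elements of the form $g\gamma^n$, with $\gamma$ a parabolic generator and $n\ge1$. Every cusp $x\in G\cb\cap K$ should have a finite $T$-orbit ending at a vertex in $V$. Iterating the cocycle relation~\eqref{xax} along this orbit writes
\[
  \qf(x)=\sum_j \vth(\cdots)\,\vphi_{h_j}(T^jx)+(\text{value of }\qf\text{ at a vertex}),
\]
where the multiplier factor is a product of $\vth$'s. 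The final term is why the constants depend on $|\qf|$ at finitely many cusps.

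\textbf{Transfer operator and Dirichlet series.} Second, we introduce on $H^\kappa$ (with $\kappa\le\kappa_0$, taken on $K\times\Theta$ to absorb the skew product by $\Theta$) the three-parameter operator
\[
  \cH_{s,\vt}F(x,\vth_0)=\sum_h |h'(x)|^s\,\e^{i\inpd{\vt,\vth_0\vphi_h(x)}}\,F(hx,\vth_0\vth_h(x)).
\]
The Dirichlet series $\sum_{x\in\Omega}\psi(x)q(x)^{-2s}\e^{i\inpd{\vt,\qf(x)}}$ is then expressed as $\langle(\Id-\cH_{s,\vt})^{-1}\cdot,\cdot\rangle$ plus boundary terms; this is where the weight $\psi_s$ involving $|(x-1)/2|^{-2s}$ enters. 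The moment hypotheses $\sum n^{-2}(\|\vphi_{\gamma^n}\|_\infty^\alpha+\|\vphi_{\gamma^n}\|_{(\kappa_0)}^\lambda)<\infty$, together with $|h'|\asymp n^{-2}$ on parabolic branches, are exactly what make $\cH_{s,\vt}$ bounded for $\Re s$ near $1$. They also give $\|\cH_{s,\vt}-\cH_{s,0}\|\ll \nvt^{\min(1,\alpha)-\eps}$, since $|\e^{iu}-1|\le\min(2,|u|)$.

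\textbf{Spectral analysis.} At $(s,\vt)=(1,0)$, Lasota--Yorke inequalities and the mixing hypothesis on $\vth$ give a simple isolated eigenvalue $1$ with a spectral gap; the mixing hypothesis rules out other eigenvalues of modulus $1$ coming from $\Theta$. Perturbation theory then gives an eigenvalue $\lambda(s,\vt)$, and $\lambda(s_\vt,\vt)=1$ defines a pole $s_\vt$ near $1$. Writing $2(s_\vt-1)=U_1(\vt)$ and expanding $\lambda(s,\vt)-1$ to first order gives
\[
  U_1(\vt)\approx 2\fd^{-1}\langle(\cH_{1,\vt}-\cH_{1,0})\1,\mu\rangle,
\]
with $\fd=-\partial_s\lambda(1,0)$ the entropy constant and $\mu$ the invariant density. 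This is precisely $2\fd^{-1}\E(\e^{i\inpd{\vt,\Phi}}-1)$, where $\Phi$ is distributed as $\vth_0\vphi_h(x)$ under the invariant measure weighted by $|h'|$, so that $\E|\Phi|^{\alpha-\eps}<\infty$ follows from the moments. The error terms come from second-order perturbation, of size $\nvt^{2}+\nvt^{2\alpha-2\eps}$. When $\alpha>2$, differentiability in $\vt$ gives the Taylor expansion, with mean $2\fd^{-1}\E\Phi$.

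\textbf{Main obstacle: Tauberian step.} Finally, we need a Perron formula with a Dolgopyat-type estimate: $\|(\Id-\cH_{s,\vt})^{-1}\|\ll|\Im s|^{\xi}$ on a strip $\Re s>1-\delta$, uniformly in small $\vt$. This is the main obstacle. For $\vt=0$, I would try to avoid a full Dolgopyat argument by using the identity with spectral theory of $G$ (Eisenstein series, as in Petridis--Risager), which gives the pole structure. For the perturbation, I would then use a resolvent identity with the bound $\nvt^{\min(1,\alpha)}$, valid uniformly since $|\e^{i\cdot}|=1$. Shifting the contour past $s_\vt$ (the smoothing coming from $\psi$) gives $Q^{2s_\vt}$ times a residue factor $\e^{U_2(\vt)}$, with error $Q^{-\delta_0}$. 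Normalizing by the $\vt=0$ case (Lemma~\ref{lem:psi-totalmass}) yields the stated formula.
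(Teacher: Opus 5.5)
Your spectral outline (quasi-compactness, simplicity via the mixing hypothesis on $\Theta$, $U_1=2(s_\vt-1)$, the identification of $\Phi$, the error $\nvt^2+\nvt^{2\alpha-2\eps}$) matches the paper. However, two steps do not go through as stated, and the second is serious.

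\emph{The coding.} You assume a Markov self-map of $K$ whose branches are all of the form $g\gamma^n$. No such map exists in general. Any self-map of $K$ obtained by accelerating $\cB$ must absorb an orbit's whole excursion outside $K$, and such excursions can pass near several different vertices. Concretely, for the paper's first-return map $\cF_K$, a branch is $g_{a_1}\cdots g_{a_n}$ in which every consecutive pair of $a_2\cdots a_n$ is cuspidal, but the sign of $a_{i+1}-\hat a_i\in\{\pm1\}$ may change arbitrarily often (compare $b\,b^{+}b^{+-}b^{+-+}b^{+-++}$ in Section~\ref{sec:comparing-three-maps}). So a branch is a product of an unbounded number of powers of different parabolics. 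The map whose branches really are $g\gamma^n$ is $\cF_c$, which acts on $\dD$, not on $K$.

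Consequently, the moment condition on parabolic powers together with $|h'|\asymp n^{-2}$ does not control your operator. It does not even give $\sum_{h\in\cH}\|h'\|_\infty^{\sigma}<\infty$ for some $\sigma<1$, which you need to continue past $\Re(s)=1$. The paper supplies this through the factorisation $\LL_{s,\vt}=\CC_{s,\vt}(\Id-\CC^\sharp_{s,\vt})^{-1}$ (Lemma~\ref{lem:decomp-Hc-HK}) and the contraction $(\CC^\sharp_\sigma)^2[\varphi]<\rho\varphi$ (Lemmas~\ref{lem:CCsharp-contracts} and~\ref{lem:convergence-H-CF}). These give Proposition~\ref{prop:Lst-bounded} and the transfer of moments in Lemma~\ref{lem:moments-c-implies-K}. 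There, the loss $r_h^\alpha$ incurred by splitting $\vphi_h$ along the cuspidal decomposition is absorbed by the geometric decay of $(\CC^\sharp)^m$.

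\emph{The large-$\Im s$ estimate.} Your proposed replacement for Dolgopyat cannot work.
\begin{enumerate}
\item Spectral theory of $G$ (as behind Lemma~\ref{lem:psi-totalmass}) describes the scalar function $\fS(s,0)$. It gives no bound on the operator resolvent $(\Id-\LL_{s,0})^{-1}$ on a H\"older space, which is what a resolvent identity in $\vt$ needs.
\item Even granting $\|(\Id-\LL_{s,0})^{-1}\|\ll|\Im s|^{C}$, the Neumann series in $\LL_{s,\vt}-\LL_{s,0}$ converges only while $|\Im s|^{C}(\nvt+\nvt^{\alpha-\eps})$ is small, i.e.\ for $|\Im s|\ll\nvt^{-c}$. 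For fixed $\vt\neq0$, Perron's formula needs the whole vertical line.
\item Your uniformity ``since $|\e^{i\cdot}|=1$'' holds only in sup norm. In $\|\cdot\|_\kappa$ the perturbation carries a factor $|s|^\kappa$.
\end{enumerate}

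What is needed is a contraction estimate proved directly for the twisted operator, uniformly for $\nvt\le|\Im s|^{c}$, as in Propositions~\ref{dolg:L2:est}--\ref{prop:dolgopyat-estimates}. Its key input is the strong UNI property, Proposition~\ref{uni:strong}. The paper proves it in two moves. First, $\Delta(h,k)\le\eta$ is converted into $h^{-1}(0)$ lying in a set of diameter $O(\eta)$ (Corollary~\ref{cor:acs}). Second, the reversal involution $(a_1,a_2,w,b_1,b_2)\mapsto(\hat b_2,\hat b_1,\hat w,\hat a_2,\hat a_1)$ on branch codings (the dual system coming from the natural extension) turns this into a condition $h(y)\in J$, which is handled by the spectral gap of $\LL$.

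For bounded $\Im s\neq0$ you also need aperiodicity at the operator level, $\spr(\LL_{1+i\tau,0})<1$ (Proposition~\ref{prop:spectrum-Ls2}(ii)). The paper proves it with branches $h_{cw^\ell,1}$ along a cuspidal cycle, for which $h'(y)\sim C\ell^{-2}$ is incompatible with the eigen-equation. The absence of other poles of the scalar series $\fS(s,0)$ does not by itself imply this.

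Finally, the smoothing in your Perron step must be in $Q$: $\psi$ smooths in $x$, not the cutoff $q(x)\le Q$. The paper removes the $Q$-smoothing using the short-interval count of denominators from Lemma~\ref{lem:psi-totalmass}.
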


If in Hypothesis~\ref{def:qmf}, condition (b) is satisfied with~$\alpha>2$, then Theorem~\ref{th:limitlaw:quasi} implies, via Lévy's continuity theorem, convergence to a non-degenerate Gaussian law of the ratio
$$ \frac{\qf(x) - \mu_\Phi \log Q}{\sqrt{\log Q}} $$
as long as~$\vphi_g(x)$ is not essentially~$\log|\frac{d}{dx}g x|$ for all~$g\in G$, up to bounded telescoping terms (cf.~\cite[Corollary~3.2]{BettinDrappeau2022}).

We will give, in the course of our arguments, explicit formulas for~$\Phi$. However, the expression that we obtain depends on a non-canonical choice of induction process.

\begin{remark}
  In~\cite[Section~5]{BaladiVallee2005}, Baladi and Vallée also prove a local limit theorem for observables which are lattice-valued.
  Some of their arguments could be adapted to our setting, however the need to avoid observables proportional to~$\log$, and the possible combined behaviour in several dimensions, makes it prohibitive to give a simple representative statement. Since we do not have in mind presently any particular arithmetic application which could motivate such a statement, we refrain to carry this out here.
\end{remark}

\subsection{Application to trigonometric series with $GL(2)$ automorphic coefficients}

We briefly mention the applications which motivated Theorem~\ref{th:limitlaw:quasi}. The details will appear in~\cite{BettinEtAla} and~\cite{BettinEtAl3}.

Our main motivating application is to the limiting distribution of the values
\begin{equation}
  L(x) \overset{AC}{:=} \sum_{n=1}^\infty \frac{a(n)}{\sqrt{n}} \e^{2\pi i n x}\label{eq:def-Lx}
\end{equation}
where~$a(n)$ are the Fourier coefficients of a non-zero automorphic form~$\phi$ of integer weight for a cofinite subgroup~$\Gamma\subset \SL(2,\R)$ with a cusp at~$\infty$, and~$x$ varies among cusps of~$\Gamma$. For all such~$x$, the series~\eqref{eq:def-Lx} is defined by means of analytic continuation of the series~$\sum_n a(n) \e^{2\pi i n x} / n^s$ initially defined for large~$\Re(s)$; we write~$AC$ above the~$:=$ sign to indicate this. An important point is that this analytic continuation is highly sensitive to the properties of~$x$ as a cusp of~$\Gamma$. We normalize the Fourier coefficients~$a(n)$ so that the Rankin-Selberg theorem implies~$\sum_{n\leq x} \abs{a(n)}^2 = x^{1 + o(1)}$, and we let~$\lambda_\phi$ denote the Laplace eigenvalue of~$\phi$.

\begin{theorem}\label{th:corollary-gl2}
  Suppose that either~$\phi$ is cuspidal, or~$\lambda_\phi\geq 1/4$. Then there exists~$\sigma_\phi>0$ and~$\beta_\phi\in \{0, 1, 3\}$ such that, as~$x$ varies among cusps of~$\Gamma$ of denominators bounded by~$Q$ and~$Q\to\infty$, the values
  $$ \frac{L(x)}{\sigma_\phi (\log Q)^{1/2} (\log\log Q)^{\beta_\phi/2}} $$
  tend to equidistribute according to a standard complex Gaussian density.
\end{theorem}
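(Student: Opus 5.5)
The plan is to deduce the statement from Theorem~\ref{th:limitlaw:quasi}. We apply it to $G=p_\D\Gamma' p_\D^{-1}$, where $\Gamma'\subset\Gamma$ is a finite-index subgroup chosen so that hypotheses (d)--(e) hold. The observable is the vector $\qf(x)=(\Re L(x),\Im L(x))\in\R^2$, transported to the disk, so $d=2$.

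\textbf{Step 1 (quantum modularity).} Write $L(x)$ as a regularized Eichler-type integral of $\phi$ from the cusp $x$ to $i\infty$: the Mellin transform of $\phi(x+iy)$ taken at the central point. For $g\in\Gamma'$, modularity of $\phi$ gives
\[
  \vth_g(x)L(gx)-L(x)=\vphi_g(x),
\]
where $\vphi_g(x)$ is a period integral of $\phi$ along the geodesic from $g^{-1}\infty$ to $i\infty$. Two cases need care.
\begin{itemize}
  \item The weight factor $(cx+d)^k$ and the nebentypus are not in the finite group $\Theta$. We absorb them by passing to a further finite-index subgroup. On the arcs $I(s)$ the sign of $cx+d$ is locally constant, and we work with $|cx+d|^{k}$-normalized values, so that $\vth_g$ takes values in a finite group of signed rotations.
  \item For non-cuspidal $\phi$ (the case $\lambda_\phi\ge 1/4$), the constant terms contribute explicit terms involving $\log|cx+d|$. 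These must be kept inside $\vphi_g$.
\end{itemize}
We then check that each $\vphi_g$ is $\kappa_0$-Hölder on $K$ (away from $g^{-1}\infty$) by differentiating under the integral, using the exponential decay of $\phi$ at the cusps. We also check the stability and mixing conditions for $\vth$; these are arithmetic and should follow from the surjectivity of the relevant characters on the parabolic generators.

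\textbf{Step 2 (moments).} For a parabolic element $\gamma$, conjugated to $z\mapsto z+n\omega$, the function $\vphi_{\gamma^n}(x)$ is essentially a smoothed partial sum $\sum_{m\lesssim n/\mathrm{dist}}a(m)m^{-1/2}\e(mx)$. Rankin--Selberg then gives second moments $\int|\vphi_{\gamma^n}|^2\ll(\log n)^{O(1)}$. Hence $\sum_n n^{-2}\int|\vphi_{\gamma^n}|^\alpha<\infty$ for every $\alpha<2$ up to the endpoint, and polynomial Hölder bounds give finitely many $\lambda$-moments. The key point is the behaviour at $\alpha=2$ itself, which decides between the two cases below.

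\emph{Cuspidal case.} The limiting variable $\Phi$, built from these $\vphi_{\gamma^n}$, has finite moments of order slightly above $2$. Theorem~\ref{th:limitlaw:quasi} then applies directly with $\alpha>2$, giving a Gaussian limit with mean $\mu_\Phi\log Q$ and $\beta_\phi=0$. We argue that $\mu_\Phi=0$ by symmetry: the observable is odd under $x\mapsto -x$ combined with complex conjugation. We also need $\Sigma_\Phi$ to be non-degenerate and isotropic. Non-degeneracy follows because $\vphi$ is not a coboundary plus $\log|g'|$, since it grows like a nonconstant period. Isotropy follows from rotation by the multiplier, i.e.\ $\vphi$ is equidistributed in phase.

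\emph{Eisenstein and $\lambda_\phi\ge1/4$ case.} This is where I expect the main obstacle. The coefficients $a(n)$ are divisor-like, so $\sum_{n\le X}|a(n)|^2\asymp X(\log X)^{\beta}$ with $\beta\in\{1,3\}$. The variable $\Phi$ then lies only in the non-normal domain of attraction of the Gaussian, with truncated second moment $\E(|\Phi|^2\1_{|\Phi|\le T})\sim c(\log T)^{\beta}$. In this regime the error $O(\nvt^{2\alpha-2\eps})$ in Theorem~\ref{th:limitlaw:quasi} is too weak. I would revisit the proof of $U_1$ to obtain
\[
  U_1(\vt)=2\fd^{-1}\E\bigl(\e^{i\inpd{\vt,\Phi}}-1\bigr)+O(\nvt^2),
\]
where the $O(\nvt^2)$ term has no logarithmic loss, and likewise $U_2(\vt)=o(1)$ as $\vt\to0$. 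A standard computation then gives
\[
  \E\bigl(\e^{i\inpd{\vt,\Phi}}-1\bigr)=-\tfrac{c}{2}\nvt^2(\log 1/\nvt)^{\beta}(1+o(1)).
\]
Substituting $\vt=\vt_0/(\sigma_\phi\sqrt{\log Q}(\log\log Q)^{\beta/2})$ makes $U_1(\vt)\log Q\to-\tfrac12|\vt_0|^2$, while $U_2$ and the $O(Q^{-\delta_0})$ error vanish. Lévy's continuity theorem then concludes the proof.

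The delicate points in this case are the precise tail asymptotics of $\Phi$, which come from the explicit shape of $\vphi_{\gamma^n}$ near the cusp via a divisor-sum asymptotic, and the removal of the $\eps$-loss at the endpoint $\alpha=2$.
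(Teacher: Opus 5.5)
Your overall route is the intended one. The paper does not prove this statement itself; it defers it to the companion works, where $L(x)$ is fed into Theorem~\ref{th:limitlaw:quasi} through the cuspidal conditions~\ref{hyp:moments-cuspidal}. But Step~2 gets the decisive point wrong. Rankin--Selberg controls mean squares in $x$ of Fourier partial sums. It does not see the coherent peaks that the periods develop when $\phi$ is not cuspidal. A cuspidal excursion of depth $\ell$ at a cusp where the constant term of $\phi$ is non-zero carries a period of size about $\ell^{1/2}$, up to a power of $\log\ell$ (compare $\sum_{n\le N}d(n)n^{-1/2}\asymp N^{1/2}\log N$). Depending on the cusp, this shows up in one of two ways: as polynomial growth of $\|\vphi_h\|_\infty$ in the depth of $h\in\cH_c$, or as a blow-up of size $\dist(x,V)^{-1/2}$ of $\vphi_{h_1}$ when it is evaluated on $\im(h_2)$ for deep $h_2$. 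The latter is why~\eqref{eq:moment-condition-cuspidal} is a double sum, and why the paper stresses that the domains of $\vphi_{h_1}$ approach the vertices. Hence your bound $\int|\vphi_{\gamma^n}|^2\ll(\log n)^{O(1)}$ is false for parabolic $\gamma$ at such cusps other than $1$; the integral is of order $n$ up to logarithms. Were the bound true, $\Phi$ would have a finite second moment and you would be in the $\beta_\phi=0$ regime, contradicting your own next step. Excursions of depth $\ell$ carry measure $\asymp\ell^{-2}$, so it is exactly this mechanism that gives $\P(|\Phi|>T)\approx T^{-2}(\log T)^{\beta-1}$, hence the truncated variance $(\log T)^{\beta}$ and the value $\beta_\phi\in\{1,3\}$. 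You assert this tail without deriving it. That derivation, together with checking~\eqref{eq:moment-condition-cuspidal}--\eqref{eq:holder-condition-cuspidal} for all $\alpha<2$, is the actual content of the non-cuspidal case.

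Conversely, the obstacle you single out is not one. In the non-cuspidal case the moment bounds hold for every $\alpha<2$, so you may take $\alpha\in(1,2)$ and $\eps<\alpha-1$. Then $\nvt^{2\alpha-2\eps}\le\nvt^2$, and Theorem~\ref{th:limitlaw:quasi} already gives $U_1(\vt)=2\fd^{-1}\E(\e^{i\inpd{\vt,\Phi}}-1)+O(\nvt^2)$ and $U_2(\vt)=O(\nvt+\nvt^{\alpha-\eps})$, with no logarithmic loss; nothing needs revisiting. What is genuinely missing is the vanishing of the mean and the isotropy. The reflection $x\mapsto -x$ is not a symmetry of a general cofinite $\Gamma$. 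The multiplier provides no rotation in basic cases: for weight-$0$ Maass forms with trivial character one has $\vth_g\equiv 1$. So $\Theta$ is trivial and $\mu_0$ in~\eqref{eq:def-muphi} does not vanish for free (it would if $\sum_{\vth\in\Theta}\vth=0$). You need an argument independent of $\Theta$; one natural route is that $\E_Q[L(x)^2]$ has no diagonal contribution, unlike $\E_Q[|L(x)|^2]$. In the Eisenstein case you must also prove $\E\Phi=0$ and the isotropy of the heavy-tailed part of $\Phi$, both of which your expansion of $\E(\e^{i\inpd{\vt,\Phi}}-1)$ assumes silently. Finally, $\sigma_\phi>0$ in the cuspidal case needs an actual non-degeneracy argument, not the remark that $\vphi$ ``grows like a nonconstant period''.
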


This statement generalizes various earlier results notably by Petridis-Risager~\cite{PetridisRisager2018}, Nordentoft~\cite{Nordentoft2021}, Lee-Sun~\cite{LeeSun2019}, Bettin-Drappeau~\cite{BettinDrappeau2022} and Drappeau-Nordentoft~\cite{DrappeauNordentoft2022};  see also the related works of Constantinescu~\cite{Constantinescu2022} and Constantinescu-Nordentoft~\cite{ConstantinescuNordentoft2025}.
The value of~$\beta_\phi$ can be determined from the growth of~$\phi$ at cusps, and we have~$\beta_\phi = 0$ if~$\phi$ is cuspidal. Moreover, when~$\phi$ is cuspidal, we can obtain an explicit expression for~$\sigma_\phi$ in terms of~$\phi$. We refer to~\cite{BettinEtAla} for more details.

Two new examples of sequences~$a(n)$, defined in elementary ways, for which the present work applies, are the following.

\begin{corollary}\label{cor:r2}
  The statement of Theorem~\ref{th:corollary-gl2} holds true, with~$\beta_\phi = 1$, for the series
  $$ L(x) \stackrel{\text{AC}}{=} \sum_{n=1}^\infty \frac{r_2(n)}{\sqrt{n}} \e^{2\pi i n x} $$
  where~$r_2(n)$ is the number of ways that~$n$ can be written as sums of two squares.
\end{corollary}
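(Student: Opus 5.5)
The plan is to derive the statement directly from Theorem~\ref{th:corollary-gl2}. We identify $r_2(n)$ with the Fourier coefficients of an automorphic form satisfying its hypotheses, then determine $\beta_\phi$ from the behaviour of that form at the cusps.

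\emph{Identifying the form.} Let $\theta(z)=\sum_{n\in\Z}\e^{2\pi i n^2 z}$. Then $\theta(z)^2=1+\sum_{n\geq1}r_2(n)\e^{2\pi i n z}$ is a holomorphic modular form of weight~$1$ for $\Gamma_0(4)$ with nebentypus $\chi_{-4}$. Equivalently, it has trivial character on $\Gamma:=\Gamma_1(4)$. We have the classical identity
$$\theta^2=1+4\sum_{n\ge1}\Big(\sum_{d\mid n}\chi_{-4}(d)\Big)\e^{2\pi i n z},$$
so $\theta^2$ is an Eisenstein series and is not cuspidal. The group $\Gamma_1(4)$ is cofinite with a cusp at~$\infty$. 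It contains no elliptic elements and does not contain $-\id$, so after conjugating to the disk model with the cusp at~$1$, hypotheses (a)--(e) of Section~\ref{sec:disk-model} hold without passing to a further subgroup.

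\emph{Checking the hypotheses.} We take $\phi(z)=y^{1/2}\theta(z)^2$, the weight-$1$ automorphic form attached to $\theta^2$. Its Laplace eigenvalue is $\lambda_\phi=\frac k2(1-\frac k2)=\frac14$ with $k=1$, so the alternative hypothesis $\lambda_\phi\geq1/4$ of Theorem~\ref{th:corollary-gl2} applies. For the normalization, the divisor-sum formula gives $r_2(n)\ll n^{o(1)}$ and $\sum_{n\le x}r_2(n)^2\asymp x\log x$, hence $a(n)=r_2(n)$ satisfies $\sum_{n\le x}\abs{a(n)}^2=x^{1+o(1)}$ as required. Theorem~\ref{th:corollary-gl2} therefore yields the Gaussian limit for the series in the statement with some $\sigma_\phi>0$ and some $\beta_\phi\in\{0,1,3\}$.

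\emph{Determining $\beta_\phi$.} This is the step I expect to be the main obstacle. The value of $\beta_\phi$ is determined by the growth of $\phi$ at the cusps, according to the classification in~\cite{BettinEtAla}. The three cusps of $\Gamma_1(4)$ are $\infty$, $0$ and $1/2$. Using $\theta(-1/4z)=\sqrt{-2iz}\,\theta(z)$, we will compute the constant terms of $\theta^2$ at each of them. The constant term is $1$ at $\infty$, a nonzero multiple of $1$ at $0$, and $0$ at $1/2$, since $\theta$ vanishes at the irregular cusp $1/2$. Because the weight is~$1$, these constant terms are of the shape $y^{1/2}$, which sits exactly at the critical exponent $\lambda=1/4$ with no $y^{1/2}\log y$ term. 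The step I will carry out first is to compare this with the case distinction in~\cite{BettinEtAla}. There, a nonvanishing $y^{1/2}$-type constant term makes the period functions $\vphi_{\gamma^n}$ near the parabolic fixed points grow like $\log n$. The moment condition of Hypothesis~\ref{def:qmf} then holds exactly at $\alpha=2$, which gives the borderline variance $(\log Q)(\log\log Q)$, that is $\beta_\phi=1$. By contrast, $\beta_\phi=3$ arises only with an extra logarithmic constant term, and $\beta_\phi=0$ only for cuspidal $\phi$. The delicate point is confirming that the contributions from the cusps $\infty$ and $0$ do not cancel in the resulting variance constant; I would check this by the explicit computation of $\sigma_\phi$ in terms of the constant terms.
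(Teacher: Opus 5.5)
Your overall route is the one the paper indicates. The paper identifies the relevant form as the weight-$1$ Eisenstein series $\theta^2$ for $\Gamma_0(4)$ and defers the details to \cite{BettinEtAla}. You apply Theorem~\ref{th:corollary-gl2} to $\phi=y^{1/2}\theta^2$ on $\Gamma_1(4)$ and read off $\beta_\phi$ from the constant terms. Your checks of the hypotheses are fine: $\lambda_\phi=1/4$, the normalization $a(n)=r_2(n)$, and constant terms that are nonzero at $\infty$ and $0$ and vanish at the irregular cusp $1/2$.

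The gap is in the step that is supposed to give $\beta_\phi=1$. You claim that a nonzero $y^{1/2}$ constant term makes $\vphi_{\gamma^n}$ grow like $\log n$, so that the moment condition holds ``exactly at $\alpha=2$''. These claims contradict each other and your conclusion. If $\|\vphi_{\gamma^n}\|_\infty\ll\log n$, then $\sum_n n^{-2}\|\vphi_{\gamma^n}\|_\infty^\alpha<\infty$ for every $\alpha$. You would then be in the case $\alpha>2$ of Theorem~\ref{th:limitlaw:quasi}, with normalization $(\log Q)^{1/2}$, i.e.\ $\beta_\phi=0$.

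The correct mechanism is square-root growth. Near a cusp $\mathfrak c\neq\infty$ in the class of $0$ or of $\infty$, the weight-$1$ form $\theta^2$ has size $\abs{z-\mathfrak c}^{-1}$. Since $L$ is essentially a half-integral of $\theta^2$, it has size $\abs{x-\mathfrak c}^{-1/2}$ there. Combined with $\abs{g_{w^\ell}(x)-[w^\infty]}\asymp 1/\ell$ from \eqref{eq:dist-gwl-winf}, this gives $\abs{\vphi}\asymp\sqrt{\ell}$ on cuspidal branches of depth $\ell$. By Corollary~\ref{cor:parabolic} these branches carry weight $\|h'\|_\infty\asymp\ell^{-2}$. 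Hence $\P(\abs{\Phi}>T)\asymp T^{-2}$: the moment hypotheses hold for every $\alpha<2$ and fail at $\alpha=2$.

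This borderline case is not covered by the Gaussian part of Theorem~\ref{th:limitlaw:quasi}, which needs $\alpha>2$. What you need is the precise tail asymptotic of $\Phi$, with a nonzero leading constant computed from the constant terms. This gives $\E(\e^{i\inpd{\vt,\Phi}}-1)=i\inpd{\vt,\E\Phi}-\tfrac12\vt^T\Sigma'\vt\,\log(1/\nvt)\,(1+o(1))$ with $\Sigma'\neq0$. That term dominates the $O(\nvt^2)$ error in $U_1$ and produces the normalization $(\log Q\log\log Q)^{1/2}$. Likewise, a $y^{1/2}\log y$ constant term gives growth $\sqrt\ell\log\ell$, tails $T^{-2}(\log T)^2$, and $\beta_\phi=3$.

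Finally, the cancellation between the cusps $\infty$ and $0$ that you worry about cannot occur. Large values of $\Phi$ come from single long excursions into one cusp class. So the two classes' contributions to the $\log T$ coefficient of the truncated covariance add as positive semi-definite matrices.
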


\begin{corollary}\label{cor:theta}
  The statement of Theorem~\ref{th:corollary-gl2} holds true, with~$\beta_\phi = 0$ and~$\sigma_\phi = 1/\sqrt{2}$, for the quadratic Weyl sum
  $$ L(x) \stackrel{\text{AC}}{=} \sum_{n=1}^\infty \frac{\e^{2\pi i n^2 x}}{\sqrt{n}}. $$
\end{corollary}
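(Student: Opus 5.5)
The plan is to realise $L(x)$ as a weight-$0$ quantum modular form for a suitable Fuchsian group with a finite multiplier system. We then check Hypothesis~\ref{def:qmf} with some $\alpha>2$, apply Theorem~\ref{th:limitlaw:quasi} with $d=2$ (identifying $\C\simeq\R^2$), and finally identify $\mu_\Phi$ and $\Sigma_\Phi$ by symmetry and a second-moment computation.

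Note that Theorem~\ref{th:corollary-gl2} cannot be quoted directly. The relevant form is Jacobi's $\theta(z)=\sum_{n\in\Z}\e^{2\pi i n^2 z}$, which has weight $1/2$ on $\Gamma_0(4)$. It is not cuspidal, and $y^{1/4}\theta$ has Laplace eigenvalue $3/16<1/4$. We therefore redo the verification by hand.

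\textbf{Step 1: setup and period functions.} Write $L(x)$ as a Mellin-type integral,
$$ L(x)=\frac{(2\pi)^{1/4}}{\Gamma(1/4)}\int_0^\infty \tfrac12\big(\theta(x+iy)-1\big)\,y^{1/4}\,\frac{\df y}{y}, $$
understood via analytic continuation at cusps $x$. Substituting the transformation law $\theta(\gamma z)=\nu_\theta(\gamma)(cz+d)^{1/2}\theta(z)$ and splitting the integral at the height $y\asymp 1/c$ gives
$$ L(x)-\vth_\gamma(x)L(\gamma x)=\vphi_\gamma(x). $$
Here $\vth_\gamma(x)$ is a product of $\nu_\theta(\gamma)$, an eighth root of unity, and a phase $\big(\operatorname{sgn}(cx+d)\big)^{1/2}$, and it is constant on the arcs $I(s)$. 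The integral $\vphi_\gamma$ is Hölder continuous away from $\gamma^{-1}\infty$.

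We pass to a finite-index torsion-free subgroup of $\Gamma_0(4)$ without $-\id$, conjugated to the disk with a cusp at $1$; this secures assumptions (d)--(e). The multiplier then takes values in the finite group $\Theta\subset \GL(2,\R)$ of rotations by multiples of $\pi/4$, possibly together with complex conjugation. The stability and mixing hypotheses of Section~\ref{sec:skew-products} should follow because the theta multiplier hits every eighth root of unity along words of the required shape, as one sees by exhibiting explicit parabolic products.

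\textbf{Step 2: moment conditions.} The parabolic elements fall into three cases.
\begin{itemize}
\item[(i)] At $\infty$ we have $L(x+1)=L(x)$, so $\vphi_{\gamma^n}=0$.
\item[(ii)] At $1/2$, $\theta$ vanishes (it equals $\theta_{01}$-type), so $\theta(z)-(\text{const})$ decays exponentially there. The integral defining $\vphi_{\gamma^n}$ is then uniformly bounded in $\|\cdot\|_{\kappa_0}$.
\item[(iii)] At cusps equivalent to $0$, the constant term of $\theta$ contributes a term $\int_{1/n}^{1} y^{1/4-1/2}\,\df y/y$-type. This gives $\|\vphi_{\gamma^n}\|_\infty\ll |n|^{1/4}$ and a Hölder norm polynomial in $n$.
\end{itemize}
In all three cases $\sum_n n^{-2}\|\vphi_{\gamma^n}\|^\alpha_\infty<\infty$ for every $\alpha<4$, and a suitable $\lambda>0$ handles the Hölder moment. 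Theorem~\ref{th:limitlaw:quasi} therefore applies with some $\alpha\in(2,4)$, giving the Gaussian regime with scaling $\sqrt{\log Q}$, that is $\beta_\phi=0$. I expect this step to be the main obstacle: one must control the non-cuspidal constant-term contributions uniformly in $x\in K$ (and not only pointwise), including the precise Hölder exponent near the endpoints of the arcs.

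\textbf{Step 3: mean and variance.} To identify the mean, note that if $x\in C(G)$ has denominator $q$, then so does $-x$, and $L(-x)=\overline{L(x)}$. Moreover, multiplying by Gauss-sum phases $\vth$ acts by rotation, and the mixing hypothesis makes the limiting law invariant under rotation by $\pi/4$. Hence $\mu_\Phi=0$ and $\Sigma_\Phi$ is a scalar matrix $s\,\Id$.

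To find $s$, take the second derivative of the expansion of $U_1$ at $\vt=0$. This is legitimate since $\alpha>2$ gives uniform integrability of $|L|^2/\log Q$, and it yields $\E_Q|L(x)|^2=2s\log Q+O(1)$. The left-hand side is computed directly by expanding $|L(x)|^2$ and averaging over cusps of denominator $\le Q$. The diagonal terms $n=m$, truncated at $n\ll Q$ by the approximate functional equation for $L$ at a rational, give $\sum_{n\le Q}1/n\sim\log Q$. The off-diagonal terms average out by orthogonality of $\e(x(n^2-m^2))$ over the probability space $\Omega_Q$, using the equidistribution \eqref{eq:psi-mass-rough} and Lemma~\ref{lem:psi-totalmass}.

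This gives $2s=1$, so each real coordinate has variance $\tfrac12\log Q$. This is exactly the standard complex Gaussian normalisation with $\sigma_\phi=1/\sqrt2$, and Lévy's continuity theorem concludes.
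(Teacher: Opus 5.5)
Your overall plan is sensible: a weight-$0$ quantum modular form with a root-of-unity multiplier, then Hypothesis~\ref{hyp:moments-cuspidal}, then Theorem~\ref{th:limitlaw:quasi}, then symmetry and a second moment. You are also right that Theorem~\ref{th:corollary-gl2} cannot be quoted. The genuine gap is in Step~3, the step that is supposed to produce $\sigma_\phi=1/\sqrt2$: it does not follow from the paper.

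Theorem~\ref{th:limitlaw:quasi} controls $\E_Q(\e^{i\inpd{\vt,\qf(x)}})$ only for $\nvt<t_0$, with an error $O(Q^{-\delta_0})$ that cannot be differentiated in $\vt$. It is therefore blind to events of probability $O(Q^{-\delta_0})$. Convergence in law of $L/\sqrt{\log Q}$ gives no information on $\E_Q\abs{L}^2$. This is not a formality here, because $L$ takes values of size a positive power of $Q$: for instance $\abs{L(1/q)}\asymp q^{1/4}$, from the blow-up at the cusp $0$ that also produces your $n^{1/4}$. So ``$\alpha>2$ gives uniform integrability of $\abs{L}^2/\log Q$'' is an extra theorem about rational orbits, which you must prove. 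There are two ways to do so:
\begin{itemize}
\item an a priori bound $\E_Q\abs{L}^{2+\eta}\ll(\log Q)^{1+\eta/2}$; or
\item rerunning Sections~\ref{sec:continuation-quasiinv}--\ref{sec:est:mgf} for $\sum_x\psi(x)q(x)^{-2s}\abs{\qf(x)}^2=-\Delta_{\vt}\fS(s,\vt)|_{\vt=0}$.
\end{itemize}
The second route needs $C^2$ dependence in $\vt$ of $\LL_{s,\vt}$ on $H^\kappa$, of $s_0(\vt)$ and of the residue, with polynomial bounds in $\Im(s)$. The first of these amounts to bounding $\LL[\Phi_K\Phi_K^T\,\cdot\,]$ in the Hölder norm, which your moments should give by interpolation. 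Since $\nabla s_0(0)=0$, the resulting double pole at $s=1$ yields $\E_Q\abs{L}^2=\operatorname{tr}\Sigma_\Phi\log Q+O(1)$. Only after this can you match it against a direct computation.

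That direct computation is also wrong as described. At $x=a/q$ the series diverges whenever $G(a,q)=\sum_{r \bmod q}\e(ar^2/q)\neq0$, since its partial sums grow like $2\sqrt N\,G(a,q)/q$. So there is no truncation ``at $n\ll Q$''. The approximate functional equation has two pieces of length $\asymp\sqrt q$, plus polar terms; for odd $q$ the dual piece carries the phase $G(a,q)/\sqrt q$ and the argument $-\overline{4a}/q$. Averaging $\e(a(n^2-m^2)/q)$ over $a$ gives $c_q(n^2-m^2)/\#(\Z/q\Z)^\times$, which equals $1$ whenever $q\mid n^2-m^2$. The off-diagonal is negligible only because, in the balanced formula, $n,m\ll\sqrt q$ and $q\mid n^2-m^2$ force $n=m$. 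The cross terms are Salié sums that must be evaluated. Lemma~\ref{lem:psi-totalmass} is of no use here: its error depends on $\psi$ and does not survive the test functions $\psi(x)\e((n^2-m^2)x)$. Done correctly, the two diagonals give $2\cdot\frac12\log q$, recovering your $\log Q$. Then $\sigma_\phi=1/\sqrt2$ corresponds to independent $N(0,1)$ real and imaginary parts, and you should state that convention.

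Step~2 is only asserted. Hypothesis~\ref{hyp:moments-cuspidal} involves the restricted norms $\|\vphi_{h_1}\|_{\infty,\im(h_2)}$ precisely because $\vphi_{h_1}$ may blow up like $\dist^{-1/4}$ at vertices on the boundary of $\Dom(h_1)$. You must convert this, via Proposition~\ref{prop:metric-Hc}, into a bound polynomial in $\ell,m$ with exponents $<1/2$, where $\|h_1'\|_\infty\asymp\ell^{-2}$ and $\|h_2'\|_\infty\asymp m^{-2}$. You must also check the mixing condition~\eqref{eq:vth-mixing}.

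Your symmetry argument is better done by $\Theta$-equivariance. The identity $\Phi_K(x,\vth)=\vth\Phi_K(x,1)$ gives $\mu_\Phi=0$ by Corollary~\ref{corest}. It also makes $\Sigma_0$ commute with $\Theta$, hence scalar as soon as $\Theta\ni\pm i$; the branch of $(cz+d)^{1/2}$ on $\{cx+d<0\}$ should supply such an element, but you need to confirm that it lies in the range of $\vth$ on the torsion-free subgroup you pass to.
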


The corresponding automorphic forms are an Eisenstein series of weight~$1$ for~$\Gamma_0(4)$, and a Theta series of half-integer weight for the same group, respectively.
In corollaries~\ref{cor:r2} and \ref{cor:theta}, the parameter~$x$ varies among rational numbers in~$[0, 1]$ of usual denominator at most~$Q$. We refer to~\cite{BettinEtAla} for more details and references.

In forthcoming work~\cite{BettinEtAl3}, we will study applications of the present methods to partial Fourier series with automorphic coefficients.

\subsection{Overview}\label{sec:overview}

Theorem~\ref{th:limitlaw:quasi} is a wide extension of Theorem~3.1 of~\cite{BettinDrappeau2022} and Theorem~A~\cite{LeeSun2019}, which in turn generalized work of Baladi and Vallée~\cite{BaladiVallee2005}. We briefly explain the new features in the proof.

The first question is to find a proper analogue of the Gauss map: a Markov, hyperbolic map on~$\dD$, acting locally as homographies associated with elements of~$G$. This problem, stated precisely in these terms, was resolved by Bowen--Series~\cite{BowenSeries1979} using a geometric construction based on a fundamental domain for~$G$. An induction of the Bowen-Series map, which we denote~$\cF_K$, then possesses spectral properties analogous to those of the Gauss map.

Generalizing the arguments of Baladi-Vallée~\cite{BaladiVallee2005} to this setting requires solving several issues. We properly parametrize the coding and domains of inverse branches of the induced map~$\cF_K$; along the way, we take the opportunity to give an explicit expression for the natural extension of~$\cF_K$ and its invariant measure, by exploiting the flexibility we have of choosing a convenient fundamental domain, namely an ideal hyperbolic polygon. Then we are to provide precise estimates for the size of inverse branches. Once this is at hand, the rest of the arguments of Baladi-Vallée, and in particularly the Dolgopyat method (with the strong UNI property), could be generalized without major issue, if one is ready to accept strong assumptions on the ``observables''~$(\vphi_g)$.

One of our main challenges, however, lies in the fact that for our applications in~\cite{BettinEtAla}, the maps~$(\vphi_g)$ are difficult to analyze, for lack of a simple explicit expression. In particular, we are to avoid a hypothesis relying on estimates uniform in~$g$ varying among the inverse branches of~$\cF_K$.
This issue is addressed by considering a new induction~$\cF_c$ of the Bowen-Series map, whose inverse branches, along with their domains, are very easy to characterize: inverse branches of~$\cF_c$ are parametrized by homographies essentially of the shape~$g^n x$, for~$n\geq 1$ arbitrary but~$g$ varying over a \emph{finite} set of parabolic elements of~$G$.
We define this map in Section~\ref{sec:cusp-acceleration} below, and make a precise study of its natural extension and its invariant measure. The original Bowen-Series accelerated map~$\cF_K$ is an induction of the map~$\cF_c$.

The map~$\cF_c$ is closely related to the cuspidal acceleration introduced by Artigiani, Marchese and Ulcigrai in~\cite{ArtigianiEtAl2016} (see also~\cite{Marchese2018}), which we denote~$\cFAMU$ here. The differences between the two settings are discussed precisely in Section~\ref{sec:comparing-three-maps} below.
We have taken the opportunity here to work out, additionally, the natural extension and the invariant measure for~$\cFAMU$.

The map~$\cF_c$ is used in our arguments on two occasions: firstly, as a fundamental tool to establish analytic continuation of a certain generating series (Section~\ref{sec:conv-res-below-1}); and secondly, as a way to effectively describe the random variable~$\Phi$ in Theorem~\ref{th:limitlaw:quasi} (Section~\ref{sec:sub-clt-case}).

\subsubsection*{Organisation of the paper}

We outline the contents as follows.
\begin{itemize}
  \item In Section~\ref{sec:bsmap} and~\ref{sec:bs:accel}, we define the instance of the Bowen--Series map we will use. We then introduce and discuss several accelerations of the Bowen-Series map away from vertices of the fundamental domains, in particular the maps~$\cF_K$ and~$\cF_c$ alluded to in the above overview. We explicitely describe their inverse branches, and construct associated natural extensions and invariant densities.
  \item In Section~\ref{sec:est:apriori}, we recall the main metric properties on inverse branches of the accelerated maps. This includes contraction estimates, and also estimates relative to the verification of the uniform non-integrability conditions (UNI).
  \item In Section~\ref{sec:transfer} we introduce the various twists of the Ruelle-Perron-Frobenius operators~$\LL$ and~$\CC$ associated with~$\cF_K$ and~$\cF_c$, and relate~$\CC$ with~$\LL$.
  \item In Section~\ref{sec:moments-conditions}, we define rigorously the sup-norm and~Hölder-norm moments bounds on~$(\vphi_g)_g$ which constitute Hypothesis~\ref{def:qmf}.(b).
  \item In Section~\ref{sec:transfer:pert}, we show a contraction estimate for large twists of the transfer operator~$\LL$, following the arguments in~\cite{Dolgopyat1998,BaladiVallee2005,BettinDrappeau2022,LeeSun2019}, and in Section~\ref{sec:asympt-behav-lead} we analyse the asymptotic behaviour of the leading eigenvalue.
  \item Finally, in Sections~\ref{sec:continuation-quasiinv} and~\ref{sec:est:mgf}, we define the generating series relevant for our problem and prove its meromorphic continuation to the left of~$\Re(s)=1$, and in Section~\ref{sec:main-results}, we deduce the main theorem~\ref{th:limitlaw:quasi}.
\end{itemize}

\subsection*{Notations}

We use the notation~$\1_A$ for the indicator of a set or a logical condition.
We denote~$f = O(g)$, or equivalently~$f \ll g$, to say that there is a constant~$C>0$ for which~$|f| \leq C g$. The domain of validity will be clear from the context, and the implicit constant~$C$ may depend on the group~$G$, the choice of fundamental domain~$R$, and the fixed finite
group~$\Theta$. In cases where~$C$ depends on other paremeters in the analysis, this dependence will be explicited in subscript, such as~$f \ll_* g$ or~$f = O_*(g)$, or else will be described explicitely.
For all~$z\in \C$, we define~$\e(z) = \e^{2\pi i z}$. Given a map $F$, images under its $n$-th iterate $F^n$ are taken only over points for which $F^n$ is defined.

We let~$\spr(\TT)$ denote the spectral radius of an operator~$\TT$.
We let~$\|f\|_{\infty}$ we indicate the usual sup-norm on the domain of~$f$.
Given a norm or seminorm~$\|\cdot\|_*$ on a functional space, we will write~$\|\cdot\|_{*, D}$ the same norm on the subspace of functions restricted to~$D$. We use~$\abs{\cdot}$ to indicate the usual Euclidean norm on~$\R^d$.

\subsection*{Acknowledgements}

We thank the organizers of the April 2023 MPI Bonn workshop ``Arithmetic statistics, automorphic forms and ergodic methods'' (N.\ Diamantis, M.\ Lee, P. Moree and Y.\ Petridis), and also the DIMA (Univ.\ of Genova), the I2M and CIRM (Univ.\ Aix-Marseille) where part of this work was carried out since 2022.

We thank Luca Marchese, Francesco Veneziano, Benoît Saussol and Jérôme Los for many discussions related to the topic of this work. 

Various LLM's were used to generate the TikZ figures, and to check for errors in an earlier version of this manuscript.

This work benefited from financial support from FWF-I4945-N/ANR-20-CE91-0006, SNF-10.003.145/ANR-24-CE93-0016, H2020-EU.1.1 (grant agreement 787304), ERC-Advanced Grant
833802 Resonances, and from the INdAM group GNAMPA.

During the preparation of this work, S.B. was partially supported by PRIN 2022 ``The arithmetic of motives and $L$-functions'', by the Curiosity Driven grant “Value distribution of quantum modular forms” of the University of Genoa, funded by the European Union -- NextGenerationEU, and by the MIUR Excellence Department Project awarded to Dipartimento di Matematica, Università di Genova, CUP D33C23001110001.

\section{Bowen--Series map} \label{sec:bsmap}

\subsection{Denominators and orbits}

Recall that the quantity~$\Upsilon(x)$ was defined  in~\eqref{eq:def-Y}, for~$x\in G \cb$. Given~$g = \smat{* & * \\ u & v}$, we define
$$ j(g, x) = u x + v. $$
For all~$g_1, g_2 \in G$ and~$x\in \bar\D$, we have
$$ j(g_1g_2, x) = j(g_1, g_2 x) j(g_2, x), $$
see~\cite[eq.~(2.3)]{Iwaniec1997}.
It is then easily seen that
\begin{equation}
  \Upsilon(g x) = \abs{j(g, x)} \Upsilon(x), \qquad (x\in G \cb, g \in G).\label{eq:denom-cocycle}
\end{equation}

\begin{lemma}\label{lem:conv-sum-q}
  For all~$\sigma>1$, we have
  $$ \sum_{x\in G\cb} \Upsilon(x)^{-2\sigma} < \infty. $$
\end{lemma}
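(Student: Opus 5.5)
We reduce the sum to an Eisenstein series on the cusp pair $(1,\cb)$. Write $x = g\cb$ with $g$ running over $G/G_\cb$, and pass to the upper half-plane by $gs_\cb = p_\D\smat{a&b\\c&d}p_\D^{-1}$. By \eqref{eq:Upsilon-SL2R} we have $\Upsilon(x)^2 = a^2+c^2$. The quantity $a^2+c^2$ is exactly $\abs{j(\tilde g^{-1},i)}^2$, up to the transpose/inverse bookkeeping, where $\tilde g = p_\D^{-1} g s_\cb p_\D$. Hence, after identifying cosets, the sum becomes
$$ \sum_{x\in G\cb}\Upsilon(x)^{-2\sigma} = \sum_{\gamma\in \Gamma_\cb\backslash \sigma_\cb^{-1}\Gamma'} \Im(\gamma i)^{\sigma}, $$
for the conjugated group $\Gamma' = p_\D^{-1}Gp_\D$. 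This is the value at $z=i$ of the non-holomorphic Eisenstein series $E_\cb(z,\sigma)$ attached to the cusp $\cb$. The precise change of variables (which of $a,c$ versus $c,d$ appears, and whether the scaling normalization introduces a constant) is the first thing to fix carefully. The required identity is $\abs{\alpha+\beta}^2 = a^2+c^2$ with $\smat{\alpha&\beta\\ *&*} = p_\D \smat{a&b\\c&d}p_\D^{-1}$, computed directly from $p_\D = \smat{1&-i\\1&i}$, together with the observation that $\abs{\alpha+\beta}$ is $G_\cb$-invariant, as noted after \eqref{eq:def-Y}.

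Convergence of $E_\cb(z,\sigma)$ for $\sigma>1$ is classical for cofinite $\Gamma$ (Iwaniec, \emph{Spectral methods}, Ch.~3, or Katok). For self-containedness we would sketch the standard argument. Take a small hyperbolic ball $B$ around $z_0=i$ whose $\Gamma$-translates are disjoint; since there are no elliptic elements, the stabilizer of $i$ is at most $\{\pm1\}$, which is excluded, so such a ball exists. The function $\Im(\gamma z)^\sigma$ is comparable to its average over $B$ by the mean-value property, since $y^\sigma$ is an eigenfunction of $\Delta$ and in any case it varies by a bounded factor on $B$. The sum over $\gamma\in\Gamma_\cb\backslash\Gamma$ with $\Im(\sigma_\cb^{-1}\gamma z)\le Y$ is therefore bounded by a constant times
$$ \int_{\text{strip } 0<x<1,\ y\le Y'} y^{\sigma-2}\,dx\,dy, $$
where the strip is a fundamental domain for $\Gamma_\cb$ acting on $\H$ and the translates $\sigma_\cb^{-1}\gamma B$ lie disjointly in it. This integral converges at $y\to0$ exactly when $\sigma>1$. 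Only finitely many cosets have $\Im(\sigma_\cb^{-1}\gamma i)$ large, since $\Im(\sigma_\cb^{-1}\gamma i)\le \max(Y_0, c^{-2})$ and the lower bound on $|c|$ for cosets $\gamma\notin\Gamma_\cb$ (Iwaniec Lemma~1.24 / Shimizu) bounds the imaginary part above. Equivalently, disjointness plus finite area of the image in the cusp region handles the top part.

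The main obstacle is purely the bookkeeping in the first step: correctly matching $\Upsilon$ with $\Im(\cdot)^{-1/2}$ of the correct coset representative, including the disk-to-half-plane conjugation and the factor $\eta_\cb$. The analytic input, disjointness of translates and the integral $\int_0 y^{\sigma-2}dy<\infty$, is standard. Alternatively, we could cite \cite[Theorem~3.8]{PetridisRisager2018}, giving $\#\{x\in G_1\backslash\Omega_Q\}\ll Q^2$ in the $\Upsilon$-ordering, and conclude by partial summation over dyadic ranges of $\Upsilon$: the count of $x$ with $\Upsilon(x)\le Y$ is $\ll Y^2$, so $\sum \Upsilon^{-2\sigma}\ll\sum_k 2^{2k}2^{-2\sigma k}<\infty$. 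This route needs the counting bound for the $\Upsilon$-ordering over all of $G\cb$ rather than modulo $G_1$. We would prefer it if that bound is available, since it gives the shortest proof.
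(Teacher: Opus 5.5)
Your proposal is correct, but it takes a different route from the paper.

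\textbf{The paper's route.} The paper first treats $\cb=1$ (width $1$, $s_1=\id$) and writes the sum as $\sum_{\gamma\in\Gamma/\Gamma_\infty}|a-ic|^{-2\sigma}$. It splits this along the double cosets $\Gamma_\infty\backslash\Gamma/\Gamma_\infty$, bounds each inner sum $\sum_n((a+cn)^2+c^2)^{-\sigma}$ by $\ll|c|^{-2\sigma}$, and then quotes Iwaniec's bound for $\sum|c|^{-2\sigma}$ over double cosets. A general cusp is handled in a second step, by conjugating to $G'=s_\cb^{-1}Gs_\cb$ and using $|j(s_\cb,x)|\asymp1$ on the circle.

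\textbf{Your route.} From the identity $\alpha+\beta=a-ic$ you get $\Upsilon(g\cb)^{-2}=\Im(\tilde g^{-1}i)$ with $\tilde g=p_\D^{-1}gs_\cb p_\D$. Hence the sum is exactly the Eisenstein series $E_\cb(i,\sigma)$ of the cusp $\cb$ (transported to $\H$), evaluated at the point corresponding to $0\in\D$. This has two advantages:
\begin{itemize}
\item It handles an arbitrary cusp in one step, with no conjugation and no $j$-comparison.
\item It makes visible that $\Upsilon$ depends only on $\cb$ and the base point, not on the reference cusp at $1$.
\end{itemize}
Your convergence argument is also self-contained: disjoint hyperbolic balls around the orbit of $i$, unfolding against $\int y^{\sigma-2}\,dx\,dy$ on the strip, and a separate treatment of the large-height part via the lower bound on $|c|$ (at most one coset has $c=0$). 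The paper instead delegates the analytic input to the double-coset count, which is itself usually proved by the same kind of area argument.

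\textbf{Two minor points.}
\begin{itemize}
\item The translates $\sigma_\cb^{-1}\gamma B$ need not lie inside the strip $0<x<1$. What is true, and all the unfolding needs, is that their images in $\Gamma_\infty\backslash\H$ are disjoint.
\item The counting alternative is unnecessary, and as stated it misdescribes \cite{PetridisRisager2018}, whose Theorem~3.8 orders by $q(x)=|c|$ modulo $G_1$, not by $\Upsilon$. The bound $\#\{x:\Upsilon(x)\le Y\}\ll Y^2$ can still be recovered from that count: sum $\ll 1+Y/|c|$ over double cosets with $0<|c|\le Y$, then apply partial summation.
\end{itemize}
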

\begin{proof}
  We translate the problem to the~$\SL(2, \R)$ setting, where the analogous estimate is essentially~\cite[Proposition~2.8]{Iwaniec2002}. We first assume that~$\cb = 1$ with width~$1$, and take $s_1=\id$. We keep the notation~$g = \smat{\alpha & \beta \\ * & *}$ throughout the computation, and given such~$g$, define~$\gamma = p_\D^{-1} g p_\D\in \SL(2, \R)$, which we write~$\gamma = \smat{a&b\\c&d}$. Let~$\Gamma = p_\D^{-1} G p_\D$, so that~$\Gamma_\infty = p_\D^{-1}G_1p_\D$. 
  
  By~\eqref{eq:Upsilon-SL2R} our sum becomes
  \begin{align*}
    \sum_{g\in G / G_1} \abs{\alpha + \beta}^{-2\sigma}
    ={}& \sum_{\gamma \in \Gamma / \Gamma_\infty} \abs{a - i c}^{-2\sigma} 
    ={} 1 + \ssum{\gamma \in \Gamma_\infty\backslash \Gamma / \Gamma_\infty \\ c\neq 0} \sum_{n\in \Z} \big((a+cn)^2 + c^2\big)^{-\sigma} \\
    \ll {}& 1 + \ssum{\gamma \in \Gamma_\infty\backslash \Gamma / \Gamma_\infty \\ c\neq 0} \abs{c}^{-2\sigma},
  \end{align*}
  where~$\Gamma_\infty = \{\eta_1^k\smat{1&k\\0&1}, k\in\Z\}$.
  At this point we invoke~\cite[eq.~(2.33)]{Iwaniec2002}, to the effect that the sum over~$\gamma$ is bounded as long as~$\sigma>1$.

  Assume now that~$\cb$ is arbitrary and let~$G' := s_\cb^{-1} G s_\cb$. By definition, $1$ is a cusp of~$G'$ of width $1$, and by what we have just proved, we have
  \begin{equation}
    \sum_{x\in G'1} \Upsilon_1^{G'}(x)^{-2\sigma} < \infty, \qquad (\sigma > 1).\label{eq:denom-bd-case1}
  \end{equation}
  Our original sum can be related this one by
  $$ \sum_{x\in G \cb} \Upsilon_\cb^G(x)^{-2\sigma} = \sum_{x \in G' 1} \Upsilon_\cb^G(s_\cb x)^{-2\sigma} = \sum_{x \in G' 1} \Upsilon_1^{G'}(x)^{-2\sigma} \abs{j(s_\cb, x)}^{-2\sigma}, $$
  where the last step follows by a computation identical to~\eqref{eq:denom-cocycle}.
  Here, it is easily checked that the map~$j(s_\cb, \cdot)$ is bounded and non-zero on the circle, so that~$\abs{j(s_\cb, x)} \asymp 1$. This along with the bound~\eqref{eq:denom-bd-case1} concludes the argument.
\end{proof}

Finally, we make precise the estimate~\eqref{eq:psi-mass-rough} referred in the introduction.

\begin{lemma}\label{lem:psi-totalmass}
  For some~$\delta>0$ the following holds. Let~$\psi$ be smooth on~$\dD$ with~$1\not\in\supp(\psi)$. Then
  $$ \sum_{x\in \Omega_Q} \psi(x) = \frac{Q^2}{\pi \vol(G\backslash\D)} \int_{\dD} \psi(z) \frac{2\abs{\df z}}{\abs{1-z}^2} + O_\psi(Q^{2-\delta}) \qquad (Q>0). $$
\end{lemma}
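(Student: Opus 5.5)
We reduce to an equidistribution statement for cusps ordered by denominator in the upper half-plane model, and then invoke the effective result of Petridis--Risager. Conjugating by $p_\D$, write $x = p_\D(r)$ with $r\in \R$. The condition $1\notin\supp\psi$ means that $\tilde\psi := \psi\circ p_\D$ is smooth and compactly supported on $\R$. By~\eqref{eq:Upsilon-SL2R}, $q(x) = |c|$, where $p_\D^{-1} g s_\cb p_\D = \smat{a&b\\c&d}$ and $r = a/c$. The sum to evaluate is therefore
$$ \sum_{x\in\Omega_Q}\psi(x) = \ssum{\gamma\in \Gamma/\Gamma_{\cb'} \\ 0<|c|\le Q} \tilde\psi(a/c), $$
where $\Gamma = p_\D^{-1}Gp_\D$ and $\Gamma_{\cb'}$ is the stabilizer of the cusp $p_\D^{-1}\cb$, after scaling.

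Next we fold by $\Gamma_\infty = \{\eta_1^k\smat{1&k\omega\\0&1}\}$. Left translation by $\Gamma_\infty$ shifts $a/c$ by $k\omega$ and leaves $c$ unchanged. Hence the sum equals $\sum_{(a/c)\in \Gamma_\infty\backslash\cdots,\ 0<|c|\le Q} \Psi(a/c)$, where $\Psi(r) = \sum_k \tilde\psi(r+k\omega)$ is smooth and $\omega$-periodic. Through~\eqref{eq:rel-OmegaQ-T}, this is precisely a sum over the set $T_{\infty\cb}(Q\sqrt\omega)$ of \cite{PetridisRisager2018}, weighted by a smooth periodic function.

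At this point we invoke the effective equidistribution theorem of \cite[Theorem~3.8 and its effective version]{PetridisRisager2018}. We expand $\Psi$ in its Fourier series $\Psi(r) = \sum_m \hat\Psi(m)\e(mr/\omega)$. For $m=0$, the counting asymptotic $\#T_{\infty\cb}(X) = \frac{X^2}{\pi\vol} + O(X^{2-\delta})$ follows from the Eisenstein series $E_\infty(z,s)$ and its Fourier coefficient at $\cb$: the scattering term $\phi_{\infty\cb}(s)$ has a simple pole at $s=1$ with residue $1/\vol$, and no other poles lie in $\Re s> 1-\delta$ apart from finitely many exceptional ones below $1$. For $m\neq 0$, the Kloosterman-type sums $\sum_{0<|c|\le X}\e(ma/c)$ are $O_\eps(|m|^{A}X^{2-\delta})$ by the spectral theory of Kloosterman sums for $\Gamma$ (Goldfeld--Sarnak type bounds), again as in \cite{PetridisRisager2018}. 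Since $\Psi$ is smooth, $\hat\Psi(m)$ decays rapidly, so the $m\ne0$ terms contribute $O_\psi(Q^{2-\delta})$.

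It remains to identify the main term, which is $\frac{Q^2}{\pi\vol}\cdot\frac1\omega\int_0^\omega \Psi$ up to the scaling normalisation. Unfolding gives $\frac1\omega\int_\R\tilde\psi(r)\df r$. The change of variables $z = p_\D(r)$ gives $\df r = \frac{2|\df z|}{|1-z|^2}$, because $r = i(1+z)/(1-z)$, so $|dr/dz| = 2/|1-z|^2$. The expected main obstacle is bookkeeping rather than analysis: we must track the factors $\omega$ and $\sqrt\omega$ from~\eqref{eq:rel-OmegaQ-T} and the scaling at $\cb$, and check that they cancel to give exactly $\frac{Q^2}{\pi\vol(G\backslash\D)}$. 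We also need to make sure the quoted power saving in \cite{PetridisRisager2018} is uniform enough in the frequency $m$ for the Fourier sum to converge. If it is not, we would instead smooth $\Psi$ and use a direct spectral expansion of the Poincaré series $\sum_{\gamma}\Psi(\gamma z)\,\Im(\gamma z)^s$.
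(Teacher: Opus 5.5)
Your proposal is correct and follows essentially the paper's own route. You fold the sum over $G_1$ to get a smooth periodic weight on $T_{\infty\cb}(Q\sqrt{\omega})$ and apply \cite[Theorem~3.8]{PetridisRisager2018}; its hypothesis $\sum_{n}\abs{\hat h(n)}(1+\abs{n})^{1/2}<\infty$ is exactly the frequency-uniformity you worried about and follows from smoothness, and in the bookkeeping the factor $(Q\sqrt\omega)^2=\omega Q^2$ from the count cancels the $1/\omega$ from averaging over a period of length $\omega$, giving $\frac{Q^2}{\pi\vol(G\backslash\D)}\int_\R\tilde\psi$ (your intermediate main term, taken literally, is missing this factor $\omega$).
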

\begin{proof}
  We split the sum over~$\Omega_Q$ into cosets modulo the action of~$G_1 = p_\D \Gamma_\infty p_\H$,  getting
  \begin{align*}
    \sum_{x\in\Omega_Q} \psi(x) = {} \sum_{x\in G_1 \backslash \Omega_Q}\sum_{\tau\in G_1}  \psi(\tau x) 
    = {} \sum_{r\in T_{\infty\cb}(Q\sqrt{\omega})} \sum_{\tau \in \Gamma_\infty} \psi(p_\D \tau \omega r) 
    = {} \sum_{r\in T_{\infty\cb}(Q\sqrt{\omega})} \sum_{n\in\Z} \psi_*(r+n),
  \end{align*}
  where~$\psi_*(t) := \psi(p_\D \omega t)$.

  The map~$h:\R/\Z \to \R_+$ defined by~$h(r) := \sum_{n\in \Z} \psi_*(r+n)$ is smooth. It satisfies
  $$ \sum_{n\in\Z} \bigl|\hat{\psi_*}(n)\bigr| (1+\abs{n})^{1/2} < \infty $$
  where~$\hat{\psi_*}$ is the Fourier transform of~$\psi_*$. We therefore have, by~\cite[Theorem~3.8]{PetridisRisager2018},
  $$ \sum_{x\in \Omega_Q} \psi(x) = \sum_{r\in T_{\infty\cb}(Q\sqrt{\omega})} h(r) = \frac{\omega Q^2}{\pi \vol(G\backslash\D)} \int_{\R/\Z} h(r) \df r + O_\psi(Q^{2-\delta}). $$
  Inserting the definition of~$h$ and changing variables in the integral yields the claimed main term.
\end{proof}

\subsection{Useful properties of the fundamental domain}\label{sec:usef-prop-fund}

Let~$R$ be a fundamental domain in~$\D$ under the action of~$G$, whose sides are non-intersecting geodesics. The fact that such a domain~$R$ exists is proved in~\cite{Tukia1973} under the hypotheses (a)-(d).
Since~$R$ has finite volume, by Siegel's Theorem~\cite[Theorem~4.1.1]{Katok1992}, the closure of~$R$ in~$\overline{\D} = \D\cup \dD$ intersects~$\dD$ at a finite set of points~$V$, and therefore~$R$ is an ideal polygon.

Each side~$s$ of~$R$ is paired with a unique side of~$R$, say~$\hat s$, by an element of~$g_s\in G\setminus\{1\}$, in such a way that
$$ s = g_s \hat s. $$
Under this pairing, we have~$\hat s \neq s$. Indeed, if~$g_s$ leaves~$s$ invariant then it either swaps or fixes its ends. In the first case, $g_s$ would have a fixed point inside~$\D$, which contradicts our hypotheses on~$G$. In the second, $g_s$ would fix both ends of~$s$, which are cusps, thus implying that~$g_s=1$, hence a contradiction. Since clearly~$\hat{\hat s\hspace{0.1em}} = s$, the map~$s\mapsto \hat s$ is thus an involution with no fixed point. In particular, the number of sides of~$R$ is even, say~$2n$ with~$n\geq 2$.

We label the sides of~$R$ by the alphabet~$\Sigma \simeq \Z / 2n\Z$ proceeding counter-clockwise, so that two consecutive elements~$i, i+1\in \Z / 2n\Z$ label consecutive sides~$s_i, s_{i+1}$. For~$a\in\Sigma$, let~$s_a$ be the side associated to~$a$, and define~$[a]\subset \dD$ be the closed arc whose ends are the vertices delimiting~$s_a$, and which does not intersect other vertices of~$R$. The arcs~$\{ [a]\}_{a\in\Sigma}$ then form a partition of~$\dD$ up to vertices.

Given~$a\in\Sigma$, define~$\hat a$ to be the label associated to~$ \hat s_a,$ and~$g_a := g_{s_a}$.
We note that~$[a] = \overline{g_a (\dD \setminus [\hat a])}. $
More generally, given a finite word~$w\in \Sigma^m$, we let~$\abs{w} := m$, and we say that~$w$ is admissible if~$a_{i+1} \neq \hat{a_i}$ for all~$i\in \{1, \cdots, m-1\}$. The empty word is denoted by~$\epsilon$.
We define the set of finite admissible words
$$ \Sigma^* = \{w\in \cup_{m \geq 0}\Sigma^{m}, w \text{ is admissible}\}. $$
Given~$w=a_1\dots a_n\in\Sigma^*$, we let~$g_w=g_{a_1}\cdots g_{a_n}$. Since~$G$ is a free group and the set~$\{g_s\}_s$ consists of a basis and its inverses, every~$g\in G$ can be written in a unique way as~$g=g_w$ for some~$w\in \Sigma^*$.
Also, for~$w\in \Sigma^*$, we define the fundamental interval associated to~$w$ as
\begin{align*}
  [w] = \begin{cases} \dD, & (\abs{w}=0), \\
    [a], & (\abs{w}=1, w=a), \\
    g_{a_1} g_{a_2} \dotsc g_{a_{n-1}}[a_n], & (w = a_1 \dotsb a_n).
  \end{cases}
\end{align*}
Note that if~$w = aw'$ with~$a\in\Sigma$ and~$w'\in \Sigma^*\setminus\{\epsilon\}$ not starting with~$\hat a$, then~$[w] = g_a [w']$. Moreover, an easy induction shows that the admissibility condition ensures~$[w] \subsetneq [a]$.

We define next the set of infinite admissible words
$$ \Sigma^\omega = \{a_1 a_2 \dotsb \mid a_i \in \Sigma, a_{i+1} \neq \hat{a_i}\ \forall i\}. $$
At this stage, our setting is identical to that described in Sections~2--4 of~\cite{Series1981}. Since in our case, the group is free with respect to the generators~$\{g_a\}_{a\in\Sigma}$, the graph of~$G$ in the terminology of~\cite{Series1981} is a tree, and admissible sequences coincide with words in~$\Sigma^*$.
By~\cite[Proposition 4.5]{Series1981}, for any~$w = a_1a_2\dotsb \in\Sigma^\omega$, the intersection
$$ [w] := \bigcap_{n\geq 1} [a_1, \dotsc, a_n] $$
consists of a single point in~$\dD$, which we still denote by~$[w]$.

Define the map
$$ \pi : \Sigma^\omega \to \dD, \qquad \pi(w) = [w]. $$
\begin{lemma}
  The map~$\pi$ is surjective. For all~$x\in \dD$, we have~$|\pi^{-1}(x)|=1$ except when~$x$ belongs to the set of cusps~$C(G)$, in which case~$|\pi^{-1}(x)| = 2$.
\end{lemma}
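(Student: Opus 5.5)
Surjectivity comes from a nested-interval argument, and the exact count of preimages comes from studying when two distinct admissible sequences code the same point.

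\emph{Surjectivity.} Fix $x\in\dD$ and build $w=a_1a_2\dotsb$ inductively.
\begin{itemize}
\item Since the arcs $\{[a]\}_{a\in\Sigma}$ cover $\dD$, choose $a_1$ with $x\in[a_1]$.
\item Having chosen an admissible $a_1\dotsb a_n$ with $x\in[a_1\dotsb a_n]$, use the relation $[a]=\overline{g_a(\dD\setminus[\hat a])}$ iterated along the word. This shows that $[a_1\dotsb a_n]$ is the union of the closed arcs $[a_1\dotsb a_n b]$ over $b\neq\hat a_n$: it is the image under $g_{a_1}\dotsb g_{a_n}$ of the closure of $\dD\setminus[\hat a_n]$, which is covered by the $[b]$, $b\neq \hat a_n$.
\item Pick $a_{n+1}$ with $x\in[a_1\dotsb a_{n+1}]$.
\end{itemize}
This yields $w\in\Sigma^\omega$ with $x\in\bigcap_n[a_1\dotsb a_n]$, and by \cite[Proposition 4.5]{Series1981} that intersection is exactly $\{x\}$, so $\pi(w)=x$.

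\emph{Counting preimages.} At each level the arcs $[a_1\dotsb a_n b]$, $b\neq\hat a_n$, have pairwise disjoint interiors. They are images under a single Möbius map of the arcs $[b]$, which partition $\dD$ up to vertices. Consider the choices in the construction above.
\begin{itemize}
\item The choice of $a_{n+1}$ is forced unless $x$ is an endpoint of one of these arcs.
\item Endpoints of the $[w]$ are of the form $g_{a_1}\dotsb g_{a_{n}}v$ with $v\in V$, hence they lie in $C(G)$.
\item Conversely, every cusp is $G$-equivalent to a vertex of $R$ (standard for an ideal polygon fundamental domain), so every cusp eventually appears as such an endpoint.
\end{itemize}
So if $x\notin C(G)$ every choice is forced and $|\pi^{-1}(x)|=1$.

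\emph{Cusps.} For $x\in C(G)$, suppose at the first stage $n$ where $x$ is an endpoint of $[a_1\dotsb a_n]$ a choice appears. Then $x$ is the common endpoint of exactly two adjacent arcs at the next level, since two consecutive arcs $[b],[b+1]$ share a vertex. After that, $x$ stays an endpoint of the chosen nested arcs.

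The key claim is that once inside one of these two arcs, all further choices are forced. Equivalently, $x$ is a boundary point of exactly one subarc at each subsequent level, namely an extreme subarc.
\begin{itemize}
\item Normalize by applying $(g_{a_1}\dotsb g_{a_n})^{-1}$; this reduces to $x$ being an endpoint $v$ of a single arc $[a]$ with $v$ a vertex.
\item The subarcs of $[a]$ are $g_a[b]$ for $b\neq\hat a$. Among them, the one containing $v=g_a v'$, where $v'$ is an endpoint of $[\hat a]$, is unique. The reason is that $v'$ is shared by $[\hat a]$ and exactly one neighbouring arc $[b]$, and $b\neq\hat a$.
\item So the next letter is uniquely forced, and the same holds by induction at every later level.
\end{itemize}
This gives exactly two infinite words through $x$. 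They are distinct because they differ at position $n+1$, and there is no branching earlier (a first branching point is unique) or later (by the induction just described). Hence $|\pi^{-1}(x)|=2$.

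\emph{Main obstacle.} The delicate step is ruling out cusps with more than two codings, or non-cusp points with two codings. This requires the vertex-cycle bookkeeping described above: the orbit of a vertex $v'$ under the side pairings stays at an endpoint with a unique extreme subarc, which uses the fact that the sides are non-intersecting, so vertices are ideal. It also requires that all cusps are orbits of vertices of $V$. I would handle these via the parabolic cycle structure at each vertex, following \cite[Sections 2--4]{Series1981}.
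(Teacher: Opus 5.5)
Your proof is correct, but it takes a different route from the paper.

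\emph{What the paper does.} It does not analyse the cylinders containing $x$ directly. It imports from \cite[Proposition~4.6]{Series1981} that $\pi$ is onto and injective except on pairs of eventually extreme (left/right) paths. It then only checks that eventually extreme paths are exactly the codings of cusps. For one direction it writes a cusp as $g x_0$ with $x_0\in V$ and $g=g_{b_1}\cdots g_{b_m}$ of minimal length, so that $x_0\notin[\hat b_m]$ and the words $u w_L$, $u w_R$ give the two codings. For the other direction it notes that the nested arcs of an extreme path share an endpoint, and that endpoint lies in $V$.

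\emph{What you do instead.} You reprove the relevant part of Series' Proposition~4.6 from first principles. The admissible subarcs of $[a_1\cdots a_n]$ are the images under $g_{a_1\cdots a_n}$ of the arcs $[b]$, $b\neq\hat a_n$. Hence branching can only happen at an image of a vertex, and it produces exactly two branches. After that, pulling back gives an endpoint of $[\hat a_m]$, which lies in exactly one admissible arc, so every later letter is forced.

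\emph{What each route buys.} Yours makes the lemma self-contained apart from Proposition~4.5 and the standard fact $C(G)=GV$. It also shows directly why the two codings of a cusp are eventually extreme. The paper's route is shorter because it delegates this combinatorics to Series.

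\emph{One step to fill in.} Passing from ``every cusp is $G$-equivalent to a vertex'' to ``every cusp is an endpoint of some cylinder'' needs exactly the paper's minimality argument. Write $x=g_{b_1}\cdots g_{b_m}v$ with $v\in V$ and $m$ minimal. Then $v\notin[\hat b_m]$, since otherwise $g_{b_m}v\in V$ would give a shorter word. So $x$ is an endpoint of $[b_1\cdots b_m c]$ for the two letters $c$ with $v\in[c]$.

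\emph{Your closing paragraph.} It is unnecessary: no parabolic or vertex-cycle structure enters your argument. Besides $C(G)=GV$ and Proposition~4.5, the only geometric input is that each vertex of the ideal polygon $R$ lies on exactly two consecutive arcs $[b]$ and $[b+1]$.
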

\begin{proof}
  This is essentially~\cite[Proposition~4.6]{Series1981}; with the terminology used there, we are only left to show that the words~$w = a_1 a_2 \dotsb\in \Sigma^\omega$ which are eventually extreme left or right paths are precisely those for which~$\pi(w)\in C(G)$. 
  
  Suppose that~$\pi(w) \in C(G)$. Then~$\pi(w)$ is equivalent to one of the vertices of~$R$, say~$x_0$. Write~$\pi(w) = gx_0$ with~$g\in G$, and write~$g = g_{b_1} \dotsc g_{b_m}$. We pick~$x_0$ and~$g$ such that~$m$ is minimal. When~$m\geq1$, this ensures that~$x_0\not\in [\hat b_m]$,  otherwise~$x_0' = g_{b_m}x_0$ is also a vertex of~$R$, and the expression~$\pi(w) = g' x_0'$ with~$g' = g_{b_1} \dotsc g_{b_{m-1}}$ contradicts the minimality of~$m$. Since~$x_0$ is a vertex of~$R$, which is an ideal polygon, it can be reached through two extreme paths~$w_L, w_R$, which start with consecutive letters, neither of which is equal to~$\hat b_m$ when~$m\geq1$. Then with~$u = b_1 \dotsb b_m$  (and~$u=\epsilon$ if~$m=0$), the words~$u w_L$ and~$u w_R$ are admissible, distinct, and by construction~$\pi(w) = \pi(u w_L) = \pi(u w_R)$. By ~\cite[Proposition~4.6]{Series1981}, we get~$w\in\{u w_L, u w_R\}$.

  Suppose conversely that~$w$ is an eventually extreme path in~$\Sigma^\omega$. This implies there exist~$w_1,w_2\in\Sigma^*$, $w_2\neq\epsilon$, such that~$w=w_1 w'$ with~$w'=w_2w_2\cdots$ and~$w'$ an extreme (left or right) path. Writing~$w' = a'_1a'_2 \dotsc$, the arcs~$[a'_1, \dotsc, a'_j]$ all share an endpoint (left or right), which by taking the limit is seen to be~$[w']$. Thus~$[w']$ is one of the endpoints of~$[a_1']$, thus in~$V$ and so~$\pi(w)=g_{w_1}[w']\in C(G)$.
\end{proof}

We also note a cycle relation, which describes the relations among the~$2n$ vertices~$V$ of~$R$ in terms of side-pairings.  Choose a starting vertex~$v_0$ and let~$s_0$ be the side of~$R$ which ends in~$v_0$ and comes before~$v_0$ in clockwise order. Then let~$v_1=g_{s_0}^{-1} v_0\in V$. We repeat this process until the sequence of vertices eventually repeats
$$ v_0 \xrightarrow{g_{s_0}^{-1}}  v_1=g_{s_0}^{-1} v_0  \xrightarrow{g_{s_1}^{-1}}  v_2 \xrightarrow{g_{s_2}^{-1}}  \cdots   \xrightarrow{g_{s_{r-1}}^{-1}} v_r=v_0. $$
In particular, $g=(g_{s_0}\cdots g_{s_{r-1}})^{-1}$ fixes~$v_0$ and in fact, if~$r\geq1$ is minimal, it generates~$G_{v_0}$. Clearly the vertices~$v_0,\dots, v_{r-1}$ are equivalent and the cycles for each of them is obtained rotating the one for~$v_0$.
We also remark that choosing the sides after the vertices would generate the same sequence in the opposite direction with corresponding matrix~$g^{-1}$.

\begin{example}
  Consider the principal congruence subgroup
  $$ \Gamma = \Gamma(3) = \{\gamma\in \SL(2, \Z), \gamma \equiv \id \pmod{3}\}. $$
  Its conjugate inside~$\SU(1, 1)$ is
  $$ G(3) = \Bigg\{g = \smat{\alpha&\beta \\ \bar\beta & \bar \alpha},  \alpha, \beta \in \tfrac12\Z[i], \bigg\{ \begin{array}{l} \alpha \pm \beta \in 1 + 3\Z[i], \\ \abs{\alpha}^2 - \abs{\beta}^2 = 1 \end{array} \Bigg\}. $$
  
  A computation by SageMath~\cite{sagemath} provides a fundamental domain for $G(3)$ consisting of the ideal polygon whose vertices are given by
  $$ p_\D(\{0, \tfrac12, 1, 2, 3, \infty\}) = \{-1, \tfrac{-3-4i}{5}, -i, \tfrac{3-4i}{5}, \tfrac{4-3i}{5}, 1\}. $$
  The set~$\Sigma$ has six letters, $\{a,b,c,\hat a,\hat b,\hat c\}$, and we may use the pairing matrices
  $$ g_a = \smat{1-\frac32 i& \frac32 i\\ * & *}, \quad g_b = \smat{-\frac72 - 3i & \frac92 \\ * & *}, \quad g_c = \smat{1 + 3i & -3 \\ * & *}. $$
  Observe that~$p_\D(1), p_\D(\infty)$ are inequivalent to other vertices, whereas the other vertices are related by the following two cycle relations: 
  $$p_\D(0) \rightarrow p_\D(3)\rightarrow p_\D(0) \rightarrow \cdots,\qquad p_\D(1/2) \rightarrow p_\D(2)\rightarrow p_\D(1/2) \rightarrow \cdots.$$
  In Figure~\ref{fig:example-Gamma3}, we have the drawing of the fundamental domain, highlighting also the set~$K$ described in Section~\ref{ssec:bs:accel} below.

  \begin{figure}[h]
    \centering
    \begin{tikzpicture}[line cap=round,line join=round,>=triangle 45,x=3.5cm,y=3.5cm]
      
      \fill[lightgray] (1,0) -- (-1,0) 
      arc (90:-36.87:0.5) 
      arc (143.13:0:1/3) 
      arc (180:36.87:1/3) 
      arc (216.87:53.13:1/7) 
      arc (233.13:90:1/3) -- cycle;

      \begin{scope}
        \clip (0,0) circle (1);
        
        \angeodesic{0}{-18.925}
        \angeodesic{-18.925}{-22.620}
        \angeodesic{-22.620}{-28.072}
        \angeodesic{-28.072}{-31.891}
        \angeodesic{-31.891}{-36.870}

        \angeodesic{-36.870}{-41.112}
        \angeodesic{-41.112}{-41.706}
        \angeodesic{-41.706}{-42.075}
        \angeodesic{-42.075}{-43.603}
        \angeodesic{-43.603}{-53.130}

        \angeodesic{-53.130}{-67.380}
        \angeodesic{-67.380}{-73.740}
        \angeodesic{-73.740}{-75.750}
        \angeodesic{-75.750}{-77.320}
        \angeodesic{-77.320}{-90}

        \angeodesic{-90}{-102.680}
        \angeodesic{-102.680}{-106.260}
        \angeodesic{-106.260}{-112.620}
        \angeodesic{-112.620}{-118.072}
        \angeodesic{-118.072}{-126.870}

        \angeodesic{-126.870}{-136.397}
        \angeodesic{-136.397}{-137.925}
        \angeodesic{-137.925}{-138.888}
        \angeodesic{-138.888}{-143.130}
        \angeodesic{-143.130}{180}

        \angeodesic{180}{90}
        \angeodesic{90}{53.130}
        \angeodesic{53.130}{43.603}
        \angeodesic{43.603}{36.870}
        \angeodesic{36.870}{0}
      \end{scope}

      \draw [line width=1pt] plot[domain=0.:1.,variable=\t]({1-2*\t}, {0.});
      \draw [line width=1pt] plot[domain=5.63968:7.85938,variable=\t]({-1.+0.5*cos(\t r)}, {-0.5+0.5*sin(\t r)});
      \draw [line width=1pt] plot[domain=0.:2.49809,variable=\t]({-0.333+0.333*cos(\t r)}, {-1.+0.333*sin(\t r)});
      \draw [line width=1pt] plot[domain=0.643501:3.141592,variable=\t]({0.333+0.333*cos(\t r)}, {-1.+0.333*sin(\t r)});
      \draw [line width=1pt] plot[domain=0.927295:3.785093,variable=\t]({0.714285+0.142857*cos(\t r)}, {-0.714285+0.142857*sin(\t r)});
      \draw [line width=1pt] plot[domain=1.570796:4.06888,variable=\t]({1.+0.333*cos(\t r)}, {-0.333+0.333*sin(\t r)});

      \draw [line width=1pt] (0,0) circle (1);

      \draw [line width=2pt, color=blue] (-31.891:1.005) arc (-31.891:-18.925:1.005);   
      \draw [line width=2pt, color=blue] (-43.603:1.005) arc (-43.603:-41.112:1.005);   
      \draw [line width=2pt, color=blue] (-77.320:1.005) arc (-77.320:-67.380:1.005);   
      \draw [line width=2pt, color=blue] (-118.072:1.005) arc (-118.072:-102.680:1.005); 
      \draw [line width=2pt, color=blue] (-143.130:1.005) arc (-143.130:-136.397:1.005); 
      \draw [line width=2pt, color=blue] (36.870:1.005) arc (36.870:90:1.005);          

      \draw[fill=black] (1,0) circle (1.5pt) node[right] {\scriptsize{$p_\D(\infty)$}};
      \draw[fill=black] (0.6,-0.8) circle (1.5pt) node[below] {\ \ \scriptsize{$p_\D(2)$}};
      \draw[fill=black] (-0.6,-0.8) circle (1.5pt);
      \draw (-0.73,-0.88) node {\scriptsize{$p_\D(1/2)$}};
      \draw[fill=black] (-1,0) circle (1.5pt) node[left] {\scriptsize{$p_\D(0)$}};
      \draw[fill=black] (0.8,-0.6) circle (1.5pt) node[right] {\scriptsize{$p_\D(3)$}};
      \draw[fill=black] (0,-1) circle (1.5pt) node[below] {\scriptsize{$p_\D(1)$}};

      \draw (0.,1.15) node {$[a]$};
      \draw (1.05,.35) node {\tiny $[a, a]$};
      \draw (.88,.67) node {\tiny $[a, b]$};
      \draw (.78,.79) node {\tiny $[a, c]$};
      \draw (.48,.98) node {\tiny $[a, \hat{c}]$};
      \draw (-.88,.67) node {\tiny $[a, \hat b]$};
      \draw (-1.,-0.45) node {$[b]$};
      \draw (-0.4,-1.05) node {$[c]$};
      \draw (0.4,-1.05) node {$[\hat c]$};
      \draw (0.81,-0.76) node {$[\hat b]$};
      \draw (1.05,-0.3) node {$[\hat a]$};
      
    \end{tikzpicture}
    \caption{Disk model for~$\Gamma(3)$ with shaded fundamental domain and highlighting~$K$ (in blue). The level 2 subarcs of $[a]$ are also indicated.}
    \label{fig:example-Gamma3}
  \end{figure}

\end{example}

\subsection{Bowen-Series map}\label{bsm}

For all~$x\in \dD'=\dD\setminus V$, there is a unique~$a\in \Sigma$ such that~$x\in [a]$. Define the Bowen-Series map
$$ \cB : \dD' \to \dD $$
by letting~$\cB(x) := g_{a}^{-1} x$, where~$a$ is determined by~$x\in [a]$.

In the terminology of~\cite{Series1981}, this map can be seen as the shift map on the subset~$\Sigma'\subset\Sigma^\omega$ of words which are not extreme paths starting from~$0$. Further, the proof of Theorem~4.12 therein, show that~$\cB$ is a Markov map with respect to the partition~$\dD = \cup_{a\in \Sigma} [a]$.

\section{Induced map and cuspidal accelerations} \label{sec:bs:accel}

\subsection{An induced map}\label{ssec:bs:accel}

We say that two letters~$a,b\in\Sigma$ are \emph{separated} (or that~$a$ is separated from~$b$, or that~$|a-b|>1$) if~$a-b\notin  \{-1, 0, 1\}$, where we recall that~$\Sigma$ is identified with~$\Z/2n\Z$.
We say that two letters~$a,b\in\Sigma$ are \emph{non-cuspidal} if the pair~$(\hat{a}, b)$ is separated, and conversely we say that~$(a, b)$ is \emph{cuspidal} if~$\abs{\hat{a} - b} = 1$.
Following~\cite[p.~164]{BowenSeries1979}, we define the set~$K$ by letting
$$ K_a := \bigcup_{\substack{b\in \Sigma \\ |b - \hat a|>1}} [a, b], \qquad K := \bigcup_{a\in \Sigma} K_a. $$
In concrete terms, $K$ can be described as follows: for each~$a\in \Sigma$, the map~$g_a$ maps~$\dD\setminus[\hat a]$ into~$[a]$, and the image of~$V$ under~$g_a$ partitions~$[a]$ into~$2n-1$ intervals. From these intervals, retain the~$2n-3$ intervals in the middle, which do not meet~$V$. The union of these middle intervals is~$K_a$. We also denote by $W$ the set of endpoints of all these middle intervals, as $a$ varies. Equivalently,
\begin{align}\label{defW}
W=\bigcup_{\substack{a,b\in \Sigma \\ |b - \hat a|>1}} \partial [a, b]=\{x\in \dD'\mid \cB(x)\in V\}.
\end{align}

Given an admissible word~$w = a_1 \dotsb a_n \in \Sigma^*$ with~$n\geq1$, we call the \emph{$K$-length} of~$w$ the quantity
\begin{equation}
  \ell_K(w) :=1+ \abs{\{1\leq i < n, |a_{i+1} - \hat a_i|>1\} }.\label{eq:def-ellK}
\end{equation}
and set~$\ell_K(\epsilon):=0$.

Let~$\DcF \subset \dD'$ be the set consisting of~$C(G)\setminus V$ along with those points~$x\not\in C(G)$ for which~$\cB^r(x)\in K$ for some~$r\geq 1$.
We define the induced map of~$\cB$ on~$K\cup V$ as 
$$ \cF_K : \DcF \to K \cup V, $$
by
$$ \cF_K(x) := \cB^j(x), \quad j = \min\{t\geq 1, \cB^t(x) \in K \cup V\}. $$
By~\cite[Proposition~4.2]{Stadlbauer2004}, the set~$\DcF$ is of full Lebesgue measure in~$\dD$.

We will index the inverse branches of~$\cF_K$ by elements of the following set
$$ \Lambda_K(1) := \{ w = a_1 \dotsb a_n \in\Sigma^* , n\geq 1, \ell_K(a_2 \dotsb a_n) \leq 1 \}. $$
By definition, this means that given~$w\in \Lambda_K(1)$ and~$x \in K_b$, with~$b\in\Sigma\setminus\{\hat a_n\}$ and~$|\hat a_n-b|=1$ if~$n=|w|\geq2$, then
$$ \cF_K(g_{a_1\dotsb a_n} x) = x. $$
Notice that the condition on~$b$ is equivalent to~$\ell_K(a_2 \dotsb a_n b)=1$ and~$wb\in \Sigma^*$, i.e.~$wb$ is admissible.
For all~$w\in \Lambda_K(1)$, $w=a_1\dotsb a_n$, we let
$$ D_K(w, 1) := \bigcup_{\substack{b\in \Sigma,\\  \ell_K(a_2\dotsb a_n b)=1}} K_b = \left\{ \begin{aligned} &\bigcup_{b\neq \hat a_n} K_b, & (n=1), \\ &\bigcup_{\abs{b - \hat{a_n}}=1} K_b, & (n\geq 2). \end{aligned} \right. $$
Note that for~$w\neq \epsilon$, we have~$D_K(w, 1) \neq \varnothing$ if and only if~$w\in \Lambda_K(1)$.
For all~$w\in \Lambda_K(1)$, we define
\begin{equation}
  h_{w, 1} : D_K(w, 1) \to \dD, \qquad h_{w, 1}(x) = g_{a_1} \dotsc g_{a_n} x,\label{eq:def-hw}
\end{equation}
and
\begin{equation}\label{eq:def-H1}
  \cH := \{h_{w, 1},  w\in \Lambda_K(1)\}.
\end{equation}
The set~$\cH$ is the set of inverse branches of~$\cF_K$.

More generally, for any~$r\geq 1$, the set of inverse branches of order~$r\geq 1$ of~$\cF_K$ is parametrized by 
$$ \Lambda_K(r) := \{w = a_1 \dotsb a_n\in \Sigma^*, n\geq r: \ell_K(a_2\cdots a_n) \in \{r-1, r\}\}. $$
Let
$$ D_K(w, r) := \bigcup_{\substack{b\in \Sigma,\\  \ell_K(a_2\dotsb a_nb)=r}} K_b, $$
so that~$D_K(w, r) \neq \varnothing$ if and only if~$w\in \Lambda_K(r)$, and
$$ h_{w, r} : D_K(w, r) \to \dD, \qquad h_{w, r}(x) = g_{a_1} \dotsc g_{a_n} x ,$$
where~$w = a_1 \dotsb a_n$. See Figure~\ref{fig:gamma32_sub3} for an example of $D_K(w, r)$.
Finally let
$$ \cH^r := \{h_{w, r} : w\in \Lambda_K(r)\} $$
with the notation~\eqref{eq:def-hw}; in particular, $\cH=\cH^1$.
We remark that if~$\ell_K(a_2 \dotsb a_n)=r-1$, then the word~$w = a_1\dotsb a_n$ belongs both to~$\Lambda_K(r)$ and~$\Lambda_K(r-1)$. The domains of the corresponding inverse branches~$h_{w, r}$ and~$h_{w, r-1}$ however are different and in fact disjoint.

In any case, given~$w\in \Lambda_K(r)$, we will denote from now on
\begin{align*}
  \Dom(h) ={}& D_K(w, r) \qquad (h = h_{w, r}), \\
  \im(h) ={}& h(\Dom(h)).
\end{align*}
Finally, we denote~$\cH^0 = \{\id\}$ and
$$ \cH^* := \bigcup_{r\geq 0} \cH^r. $$

\begin{figure}[htbp]
  \centering
  
  \begin{subfigure}[t]{0.32\textwidth}
    \centering
    \resizebox{\linewidth}{!}{
      \begin{tikzpicture}[line cap=round,line join=round,>=triangle 45,x=3.5cm,y=3.5cm]
        \begin{scope}
          \clip (0,0) circle (1);
          \angeodesic{0}{-18.925} \angeodesic{-18.925}{-22.620} \angeodesic{-22.620}{-28.072} \angeodesic{-28.072}{-31.891} \angeodesic{-31.891}{-36.870}
          \angeodesic{-36.870}{-41.112} \angeodesic{-41.112}{-41.706} \angeodesic{-41.706}{-42.075} \angeodesic{-42.075}{-43.603} \angeodesic{-43.603}{-53.130}
          \angeodesic{-53.130}{-67.380} \angeodesic{-67.380}{-73.740} \angeodesic{-73.740}{-75.750} \angeodesic{-75.750}{-77.320} \angeodesic{-77.320}{-90}
          \angeodesic{-90}{-102.680} \angeodesic{-102.680}{-106.260} \angeodesic{-106.260}{-112.620} \angeodesic{-112.620}{-118.072} \angeodesic{-118.072}{-126.870}
          \angeodesic{-126.870}{-136.397} \angeodesic{-136.397}{-137.925} \angeodesic{-137.925}{-138.888} \angeodesic{-138.888}{-143.130} \angeodesic{-143.130}{180}
          \angeodesic{180}{90} \angeodesic{90}{53.130} \angeodesic{53.130}{43.603} \angeodesic{43.603}{36.870} \angeodesic{36.870}{0}
        \end{scope}

        \draw [line width=1pt] plot[domain=0.:1.,variable=\t]({1-2*\t}, {0.});
        \draw [line width=1pt] plot[domain=5.63968:7.85938,variable=\t]({-1.+0.5*cos(\t r)}, {-0.5+0.5*sin(\t r)});
        \draw [line width=1pt] plot[domain=0.:2.49809,variable=\t]({-0.333+0.333*cos(\t r)}, {-1.+0.333*sin(\t r)});
        \draw [line width=1pt] plot[domain=0.643501:3.141592,variable=\t]({0.333+0.333*cos(\t r)}, {-1.+0.333*sin(\t r)});
        \draw [line width=1pt] plot[domain=0.927295:3.785093,variable=\t]({0.714285+0.142857*cos(\t r)}, {-0.714285+0.142857*sin(\t r)});
        \draw [line width=1pt] plot[domain=1.570796:4.06888,variable=\t]({1.+0.333*cos(\t r)}, {-0.333+0.333*sin(\t r)});
        \draw [line width=1pt] (0,0) circle (1);

        \draw [line width=2pt, color=blue] (-31.891:1.005) arc (-31.891:-18.925:1.005);   
        \draw [line width=2pt, color=blue] (-43.603:1.005) arc (-43.603:-41.112:1.005);   
        \draw [line width=2pt, color=blue] (-77.320:1.005) arc (-77.320:-67.380:1.005);   
        \draw [line width=2pt, color=blue] (-118.072:1.005) arc (-118.072:-102.680:1.005); 
        \draw [line width=2pt, color=blue] (36.870:1.005) arc (36.870:90:1.005);          

        \draw (0.,1.15) node {$[a]$}; \draw (-1.,-0.45) node {$[b]$}; \draw (-0.4,-1.05) node {$[c]$};
        \draw (0.4,-1.05) node {$[\hat c]$}; \draw (0.81,-0.76) node {$[\hat b]$}; \draw (1.05,-0.3) node {$[\hat a]$};
      \end{tikzpicture}
    }
    \caption{$n=1$ (whence $r=1$).}
  \end{subfigure}
  \hfill
  \begin{subfigure}[t]{0.32\textwidth}
    \centering
    \resizebox{\linewidth}{!}{
      \begin{tikzpicture}[line cap=round,line join=round,>=triangle 45,x=3.5cm,y=3.5cm]
        \begin{scope}
          \clip (0,0) circle (1);
          \angeodesic{0}{-18.925} \angeodesic{-18.925}{-22.620} \angeodesic{-22.620}{-28.072} \angeodesic{-28.072}{-31.891} \angeodesic{-31.891}{-36.870}
          \angeodesic{-36.870}{-41.112} \angeodesic{-41.112}{-41.706} \angeodesic{-41.706}{-42.075} \angeodesic{-42.075}{-43.603} \angeodesic{-43.603}{-53.130}
          \angeodesic{-53.130}{-67.380} \angeodesic{-67.380}{-73.740} \angeodesic{-73.740}{-75.750} \angeodesic{-75.750}{-77.320} \angeodesic{-77.320}{-90}
          \angeodesic{-90}{-102.680} \angeodesic{-102.680}{-106.260} \angeodesic{-106.260}{-112.620} \angeodesic{-112.620}{-118.072} \angeodesic{-118.072}{-126.870}
          \angeodesic{-126.870}{-136.397} \angeodesic{-136.397}{-137.925} \angeodesic{-137.925}{-138.888} \angeodesic{-138.888}{-143.130} \angeodesic{-143.130}{180}
          \angeodesic{180}{90} \angeodesic{90}{53.130} \angeodesic{53.130}{43.603} \angeodesic{43.603}{36.870} \angeodesic{36.870}{0}
        \end{scope}

        \draw [line width=1pt] plot[domain=0.:1.,variable=\t]({1-2*\t}, {0.});
        \draw [line width=1pt] plot[domain=5.63968:7.85938,variable=\t]({-1.+0.5*cos(\t r)}, {-0.5+0.5*sin(\t r)});
        \draw [line width=1pt] plot[domain=0.:2.49809,variable=\t]({-0.333+0.333*cos(\t r)}, {-1.+0.333*sin(\t r)});
        \draw [line width=1pt] plot[domain=0.643501:3.141592,variable=\t]({0.333+0.333*cos(\t r)}, {-1.+0.333*sin(\t r)});
        \draw [line width=1pt] plot[domain=0.927295:3.785093,variable=\t]({0.714285+0.142857*cos(\t r)}, {-0.714285+0.142857*sin(\t r)});
        \draw [line width=1pt] plot[domain=1.570796:4.06888,variable=\t]({1.+0.333*cos(\t r)}, {-0.333+0.333*sin(\t r)});
        \draw [line width=1pt] (0,0) circle (1);

        \draw [line width=2pt, color=blue] (-31.891:1.005) arc (-31.891:-18.925:1.005);   
        \draw [line width=2pt, color=blue] (-43.603:1.005) arc (-43.603:-41.112:1.005);   
        \draw [line width=2pt, color=blue] (-77.320:1.005) arc (-77.320:-67.380:1.005);   

        \draw (0.,1.15) node {$[a]$}; \draw (-1.,-0.45) node {$[b]$}; \draw (-0.4,-1.05) node {$[c]$};
        \draw (0.4,-1.05) node {$[\hat c]$}; \draw (0.81,-0.76) node {$[\hat b]$}; \draw (1.05,-0.3) node {$[\hat a]$};
      \end{tikzpicture}
    }
    \caption{$n>1$ and $\ell_K(w')=r-1$.}
  \end{subfigure}
  \hfill
  \begin{subfigure}[t]{0.32\textwidth}
    \centering
    \resizebox{\linewidth}{!}{
      \begin{tikzpicture}[line cap=round,line join=round,>=triangle 45,x=3.5cm,y=3.5cm]
        \begin{scope}
          \clip (0,0) circle (1);
          \angeodesic{0}{-18.925} \angeodesic{-18.925}{-22.620} \angeodesic{-22.620}{-28.072} \angeodesic{-28.072}{-31.891} \angeodesic{-31.891}{-36.870}
          \angeodesic{-36.870}{-41.112} \angeodesic{-41.112}{-41.706} \angeodesic{-41.706}{-42.075} \angeodesic{-42.075}{-43.603} \angeodesic{-43.603}{-53.130}
          \angeodesic{-53.130}{-67.380} \angeodesic{-67.380}{-73.740} \angeodesic{-73.740}{-75.750} \angeodesic{-75.750}{-77.320} \angeodesic{-77.320}{-90}
          \angeodesic{-90}{-102.680} \angeodesic{-102.680}{-106.260} \angeodesic{-106.260}{-112.620} \angeodesic{-112.620}{-118.072} \angeodesic{-118.072}{-126.870}
          \angeodesic{-126.870}{-136.397} \angeodesic{-136.397}{-137.925} \angeodesic{-137.925}{-138.888} \angeodesic{-138.888}{-143.130} \angeodesic{-143.130}{180}
          \angeodesic{180}{90} \angeodesic{90}{53.130} \angeodesic{53.130}{43.603} \angeodesic{43.603}{36.870} \angeodesic{36.870}{0}
        \end{scope}

        \draw [line width=1pt] plot[domain=0.:1.,variable=\t]({1-2*\t}, {0.});
        \draw [line width=1pt] plot[domain=5.63968:7.85938,variable=\t]({-1.+0.5*cos(\t r)}, {-0.5+0.5*sin(\t r)});
        \draw [line width=1pt] plot[domain=0.:2.49809,variable=\t]({-0.333+0.333*cos(\t r)}, {-1.+0.333*sin(\t r)});
        \draw [line width=1pt] plot[domain=0.643501:3.141592,variable=\t]({0.333+0.333*cos(\t r)}, {-1.+0.333*sin(\t r)});
        \draw [line width=1pt] plot[domain=0.927295:3.785093,variable=\t]({0.714285+0.142857*cos(\t r)}, {-0.714285+0.142857*sin(\t r)});
        \draw [line width=1pt] plot[domain=1.570796:4.06888,variable=\t]({1.+0.333*cos(\t r)}, {-0.333+0.333*sin(\t r)});
        \draw [line width=1pt] (0,0) circle (1);

        \draw [line width=2pt, color=blue] (-118.072:1.005) arc (-118.072:-102.680:1.005); 
        \draw [line width=2pt, color=blue] (36.870:1.005) arc (36.870:90:1.005);          

        \draw (0.,1.15) node {$[a]$}; \draw (-1.,-0.45) node {$[b]$}; \draw (-0.4,-1.05) node {$[c]$};
        \draw (0.4,-1.05) node {$[\hat c]$}; \draw (0.81,-0.76) node {$[\hat b]$}; \draw (1.05,-0.3) node {$[\hat a]$};
      \end{tikzpicture}
    }
    \caption{$n>1$ and $\ell_K(w')=r$.}
    \label{fig:gamma32_sub3}
  \end{subfigure}
  
  \caption{Domain $D_K(w, r)$ of $h_{w,r}\in\cH^r$ (underlined in blue), when the last letter of $w=a_1\dotsb a_n$ is $a_n=\hat b$, depending on the value of $n$ and $\ell_K(w')$, where $w'=a_2\dotsb a_n$. If $r=1$ only cases (A) and (C) appear, since $\ell_K(w')=0$ implies $n=1$.}
  \label{fig:domainH}
\end{figure}

\begin{lemma}\label{lem:invbranch-FK} \
  \begin{enumerate}
    \item For all~$x\in K$ and~$r\in\Z_{\geq 0}$, we have
    $$ \cF_K^{-r}(x) = \{h(x): h \in \cH^r, x\in \Dom(h)\}, $$
    and the values~$\{h(x)\}$ in the right-hand side are distinct.
    \item For all~$h\in \cH^r$, we have a unique decomposition in the form~$h = h_1 \circ \dotsb \circ h_r|_{\Dom (h)}$, where~$h_j \in \cH$ and
    \begin{align}\label{adda}
    h_{i+1}\circ\dotsb \circ h_r(\Dom(h)) \subset \Dom(h_i),\qquad i=1,\dots,r
    \end{align}
  \end{enumerate}
 \end{lemma}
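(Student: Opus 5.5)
We argue through the symbolic coding of the previous subsection, treating both parts by induction on~$r$. The base step is the case~$r=1$ of part~(1), namely that~$\cH$ is exactly the set of inverse branches of~$\cF_K$ on~$K$. Take~$x\in K_b$ and~$y$ with~$\cF_K(y)=x$, and set~$j=\min\{t\geq 1:\cB^t(y)\in K\cup V\}$; since~$x\in K$ and the intermediate iterates avoid~$V$, they also avoid~$K$. If~$y$ has coding~$a_1a_2\dotsb$, then~$\cB^t(y)=g_{a_1\dotsb a_t}^{-1}y$ has coding~$a_{t+1}a_{t+2}\dotsb$. So~$y=g_{a_1\dotsb a_j}x$ with~$a_{j+1}=b$.

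Membership of~$\cB^t(y)$ in~$K$ is decided by its first two letters: a point with coding~$c_1c_2\dotsb$ lies in~$K_{c_1}$ exactly when~$|c_2-\hat c_1|>1$. Ambiguous codings occur only at cusps, and~$K$ avoids~$W\cup V$, so this can be arranged. The minimality of~$j$ then says precisely that~$|a_{t+1}-\hat a_t|\leq1$ for~$2\le t\le j$, which amounts to~$\ell_K(a_2\dotsb a_j b)=1$. Admissibility gives~$a_{t+1}\neq\hat a_t$, so each such step has~$|a_{t+1}-\hat a_t|=1$. Thus~$w=a_1\dotsb a_j\in\Lambda_K(1)$ and~$x\in D_K(w,1)$.

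Conversely, given~$w\in\Lambda_K(1)$ and~$x\in D_K(w,1)$, the word~$wb\cdots$ (where~$b\cdots$ is the coding of~$x$) is admissible. Then~$g_wx$ has this coding by the identity~$[aw']=g_a[w']$, and the same computation shows that the first return of~$g_wx$ to~$K$ happens at time~$|w|$, giving~$\cF_K(g_wx)=x$.

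Distinctness of the values~$h(x)$ follows because distinct~$w$ give distinct admissible infinite codings~$wb\dotsb$ of~$h(x)$. By the lemma on~$\pi$, two distinct codings can represent the same point only at a cusp. At a cusp~$h(x)$ the word~$wb\dotsb$ would be eventually an extreme path, which forces~$x$ to be a cusp with an extreme coding, i.e.~$x\in W\cup V$. This is excluded because~$K$ is bounded away from vertices and~$W$ (its intervals are the open middle pieces, or we restrict to points where the coding is unique).

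For general~$r$ we iterate. A point~$y\in\cF_K^{-r}(x)$ is obtained by composing~$r$ first-return branches, which gives the decomposition~$h=h_1\circ\dotsb\circ h_r$ of part~(2) with the domain compatibility~\eqref{adda}. Concatenating the words~$w=w_1\dotsb w_r$ produces a word which is admissible, since each~$w_{i+1}$ starts with a letter compatible with the domain~$D_K(w_i,1)$. Its~$K$-length count is the point to check. Every junction between~$w_i$ and~$w_{i+1}$ is a non-cuspidal transition, because the first letter of~$w_{i+1}$ together with the next letter lies in~$K$. The internal transitions of each~$w_i$ are cuspidal, and the final transition into~$b$ is non-cuspidal. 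Hence~$\ell_K(a_2\dotsb a_n)$ equals~$r-1$ or~$r$, according to whether the last transition is counted, and~$\ell_K(a_2\dotsb a_nb)=r$. This shows~$w\in\Lambda_K(r)$ and~$x\in D_K(w,r)$.

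Conversely, given~$w\in\Lambda_K(r)$, we cut it at its non-cuspidal transitions. This yields exactly~$r$ pieces, each lying in~$\Lambda_K(1)$, and the domain conditions follow from the second letter of each following piece. Uniqueness of the decomposition comes from the uniqueness of the cut points. It can also be seen from the uniqueness of reduced words in the free group, which determines~$w$ from~$g_w$.

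The main obstacle is the bookkeeping around boundary points and the edge case~$n=1$ in~$D_K(w,1)$: when~$n=1$ the domain is~$\bigcup_{b\neq\hat a_1}K_b$, with no cuspidal constraint. One must check that this matches the convention~$\ell_K(\epsilon)=0$, where~$\ell_K(b)=1$ for any admissible~$b$. One must also ensure that the ``first two letters determine membership in~$K$'' criterion is applied only to points with unique coding. I would handle this by working first on~$K\setminus C(G)$ and then treating cusps inside~$K$ separately: these are interior to the~$K_b$, so their coding starting with the relevant two letters is still unique.
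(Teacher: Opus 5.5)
Your proof of the set equality and of part~(2) follows the paper's route: read $w$ off the $\cB$-itinerary of $y$, and conversely cut $w$ at the non-cuspidal pairs of $w'b$. The distinctness claim in part~(1), however, is not established.

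\textbf{Distinctness.} Your assertion that distinct $w$ give distinct infinite codings $wb\dotsb$ is false. Suppose $x\in K_b$ has a periodic coding $u^\infty$, so that $g_ux=x$ and $u$ begins with $b$. Then $w$ and $wu$ produce the same coding, and indeed $g_wx=g_{wu}x$. What excludes this is the constraint $\ell_K(w'b)=r$ built into $x\in D_K(w,r)$, and your argument never uses it. The paper argues as follows. Suppose $y=h_{w_1,r}(x)=h_{w_2,r}(x)$. Since $x$ is interior to $[b]$, $y$ is interior to both $[w_1b]$ and $[w_2b]$, so one of $w_1b,w_2b$ is a prefix of the other. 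If $w_1\neq w_2$, then $w_2b$ begins with $w_1bc$ for a letter $c$ such that $x$ is interior to $[b,c]$. Since $x\in K$, the pair $(b,c)$ is non-cuspidal, so $\ell_K(w_2'b)\geq\ell_K(w_1'bc)=r+1$, contradicting $x\in D_K(w_2,r)$. Alternatively, your own converse computation shows that $|w|$ is the time of the $r$-th visit of $y$ to $K\cup V$ under $\cB$, and that the letters of $w$ form the $\cB$-itinerary of $y$. Hence $w$ is determined by $(y,r)$.

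\textbf{Excluding cusps.} The second half of your distinctness argument is also wrong. $K$ is not bounded away from $W$: in fact $W\subset K$, since $W$ consists of endpoints of the closed middle arcs. Moreover, cusps are dense in $K$. If $h(x)$ is a cusp then so is $x=h^{-1}(h(x))$, and its coding is only \emph{eventually} extreme, so nothing forces $x\in W\cup V$. Restricting to points with a unique coding is not an option either: the lemma is applied precisely at cusps $x\in G\cb$, including points of $W$, in deriving~\eqref{eq:fS-expr-LL}.

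The same false premise underlies your justification of the ``first two letters'' criterion, but there it is harmless. Set $a_{n+1}:=b$ and take $1\leq t<n$. Because $x$ is interior to $[b]$, the point $\cB^t(g_wx)$ lies in the interior of $[a_{t+1}\dotsb a_nb]\subset[a_{t+1},a_{t+2}]$. That interior misses $K\cup V$ whenever $(a_{t+1},a_{t+2})$ is cuspidal, whether or not the point is a cusp.

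\textbf{Bookkeeping.} Your sentence on which pairs are cuspidal is misworded as stated. The pair (last letter of $w_i$, first letter of $w_{i+1}$) is \emph{cuspidal} when $|w_i|\geq2$; this is the condition defining $D_K(w_i,1)$. Likewise $(a_n,b)$ is cuspidal when $|w_r|\geq2$. The non-cuspidal pairs of $w'b$ are exactly the $r-1$ pairs \emph{beginning} at the first letters of $w_2,\dotsc,w_r$. Only with this reading does $\ell_K(w'b)=r$ follow.
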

 \begin{remark}
The consistency condition~\eqref{adda} makes it clear that the domain of~$h$, $\Dom(h)$, may be smaller that the full domain of~$h_r$. More generally, for inverse
branches~$h_1,\dots,h_m$, we denote by
$\Dom(h_1\circ\dotsb\circ h_m)$ the largest subset
of~$\Dom(h_m)$ on which all successive compositions
are defined.
  \end{remark}

\begin{proof}
  Let~$x\in K_b$ for some~$b\in \Sigma$, and~$r\geq 1$. Then by definition, we have~$\cF_K^r(y) = x$ if and only if~$y$ can be expressed as
  $$ y = g_{w_1} g_{w_2} \dotsb g_{w_r} x, $$
with  $w_1\dotsb w_r b$ admissible and, denoting~$w_i = a_{i,1} a_{i,2} \dotsb a_{i,n_i}$ with~$n_i \geq 1$ and~$a_{r+1,1}:=b$, we have:
  \begin{itemize}
    \item $\abs{\hat{a_{i,j}} - a_{i,j+1}} = 1$ ($1\leq i \leq r$, $2\leq j \leq n_i-1$),
    \item $\abs{\hat{a_{i,n_i}} - a_{i+1,1}} = 1$ ($1\leq i \leq r-1$, $n_i\geq 2$),
    \item $\abs{\hat{a_{r,n_r}} - b} = 1$ ($n_r\geq2$),
    \item $\abs{\hat{a_{i,1}} - a_{i,2}} > 1$ ($2\leq i \leq r$, $n_i \geq 2$),
    \item $\abs{\hat{a_{i,1}} - a_{i+1,1}} > 1$ ($2\leq i \leq r$, $n_i = 1$).
  \end{itemize}
  Let~$w = w_1\dotsb w_r$, and let~$w' = a_{1,2} \dotsb a_{1,n_1} w_2 \dotsb w_r$ be obtained from~$w$ by removing the first letter. Moreover, in the word~$w' b$, there are exactly~$r-1$ non-cuspidal pairs of consecutive letters, which are the following pairs:
  \begin{itemize}
    \item $(a_{i,1}, a_{i,2})$ for~$2\leq i \leq r$ where~$n_i \geq 2$,
    \item $(a_{i,1}, a_{i+1,1})$ for~$2\leq i < r$ where~$n_i = 1$,
    \item $(a_{r,1}, b)$ if~$r\geq2$ and~$n_r = 1$.
  \end{itemize}
  We deduce~$\ell_K(w' b) = r$, and therefore also~$\ell_K(w') \in \{r-1, r\}$ (depending on whether~$n_r = 1$ or~$n_r \geq 2$). Since clearly we also have~$\abs{w}\geq r$, we conclude that~$x \in D_K(w, r)$ and~$w \in \Lambda_K(r)$. This shows that~$y = h(x)$ for~$h = h_{w, r} \in \cH^r$.

  Conversely, let~$h\in \cH^r$, and write~$h = h_{w, r}$ for some~$w \in \Lambda_K(r)$. Assume~$x \in D_K(w, r)$. This means~$\ell_K(w' b) = r$, where~$w'$ is obtained from~$w$ by removing the first letter. We break
  \begin{equation}
    w = w_1 w_2 \dotsb w_r,\label{eq:lemma-K-decomp-w}
  \end{equation}
  where~$w_2, \dotsc, w_r$ start at the first letter of a non-cuspidal pair in~$w'b$. Then we easily see by induction that for all~$1\leq i \leq r$,
  $$ \cF_K^i(g_{w_1} \dotsb g_{w_r} x) = g_{w_{1+i}} \dotsc g_{w_r} x $$
  where the product on the right-hand side is empty if~$i=r$. For~$i=r$ this shows that~$h(x) \in \cF_K^{-r}(x)$.
  
  Finally, suppose that~$w_1, w_2\in \Lambda_K(r)$ are such that~$x\in D_K(w_1, r) \cap D_K(w_2, r)$ and~$y:=h_{w_1, r}(x) = h_{w_2, r}(x)$. Letting~$b\in\Sigma$ be such that~$x\in K_b$, we have that~$y$ belong to the arcs~$[w_1b]$ and~$[w_2b]$, which implies (up to swapping~$w_1$ and~$w_2$) that~$w_1b$ is a prefix of~$w_2b$. Let~$w'_i$ be obtained from~$w_i$ by removing the first letter. Since~$x\in K_b$, we have~$x \in [b, c]$ with~$(b,c)$ non-cuspidal, and this implies that~$\ell_K(w'_1bc) = r+1$. If~$w_1 \neq w_2$, then~$w'_1 b c$ is a prefix of~$w'_2 b$, and we deduce therefore~$\ell_K(w'_2 b)\geq r+1$, a contradiction. We conclude that~$w_1 = w_2$. This shows that the values~$\{h(x), h\in\cH^r, x\in \Dom(h)\}$ are distinct.

  We now move to proving (2). Let~$h\in \cH^r$ and~$x\in \Dom(h)$, say~$x\in K_b$ for some~$b\in\Sigma$. Write~$h = h_{w, r}$ where~$w\in \Lambda_K(r)$ and~$x \in D_K(w, r)$, let~$w_1, \dotsc, w_r$ be constructed as in~\eqref{eq:lemma-K-decomp-w}, and let~$b_i$ be the first letter of~$w_i$. Finally let~$b_{r+1} := b$.
  By construction, we have that within each word~$w_i b_{i+1}$, the only possible non-cuspidal pair of letters are the first two, and therefore~$w_i \in \Lambda_K(1)$ for all~$i$, and also~$K_{b_{i+1}} \subset D_K(w_i, 1)$.
  By a descending induction, we deduce~$g_{w_i} \dotsb g_{w_r} K_b \subset [b_i]$ for~$1\leq i \leq r$. Let~$h_i := h_{w_i, 1} \in \cH$. We deduce that~$K_b \subset \Dom(h_r)$, and for all~$1\leq i \leq r-1$, that~$h_{i+1}\circ\dotsb \circ h_r(K_b)\subset K_{b_{i+1}}\subset \Dom(h_i) $. This shows the existence of the decomposition~$h = h_1 \circ \dotsb \circ h_r$. The uniqueness is easily established.
\end{proof}

\subsection{Cuspidal acceleration}
\label{sec:cusp-acceleration}

Most of our analysis will be carried out using the map~$\cF_K$ in the previous section.
For our later arguments and for some applications, however, it will be crucial for us to consider another acceleration of~$\cB$, which is slower, but whose inverse branches are easier to describe.
What follows is inspired by the notion of cuspidal decomposition introduced in the work~\cite{ArtigianiEtAl2016} (see also~\cite{ArtigianiEtAl2020,Marchese2018}).

Given a word~$w = a_1 \dotsb a_r \in \Sigma^*\setminus\{\epsilon\}$, we say that~$w$ is \emph{cuspidal} if there is~$\delta\in \{\pm 1\}$ such that~$\hat a_i =  a_{i+1} + \delta$ for all~$i=1,\dots,r-1$. If~$\delta=1$, we say that~$w$ is \emph{right-cuspidal}, and if~$\delta=-1$ we say that~$w$ is \emph{left-cuspidal}. Note that if~$\abs{w}=1$ then~$w$ is both left and right-cuspidal (and conversely). Note also that~$w$ is cuspidal if and only if the arcs~$[a_1],[a_1,a_2],\dots,[a_1,\dots,a_r]$ share an endpoint (the right one if~$w$ is right-cuspidal and the left one otherwise). These notions coincide with the corresponding notions in~\cite{ArtigianiEtAl2016}.
We say that~$w$ is a \emph{cuspidal cycle} if~$wa_1$ is cuspidal and that a cuspidal cycle is~\emph{minimal} if it has no prefix which is a cuspidal cycle. Notice there are exactly $2|\Sigma|$ minimal cuspidal cycles.

Given~$w\in\Sigma^*$ we define the number of \emph{cuspidal flips}~$\ell_c(w)$ of~$w$ as follows.
If~$w=\epsilon$ let~$\ell_c(w)=0$, whereas for~$w = a_1 \dotsb a_n \in \Sigma^*$ with~$n\geq1$, we let
\begin{equation}
  \ell_c(w) :=1+ \abs{\{1\leq i < n-1, \delta_i\delta_{i+1}\neq1\} }.\label{eq:def-ellc}
\end{equation}
with~$\delta_i=0$ if~$|a_{i+1}-\hat a_i|\neq1$ and otherwise~$\delta_i:=a_{i+1}-\hat a_i\in\{\pm1\}$. Notice that, if we denote~$\hat w:=\hat a_n\dots  \hat a_1$, we have
\begin{align}\label{eq:invlc}
  \ell_c(w)=\ell_c(\hat w).
\end{align}

In~\cite[Definition~4.4]{ArtigianiEtAl2016}, the authors introduce the notion of decomposition into maximal cuspidal words. Note that the quantity~$\ell_c(w)$ defined here \emph{does not} correspond, in general, to the number of cuspidal blocks in such a decomposition of~$w$. For instance if~$w = a_1a_2b_1b_2$ with~$a_1a_2$ and~$b_1b_2$ right-cuspidal, and~$a_2b_1$ left-cuspidal, then~$(\delta_i)_{1\leq i \leq 3} = (1, -1, 1)$, and therefore~$\ell_c(w) = 3$; however the decomposition into cuspidal blocks as defined in~\cite{ArtigianiEtAl2016} has two blocks, namely~$w = (a_1a_2) (b_1b_2)$.

Before we actually define the accelerated map that will be of interest for us, we first discuss the cuspidal acceleration defined in~\cite{ArtigianiEtAl2016}.
For non-cuspidal $x=[a_1,a_2,\dotsb]\in\DcF$, this map is given by~$\cFAMU(x)=[a_{r+1}, a_{r+2}, \dotsb ]$, where~$r\geq 1$ is the maximal index such that~$a_1\dotsb a_r$ is cuspidal. More generally, we let

\begin{minipage}{0.5\textwidth}
  \begin{align*}
    I ^\pm:={}&\bigcup_{a\in\Sigma} [a,\hat a \pm 1]^*; \\
    I_a^\pm :={}& [\hat a\pm1,\hat{\hat a\pm1}\pm1 ]^*; \\
    I_x^\pm:={}&I^\pm_a, \qquad (x\in[a]\setminus V, a\in\Sigma),
  \end{align*}
\end{minipage}
\hfill
\begin{minipage}{0.5\textwidth}
  \begin{tikzpicture}[scale=2.3]
    \draw (0, 0) circle (1);

    \intervalpartial{-90}{240}{0.5};
    \interval{-90}{-30} node[midway, above left] {$[a]$};
    \draw[line width=1.5px] (-72:1) arc (-72:-48:1);
    \interval{-90}{-72};
    \draw[->] (0.4, -1.3) node[right] {$I^+\cap[a]$} -- (0.15, -1.05);
    \interval{-48}{-30};
    \draw[->] (1.1, -0.85) node[right] {$I^-\cap[a]$} -- (0.85, -0.7);
    \intervalpartial{-30}{-15}{0.5};

    \intervalpartial{20}{360}{0.5};

    \interval{20}{60};
    \draw[line width=1.5px] (33:1) arc (33:47:1);
    \interval{20}{33};\interval{47}{60};
    \draw[->] (1.1, 0.9) node[right] {$I_a^-$} -- (0.63, 0.85);

    \interval{60}{90} node[midway, below] {$[\hat a]$};

    \interval{90}{130};
    \draw[line width=1.5px] (103:1) arc (103:117:1);
    \interval{90}{103}; \interval{117}{130};
    \draw[->] (-0.5, 1.15) node[left] {$I_a^+$} -- (-0.20, 1.05);

    \intervalpartial{130}{155}{0.5};
  \end{tikzpicture}
\end{minipage}
where~${}^*$ indicates that the border of the arc belonging also to~$K$ is removed.
Then on all of $\DcF$, the map~$\cFAMU:\DcF\to\dD$ can be defined as
\[ \cFAMU(x) :=\begin{cases}
    \cB(x), & \text{if $x\in K$}\\
    \cB^{r+1}(x),&\text{if $x\in I^\pm $, where $r = \min\{t\geq 1, \cB^t(x) \notin I^\pm_{\cB^{t-1}(x)}\}.$}
  \end{cases}
\]
For non-cuspidal $x=[a_1,a_2,\dotsc]\in\DcF$ we can also express~$\cFAMU$ in terms of~$\ell_c$ as
\[
  \cFAMU(x) :=\begin{cases}
    \cB(x), & \text{if $x\in K$},\\
    [a_{j+1}, a_{j+2}, \dotsc], & \text{if } x\not\in K, \text{where $j\geq 1$ is maximal such that $\ell_c([a_1\dotsb a_{j}])=1$}.
  \end{cases}
\]

For our purposes, it is however more natural to consider a variation of~$\cFAMU$ which we denote by~$\cF_c$, defined for~$x\in \dD'$ in the following way : 
\begin{equation}
  \cF_c(x) :=\begin{cases}
    \cB(x), & \text{if $x\in K$}\\
    \cB^r(x),&\text{if $x\in I^\pm $, where $r = \min\{t\geq 1, \cB^t(x) \notin I^\pm_{\cB^{t-1}(x)}\}.$}
  \end{cases}\label{eq:def-Fc}
\end{equation}
The difference, with respect to~$\cFAMU$, is that when~$x\notin K$ the exponent in the iteration is one less.
We remark that the function~$\cF_K$ defined in the previous section is also the induced map of~$\cF_c$ on~$K\cup V$; this is not true for the map~$\cFAMU$.

Similarly as~$\cFAMU$, the map~$\cF_c$ can also be expressed by means of~$\ell_c$, in the following way~: if~$x = [a_1, a_2, \dotsc]\in\dD'$, then
\begin{align}\label{eq:altm}
  \cF_c(x)=\begin{cases}
    \cB(x), & \text{if $x\in W$},\\
    [a_{j+1}, a_{j+2}, \dotsc],  & \begin{aligned}
        &\text{if }x\notin W,\text{ where }j\geq1
          \text{ is maximal such that}\\
        &\ell_c(a_1\dotsb a_{j+1})=1\text{ and }[a_{j+1},a_{j+2},\dotsb]\notin V.
      \end{aligned}  \end{cases}
\end{align}

We now classify the inverse branches of~$\cF_c$. Similarly as for~$\cF_K$ we parametrize the inverse branches of order $r\geq1$ of~$\cF_c$ by the following set of words,
\[ \Lambda_c(r) := \{w \in \Sigma^*:\ell_c(w)\in\{r-1, r\},\ |w|\geq r, [\hat w]\not\subset K\text{ if $\ell_c(w)=r$}\}. \]
Notice that $ [\hat w]\not\subset K$ is always satisfied if $|w|=1$ and if $|w|\geq2$ it is equivalent to the last two letters of $w$ being a cuspidal pair. If $r=1$ we write
$$ \Lambda_c :=\Lambda_c(1)= \{w \in \Sigma^*:\ell_c(w)=1, [\hat w]\not\subset K \}. $$
We also let
$$ D_c(w,r) := \bigcup_{\substack{b_1, b_2 \in \Sigma \\ \ell_c(wb_1) = r \\ \ell_c(wb_1b_2)=r+1}} [b_1, b_2],$$
where if $|w|\geq2$ and $\ell_c(w)=r$, we remove the vertices $V$ from this union.
Note that~$w\in \Lambda_c(r)$ if and only if~$D_c(w,r)\neq\varnothing$.
We define the corresponding inverse branches through
\begin{equation}
  h^c_{w,r} : D_c(w,r) \to \dD, \qquad h^c_{w,r}(x) := g_{a_1} \dotsb g_{a_n} x\label{eq:def-hwc}
\end{equation}
if~$w=a_1 \dotsb a_n$. Finally, let~$\cH_c^0 = \{\id\}$,
\begin{equation}\label{eq:def-Hcr}
  \cH_c^r := \{h^c_{w,r}, w \in \Lambda_c(r) \},\qquad   \cH_c^* := \bigcup_{r\geq 0} \cH_c^r.
\end{equation}
Given~$h\in \cH_c^r$, we let~$\Dom(h)$ and~$\im(h)$ be the domain and range of~$h$, so that~$\Dom(h) = D_c(w,r)$ if~$h = h^c_{w,r}$. Finally, we drop the dependency on $r$ from the notation when~$r=1$, as was the case for~$\cH$.

We record the properties of the above discussion in the following lemma. 
\begin{lemma}\label{ladcaxx}
  For all~$x\in \dD$ and~$r\in\Z_{\geq 0}$, we have
\begin{align}\label{asxax} \cF_c^{-r}(x) = \{h(x): h\in \cH^r_c, x \in \Dom(h)\}, 
\end{align}
  and the values~$\{h(x)\}$ in the right-hand side are distinct.
\end{lemma}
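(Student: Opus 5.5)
We will follow the pattern of the proof of Lemma~\ref{lem:invbranch-FK}, working at the level of symbolic codings through~$\pi$ and the shift description of~$\cB$. The starting point is the alternative description~\eqref{eq:altm} of~$\cF_c$. For~$x=[a_1,a_2,\dotsc]\notin W$, it says that~$\cF_c$ removes the longest prefix~$a_1\dotsb a_j$ such that~$\ell_c(a_1\dotsb a_{j+1})=1$, i.e.\ such that the extended block~$a_1\dotsb a_{j+1}$ is cuspidal with constant~$\delta$. For~$x\in W$ it removes a single letter.

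First we treat~$r=1$. Let~$y=[a_1,a_2,\dotsc]$ with~$\cF_c(y)=x$, write~$w=a_1\dotsb a_j$ for the removed prefix, and let~$x=[b_1,b_2,\dotsc]$. We will check that~$w\in\Lambda_c$ and~$x\in D_c(w,1)$.
\begin{itemize}
  \item Maximality of~$j$ in~\eqref{eq:altm} forces~$\ell_c(wb_1)=1$ and~$\ell_c(wb_1b_2)=2$. Hence~$x\in[b_1,b_2]\subset D_c(w,1)$.
  \item Since~$wb_1$ is cuspidal, so is~$w$, which gives~$\ell_c(w)=1$.
  \item If~$|w|\ge2$, its last pair is cuspidal, so~$[\hat w]\not\subset K$.
  \item The case~$y\in W$ gives~$|w|=1$, and is handled directly. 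We must check it still lands in~$D_c(w,1)$: the condition~$\cB(y)\in V$ is compatible because the vertex arises as the shared endpoint of~$[b_1,b_2]$.
\end{itemize}
For the converse, take~$w\in\Lambda_c$ and~$x\in[b_1,b_2]$ with~$\ell_c(wb_1)=1$ and~$\ell_c(wb_1b_2)=2$. Admissibility of~$wb_1$ is implicit in~$\ell_c$ being defined on~$\Sigma^*$, so~$y=g_wx=[w b_1 b_2\dotsb]$. The maximal cuspidal extension in~\eqref{eq:altm} then stops exactly after~$w$, because adding~$b_2$ breaks~$\ell_c=1$. This gives~$\cF_c(y)=x$. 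The vertex exclusion in~$D_c$ is exactly what guarantees~$[b_1,b_2,\dotsc]\notin V$.

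Second, we prove injectivity. Suppose~$h^c_{w,r}(x)=h^c_{w',r}(x)$. The codings~$wb_1b_2\dotsb$ and~$w'b_1b_2\dotsb$ then both represent~$y$. Since~$|\pi^{-1}(y)|\le2$, and only for cusps, we first handle non-cusps, where the codings coincide. In that case one of~$w,w'$ is a prefix of the other, say~$w'=wu$. Then~$\ell_c$ of~$wb_1b_2$ versus the corresponding prefix of~$w'b_1$ contradicts~$\ell_c(wb_1b_2)=r+1$ while~$\ell_c(w'b_1)=r$, unless~$u=\epsilon$. Cusps are handled either by continuity (the branches are homographies on intervals) or by noting that the two codings of a cusp are the eventually extreme paths, and the vertex-removal clause in~$D_c$ selects one of them.

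Third, we pass to general~$r$ by induction, using the cocycle of words, as in Lemma~\ref{lem:invbranch-FK}(2). The concatenation~$w=w_1\dotsb w_r$ of~$r$ first-order branch words satisfies~$\ell_c(wb_1)=r$ and~$\ell_c(wb_1b_2)=r+1$. This uses that at each junction~$w_ib$ a flip is created, and that the extra letter~$b_1$ may or may not create one, which is why~$\ell_c(w)\in\{r-1,r\}$. Conversely, a word~$w$ with these properties is cut uniquely at the positions where the sign sequence~$(\delta_i)$ of~$wb_1b_2$ flips.

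\textbf{Main obstacle.} The delicate step is the bookkeeping of~$\ell_c$ at block junctions, including the~$\delta_i=0$ (non-cuspidal) pairs and the condition~$[\hat w]\not\subset K$ when~$\ell_c(w)=r$. Getting these off-by-one counts right, together with the vertex/$W$ boundary cases, is where we expect most of the care to be needed.
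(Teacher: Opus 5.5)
Your overall plan (codings, cutting at the sign flips of $wb_1b_2$, reducing to first-order branches as in Lemma~\ref{lem:invbranch-FK}) matches the paper's for $x\notin V$. The case $x=v\in V$ is the only part of the statement that needs a genuinely new argument, and the only part the paper's proof treats in detail. You leave it as your ``main obstacle'', and the one concrete claim you make about it is wrong.

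In your $r=1$ converse you assert that the vertex-removal clause guarantees $[b_1,b_2,\dotsc]\notin V$. For $|w|=1$ there is no removal, and vertices do lie in $D_c(a,1)$. For such $v$ the point $g_av$ lies in $W$, and $\cF_c(g_av)=v$ comes from the first clause of~\eqref{eq:altm}, not from the maximal-extension clause you invoke. Even to know $g_av\in W\subset\dD'$ (rather than $g_av\in V$, where $\cF_c$ is undefined), you need an observation that is missing from your proposal. A vertex $v$ which is an endpoint of $[\hat a_n]$ lies only in depth-two cylinders $[b_1,b_2]$ (with $wb_1$ admissible) of the form $b_1=\hat a_n+\delta$, $b_2=\hat b_1+\delta$, with the \emph{same} $\delta$. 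So $\ell_c(wb_1)=\ell_c(wb_1b_2)$ and $v\notin D_c(w,r)$. Hence $v\in D_c(w,r)\cap V$ forces $g_{a_n}v\in W$.

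A related boundary slip occurs in your forward bullet. Maximality of $j$ can be caused by the $V$-condition when $x\in W$, and then $\ell_c(wb_1b_2)$ may equal $1$. In that case you must use the adjacent cylinder $[b_1,c]\subset K$.

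For general $r$, step~3 never reconciles the block count with the clause removing $V$ from $D_c(w,r)$ when $|w|\ge2$ and $\ell_c(w)=r$. Saying that $b_1$ ``may or may not'' create a flip explains why both values of $\ell_c(w)$ occur. It does not explain why only $\ell_c(w)=r-1$ is compatible with a vertex. Two things must be shown:
\begin{enumerate}
\item[(i)] Suppose $v\in D_c(w,r)\cap V$ and $n=|w|\ge2$, so $\ell_c(w)=r-1$ by the removal clause. Then the points $g_{a_i\dotsb a_n}v$, $i<n$, avoid $W\cup V$. So~\eqref{eq:altm} gives $\cF_c^{r-1}(h^c_{w,r}(v))=g_{a_n}v\in W$, and one more step gives $v$.
\item[(ii)] Conversely, let $z\in\cF_c^{-r}(v)$, and write $z=g_wv$ with $n$ minimal such that $\cB^n(z)=v$. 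The $V$-condition in~\eqref{eq:altm} forces the $\cF_c$-orbit to pass through $g_{a_n}v\in W$. So the last block is the single letter $a_n$, and $r=\ell_c(w)+1$ (or $r=1$ if $n=1$). Since $g_{a_n}v\in W\cap K_{a_n}$, you can choose $b_1$ with $|b_1-\hat a_n|>1$ and $v\in[b_1,b_2]$. This gives $\ell_c(wb_1)=r$ and $\ell_c(wb_1b_2)=r+1$, i.e.\ $v\in D_c(w,r)$.
\end{enumerate}

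Finally, your injectivity argument at cusps does not work as stated. Continuity cannot separate two homographies at a single point, and the removal clause only concerns $x\in V$. It is simpler to use the forward inclusion, once proved with its explicit itinerary: $y=h^c_{w,r}(x)$ determines its $\cB$-itinerary along $r$ steps of $\cF_c$, and that itinerary is $w$.
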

\begin{proof}
The proof is very similar to Lemma~\ref{lem:invbranch-FK}, with only the equality in~\eqref{asxax} for $x=v\in V$, $r\geq1$, requiring a more detailed explanation.

Let $w=a_1\dotsb a_n\in\Lambda_c(r)$ with $v\in D_c(w,r)\cap V$. We now prove that $\cF_c^r(h_{w,r}^c(v))=v$, which gives the inclusion $\supset$ in~\eqref{asxax}. 

 First we notice that $v$ can't be an endpoint of $ [\hat a_n]$. Indeed, the depth 2 cylinders $[b_1,b_2]$ containing such points and for which $wb_1$ is admissible satisfy $b_1= \hat a_n+\delta$, $b_2=\hat b_1+\delta$ for some $\delta\in\{\pm1\}$, but for these $\ell_c(wb_1)=\ell_c(wb_1b_2)$ contradicting the condition required in $D_c(w,r)$. Since $v\notin[\hat a_n]$, it then follows that $g_{a_n}v\in W\subset K$. 
If $n=1$ (and thus $r=1$),~\eqref{eq:altm} then yields $\cF_c(h_{w,r}^c(v))=v$. Thus, assume $n\geq2$ so that, by definition of $D_c(w,r)$  and since $v\in V$, we must have $\ell_c(w)=r-1$. Then, since for all $i<n$ we have $g_{a_{i}\dotsb a_n}v\notin W\cup V$, by~\eqref{eq:altm} we get $\cF_c^{r-1}(h_{w,r}^c(v))=g_{a_n}v\in W$, whence $\cF_c^r(h_{w,r}^c(v))=v$, as desired. 

Conversely, let $z\in\cF_c^{-r}(v)$, and let $n\geq 1$ be minimal such that $\cB^n(z)=v$, and let $w=a_1\dotsb a_n$ be the corresponding word, so that $z=g_wv$. The same argument as above gives $r=1$ if $n=1$, and $r=\ell_c(w)+1$ if $n\geq2$.
Since $g_{a_n}v\in W\cap K_{a_n}$, we can choose $b_1,b_2\in\Sigma$ such that $wb_1b_2$ is admissible, $v\in[b_1,b_2]$ and $|b_1-\hat a_n|>1$. This implies that $\ell_c(wb_1)=r$,  $\ell_c(wb_1b_2)=r+1$ and so $w\in\Lambda_c(r)$, $v\in D_c(w,r)$. Therefore $z=h_{w,r}^c(v)$.
\end{proof}

We will also need to consider the inverse branches of~$\cH_c$ with a restricted domain. Given~$w\in \Lambda_c$, define 
$$  D_c^\sharp(w) :=D_c(w)\setminus K, $$
and
$$ h^{c\sharp}_w := h^c_w |_{D_c^\sharp(w)}. $$
Finally, let
\begin{equation}\label{eq:def-Hcsharp}
  \cH_c^{\sharp} := \{h^{c\sharp}_w, w\in \Lambda_c\}.
\end{equation}
We remark that the same word~$w\in\Sigma^*$ can index elements of~$\cH$, $\cH_c$ and~$\cH_c^\sharp$. While the corresponding inverse branch~$h_w$ is defined by the same M\"obius transformation in all cases, its domain does depend on whether we consider~$h_w$ to be an element of~$\cH$, $\cH_c$ or~$\cH_c^\sharp$.

\begin{figure}[htbp]
  \centering
  
  
  \begin{subfigure}[t]{0.32\textwidth}
    \centering
    \vspace{0pt} 
    \resizebox{\linewidth}{!}{
      \begin{tikzpicture}[line cap=round,line join=round,>=triangle 45,x=3.5cm,y=3.5cm]
        \begin{scope}
          \clip (0,0) circle (1);
          \angeodesic{0}{-18.925} \angeodesic{-18.925}{-22.620} \angeodesic{-22.620}{-28.072} \angeodesic{-28.072}{-31.891} \angeodesic{-31.891}{-36.870}
          \angeodesic{-36.870}{-41.112} \angeodesic{-41.112}{-41.706} \angeodesic{-41.706}{-42.075} \angeodesic{-42.075}{-43.603} \angeodesic{-43.603}{-53.130}
          \angeodesic{-53.130}{-67.380} \angeodesic{-67.380}{-73.740} \angeodesic{-73.740}{-75.750} \angeodesic{-75.750}{-77.320} \angeodesic{-77.320}{-90}
          \angeodesic{-90}{-102.680} \angeodesic{-102.680}{-106.260} \angeodesic{-106.260}{-112.620} \angeodesic{-112.620}{-118.072} \angeodesic{-118.072}{-126.870}
          \angeodesic{-126.870}{-136.397} \angeodesic{-136.397}{-137.925} \angeodesic{-137.925}{-138.888} \angeodesic{-138.888}{-143.130} \angeodesic{-143.130}{180}
          \angeodesic{180}{90} \angeodesic{90}{53.130} \angeodesic{53.130}{43.603} \angeodesic{43.603}{36.870} \angeodesic{36.870}{0}
        \end{scope}

        \draw [line width=1pt] plot[domain=0.:1.,variable=\t]({1-2*\t}, {0.});
        \draw [line width=1pt] plot[domain=5.63968:7.85938,variable=\t]({-1.+0.5*cos(\t r)}, {-0.5+0.5*sin(\t r)});
        \draw [line width=1pt] plot[domain=0.:2.49809,variable=\t]({-0.333+0.333*cos(\t r)}, {-1.+0.333*sin(\t r)});
        \draw [line width=1pt] plot[domain=0.643501:3.141592,variable=\t]({0.333+0.333*cos(\t r)}, {-1.+0.333*sin(\t r)});
        \draw [line width=1pt] plot[domain=0.927295:3.785093,variable=\t]({0.714285+0.142857*cos(\t r)}, {-0.714285+0.142857*sin(\t r)});
        \draw [line width=1pt] plot[domain=1.570796:4.06888,variable=\t]({1.+0.333*cos(\t r)}, {-0.333+0.333*sin(\t r)});
        \draw [line width=1pt] (0,0) circle (1);

        \draw [line width=2pt, color=blue] (-118.072:1.005) arc (-118.072:90:1.005);    
        \draw [line width=2pt, color=red] (-102.680:1.025) arc (-102.680:-90:1.025); 
        \draw [line width=2pt, color=red] (0:1.025) arc (0:36.870:1.025); 
\draw [line width=2pt, color=red] (-90:1.025) arc (-90:-77.320:1.025);
\draw [line width=2pt, color=red] (-67.380:1.025) arc (-67.380:-53.130:1.025);
\draw [line width=2pt, color=red] (-53.130:1.025) arc (-53.130:-43.603:1.025);
\draw [line width=2pt, color=red] (-41.112:1.025) arc (-41.112:-36.870:1.025);
\draw [line width=2pt, color=red] (-36.870:1.025) arc (-36.870:-31.891:1.025);
\draw [line width=2pt, color=red] (-18.925:1.025) arc (-18.925:0:1.025);
  
        \draw (0.,1.15) node {$[a]$}; \draw (-1.,-0.45) node {$[b]$}; \draw (-0.4,-1.05) node {$[c]$};
        \draw (0.4,-1.05) node {$[\hat c]$}; \draw (0.81,-0.76) node {$[\hat b]$}; \draw (1.05,-0.3) node {$[\hat a]$};
      \end{tikzpicture}
    }
    \caption{$n=1$ (whence $r=1$)}
    \label{fig:gamma3_sub1}
  \end{subfigure}
  \hfill
  \begin{subfigure}[t]{0.32\textwidth}
    \centering
    \vspace{0pt}
    \resizebox{\linewidth}{!}{
      \begin{tikzpicture}[line cap=round,line join=round,>=triangle 45,x=3.5cm,y=3.5cm]
        \begin{scope}
          \clip (0,0) circle (1);
          \angeodesic{0}{-18.925} \angeodesic{-18.925}{-22.620} \angeodesic{-22.620}{-28.072} \angeodesic{-28.072}{-31.891} \angeodesic{-31.891}{-36.870}
          \angeodesic{-36.870}{-41.112} \angeodesic{-41.112}{-41.706} \angeodesic{-41.706}{-42.075} \angeodesic{-42.075}{-43.603} \angeodesic{-43.603}{-53.130}
          \angeodesic{-53.130}{-67.380} \angeodesic{-67.380}{-73.740} \angeodesic{-73.740}{-75.750} \angeodesic{-75.750}{-77.320} \angeodesic{-77.320}{-90}
          \angeodesic{-90}{-102.680} \angeodesic{-102.680}{-106.260} \angeodesic{-106.260}{-112.620} \angeodesic{-112.620}{-118.072} \angeodesic{-118.072}{-126.870}
          \angeodesic{-126.870}{-136.397} \angeodesic{-136.397}{-137.925} \angeodesic{-137.925}{-138.888} \angeodesic{-138.888}{-143.130} \angeodesic{-143.130}{180}
          \angeodesic{180}{90} \angeodesic{90}{53.130} \angeodesic{53.130}{43.603} \angeodesic{43.603}{36.870} \angeodesic{36.870}{0}
        \end{scope}

        \draw [line width=1pt] plot[domain=0.:1.,variable=\t]({1-2*\t}, {0.});
        \draw [line width=1pt] plot[domain=5.63968:7.85938,variable=\t]({-1.+0.5*cos(\t r)}, {-0.5+0.5*sin(\t r)});
        \draw [line width=1pt] plot[domain=0.:2.49809,variable=\t]({-0.333+0.333*cos(\t r)}, {-1.+0.333*sin(\t r)});
        \draw [line width=1pt] plot[domain=0.643501:3.141592,variable=\t]({0.333+0.333*cos(\t r)}, {-1.+0.333*sin(\t r)});
        \draw [line width=1pt] plot[domain=0.927295:3.785093,variable=\t]({0.714285+0.142857*cos(\t r)}, {-0.714285+0.142857*sin(\t r)});
        \draw [line width=1pt] plot[domain=1.570796:4.06888,variable=\t]({1.+0.333*cos(\t r)}, {-0.333+0.333*sin(\t r)});
        \draw [line width=1pt] (0,0) circle (1);

        \draw [line width=2pt, color=blue] (-90:1.005) arc (-90:90:1.005); 

        \draw (0.,1.15) node {$[a]$}; \draw (-1.,-0.45) node {$[b]$}; \draw (-0.4,-1.05) node {$[c]$};
        \draw (0.4,-1.05) node {$[\hat c]$}; \draw (0.81,-0.76) node {$[\hat b]$}; \draw (1.05,-0.3) node {$[\hat a]$};
      \end{tikzpicture}
    }
    \caption{$n>1$, $\ell_c(w)=r-1$,\\ $[a_{n-1},{a_n}]=[c,\hat b]=[c,\hat c+1]$.}
    \label{fig:gamma3_sub2}
  \end{subfigure}
  \hfill
  \begin{subfigure}[t]{0.32\textwidth}
    \centering
    \vspace{0pt}
    \resizebox{\linewidth}{!}{
      \begin{tikzpicture}[line cap=round,line join=round,>=triangle 45,x=3.5cm,y=3.5cm]
        \begin{scope}
          \clip (0,0) circle (1);
          \angeodesic{0}{-18.925} \angeodesic{-18.925}{-22.620} \angeodesic{-22.620}{-28.072} \angeodesic{-28.072}{-31.891} \angeodesic{-31.891}{-36.870}
          \angeodesic{-36.870}{-41.112} \angeodesic{-41.112}{-41.706} \angeodesic{-41.706}{-42.075} \angeodesic{-42.075}{-43.603} \angeodesic{-43.603}{-53.130}
          \angeodesic{-53.130}{-67.380} \angeodesic{-67.380}{-73.740} \angeodesic{-73.740}{-75.750} \angeodesic{-75.750}{-77.320} \angeodesic{-77.320}{-90}
          \angeodesic{-90}{-102.680} \angeodesic{-102.680}{-106.260} \angeodesic{-106.260}{-112.620} \angeodesic{-112.620}{-118.072} \angeodesic{-118.072}{-126.870}
          \angeodesic{-126.870}{-136.397} \angeodesic{-136.397}{-137.925} \angeodesic{-137.925}{-138.888} \angeodesic{-138.888}{-143.130} \angeodesic{-143.130}{180}
          \angeodesic{180}{90} \angeodesic{90}{53.130} \angeodesic{53.130}{43.603} \angeodesic{43.603}{36.870} \angeodesic{36.870}{0}
        \end{scope}

        \draw [line width=1pt] plot[domain=0.:1.,variable=\t]({1-2*\t}, {0.});
        \draw [line width=1pt] plot[domain=5.63968:7.85938,variable=\t]({-1.+0.5*cos(\t r)}, {-0.5+0.5*sin(\t r)});
        \draw [line width=1pt] plot[domain=0.:2.49809,variable=\t]({-0.333+0.333*cos(\t r)}, {-1.+0.333*sin(\t r)});
        \draw [line width=1pt] plot[domain=0.643501:3.141592,variable=\t]({0.333+0.333*cos(\t r)}, {-1.+0.333*sin(\t r)});
        \draw [line width=1pt] plot[domain=0.927295:3.785093,variable=\t]({0.714285+0.142857*cos(\t r)}, {-0.714285+0.142857*sin(\t r)});
        \draw [line width=1pt] plot[domain=1.570796:4.06888,variable=\t]({1.+0.333*cos(\t r)}, {-0.333+0.333*sin(\t r)});
        \draw [line width=1pt] (0,0) circle (1);

        \draw [line width=2pt, color=blue] (-118.072:1.005) arc (-118.072:0:1.005);   

        \draw (0.,1.15) node {$[a]$}; \draw (-1.,-0.45) node {$[b]$}; \draw (-0.4,-1.05) node {$[c]$};
        \draw (0.4,-1.05) node {$[\hat c]$}; \draw (0.81,-0.76) node {$[\hat b]$}; \draw (1.05,-0.3) node {$[\hat a]$};
      \end{tikzpicture}
    }
    \caption{$n>1$, $\ell_c(w)=r-1$,\\ $[a_{n-1},{a_n}]=[a,\hat b]=[a,\hat a-1]$.}
    \label{fig:gamma3_sub3}
  \end{subfigure}
  
  \vspace{1em} 
  
  
  \begin{subfigure}[t]{0.32\textwidth}
    \centering
    \vspace{0pt}
    \resizebox{\linewidth}{!}{
      \begin{tikzpicture}[line cap=round,line join=round,>=triangle 45,x=3.5cm,y=3.5cm]
        \begin{scope}
          \clip (0,0) circle (1);
          \angeodesic{0}{-18.925} \angeodesic{-18.925}{-22.620} \angeodesic{-22.620}{-28.072} \angeodesic{-28.072}{-31.891} \angeodesic{-31.891}{-36.870}
          \angeodesic{-36.870}{-41.112} \angeodesic{-41.112}{-41.706} \angeodesic{-41.706}{-42.075} \angeodesic{-42.075}{-43.603} \angeodesic{-43.603}{-53.130}
          \angeodesic{-53.130}{-67.380} \angeodesic{-67.380}{-73.740} \angeodesic{-73.740}{-75.750} \angeodesic{-75.750}{-77.320} \angeodesic{-77.320}{-90}
          \angeodesic{-90}{-102.680} \angeodesic{-102.680}{-106.260} \angeodesic{-106.260}{-112.620} \angeodesic{-112.620}{-118.072} \angeodesic{-118.072}{-126.870}
          \angeodesic{-126.870}{-136.397} \angeodesic{-136.397}{-137.925} \angeodesic{-137.925}{-138.888} \angeodesic{-138.888}{-143.130} \angeodesic{-143.130}{180}
          \angeodesic{180}{90} \angeodesic{90}{53.130} \angeodesic{53.130}{43.603} \angeodesic{43.603}{36.870} \angeodesic{36.870}{0}
        \end{scope}

        \draw [line width=1pt] plot[domain=0.:1.,variable=\t]({1-2*\t}, {0.});
        \draw [line width=1pt] plot[domain=5.63968:7.85938,variable=\t]({-1.+0.5*cos(\t r)}, {-0.5+0.5*sin(\t r)});
        \draw [line width=1pt] plot[domain=0.:2.49809,variable=\t]({-0.333+0.333*cos(\t r)}, {-1.+0.333*sin(\t r)});
        \draw [line width=1pt] plot[domain=0.643501:3.141592,variable=\t]({0.333+0.333*cos(\t r)}, {-1.+0.333*sin(\t r)});
        \draw [line width=1pt] plot[domain=0.927295:3.785093,variable=\t]({0.714285+0.142857*cos(\t r)}, {-0.714285+0.142857*sin(\t r)});
        \draw [line width=1pt] plot[domain=1.570796:4.06888,variable=\t]({1.+0.333*cos(\t r)}, {-0.333+0.333*sin(\t r)});
        \draw [line width=1pt] (0,0) circle (1);

        \draw [line width=2pt, color=blue] (-118.072:1.005) arc (-118.072:90:1.005);    
        
        \draw (0.,1.15) node {$[a]$}; \draw (-1.,-0.45) node {$[b]$}; \draw (-0.4,-1.05) node {$[c]$};
        \draw (0.4,-1.05) node {$[\hat c]$}; \draw (0.81,-0.76) node {$[\hat b]$}; \draw (1.05,-0.3) node {$[\hat a]$};
      \end{tikzpicture}
    }
    \caption{$n>1$, $\ell_c(w)=r-1$,\\ $|a_n-\hat a_{n-1}|>1$.}
    \label{fig:gamma3_sub4}
  \end{subfigure}
  \hfill
  \begin{subfigure}[t]{0.32\textwidth}
    \centering
    \vspace{0pt}
    \resizebox{\linewidth}{!}{
      \begin{tikzpicture}[line cap=round,line join=round,>=triangle 45,x=3.5cm,y=3.5cm]
        \begin{scope}
          \clip (0,0) circle (1);
          \angeodesic{0}{-18.925} \angeodesic{-18.925}{-22.620} \angeodesic{-22.620}{-28.072} \angeodesic{-28.072}{-31.891} \angeodesic{-31.891}{-36.870}
          \angeodesic{-36.870}{-41.112} \angeodesic{-41.112}{-41.706} \angeodesic{-41.706}{-42.075} \angeodesic{-42.075}{-43.603} \angeodesic{-43.603}{-53.130}
          \angeodesic{-53.130}{-67.380} \angeodesic{-67.380}{-73.740} \angeodesic{-73.740}{-75.750} \angeodesic{-75.750}{-77.320} \angeodesic{-77.320}{-90}
          \angeodesic{-90}{-102.680} \angeodesic{-102.680}{-106.260} \angeodesic{-106.260}{-112.620} \angeodesic{-112.620}{-118.072} \angeodesic{-118.072}{-126.870}
          \angeodesic{-126.870}{-136.397} \angeodesic{-136.397}{-137.925} \angeodesic{-137.925}{-138.888} \angeodesic{-138.888}{-143.130} \angeodesic{-143.130}{180}
          \angeodesic{180}{90} \angeodesic{90}{53.130} \angeodesic{53.130}{43.603} \angeodesic{43.603}{36.870} \angeodesic{36.870}{0}
        \end{scope}

        \draw [line width=1pt] plot[domain=0.:1.,variable=\t]({1-2*\t}, {0.});
        \draw [line width=1pt] plot[domain=5.63968:7.85938,variable=\t]({-1.+0.5*cos(\t r)}, {-0.5+0.5*sin(\t r)});
        \draw [line width=1pt] plot[domain=0.:2.49809,variable=\t]({-0.333+0.333*cos(\t r)}, {-1.+0.333*sin(\t r)});
        \draw [line width=1pt] plot[domain=0.643501:3.141592,variable=\t]({0.333+0.333*cos(\t r)}, {-1.+0.333*sin(\t r)});
        \draw [line width=1pt] plot[domain=0.927295:3.785093,variable=\t]({0.714285+0.142857*cos(\t r)}, {-0.714285+0.142857*sin(\t r)});
        \draw [line width=1pt] plot[domain=1.570796:4.06888,variable=\t]({1.+0.333*cos(\t r)}, {-0.333+0.333*sin(\t r)});
        \draw [line width=1pt] (0,0) circle (1);

        \draw [line width=2pt, color=blue] (-118.072:1.005) arc (-118.072:-90:1.005);    
        \draw [line width=2pt, color=red] (-102.680:1.025) arc (-102.680:-90:1.025); 

        \draw (0.,1.15) node {$[a]$}; \draw (-1.,-0.45) node {$[b]$}; \draw (-0.4,-1.05) node {$[c]$};
        \draw (0.4,-1.05) node {$[\hat c]$}; \draw (0.81,-0.76) node {$[\hat b]$}; \draw (1.05,-0.3) node {$[\hat a]$};
      \end{tikzpicture}
    }
    \caption{$n>1$, $\ell_c(w)=r$,\\ $[a_{n-1},{a_n}]=[c,\hat b]=[c,\hat c+1]$.}
    \label{fig:gamma3_sub5}
  \end{subfigure}
  \hfill
  \begin{subfigure}[t]{0.32\textwidth}
    \centering
    \vspace{0pt}
    \resizebox{\linewidth}{!}{
      \begin{tikzpicture}[line cap=round,line join=round,>=triangle 45,x=3.5cm,y=3.5cm]
        \begin{scope}
          \clip (0,0) circle (1);
          \angeodesic{0}{-18.925} \angeodesic{-18.925}{-22.620} \angeodesic{-22.620}{-28.072} \angeodesic{-28.072}{-31.891} \angeodesic{-31.891}{-36.870}
          \angeodesic{-36.870}{-41.112} \angeodesic{-41.112}{-41.706} \angeodesic{-41.706}{-42.075} \angeodesic{-42.075}{-43.603} \angeodesic{-43.603}{-53.130}
          \angeodesic{-53.130}{-67.380} \angeodesic{-67.380}{-73.740} \angeodesic{-73.740}{-75.750} \angeodesic{-75.750}{-77.320} \angeodesic{-77.320}{-90}
          \angeodesic{-90}{-102.680} \angeodesic{-102.680}{-106.260} \angeodesic{-106.260}{-112.620} \angeodesic{-112.620}{-118.072} \angeodesic{-118.072}{-126.870}
          \angeodesic{-126.870}{-136.397} \angeodesic{-136.397}{-137.925} \angeodesic{-137.925}{-138.888} \angeodesic{-138.888}{-143.130} \angeodesic{-143.130}{180}
          \angeodesic{180}{90} \angeodesic{90}{53.130} \angeodesic{53.130}{43.603} \angeodesic{43.603}{36.870} \angeodesic{36.870}{0}
        \end{scope}

        \draw [line width=1pt] plot[domain=0.:1.,variable=\t]({1-2*\t}, {0.});
        \draw [line width=1pt] plot[domain=5.63968:7.85938,variable=\t]({-1.+0.5*cos(\t r)}, {-0.5+0.5*sin(\t r)});
        \draw [line width=1pt] plot[domain=0.:2.49809,variable=\t]({-0.333+0.333*cos(\t r)}, {-1.+0.333*sin(\t r)});
        \draw [line width=1pt] plot[domain=0.643501:3.141592,variable=\t]({0.333+0.333*cos(\t r)}, {-1.+0.333*sin(\t r)});
        \draw [line width=1pt] plot[domain=0.927295:3.785093,variable=\t]({0.714285+0.142857*cos(\t r)}, {-0.714285+0.142857*sin(\t r)});
        \draw [line width=1pt] plot[domain=1.570796:4.06888,variable=\t]({1.+0.333*cos(\t r)}, {-0.333+0.333*sin(\t r)});
        \draw [line width=1pt] (0,0) circle (1);

        \draw [line width=2pt, color=blue] (0:1.005) arc (0:90:1.005);   
        \draw [line width=2pt, color=red] (0:1.025) arc (0:36.870:1.025); 

        \draw (0.,1.15) node {$[a]$}; \draw (-1.,-0.45) node {$[b]$}; \draw (-0.4,-1.05) node {$[c]$};
        \draw (0.4,-1.05) node {$[\hat c]$}; \draw (0.81,-0.76) node {$[\hat b]$}; \draw (1.05,-0.3) node {$[\hat a]$};
      \end{tikzpicture}
    }
    \caption{$n>1$, $\ell_c(w)=r$,\\ $[a_{n-1},{a_n}]=[a,\hat b]=[a,\hat a-1]$.}
    \label{fig:gamma3_sub6}
  \end{subfigure}
  
  \caption{Domain $D_c(w, r)$ of $h_{w,r}^c\in\cH_c^r$ (underlined in blue), where the last letter of $w=a_1\dotsb a_n$ is $a_n=\hat b$, depending on the value of $n$ and $\ell_c(w)$ and $a_{n-1}$. If $r=1$ only case (A), (E) and (F) appear. The restricted domain $D^\sharp_c(w)$ (for $r=1$) is highlighted in red.}
  \label{fig:domainHC}
\end{figure}

The relevance of~$\cH_c^\sharp$ can be seen from the following simple lemma, which characterizes inverse branches~$\cH$ for~$\cF_K$ in terms of inverse branches~$\cH_c, \cH_c^\sharp$ for~$\cF_c$.
\begin{lemma}\label{lem:decomp-Hc-HK}
  Let~$x\in K$. Then the set
  $$ \{h\in \cH, x\in \Dom(h)\} $$
  is in bijection with the set
  $$ \bigcup_{r\geq 1}\left\{(h_1, \dotsc, h_r): \begin{cases}h_1, \dotsc, h_{r-1} \in\cH^\sharp_c \text{ and } h_r\in\cH_c, \\ h_{i+1}\circ\dotsb \circ h_r(x) \in \Dom(h_i),\ i=1,\dots,r \end{cases} \right\}, $$
  in such a way that corresponding elements satisfy
  \begin{align}\label{eq:decomp-Hc-HK}
    h(x) = h_1 \circ \dotsb \circ h_r(x).
  \end{align}
  \end{lemma}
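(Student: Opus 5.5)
We use that $\cF_K$ is the induced map of $\cF_c$ on $K\cup V$, as remarked after~\eqref{eq:def-Fc}, and read off inverse branches on both sides via Lemma~\ref{lem:invbranch-FK}(1) with $r=1$ and Lemma~\ref{ladcaxx}. Fix $x\in K$. By Lemma~\ref{lem:invbranch-FK}, the elements $h\in\cH$ with $x\in\Dom(h)$ are in bijection with the points $y\in\cF_K^{-1}(x)$ via $y=h(x)$.

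\textbf{Step 1 (time-$r$ decomposition).} Since $\cF_K$ is the first return of $\cF_c$ to $K\cup V$ and $x\in K$ (so $x\notin V$), each $y\in\cF_K^{-1}(x)$ has a unique $r\geq1$ with $\cF_c^r(y)=x$ and $\cF_c^i(y)\notin K\cup V$ for $1\leq i<r$. Conversely, any $y$ with $\cF_c^r(y)=x$ and $\cF_c^i(y)\notin K\cup V$ for $1\le i<r$ satisfies $\cF_K(y)=x$. To make this rigorous, check that the $\cB$-orbit of $y$ up to time $\cF_c^r(y)$ passes through $K\cup V$ only at the times of the $\cF_c$-orbit. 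This is done by inspecting~\eqref{eq:def-Fc}: when $z\in I^\pm$, the intermediate points $\cB^t(z)$ with $t<r$ lie in $I^\pm_{\cB^{t-1}z}$, which is disjoint from $K$ (the starred arcs remove the boundary in $K$) and from $V$.

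\textbf{Step 2 (unfolding into branches).} Put $z_i=\cF_c^{i}(y)$ for $0\le i\le r$, with $z_r=x$. Apply Lemma~\ref{ladcaxx} with $r=1$ at each step. For each $i$ there is a unique $h_i\in\cH_c$ with $z_{i-1}=h_i(z_i)$ and $z_i\in\Dom(h_i)$. For $i<r$ we have $z_i\notin K$, so $z_i\in D_c(w_i)\setminus K=D_c^\sharp(w_i)$, and $h_i$ may be regarded as the element $h^{c\sharp}_{w_i}\in\cH_c^\sharp$. For $i=r$, $h_r\in\cH_c$ and $x\in\Dom(h_r)$. This yields an admissible tuple $(h_1,\dots,h_r)$ satisfying the domain conditions and~\eqref{eq:decomp-Hc-HK}.

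\textbf{Step 3 (inverse map).} Conversely, given an admissible tuple, set $y=h_1\circ\dotsb\circ h_r(x)$. By Lemma~\ref{ladcaxx}, $\cF_c$ maps $h_i(z)$ to $z$ for $z\in\Dom(h_i)$, so $\cF_c^i(y)=h_{i+1}\circ\dotsb\circ h_r(x)$. For $i<r$ this point lies in $\Dom(h_i)$, which is contained in $\dD\setminus K$ because $h_i\in\cH_c^\sharp$. It is also not in $V$: since $\ell_c(w_i)=1$, $D_c(w_i)$ excludes $V$ when $|w_i|\ge2$, and for $|w_i|=1$ we must check that $V\cap D_c(w_i)\setminus K$ causes no issue. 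By Step 1, $\cF_K(y)=x$, and Lemma~\ref{lem:invbranch-FK} then gives a unique $h\in\cH$ with $y=h(x)$.

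\textbf{Bijection and main obstacle.} The two constructions are mutually inverse because branch decompositions in Lemma~\ref{ladcaxx} are unique: the values $h(x)$ are distinct, so the branch $h_i$ is determined by the pair $(z_{i-1},z_i)$. The main obstacle is the bookkeeping in Steps 1 and 3, namely showing that intermediate $\cB$-iterates within one $\cF_c$-step never fall into $K\cup V$, and handling vertices in $D_c(w)$ for $|w|=1$. For the latter, a vertex $v$ lying in $D_c(w)\setminus K$ would give $\cF_c^i(y)=v$ and stop the induction prematurely. I would first try to rule this out using the description~\eqref{eq:altm}. Failing that, I would note that such $v$ are finitely many and show that the $\cF_c$-orbit cannot pass through $V$ and then return to $x\in K$, since $\cB$ fixes $V$ setwise up to pairings and so $\cB$-orbits starting in $V$ stay in $V$.
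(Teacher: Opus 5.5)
Your argument is correct, and it takes a different route from the paper's. The paper works purely at the level of words. Writing $x\in[c_1,c_2]\subset K_{c_1}$, it shows that concatenation $(w_1,\dots,w_r)\mapsto w_1\dotsb w_r$ maps the tuples with $w_i\in\Lambda_c$, $[w_{i+1}\dotsb w_rc_1c_2]\subset D_c^\sharp(w_i)$ and $[c_1,c_2]\subset D_c(w_r)$ bijectively onto $\{w\in\Lambda_K(1): [c_1,c_2]\subset D_K(w,1)\}$. The inverse cuts $w$ at the indices $j$ where $\delta_{j-1}\delta_j\neq 1$, computed on $wc_1c_2$, and all $\ell_c$, $\ell_K$ and $\sharp$-conditions are checked by hand.

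You instead treat Lemma~\ref{lem:invbranch-FK}(1) and Lemma~\ref{ladcaxx} as black boxes, and read the decomposition off the $\cF_c$-orbit of $y=h(x)$ up to its first return to $K\cup V$. This is exactly the style of argument the paper uses for the converse statement, Lemma~\ref{lem:decomp-Hc}. Your route is shorter and more conceptual. However, it leans on the induced-map remark after~\eqref{eq:def-Fc}, which your Step~1 verifies adequately, and on Lemma~\ref{ladcaxx}, whose proof in the paper is only sketched. The word-level proof is self-contained.

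The word-level proof also shows directly that the decomposition and $r$ depend only on $c_1$, i.e.\ are constant on $K_{c_1}$. This is the remark after the lemma, and it is used in Lemma~\ref{lem:moments-c-implies-K}. Your pointwise argument needs a small extra check to get this: that the $\cF_c$-return structure of $h(x)$ depends only on $w$ and $c_1$.

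The vertex issue you single out as the main obstacle is in fact immediate. For $1\le i\le r-1$, Lemma~\ref{ladcaxx} gives $z_i=h_{i+1}(z_{i+1})\in\cF_c^{-1}(z_{i+1})$. Since $\cF_c$ is defined only on $\dD'$, this forces $z_i\notin V$. So vertices lying in $D_c^\sharp(a)$ for single letters $a$ are never hit by $h_{i+1}\circ\dotsb\circ h_r(x)$, and no tuple is lost or gained. You should drop the fallback argument: $\cB$ is not defined on $V$, so ``$\cB$-orbits starting in $V$ stay in $V$'' has no meaning here.
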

\begin{remark}
Given $h\in\cH$, the decomposition~\eqref{eq:decomp-Hc-HK}, as well as $r$, is constant on each component $K_a\subset\Dom(h)$, but may depend on $a$.    \end{remark}
\begin{proof}
  Write~$x\in [c_1, c_2] \subset K_{c_1}$ for some~$c_1, c_2 \in \Sigma$. We rephrase the desired correspondence in terms of the associated words writing~$h = h_{w, 1}$ and~$h_i = h_{w_i}^{c\sharp}$ ($i\leq r-1$) and~$h_r = h_{w_r}^c$. We need to prove that sets
  $$ M_1 = \{w\in \Lambda_K(1), [c_1, c_2] \subset D_K(w, 1)\} $$
  and
  $$ M_2 = \bigcup_{r\geq 1} \left\{(w_1, \dotsc, w_r), \begin{cases} w_i \in \Lambda_c, \\ [w_{i+1} \dotsb w_r c_1 c_2] \subset D_c^\sharp(w_i) & (1\leq i \leq r-1), \\ [c_1, c_2] \subset D_c(w_r). \end{cases} \right\} $$
  are in one-to-one correspondance through~$w = w_1 \dotsb w_r$.

  Suppose~$(w_1, \dotsc, w_r) \in M_2$ and let~$w = w_1\dotsb w_r$. Additionnally, let~$w'$ be obtained from~$w$ by removing the first letter. Let~$(b_1, b_2)$ be two consecutive letters in~$w'$. If~$(b_1, b_2)$ both belong to a single subword~$w_i$, then the condition~$w_i \in \Lambda_c$ ensures that~$\ell_c(w_i) = 1$ and thus~$(b_1, b_2)$ is a cuspidal pair. If~$b_1 \in w_i$ and~$b_2 \in w_{i+1}$ for some~$i\leq r-1$, the condition~$[w_{i+1}\dotsb] \subset D_c^\sharp(w_i)$ ensures that~$\ell_c(w_i b_2) = 1$, and thus if~$|w_i|\geq2$ then~$(b_1, b_2)$ are also cuspidal. Finally, if~$|w_i|=1$, then since~$b_1=w_i$ is in~$w'$ we must have~$i\geq2$ and thus~$[w_{i}\dotsb] \subset D_c^\sharp(w_{i-1})$ which implies that~$b_1b_2$ are also cuspidal in this case.  Since this is true for any consecutive pair of letters in~$w'$, we conclude that that~$\ell_K(w') \leq 1$, and therefore~$w\in \Lambda_K(1)$. Finally, since~$[c_1, c_2] \subset D_c(w_r)$, we have in particular~$\ell_c(w_r c_1) = 1$, and therefore also~$\ell_K(w' c_1) = 1$. This proves that~$x \in D_K(w,1)$, and thus~$w\in M_1$.

  Conversely, let~$w \in M_1$, which we write~$w=a_1\dotsb a_m\in\Sigma^*$ with~$a_i \in \Sigma$ and~$m\geq 1$. Let also~$a_{m+1} := c_1, a_{m+2} := c_2$, and let~$w'$ be obtained from~$w$ by removing the first letter.
  By hypothesis, we have~$\ell_K(a_2\dotsb a_m c_1) = 1$, which means that all consecutive pairs of letters in~$w'c_1$ are cuspidal. Let~$r := \ell_c(w c_1) \geq 1$.
For~$1\leq i\leq m+1$, let~$\delta_i$ be defined as
in~\eqref{eq:def-ellc}, applied to the word~$wc_1c_2$.
  By definition of~$r$, there are exactly~$r-1$ values of~$j\in \{2, \dotsc, m\}$ for which~$\delta_{j-1} \delta_j \neq 1$. We let~$2\leq m_2< \dotsb<m_r\leq m$ denote these values, and~$m_1 := 1$, $m_{r+1} := m+1$.
  Finally, for all~$i\in \{1, \dotsc, r\}$ we define the words
  $$ w_i := a_{m_i} a_{m_i+1} \dotsc a_{m_{i+1}-1}. $$
  We clearly have by construction~$\abs{w_i} \geq 1$, and~$w = w_1 \dotsb w_r$. Moreover, for all~$1\leq i \leq r$, we have~$\delta_j \delta_{j+1} = 1$ whenever~$m_i \leq j < m_{i+1}-2$, therefore~$\ell_c(w_i)=1$ and thus~$w_i\in \Lambda_c$.

  Next we consider~$i\in \{1, \dotsc, r\}$ and we compute~$\ell_c(w_i a_{m_{i+1}})$. If~$\abs{w_i} = 1$, then automatically~$\ell_c(w_i a_{m_{i+1}}) = 1$. Otherwise, we have~$m_i<m_{i+1}-1$, and therefore by definition~$\delta_{m_{i+1}-2} \delta_{m_{i+1}-1} = 1$. This, along with~$\ell_c(w_i) = 1$, ensures that~$\ell_c(w_i a_{m_{i+1}}) = 1$. In both cases, we have shown that~$\ell_c(w_i a_{m_{i+1}}) = 1$. On the other hand, by definition of~$m_{i+1}$, we have~$\delta_{m_{i+1}-1} \delta_{m_{i+1}} \neq 1$, and therefore~$\ell_c(w_i a_{m_{i+1}} a_{m_{i+1}+1}) = 2$. We have thus shown that~$[w_{i+1} \dotsb w_r c_1 c_2] \subset D_c(w_i)$.
  Finally, if~$i\leq r-1$, the condition~$\ell_K(w' c_1)=1$ ensures that~$[a_{m_{i+1}}, a_{m_{i+1}+1}] \not\subset K$, and therefore~$[w_{i+1} \dotsb w_r c_1c_2] \subset D_c^\sharp(w_i)$. This concludes the proof that~$(w_1, \dotsc, w_r) \in M_2$.
\end{proof}

Given~$y\in\DcF$, we let
\begin{align}\label{eq:def-return-Fc}
  r_y:=\min\{r\geq 1\mid \cF_c^{r}(y)\in K\cup V\}
\end{align}
be the the first (positive) time at which the orbit of  $y$ under~$\cF_c$ enters~$K\cup V$. Clearly, for~$h\in\cH$ and~$x\in\Dom(h)$ we have that 
\begin{align}\label{eq:return-h-r}
  r_{h(x)}=r
\end{align}
with~$r$ as in the decomposition~\eqref{eq:decomp-Hc-HK} of~$h$. By the definitions of $\cF_K$ and $\cF_c$, we have $\cF_K(x)=\cF_c^{r_x}(x)$ for $x\in\DcF$.

It will also be useful to go in the opposite direction, relating  the inverse branches $\cH$ with those in~$\cH$ and $\cH_c^\sharp$.

\begin{lemma}\label{lem:decomp-Hc}
  Let~$x\in \dD$. Then the set
  $$ \{h\in \cH_c^*, x\in \Dom(h)\} $$
  is in bijection with the set
  $$ \bigcup_{r,s\geq 0}\left\{(h_1, \dotsc, h_{r+s}): \begin{cases}h_1, \dotsc, h_{r} \in\cH \text{ and } h_{r+1},\dotsc,h_{r+s}\in\cH^\sharp_c, \\h_{i+1}\circ\dotsb \circ h_{r+s}(x) \in \Dom(h_i),\ i=1,\dots,r+s. \end{cases} \right\}, $$
  in such a way that corresponding elements satisfy
  \begin{align}\label{eq:decomp-Hc-HK2}
    h(x) = h_1 \circ \dotsb \circ h_{r+s}(x).
  \end{align}
\end{lemma}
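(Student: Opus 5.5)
I would prove Lemma~\ref{lem:decomp-Hc} at the level of words, as in the proof of Lemma~\ref{lem:decomp-Hc-HK}. The guiding idea: an $\cF_c$-orbit of length $N$ ending at $x$ visits $K\cup V$ at certain times. The last such visit splits the orbit into an initial part, made of full excursions each handled by an element of $\cH$ via Lemma~\ref{lem:decomp-Hc-HK}, and a terminal part of $s$ steps which never re-enters $K$ and so consists only of $\cH_c^\sharp$-branches.

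\emph{Setup.} Let $h\in\cH_c^N$ with $x\in\Dom(h)$ and set $y=h(x)$. By Lemma~\ref{ladcaxx} and its iterated version (the analogue of Lemma~\ref{lem:invbranch-FK}(2), proved the same way by cutting the word at cuspidal flips), $h$ decomposes uniquely as $h=k_1\circ\dotsb\circ k_N$ with $k_j\in\cH_c$ and the compatibility conditions. Set $y_j:=k_{j+1}\circ\dotsb\circ k_N(x)$, so that $y_j=\cF_c^{j}(y)$ and $y_N=x$.
\begin{itemize}
\item Let $j_0$ be the largest index $j\in\{0,\dots,N\}$ with $j=0$ or $y_j\in K\cup V$.
\item Put $s=N-j_0$.
\end{itemize}
For $j_0<j\le N$ we have $y_j\notin K$. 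Since $k_j$ has domain meeting $K$ only through $D_c\setminus D_c^\sharp$, each $k_j$ for $j>j_0$ can be replaced by its restriction $k_j^\sharp\in\cH_c^\sharp$. Points of $V$ need separate checking, because $D_c^\sharp$ removes only $K$. I will verify directly from the definition of $D_c(w,r)$ that $y_j\in V$ is consistent with the restriction.

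\emph{Initial part.} The segment $y_0\to y_{j_0}$ starts at $y_{j_0}\in K$ (when $j_0>0$). I cut it at the successive visits to $K\cup V$. Each block between consecutive visits has the shape ``$\cH_c^\sharp,\dots,\cH_c^\sharp,\cH_c$''. By Lemma~\ref{lem:decomp-Hc-HK} each such block corresponds bijectively to a single $h_i\in\cH$, with $r_{h_i(\cdot)}$ given by \eqref{eq:return-h-r}. This produces $h_1,\dots,h_r\in\cH$ followed by $h_{r+1},\dots,h_{r+s}\in\cH_c^\sharp$, satisfying \eqref{eq:decomp-Hc-HK2}.

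\emph{Inverse map.} Given such a tuple, the composition of the $\cH_c^\sharp$-tail and of each $\cH$-element, the latter expanded via Lemma~\ref{lem:decomp-Hc-HK}, is a compatible chain of $\cH_c$-branches. By Lemma~\ref{ladcaxx} this chain defines an element of $\cH_c^{N}$, where $N$ is $s$ plus the sum of the $r$-values of the blocks. The point is that $\cH_c^\sharp\subset$ restrictions of $\cH_c$, and that the compatibility conditions transfer verbatim. The two maps are mutually inverse because the cut points are intrinsically determined: they are exactly the times at which the orbit lies in $K\cup V$. Injectivity follows from the distinctness statement in Lemma~\ref{ladcaxx}.

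\emph{Main obstacle.} I expect the difficulty to be the vertex points. Lemma~\ref{lem:decomp-Hc-HK} is stated for $x\in K$, while $\cF_K$ maps into $K\cup V$, and $D_c(w,r)$ has special vertex-removal rules. Points landing in $V$ must be shown to terminate blocks consistently. I would first try to reduce to $x\notin V$ and to $y_j\notin V$ for intermediate $j$, arguing as in the proof of Lemma~\ref{ladcaxx}: a preimage of a vertex passes through $W\subset K$ one step before. The vertex case for $x$ itself would then be handled by that same computation.
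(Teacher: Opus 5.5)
Your strategy is the paper's. You follow the $\cF_c$-orbit $y_j=\cF_c^j(h(x))$ down to $y_N=x$ and cut it at its visits to $K$. Each completed excursion becomes an element of $\cH$: the paper obtains this from Lemma~\ref{lem:invbranch-FK} and the fact that $\cF_K$ is induced by $\cF_c$, while you use Lemma~\ref{lem:decomp-Hc-HK}, which is equivalent. The branches after the last visit are restricted to $\cH_c^\sharp$, and the map is inverted by expanding each $\cH$-branch via Lemma~\ref{lem:decomp-Hc-HK}. For $x\notin V$ your argument is complete.

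The one real error is putting $V$ into the cut set. If $x\in V$, your rule gives $j_0=N$ and $s=0$. The last block is then the single step $y_{N-1}\to x$, and it would have to be an element $h_r\in\cH$ with $x\in\Dom(h_r)$. No such element exists: $\Dom(h)=D_K(w,1)\subset K$ for every $h\in\cH$, while $K\cap V=\varnothing$. Your definition of $j_0$ also contradicts your plan to check vertices inside the tail, since with it the tail can never contain a vertex.

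The fix is to cut only at visits to $K$, as the paper does. Because $\cF_c$ is undefined on $V$, you have $y_j\notin V$ for $j<N$, so a vertex can only occur as $x=y_N$. In that case the proof of Lemma~\ref{ladcaxx} gives $y_{N-1}=g_a x\in W\subset K$ for a single letter $a$. Hence $s=1$, and the last branch $h^c_a$ restricts to an element of $\cH_c^\sharp$ simply because $x\notin K$. So no $\cH$-block ever ends at a vertex, Lemma~\ref{lem:decomp-Hc-HK} is only ever applied at points of $K$, and the ``main obstacle'' you anticipate does not arise.
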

\begin{remark}
  Notice that if $x\in K$, then necessary $s=0$. Moreover, if~$r,s\geq1$, then $h_{r+1}\circ\dotsb\circ h_{r+s}(x)\in K$.
\end{remark}
\begin{proof}
The identity corresponds to the empty tuple. We therefore
take $h\in\cH_c^m$ with $m\geq1$ and $x\in\Dom(h)$, and put $y_j:=\cF_c^j(h(x))$ for $j=0,\dots, m$. In particular, $y_m=x$ and a vertex can occur only at the last step.
Let $0<j_1<\dotsb<j_r\leq m$ be the indices $j\in\{1,\dots,m\}$ such that $y_j\in K$. Since $\cF_K$ is induced by $\cF_c$ on $K\cup V$, we have $\cF_K(y_{j_{i-1}})=y_{j_i}$ for $1\leq i\leq r$ and with $j_0=0$.
Thus, by Lemma~\ref{lem:invbranch-FK}, there are unique $h_i\in\cH$ such that $y_{j_i}\in\Dom(h_i)$ and $y_{j_{i-1}}=h_i(y_{j_i}).$
We then let $s:=m-j_r$. For $1\leq i\leq s$, we have $y_{j_r+i}\notin K$. In particular, the unique cuspidal inverse branch sending $y_{j_r+i}$ to $y_{j_r+i-1}$ provided by Lemma~\ref{ladcaxx} restricts to
a branch  $h_{r+i}\in\cH_c^\sharp$. By construction $h(x)=h_1\circ\dotsb\circ h_{r+s}(x)$ which yields a map between the two sets in the statements. Conversely, expanding each $\cH$ branch by Lemma~\ref{lem:decomp-Hc-HK} recovers the cuspidal decomposition, so this map is a bijection.
   \end{proof}

\subsection{Comparing the three maps}\label{sec:comparing-three-maps}

We conclude the section with an example to help visualize the difference between the three maps. Consider the example~$x=[w]$ with
\[ w= a\, a^+\, a^{++}\,
  b\, b^{+}\, b^{+-}\,b^{+-+}\, b^{+-++}\,
  c\, c^-\,
  d
  \dotsb\in\Sigma^\omega \]
where for all~$m\in\Sigma$, we wrote~$m^{ \pm}:=\hat{m}\pm1$ and where~$|\hat{a^{++}}-b|,|\hat{b^{+-++}}-c|,|\hat{c^{-}}-d|>1$.
Then the first few elements in the orbit of~$w$ under the three maps introduced above are as follows:
\[
  \begin{array}{r|llllllll}
    r & 1 & 2 & 3 & 4 & 5 & 6 & 7  \\ \hline
    \cF_K^r(w) & a^{++}b \dotsb & b^{+-++}c\dotsb & c^{-} d\dotsb & & & &  \\
    \cF_c^r(w) & a^{++}b\dotsb & bb^+\dotsb & b^{+}b^{+-}\dotsb & b^{+-}b^{+-+}\dotsb & b^{+-++} c\dotsb & c c^-\dotsb & c^-d\dotsb  \\
    \cFAMU^r(w) & bb^+\dotsb & b^{+-}b^{+-+} \dotsb & c c^-\dotsb & d\dotsb & & & 
  \end{array}
\]
where for each point of the orbit we indicated the first letters of the corresponding word in~$\Sigma^\omega$.

\subsection{Natural extensions and invariant measures}\label{sec:natur-extens-invar}

\subsubsection{The invariant measure for~$\cB$ and~$\cF_K$}

Let
$$ \df s(x) = \frac{\df x}{ix} = \df\theta \qquad( x = \e^{i\theta} \in \dD) $$
be the arc length measure on the circle. It is a routine computation to check that the measure~$\zeta$ on~$\dD^2$ given by
$$ \zeta(\df x, \df y) = \frac{2\df x \df y}{(x-y)^2} = \frac{-2xy\df s(x) \df s(y)}{(x-y)^2} = \frac{\df s(x)\df s(y)}{1-\Re(y\bar{x})} $$
is invariant under the transformation~$(x, y)\mapsto (g x, g y)$ if~$g\in \SU(1, 1)$; see e.g.~\cite[p.~528]{ArnouxSchmidt2019} for the computation on the half-plane model. 

Following the argument of~\cite[Section~3.1]{Series1985} (see also~\cite{AdlerFlatto1991} and~\cite[Section~6]{ArnouxSchmidt2019}), we can compute invariant measures for~$\cF_K, \cF_c$ and~$\cFAMU$ by integrating~$\zeta(\df x, \df y)$ along a well-chosen set.

\begin{proposition}\label{prop:xi-invar}
  For all~$x\in \dD\setminus V$, define
  \begin{equation}
    \label{eq:def-inv-meas}
    \xi(x) := \int_{y\in \dD \setminus [a]} \frac{\zeta(\df x, \df y)}{\df s(x)} = \int_{y\in \dD \setminus[a]} \frac{\df s(y)}{1-\Re(y \bar{x})}, \qquad (x\in [a]).
  \end{equation}
  \begin{itemize}
    \item The measure~$\xi(x) \df s(x)$ on~$\dD$ is~$\cB$-invariant.
    \item The measure~$\xi(x)\df s(x)$ restricted to~$K$ is~$\cF_K$-invariant.
  \end{itemize}
\end{proposition}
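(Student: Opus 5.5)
We realise $\cB$ as a factor of a bijective map on a subset of $\dD^2$ that preserves $\zeta$; this is the natural-extension approach of Series and Arnoux--Schmidt. Set
$$ \Omega := \bigcup_{a\in\Sigma} [a]\times(\dD\setminus[a]), \qquad \hat\cB(x,y) := (g_a^{-1}x,\ g_a^{-1}y) \quad (x\in[a]). $$
By construction, the density of the projection of $\zeta|_\Omega$ onto the first coordinate is $\xi(x)\,\df s(x)$. Since each $g_a$ lies in $\SU(1,1)$, the measure $\zeta$ is invariant under the diagonal action $(x,y)\mapsto(gx,gy)$. So once we show that $\hat\cB$ is a bijection of $\Omega$ up to null sets, $\hat\cB$ preserves $\zeta|_\Omega$. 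Because $\pi_1\circ\hat\cB=\cB\circ\pi_1$, the pushforward of $\zeta|_\Omega$ under $\pi_1$ is then $\cB$-invariant, which gives the first item.

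The key step is the set identity
$$ \hat\cB\big([a]\times(\dD\setminus[a])\big)=(\dD\setminus[\hat a])\times[\hat a], $$
up to vertices. It follows from $[a]=\overline{g_a(\dD\setminus[\hat a])}$: this gives $g_a^{-1}[a]=\dD\setminus[\hat a]$ up to endpoints, and, taking complements, $g_a^{-1}(\dD\setminus[a])=[\hat a]$. We then decompose
$$ (\dD\setminus[\hat a])\times[\hat a]=\bigcup_{b\neq\hat a}[b]\times[\hat a]. $$
Each piece $[b]\times[\hat a]$ with $b\neq\hat a$ lies in $[b]\times(\dD\setminus[b])$, so it sits inside $\Omega$. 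Taking the union over $a$, the images are $[b]\times[\hat a]$ over all ordered pairs $b\neq \hat a$. Since $a\mapsto\hat a$ is a bijection of $\Sigma$, these are exactly the rectangles $[b]\times[c]$ with $c\neq b$, and they tile $\Omega$ disjointly up to measure zero. Hence $\hat\cB$ is a bijection of $\Omega$ mod null sets. We also need $\xi$ to be locally integrable, so that the measure is $\sigma$-finite. The density $1/(1-\Re(y\bar x))$ blows up only near the diagonal, which happens only at vertices, where $\xi(x)\asymp 1/\dist(x,V)$. This is fine: the measure is infinite but $\sigma$-finite, as expected for parabolic fixed points. On $K$ the density $\xi$ is bounded, because $K$ stays away from $V$.

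For the second item we use the standard fact that an induced map preserves the restriction of an invariant measure, with $\cF_K$ being the first-return map of $\cB$ to $K\cup V$. Kac/Rokhlin's lemma is usually stated for finite measures, but for conservative $\sigma$-finite systems the induced map still preserves the restricted measure, provided almost every point of $K$ returns to $K$. Conservativity and the return property both come from \cite[Proposition~4.2]{Stadlbauer2004}: $\DcF$ has full measure, so almost every point visits $K$ infinitely often. Since $K$ has finite $\xi$-measure, we can also argue directly on $\Omega$. Let $\hat K:=\Omega\cap(K\times\dD)$ and consider the first-return map of $\hat\cB$ to $\hat K$. This is a bijection, modulo null sets, between the sets of points with a given return time and their images, so it preserves $\zeta|_{\hat K}$. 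Projecting to the first coordinate then gives invariance of $\xi\,\df s|_K$ under $\cF_K$.

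The main obstacle is the null-set and conservativity bookkeeping: vertices, the set $W$, and the fact that $\xi\,\df s$ is infinite near $V$. To handle it, we will first verify invariance on rectangles $[b,c]\times B$, using the explicit partition above. We then use Stadlbauer's full-measure statement to conclude that the first-return map to $\hat K$ is defined almost everywhere and is invertible.
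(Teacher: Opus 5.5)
Your proposal is correct and follows the paper's route. The paper simply defers to Stadlbauer's natural extension for $\cB$ (the ``stable region'' $Y$ in \cite[Section~4]{Stadlbauer2004}) and then passes to the induced map on $K$. You make this explicit: the identity $\hat\cB\big([a]\times(\dD\setminus[a])\big)=(\dD\setminus[\hat a])\times[\hat a]$ shows that $\hat\cB$ permutes the rectangles $[b]\times[c]$ with $c\neq b$. The inducing step then needs only $\int_K\xi\,\df s<\infty$ and almost-sure return to $K$ from \cite[Proposition~4.2]{Stadlbauer2004}, and you correctly identify both.
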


\begin{proof}
  See~\cite[Section~4]{Stadlbauer2004}. The value of the measure is obtained from a stable region for an associated natural extension, which is denoted~$Y$ in~\cite{Stadlbauer2004}.
  Note that this construction uses the fact that~$R$ is an ideal polygon. We refer to~\cite{KatokUgarcovici2017,AbramsEtAl2026} for the construction of a natural extension in cases when the fundamental domain has vertices interior to~$\D$.
\end{proof}

Note that~$\xi(x)$ takes positive values, and is smooth and bounded outside any neighborhood of~$V$. In particular, we have
$$ \xi(x) \asymp 1, \qquad (x\in K). $$
We also notice that by the invariance of $\zeta(\df x, \df y)$ and a change of variable we have
\begin{align}\label{chva}
  |g_w'(x)|    \xi(g_{w}(x)) =  \int_{ y\in [\hat w]} \frac{\zeta(\df x, \df y)}{\df s(x)}
\end{align}
for all $w=a_1\dotsb a_m\in\Sigma^*$ and $x\in[a]$ with $wa$ admissible.

\subsubsection{The invariant measures for~$\cF_c$ and~$\cFAMU$}\label{inme}

We proceed in a similar fashion for~$\cF_c$.
For all~$a\in \Sigma$ and~$x\in [a]$, we define
$$ B_x := \begin{cases} [a], & (x\in K_a), \\ [a] \cup [a-1], & (x\in [a,\hat a + 1]\setminus K_a), \\ [a] \cup [a+1] , & (x\in [a,\hat a - 1]\setminus K_a). \end{cases} $$
Then, we define
\begin{align}\label{eq:def-invar-Fc}
  \varphi(x) = \int_{\dD \setminus B_x} \frac{\zeta(\df x, \df y)}{\df s(x)}.
\end{align}
By construction, we have~$\inf_{x\in \dD} \dist(x, \dD\setminus B_x) > 0$, and thus
$$ \varphi(x) \asymp 1, \qquad (x\in \dD). $$
Notice also that~$\varphi$ is positive, smooth outside of the border of~$\partial K$, and that~$\varphi(x) = \xi(x)$ for all~$x\in K$.

\begin{proposition}\label{prop:varphi-invar}
  The measure~$\varphi(x) \df s(x)$ is invariant under~$\cF_c$.
\end{proposition}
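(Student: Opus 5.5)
We follow the natural-extension strategy of Proposition~\ref{prop:xi-invar}, in the spirit of~\cite[Section~3.1]{Series1985}. Consider the domain
$$ \Omega_c := \{(x,y)\in \dD^2 : x\in \dD\setminus V,\ y\in \dD\setminus B_x\} \subset \dD^2, $$
equipped with the measure $\zeta$. By Fubini and the definition~\eqref{eq:def-invar-Fc}, the projection of $\zeta|_{\Omega_c}$ onto the first coordinate is exactly $\varphi(x)\df s(x)$. It therefore suffices to build a map $\hat\cF_c:\Omega_c\to\Omega_c$ that is a bijection up to null sets, preserves $\zeta$, and projects to $\cF_c$ on the first coordinate.

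The natural candidate is $\hat\cF_c(x,y) := (g^{-1}x, g^{-1}y)$, where $g = g_{a_1}\dotsb g_{a_j}$ is the group element such that $\cF_c(x) = g^{-1}x$. Here $g$ is read off from~\eqref{eq:altm}, and on each piece $h^c_{w}(D_c(w))$ it equals $g_w$. Since $\zeta$ is invariant under the diagonal action of $\SU(1,1)$ and $g$ is locally constant, $\zeta$-invariance is automatic. The whole content of the proof is therefore the set-theoretic identity
$$ \Omega_c = \bigsqcup_{w\in\Lambda_c} \{(h^c_w(x), g_w y) : x\in D_c(w),\ y\in \dD\setminus B_x\} \quad\text{(up to null sets)}, $$
which says that $\hat\cF_c$ is injective with image $\Omega_c$. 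We verify this fiberwise. Fix $x'=h^c_w(x)$, so that $x\in D_c(w)$. We must show that $g_w(\dD\setminus B_x) = \dD\setminus B_{x'}$. Letting $w$ range over the branches with $x\in D_c(w)$, and using Lemma~\ref{ladcaxx}, this amounts to showing that the sets $g_w(\dD\setminus B_x)$ partition $\dD\setminus B_x$'s complement in the correct way. Equivalently, for fixed $x$, the images $g_w^{-1}(\dD\setminus B_{h^c_w(x)})$ over $w$ with $x\in D_c(w)$ tile $\dD\setminus B_x$ up to endpoints.

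For the computation we use $[a]=\overline{g_a(\dD\setminus[\hat a])}$ repeatedly, so that $g_a^{-1}(\dD\setminus[a]) = [\hat a]$. If $x'\in K_{a}$ and $w=a$, then $B_{x'}=[a]$ and $g_a^{-1}(\dD\setminus [a])=[\hat a]$. For a cuspidal word $w=a_1\dotsb a_n$, say right-cuspidal, $x'$ lies in the cuspidal region of $[a_1]$ and $B_{x'}=[a_1]\cup[a_1\pm1]$. One then computes $g_w^{-1}(\dD\setminus B_{x'})$: the successive letters $a_{i+1}=\hat a_i+\delta$ together with the cycle relation around the cusp imply that $g_w^{-1}$ maps the complement of two adjacent arcs to a union of arcs adjacent to $[\hat a_n]$. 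This union should equal $[\hat a_n]$ or $[\hat a_n]\cup[\hat a_n\mp 1]$, according to whether $\ell_c(w)$ is $1$ with the last pair non-cuspidal or cuspidal. Comparing with the description of $D_c(w)$ (Figure~\ref{fig:domainHC}), the pieces $g_w^{-1}(\dD\setminus B_{h_w(x)})$ for the admissible $w$ should then exactly fill $\dD\setminus B_x$. This is the analogue of the tiling argument of~\cite[Section~4]{Stadlbauer2004}.

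The main obstacle is this combinatorial case analysis. It requires three distinctions: whether $x\in K$ or $x$ lies in a cuspidal region $[b,\hat b\pm1]\setminus K_b$; whether $w$ has length one; and whether the last pair of $w$ is cuspidal. Each case needs a careful check, via the cycle relation at vertices, that the arcs fit together without overlap. I would first handle the case $x\in K$, where $B_x=[b]$ and all branches $w\in\Lambda_c$ with $x\in D_c(w)$ contribute. I would then treat the cuspidal case, where only the branches with $\ell_c(wb_1)=1$ and last pair cuspidal appear and the extra arc $[b\pm1]$ in $B_x$ is exactly compensated. Finally, the pushforward of $\zeta|_{\Omega_c}$ under $\hat\cF_c$ equals $\zeta|_{\Omega_c}$, and projecting to the first coordinate gives $\cF_c$-invariance of $\varphi\,\df s$.
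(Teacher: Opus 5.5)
Your framework is the paper's: the same fibred set $\{(x,y): y\in\dD\setminus B_x\}$ (the paper's $N_0$, intersected with $A\times A$ to discard null sets), the same diagonal map, and the same reduction of the invariance of $\varphi\,\df s$ to bijectivity of that map. The gap is that bijectivity is the entire content of the proof, and you only assert it (``should then exactly fill''). Moreover, the shape you predict for the individual pieces is wrong, so the sketch would not close as written. Your first formulation $g_w(\dD\setminus B_x)=\dD\setminus B_{x'}$ is also false; only the subsequent ``equivalently'' tiling statement is the right one.

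Let $b$ be the first letter of $x$. Since $B_{x'}\supset[a_1]$ and $g_{a_1}^{-1}(\dD\setminus[a_1])=[\hat a_1]$, every piece satisfies $g_w^{-1}(\dD\setminus B_{h^c_w(x)})\subset[\hat w]\subset[\hat a_n]$. So a piece is never $[\hat a_n]\cup[\hat a_n\mp1]$. It equals $[\hat a_n]$ only when $w=a$ is a single letter with $(a,b)$ non-cuspidal.

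Call $wb$ $\delta$-cuspidal if every consecutive pair $(c,c')$ in it has $c'=\hat c+\delta$. For such $w=a_1\dotsb a_n$ one has $h^c_w(x)\in[a_1,\hat a_1+\delta]\setminus K_{a_1}$. Hence $B_{h^c_w(x)}=[a_1]\cup[a_1-\delta]$, and the piece is $[\hat w]\setminus[\hat w,a_1-\delta]$.

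For fixed $x$ and $\delta$, these branches have the following structure:
\begin{itemize}
\item they all end in the same letter $a$, with $\hat a=b-\delta$;
\item there is exactly one of each length;
\item $w^{(n+1)}$ is $w^{(n)}$ with a letter prepended, so $[\hat w^{(n)},a_1-\delta]=[\hat w^{(n+1)}]$.
\end{itemize}
The pieces are therefore the differences $[\hat w^{(n)}]\setminus[\hat w^{(n+1)}]$, which telescope to $[\hat a]$ minus a vertex. If each were $[\hat a]$ or larger, as you predict, they would all overlap.

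The correct bookkeeping is then as follows.
\begin{itemize}
\item For $x\in K_b$: $\dD\setminus[b]=\bigcup_{a\neq\hat b}[\hat a]$. Each non-cuspidal $a$ is covered by its one-letter branch, and the two cuspidal ones by telescoping.
\item For $x\in[b,\hat b+\delta]\setminus K_b$: the condition $\ell_c(wb_1b_2)=2$ in $D_c(w)$ kills every branch with $wb$ $\delta$-cuspidal. This removes exactly $[\hat a]=[b-\delta]$, the extra arc of $B_x$.
\end{itemize}
Your case ``$\ell_c(w)=1$ with last pair non-cuspidal'' never occurs in $\Lambda_c$ for $|w|\geq2$.

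The paper avoids this geometric case analysis by working directly in the coding. Write $x=[a_1a_2\dotsb]$ and $y=[b_1b_2\dotsb]$. The condition $y\notin B_x$ becomes $\ell_c(\hat a_2\hat a_1b_1)=2$. The inverse $\cS$ of $\cT_c$ takes the maximal $j$ with $\ell_c(\hat a_1b_1\dotsb b_j)=1$, which exists because $y\in A$. It then moves $\hat b_j\dotsb\hat b_1$ to the front of the coding of $x$, i.e.\ $\cS(x,y)=([\hat b_j\dotsb\hat b_1a_1\dotsb],[b_{j+1}\dotsb])$. The identities $\cT_c\circ\cS=\id$ and $\cS\circ\cT_c=\id$ then follow from \eqref{eq:altm} and the reversal symmetry \eqref{eq:invlc}. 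Either this symbolic inverse or a correct version of the telescoping tiling above is the step missing from your proposal.
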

\begin{proof}
  Although this can be checked directly, we introduce the underlying natural extension, from which the definition of~$\varphi(x)$ was derived.

  Let~$A\subset \dD$ consist of those elements~$x = [a_1, a_2, \dotsc]$ not in~$C(G)$, for which~$[a_j, a_{j+1}]\subset K$ for infinitely many values of~$j$. This is the recurrence set for~$\cF_K$, and it is a full measure set~\cite[Theorem~2]{Stadlbauer2004}. For all~$x\in A$, we let~$g_x\in G$ be the element for which~$\cF_c(x) = g_x^{-1} x$, and we define
  $$ \cT_c(x, y) := (g_x^{-1} x, g_x^{-1} y), \qquad ((x, y) \in A \times A). $$
  We define a non-trivial subset of~$A\times A$ on which~$\cT_c$ acts and is invertible. In concrete examples, this set can be guessed numerically using the method in~\cite{ArnouxSchmidt2019}. In our case, we define
  \begin{align*} 
    N_0 &{} := \bigcup_{\substack{a_1,a_2,b\in\Sigma\\ \ell_c(\hat a_2\hat a_1 b )=2}}[a_1,a_2]\times[b] = \bigcup_{x\in \dD} \{x\} \times (\dD\setminus B_x), 
  \end{align*}
  and
  $$ N := N_0 \cap (A\times A). $$
  Notice that~$\varphi(x) = \int_{y\in\dD} \1_{(x, y) \in N} \frac{\zeta(\df x, \df y)}{\df s(x)}$. In order to prove the proposition, following the argument in \emph{e.g.}~\cite[section~3.1]{Series1985}, it suffices to show that~$\cT_c$ is one-to-one from~$N$ to itself.

  We construct the inverse~$\cS$ of~$\cT_c$ as follows. Let~$(x,y)\in  N$ with~$x=[a_1a_2\dotsb]$ and~$y=[b_1b_2\dotsb]$. Then, let~$j\geq1$ be the maximal integer such that~$\ell_c(\hat a_1 w)=1$ with~$w=b_1\dotsb b_j$. Since~$y\in A$ such a~$j$ exists. Moreover, since~$\ell_c(\hat a_2\hat a_1 b_1)=2$ it follows that also~$\ell_c(\hat a_2\hat a_1 w)=2$.
  By~\eqref{eq:invlc} we have that~$x\in D_c(\hat w)$ and we let 
  $$
  \cS(x,y):=(h_{\hat w}( x), g_w^{-1}(y))=\big([\hat b_j \dotsb \hat b_1 a_1 \dotsb],[b_{j+1}b_{j+2}\dotsb]\big).
  $$

  This belongs to~$N$ since, by definition of~$j$, we have~$\ell_c(b_{j-1}b_jb_{j+1})=2$ (or~$\ell_c(\hat a_1 b_1 b_2)=2$ if~$j=1$).
  By construction~$\cT_c \circ \cS=\id $. We need to show that also~$\cS \circ \cT_c=\id$.
  Let~$(x,y)\in N$ as above and let~$j\geq1$ be such that~$1=\ell_c(w a_{j+1})<\ell_c(w a_{j+1}a_{j+2})=2$ with~$w=a_1\dotsb a_{j}$.
  By~\eqref{eq:altm} we have
  $$
  \cT_c(x,y)=(g_w^{-1} x,g_w^{-1} y)=\big([a_{j+1}a_{j+2}\dotsb], [\hat a_{j}\dotsb \hat a_{1}b_1b_2\dotsb]\big).
  $$
  This also belongs to~$N$ since~$\ell_c(\hat{a_{j+2}} \hat{a_{j+1}} \hat{a_j})=2$ by~\eqref{eq:invlc}, and it immediately follows that~$(\cS\circ \cT_c)(x,y)=(x,y)$.
\end{proof}

For completeness, and since it could be of independent interest, we also compute an invariant measure for~$\cFAMU$, although it will play no role in the paper.

Given $a\in\Sigma$, let $w_*^\pm=[a_1,a_2,\dotsb]$ with $a_1=a\pm1$ and $a_{j+1}=\hat a_{j}\pm(-1)^{j}$ for $j\geq1$. Notice that $w_*^\pm$ are periodic points of the Bowen series map but are not cusps, because the period is not a cuspidal cycle~\cite[Lemma~4.9]{ArtigianiEtAl2016}. In particular, they are hyperbolic points.

For~$a\in\Sigma$, let~$Y_a$ be the interval in~$\dD$ from~$w_*^-$ to~$w_*^+$ in the counter-clockwise direction. We note that~$Y_a$ containes a neighborhood of~$[a]$. We can write the complement to~$Y_a$ in~$\dD$ as
\[
  \dD\setminus Y_a=\bigcup_{\substack{b_1\dotsb b_m\in\Sigma^*,\text{ $m\geq1$ odd}\\
      b_1=a\pm 1 \Rightarrow  b_{j+1}=\hat b_{j}\pm (-1)^j,\ 1\leq j<m-1
      \\  b_m\neq \hat b_{m-1}\pm(-1)^{m-1},
    }}[b_1,\dots,b_m]
\]
where~$b_{m-1}$ in the last line of the subscript should be interpreted as~$b_0:=\hat a$ if~$m=1$, so that when~$m=1$ the condition becomes~$\abs{b_1-a}>1$.

We let
$$ \varphi_M(x) := \int_{\dD\setminus Y_a} \frac{\zeta(\df x, \df y)}{\df s(x)} \qquad x\in[a]. $$
Since~$\dist([a], \dD \setminus Y_a)>0$ for all~$a\in \Sigma$, we have~$\varphi_M(x) \asymp 1$ for all~$x\in \dD$.

\begin{proposition}
  The measure~$\varphi_M(x)\df s(x)$ on~$\dD$ is invariant under~$\cFAMU$.
\end{proposition}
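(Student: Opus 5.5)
The plan is to copy the proof of Proposition~\ref{prop:varphi-invar}: exhibit a natural extension of~$\cFAMU$ that acts bijectively on a subset of~$A\times A$ and whose fibres over~$x\in[a]$ are~$\dD\setminus Y_a$. Concretely, for~$x\in A$ let~$g_x\in G$ be defined by~$\cFAMU(x)=g_x^{-1}x$, and put~$\cT_M(x,y):=(g_x^{-1}x,g_x^{-1}y)$. Let
$$ N_M:=\Big(\bigcup_{a\in\Sigma}[a]\times(\dD\setminus Y_a)\Big)\cap(A\times A). $$
By construction~$\varphi_M(x)=\int_{y}\1_{(x,y)\in N_M}\frac{\zeta(\df x,\df y)}{\df s(x)}$. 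The measure~$\zeta$ is~$\SU(1,1)$-invariant, so~$\cT_M$ preserves~$\zeta$. As in~\cite[Section~3.1]{Series1985}, it is then enough to show that~$\cT_M$ maps~$N_M$ bijectively onto itself. Projecting onto the first coordinate then gives the invariance of~$\varphi_M(x)\df s(x)$.

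To check this I work in symbolic coordinates. Write~$x=[a_1a_2\dotsb]$ and~$y=[b_1b_2\dotsb]$. The displayed description of~$\dD\setminus Y_{a_1}$ says that~$y$ begins with a zig-zag block~$b_1\dotsb b_m$ of odd length. This block follows the alternating pattern~$b_{j+1}=\hat b_j\pm(-1)^j$, started from~$b_0=\hat a_1$, and breaks that pattern exactly at step~$m$. Equivalently, the reversed word~$\hat b_m\dotsb\hat b_1a_1$ makes a completed cuspidal turn of the wrong parity.

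Suppose first that~$x\in K$, so~$\cFAMU(x)=\cB(x)$ and~$\cT_M(x,y)=([a_2\dotsb],[\hat a_1b_1b_2\dotsb])$. The pair~$(a_1,a_2)$ is non-cuspidal. One has to check that~$\hat a_1b_1\dotsb$ lies in~$\dD\setminus Y_{a_2}$, i.e. that~$m'=1$ with~$|\hat a_1-a_2|>1$, which is exactly the separation of~$(\hat a_1, a_2)$.

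Suppose next that~$x\in I^\pm$, with cuspidal prefix~$a_1\dotsb a_j$ of maximal length. Then~$\cFAMU$ removes~$a_1\dotsb a_{j+1}$, and~$y$ becomes~$\hat a_{j+1}\dotsb\hat a_1b_1\dotsb$. One has to check that the prepended block together with the old zig-zag~$b_1\dotsb b_m$ satisfies the odd-length zig-zag condition relative to~$a_{j+2}$. Here the extra letter~$a_{j+1}$ swallowed by~$\cFAMU$, compared with~$\cF_c$, is precisely what shifts parity by one. This is why~$Y_a$ is bounded by the hyperbolic periodic points~$w_*^\pm$ of the alternating pattern rather than by cusps.

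The inverse~$\cS$ is built as in the proof of Proposition~\ref{prop:varphi-invar}. Given~$(x,y)\in N_M$, read off from~$y$ the maximal prefix~$b_1\dotsb b_j$ such that~$\hat b_j\dotsb\hat b_1a_1$ is a legitimate AMU block (cuspidal plus one letter), or decide that~$j=1$ in the~$K$ case. Then set~$\cS(x,y)=(g_{\hat b_j\dotsb\hat b_1}x,\,g_{b_1\dotsb b_j}^{-1}y)$. Existence of such a~$j$ uses~$y\in A$, and~$\cS(x,y)\in N_M$ again follows from the odd-length condition. One then verifies~$\cT_M\circ\cS=\id$ and~$\cS\circ\cT_M=\id$ directly from the descriptions above.

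The main obstacle is the combinatorial bookkeeping in the case~$x\in I^\pm$. There one must show that the odd-parity zig-zag condition defining~$\dD\setminus Y_a$ is exactly preserved, in both directions. In particular one must show that a cuspidal block ending at the maximal~$j$ followed by the extra letter yields an odd-length alternating block, and that the break condition~$b_m\neq\hat b_{m-1}\pm(-1)^{m-1}$ matches the maximality of~$j$. I would check this first on the orbit example of Section~\ref{sec:comparing-three-maps}, and then prove it using~\eqref{eq:invlc} and the fact from~\cite[Lemma~4.9]{ArtigianiEtAl2016} that alternating periods are not cuspidal cycles.
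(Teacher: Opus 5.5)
Your set-up is the paper's: the same $\cT_M$, the same set $N=(A\times A)\cap\bigcup_a[a]\times(\dD\setminus Y_a)$, and the same reduction to bijectivity via Series' argument. However, the two combinatorial inputs on which everything rests are stated incorrectly, so the bookkeeping you defer would not close.

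First, $\cFAMU$ removes exactly the maximal cuspidal prefix $a_1\dotsb a_R$ of $x$; it is $\cF_c$ that removes one letter less. It does not swallow a further letter $a_{R+1}$. With your version $N$ is not even preserved. Suppose $(a_R,a_{R+1})$ is non-cuspidal and $(a_{R+1},a_{R+2})$ is cuspidal. Then the new second coordinate $[\hat a_{R+1}\hat a_R\dotsb]$, with zig-zag started from $b_0=\hat a_{R+2}$, breaks at step $2$, which is even. Your version is also inconsistent with your own $K$ case, where you correctly remove only $a_1$.

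The more serious gap is the inverse: $\cS$ cannot be defined by taking a maximal prefix of $y$. Take $(x,y)\in N$ with $y=[b_1b_2\dotsb]$ and $b_2=\hat b_1+\delta$. Let $b_1\dotsb b_{n+1}$ be the maximal cuspidal prefix of $y$. Let $r\geq 1$ be minimal with $b_{n+r+1}\neq\hat b_{n+r}+\delta(-1)^r$; this is the length of the alternating run following the cuspidal block.

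Any candidate cut has $j\le n+1$, since $\hat b_j\dotsb\hat b_1$ must be cuspidal. Consider the second coordinate $[b_{j+1}\dotsb]$ of $\cS(x,y)$, with zig-zag started from $b_0=b_j$. It breaks at step $r$ if $j=n+1$, at step $r+1$ if $j=n$, and at step $2$ if $j\le n-1$. Hence $\cS(x,y)\in N$ forces $j=n+1$ when $r$ is odd and $j=n$ when $r$ is even (and $j=1$ when $(b_1,b_2)$ is non-cuspidal). This is exactly the rule in the paper. The parity of the run after the cuspidal block is the missing idea, and a maximality criterion cannot see it.

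For a concrete failure, suppose $(b_1,b_2)$ and $(b_2,b_3)$ are cuspidal of orientation $\delta$, $(b_3,b_4)$ is cuspidal of orientation $-\delta$, $(b_4,b_5)$ is non-cuspidal, and $|b_1-a_1|>1$. Then the maximal choice $j=3$ yields $([\hat b_3\hat b_2\hat b_1a_1\dotsb],[b_4b_5\dotsb])\notin N$, whereas $j=2$ gives the genuine preimage.

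Once you use the correct description of $\cFAMU$ and this parity rule, the remaining checks are the routine verifications you describe: $\cT_M(N)\subset N$, $\cT_M\circ\cS=\mathrm{id}$ and $\cS\circ\cT_M=\mathrm{id}$.
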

\begin{proof}
  Analogously to the earlier cases, we let~$\cT_M(x, y) := (g_x^{-1} x, g_x^{-1} y)$
  with~$g_x$ such that~$\cFAMU(x) = g_x^{-1} x$, and we define
  \begin{align*}
    N_0:={}& \bigcup_{a\in \Sigma} [a] \times (\dD \setminus Y_a), \\
    N :={}& (A \times A) \cap N_0.
  \end{align*}
  Again it suffices to show that the map~$\cT_M$ is one-to-one on~$N$.
  Let~$(x,y)\in  N$ with~$x=[a_1a_2\dotsb]$ and~$y=[b_1b_2\dotsb]$.
  We define 
  $$
  \cS(x,y)=(g_w^{-1}( x), g_w^{-1}(y))=\big([\hat b_j \dotsb \hat b_1 a_1 \dotsb],[b_{j+1}b_{j+2}\dotsb]\big),
  $$
  for~$w=b_1\dots b_j$ and~$j\geq1$ defined as follows.
  \begin{itemize}
    \item If~$|\hat b_1-b_2|>1$, then~$j:=1$.
    \item If~$b_2=\hat b_1 + \delta$ for some~$\delta\in \{\pm 1\}$, then we let~$n\geq1$ be the minimal integer such that~$b_{n+2}\neq \hat b_{n+1} + \delta$. Let~$r\geq1$ be the minimal integer such that~$b_{n+r+1}\neq \hat b_{n+r} + \delta (-1)^r$. 
    Then we take~$j:=n$ if~$r$ is even and~$j:=n+1$ if~$r$ is odd.
  \end{itemize}
  By construction it can be checked that~$\cS(x, y) \in N$ and~$\cT_M\circ \cS(x, y) = (x, y)$. We leave to the interested reader to verify that~$\cT_M(N) \subset N$ and~$\cS\circ \cT_M = \id$.
\end{proof}

\begin{remark}
  In~\cite{Marchese2018}, a technical assumption is made on the fundamental domain of the group (Assumption 2.1 \emph{ibid.}). We believe that the metric estimates on inverse branches which we work out for~$\cF_c$ in Section~\ref{sec:estim-cusp-inverse-branch} could be useful to relax this assumption.
\end{remark}

\section{Metric properties of inverse branches} \label{sec:est:apriori}

\subsection{Distortion and expansion estimates}

We quote the following uniform bounded distortion property from~\cite[Lemma~4.3]{Stadlbauer2004},
see also~\cite[p.~164-165]{BowenSeries1979}.
\begin{lemma} \label{lem:bounded-dist}
  There exists~$C\geq0$ such that for all~$h\in \cH^*$ we have
  \begin{equation}
    \abs{h''(x)} \leq C \abs{h'(x)}, \qquad (x\in \Dom(h)). \label{eq:bounded-dist}
  \end{equation}
\end{lemma}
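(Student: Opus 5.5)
The plan is to reduce the distortion bound to a geometric statement about where the pole of each Möbius map $h$ sits relative to $\Dom(h)$. Every $h\in\cH^*$ is the restriction to $\Dom(h)$ of $x\mapsto g_w x$ for some $g_w=\smat{\alpha&\bar\beta\\ \beta&\bar\alpha}\in G\subset \SU(1,1)$. First I would compute both derivatives explicitly: $h'(x)=(\beta x+\bar\alpha)^{-2}$ and $h''(x)=-2\beta(\beta x+\bar\alpha)^{-3}$. This gives
$$ \frac{\abs{h''(x)}}{\abs{h'(x)}} = \frac{2\abs{\beta}}{\abs{\beta x+\bar\alpha}} = \frac{2}{\abs{x - x_h}}, \qquad x_h := -\bar\alpha/\beta, $$
when $\beta\neq0$; if $\beta=0$ the bound is trivial. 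Here $x_h = g_w^{-1}\infty$ lies outside $\bar\D$, and $\abs{x_h}=\abs{\alpha}/\abs{\beta}>1$. The derivative along the circle differs from the complex derivative only by bounded rotation factors, whose derivatives are bounded, so the claim reduces to showing
$$ \inf_{h\in\cH^*}\ \dist(\Dom(h), x_h) > 0. $$

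The reflection of $x_h$ in the unit circle is $1/\bar x_h = -\bar\beta/\alpha = g_w^{-1}(0)$. Since $\abs{x-x_h}\asymp \abs{x - 1/\bar x_h}$ for $\abs{x}=1$ whenever $\abs{x_h}$ is bounded, and $\abs{x-x_h}\ge \abs{x_h}-1$ is large otherwise, it suffices to show that the point $g_w^{-1}(0)\in\D$ stays a uniform Euclidean distance away from $\Dom(h)\subset\dD$.

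Next I would use the geometry of $R$. The point $g_w^{-1}(0)$ lies in the tile $g_w^{-1}R'$, where $R'$ is the translate of $R$ containing $0$; without loss of generality $0\in R$. Writing $w=a_1\cdots a_n$, the tile $g_w^{-1}R$ is separated from $\Dom(h)$ by the geodesic $g_{a_n}^{-1}\cdots g_{a_1}^{-1}s_{a_1}$. By admissibility of the word together with the letter $b$ of the component $K_b\ni x$, the domain $\Dom(h)$ lies inside $[b]$ or its neighbours, while $g_w^{-1}R$ lies on the side of $\widehat{a_n}$. More precisely, $g_w^{-1}(0)$ sits in the half-disc cut off by the side $s_{\hat a_n}$, or deeper still. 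The defining constraints of $D_K(w,r)$ ($\abs{b-\hat a_n}\ge1$, with $K_b$ bounded away from $V$) guarantee that $\Dom(h)$ avoids a fixed neighbourhood of the closed arc $[\hat a_n]$. Since $g_w^{-1}(0)$ accumulates only inside the region bounded by $s_{\hat a_n}$ and $[\hat a_n]$, the Euclidean distance from $\Dom(h)$ is bounded below by a constant depending only on $R$ and the finitely many letters.

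The main obstacle is making this last separation argument rigorous, especially near vertices. When $b=\hat a_n\pm1$ the arcs are adjacent, and $g_w^{-1}(0)$ might approach their common endpoint. To handle this I would use that $K_b$ stays away from $V$ (it retains only the middle subintervals), which gives a uniform gap between $K_b$ and the endpoints of $[\hat a_n]$. Concretely, I would take the constant to be $\min_{a,b}\dist(K_b,\overline{H_{\hat a}})$, where $H_{\hat a}$ is the closed half-disc bounded by $s_{\hat a}$ containing $[\hat a]$, minimized over the finitely many pairs with $b\neq a$. This is positive because $K_b$ is compact and disjoint from $[\hat a]$ and its endpoints.
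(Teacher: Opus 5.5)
Your argument is correct, and it takes a different, more self-contained route than the paper. The paper writes $\frac{h''}{h'}(x)=-\frac{D^2(\cB^n)}{D(\cB^n)^2}(h(x))$, using $\cB^{\abs{w}}\circ h=\id$ on $\Dom(h)\subset K$, and then quotes Stadlbauer's Lemma~4.3. The mechanism behind that lemma (which the paper reproduces for $\cH_c$ in Proposition~\ref{prop:contraction}) goes through a fixed point: it places the repelling or indifferent fixed point $\psi_w$ of $g_w$ in $[\hat a_n]$, and uses $\abs{\psi_w-g_w^{-1}\infty}\to0$ as $\abs{w}\to\infty$. Short words are then handled trivially.

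You instead locate the reflection $g_w^{-1}(0)=g_{\hat a_n}\cdots g_{\hat a_1}(0)$ of the pole directly, through the nesting of tiles $g_{\hat a_n\cdots\hat a_1}R\subset H_{\hat a_n}$. This is non-asymptotic, needs no fixed-point analysis, and gives the explicit constant $2/c$. Its only input is that $\Dom(h)$ stays uniformly away from the closed arc $[\hat a_n]$, so it would also cover $\cH_c$ verbatim; that separation is exactly what the paper checks there. The paper's route buys brevity and stays inside Stadlbauer's framework.

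A few small repairs are needed.
\begin{itemize}
\item The minimum should run over pairs with $b\neq\hat a$, which is admissibility of $a_nb$, not $b\neq a$. For $b=\hat a$ the distance is $0$.
\item ``Without loss of generality $0\in R$'' needs one line, since $R$ is fixed. Take $p\in\operatorname{int}R$. Then $g_w^{-1}(p)\in H_{\hat a_n}$, and $g_w^{-1}(0)$ lies within hyperbolic distance $d(0,p)$ of it, so it lies in a closed hyperbolic neighbourhood of $H_{\hat a_n}\cap\D$. That neighbourhood is bounded by a hypercycle with the same ideal endpoints, so its closure in $\bar\D$ still meets $\dD$ only in $[\hat a_n]$. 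Compactness then again gives a uniform positive distance to each $K_b$ with $b\neq\hat a_n$.
\item For $\abs{x}=1$ there is the exact identity $\abs{x-x_h}=\abs{x_h}\,\abs{x-g_w^{-1}(0)}\geq\abs{x-g_w^{-1}(0)}$. This removes your case distinction on the size of $\abs{x_h}$.
\end{itemize}
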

\begin{proof}
  The set~$B = K$ satisfies Lemma~4.3 from~\cite{Stadlbauer2004}. Let~$C>0$ be the value given by this lemma. Let~$h\in \cH^*$, $h=h_{w,r}$ for some~$r\geq 0$. Thus, for any~$x\in \Dom(h)$, if~$n:=\abs{w}$, then~$\cB^{n}(h(x)) = x \in \Dom(h) \subset K$. We deduce\footnote{In the proof of~\cite[Lemma~4.3]{Stadlbauer2004} it is assumed that $\im(h)\subset K$, however this is not actually used in the proof, nor is present in the statement of the Lemma. See also Proposition~\ref{prop:contraction} below.}
  $$ \abs{\frac{h''}{h'}(x)} = \abs{\frac{D^2(\cB^{n})}{D(\cB^{n})^2}(h(x))} \leq C $$
  where~$D$ stands for the derivative, and the last bound is~\cite[Lemma~4.3]{Stadlbauer2004}.
\end{proof}
\begin{corollary}\label{cor:acs}
  For~$h, k\in \cH^*\setminus\{\id\}$ and~$x\in \Dom(h)\cap\Dom(k)$, we have
  \begin{align}
    & \abs{\frac{h''}{h'}(x) - \frac{k''}{k'}(x)} \asymp \abs{h^{-1}(0) - k^{-1}(0)}, \label{eq:comparison-distortion} \\
    & \abs{h'(x)} \asymp \abs{h'(0)}, \label{eq:h'-x-0} \\
    & \abs{h(x) - h(0)} \ll \abs{h'(x)}. \label{eq:diffh-0-h'}
  \end{align}
\end{corollary}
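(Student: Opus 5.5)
The plan is to compute everything explicitly for a Möbius map. Write $h=g_w=\smat{\alpha&\bar\beta\\ \beta&\bar\alpha}\in\SU(1,1)$, so that
$$h'(x)=(\beta x+\bar\alpha)^{-2}=j(h,x)^{-2}, \qquad \frac{h''}{h'}(x)=\frac{-2\beta}{\beta x+\bar\alpha}.$$
The inverse is $h^{-1}=\smat{\bar\alpha&-\bar\beta\\ -\beta&\alpha}$, which gives $h^{-1}(0)=-\bar\beta/\alpha$. We have $|h^{-1}(0)|<1$, and the pole of $h$ is $h^{-1}(\infty)=-\bar\alpha/\beta=1/\overline{h^{-1}(0)}$.

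For $x\in\dD$ we can write
$$\frac{h''}{h'}(x)=\frac{-2}{x-p_h}, \qquad p_h:=-\bar\alpha/\beta.$$
We also note that $1/\bar p_h=h^{-1}(0)$.

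The key geometric input is that $\Dom(h)\subset K$ stays at distance $\gg 1$ from the pole $p_h$. Indeed, $p_h$ is the reflection in $\dD$ of $h^{-1}(0)$, and $h^{-1}(0)$ lies near the interval $[\hat w]$, i.e.\ outside the cylinder where $\Dom(h)$ sits. Concretely, $h^{-1}(0)$ lies on the hyperbolic geodesic region bounded by $g_w^{-1}$ of the isometric circle, and the isometric circle of $h^{-1}$ separates $\Dom(h)$ from $[\hat w]$.

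To prove this cleanly, I would instead use Lemma~\ref{lem:bounded-dist} directly. Since $|h''/h'(x)|=2/|x-p_h|\le C$, we get $|x-p_h|\ge 2/C$ for all $x\in\Dom(h)$.

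For \eqref{eq:h'-x-0}, we have
$$|h'(x)|/|h'(0)|=|p_h|^2/|x-p_h|^2 .$$
Since $|p_h|>1$ and $|x-p_h|\ge 2/C$, while $|x-p_h|\le 1+|p_h|$, this ratio is $\asymp 1$ once $|p_h|$ is bounded. When $|p_h|$ is large, both $|p_h|^2$ and $|x-p_h|^2$ are $\asymp|p_h|^2$. In all cases the ratio is $\asymp 1$.

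For \eqref{eq:diffh-0-h'}, we compute
$$h(x)-h(0)=\frac{x}{(\beta x+\bar\alpha)\bar\alpha}.$$
Hence
$$|h(x)-h(0)|\le \frac{1}{|\beta||x-p_h|\,|\alpha|} \asymp \frac{|p_h|}{|\beta|^2|x-p_h|^2}\cdot\frac{|x-p_h|}{|p_h|}\ll |h'(x)|\,|x-p_h|/|p_h| \ll |h'(x)|,$$
using $|\alpha|=|\beta||p_h|$ and $|x-p_h|\le 2|p_h|$.

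For \eqref{eq:comparison-distortion}, we compute
$$\frac{h''}{h'}(x)-\frac{k''}{k'}(x)=\frac{2(p_k-p_h)}{(x-p_h)(x-p_k)}.$$
Set $u=h^{-1}(0)$ and $v=k^{-1}(0)$, so that $p_h=1/\bar u$ and $p_k=1/\bar v$. Then $p_k-p_h=(\bar u-\bar v)/(\bar u\bar v)$, and the expression becomes
$$\frac{2(\bar u-\bar v)}{(\bar u x-1)(\bar v x-1)}\cdot\frac{1}{x^2}\cdot x^2,$$
after rewriting $\bar u(x-p_h)=\bar u x-1$. Since $|x|=1$, it suffices to show that $|\bar u x-1|\asymp 1$. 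This follows from $|x-p_h|\ge 2/C$: for $|u|\ge 1/2$, we have $|\bar u x-1|=|u||x-p_h|\gg 1$; for $|u|<1/2$, it is automatic. The upper bound $|\bar u x-1|\le 2$ is trivial.

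The main obstacle is the separation of the pole from $\Dom(h)$. I have reduced this to Lemma~\ref{lem:bounded-dist} via the exact formula for $h''/h'$. One must also check that $h\neq\id$ plays no role beyond $\beta\neq0$. If $\beta=0$, then $h$ is a rotation and the formulas degenerate harmlessly; in any case, nontrivial elements of $G$ without fixed points in $\D$ have $\beta\ne0$.
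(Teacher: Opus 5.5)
Your proof is correct and follows essentially the paper's route: explicit M\"obius formulas, with Lemma~\ref{lem:bounded-dist} giving the separation $|x-p_h|\ge 2/C$ of the pole from $\Dom(h)$, after which all three estimates are direct computation. The only difference is that the paper also uses discreteness to get $|\alpha/\beta|\asymp 1$ (i.e.\ $|p_h|\asymp 1$), which you sidestep by case analysis. One small slip: the middle term of your chain for \eqref{eq:diffh-0-h'} is off by a factor $|p_h|$, but the identity $|h(x)-h(0)|=|h'(x)|\,|x-p_h|/|p_h|$ at its end is exact and suffices.
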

\begin{proof}
  Write
  $$ h(x) = \frac{\alpha x + \beta}{\bar \beta x + \bar \alpha} $$
  and similarly we denote~$\alpha', \beta'$ the components of~$k$. Since~$h\neq \id$, we have~$\beta\neq 0$, and since~$G$ does not contain elliptic elements, we have~$\alpha\neq 0$. The same holds for~$\alpha', \beta'$. By discreteness of~$G$, we have~$\beta \gg 1$, and therefore~$\abs{\alpha/\beta} = \sqrt{1+1/\abs{\beta}^2} \asymp 1$. The bounded distortion property~\eqref{eq:bounded-dist} implies in our case
  $$ \abs{x + \bar\alpha / \bar\beta}\asymp \abs{x + \beta/\alpha} \asymp 1 $$
  and similarly for~$\alpha', \beta'$.
  Indeed the lower-bound is exactly~\eqref{eq:bounded-dist}, while the upper-bound holds since both~$x$ and~$\alpha/\beta$ are bounded. Then,
  $$ \abs{\frac{h''}{h'}(x) - \frac{k''}{k'}(x)} = \frac{2\abs{\beta/\alpha-\beta'/\alpha'}}{\abs{(x+\beta/\alpha)(x+\beta'/\alpha')}} \asymp \abs{\beta/\alpha - \beta'/\alpha'}. $$
  This proves the first item. The second item follows by the simple computation
  $$ \abs{h'(x)} = \abs{h'(0)} \cdot \bigl|\tfrac{\alpha}{\beta}\bigr|^2 \cdot \abs{x+\bar\alpha/\bar\beta}^{-2} \asymp \abs{h'(0)}. $$
  Similarly, the third item follows since
  \begin{equation*} \abs{h(x) - h(0)} = \abs{h'(x)} \cdot \abs{\tfrac{\beta}{\alpha}}\cdot \abs{x+\bar\alpha/\bar\beta} \asymp \abs{h'(x)}. \qedhere
  \end{equation*}
\end{proof}

We proceed to show the uniform expansion property, expressed in terms of inverse branches. This is essentially extracted from~\cite[Proposition~4.4]{Stadlbauer2004}.
\begin{lemma} \label{lem:contraction}
  For some~$\rho\in [0, 1)$, all~$r\geq 1$ and~$h\in \cH^r$, we have
  \begin{equation}
    \abs{h'(x)} \ll \rho^r\qquad \forall x\in\Dom(h)\label{eq:contraction}
  \end{equation}
  with an absolute implied constant.
\end{lemma}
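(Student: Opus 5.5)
The plan is to reduce to a fixed number of iterates, via the decomposition in Lemma~\ref{lem:invbranch-FK}(2) and the chain rule, and to get that from Stadlbauer's expansion estimate. By Lemma~\ref{lem:invbranch-FK}(2), any $h\in\cH^r$ factors uniquely as $h=h_1\circ\dotsb\circ h_r$ with $h_j\in\cH$ and the compatibility condition~\eqref{adda}. Hence for $x\in\Dom(h)$,
$$ h'(x)=\prod_{j=1}^r h_j'(y_j), \qquad y_j:=h_{j+1}\circ\dotsb\circ h_r(x)\in \Dom(h_j)\subset K. $$
It therefore suffices to find an integer $N\geq1$ and $\rho_0<1$ with $\abs{k'(y)}\leq\rho_0$ for every $k\in\cH^N$ and $y\in\Dom(k)$, together with a uniform bound $\abs{k'(y)}\ll 1$ for $k\in\cH^j$, $0\leq j<N$.

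Granting this, write $r=qN+j$ with $0\leq j<N$ and group the factors into $q$ blocks of $N$ consecutive $h_i$'s plus a remainder block. Each block is again an element of $\cH^N$: by the converse direction in the proof of Lemma~\ref{lem:invbranch-FK}, concatenating compatible words yields a word in $\Lambda_K(N)$ whose domain contains the relevant point. This gives $\abs{h'(x)}\ll\rho_0^{q}\ll(\rho_0^{1/N})^r$, which is the claim with $\rho=\rho_0^{1/N}$.

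For the uniform bound on $\abs{k'}$, I would use Corollary~\ref{cor:acs}: $\abs{k'(y)}\asymp\abs{k'(0)}=\abs{\alpha}^{-2}$, and $\abs{\alpha}\geq1$ in $\SU(1,1)$, so $\abs{k'(y)}\ll1$ uniformly over $\cH^*$. This also shows that a bound of the form $\abs{k'}\leq\rho_0$ is essentially equivalent to a lower bound $\abs{\alpha_k}\geq c>1$.

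The main obstacle is the strict contraction after $N$ steps. I would quote \cite[Proposition~4.4]{Stadlbauer2004}, which states that $\cF_K$ is uniformly expanding on $K$: $\abs{(\cF_K^N)'}\geq\lambda>1$ for some $N$. This translates to $\abs{k'(y)}\leq\lambda^{-1}$ for $k\in\cH^N$, since $\cF_K^N(k(y))=y$.

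If instead I had to argue directly, I would combine two facts. First, there are only finitely many $k\in\cH^N$ with $\abs{\alpha_k}$ bounded, by discreteness of $G$. Second, bounded distortion (Lemma~\ref{lem:bounded-dist}) combined with Markov disjointness of the images $\im(k)$, $k\in\cH^N$, in $\dD$ forces $\abs{k'}\ll\operatorname{length}(\im k)\to0$. One must still exclude the case $\abs{k'}\geq1$ for each of the finitely many remaining branches. This is where hyperbolicity of the return to $K$ enters: $K$ is bounded away from the parabolic fixed points $V$, so no branch returning to $K$ can be a non-contracting parabolic-like map on its domain. I expect this step to rely on Stadlbauer's argument rather than being reproved here.
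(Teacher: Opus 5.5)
Your reduction matches the skeleton of the paper's proof: the factorization from Lemma~\ref{lem:invbranch-FK}(2), the chain rule, grouping into blocks of $N$ branches, and the uniform bound $\|k'\|_\infty\ll 1$ via $\abs{k'}\asymp\abs{\alpha_k}^{-2}\le 1$.

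The difference is in the one substantive step, $\sup_{k\in\cH^N}\|k'\|_\infty<1$. You import it from \cite[Proposition~4.4]{Stadlbauer2004}, which the paper also credits, but the paper proves it in two lines. Your ``direct'' fallback is aimed the wrong way. At a fixed $N$, nothing forces the finitely many branches with bounded $\abs{\alpha_k}$ to contract. Nor is any hyperbolicity input needed beyond the bounded distortion you already use.

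The missing idea is that $w\in\Lambda_K(N)$ forces $\abs{w}\ge N$, so you should let $N$ grow. The paper combines three facts:
\begin{itemize}
\item your inequality $\|k'\|_\infty\ll\meas(\im k)$, from bounded distortion together with $\meas(\Dom k)\gg 1$ (since $\Dom k$ is a nonempty union of the finitely many arcs $K_b$);
\item the inclusion $\im k\subset[w]$;
\item the uniform shrinking $\sup_{\abs{w}=n}\meas([w])=o(1)$ from \cite[Prop.~4.5]{Series1981}.
\end{itemize}
Your discreteness remark also closes the gap on its own. Only finitely many $g\in G$ have $\abs{\alpha_g}\le M$, and each is $g_u$ for a unique reduced word $u$. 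Once $N$ exceeds the lengths of these words, every $k\in\cH^N$ has $\abs{\alpha_k}>M$. Hence $\|k'\|_\infty\ll M^{-2}$, which is $<1$ for $M$ large.

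If you keep the citation instead, note that Stadlbauer works with the first-return map to $K$ (compare the footnote in the proof of Lemma~\ref{lem:bounded-dist}). His expansion bound therefore yields $\abs{k'(y)}\le\lambda^{-1}$ only at points with $k(y)\in K$, whereas branches in $\cH^N$ may exit $K$. In your block decomposition this can only happen for the leftmost block. You can control that block by the uniform bound $\|k'\|_\infty\ll 1$, and with that adjustment the argument goes through.
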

\begin{proof}
  Let~$h\in \cH^r$, $h=h_{w, r}$ with~$w\in \Lambda_K(r)$. Then proceeding as in~\cite[p.~145]{Aaronson1997} we have
  $$ \meas(\Dom(h)) |h'(x)| = \int_{\Dom(h)} |h'(x)|\df s(y) \ll \int_{\Dom(h)} |h'(y)| \df s(y) = \meas(\im(h)) $$
  where we used~\eqref{eq:h'-x-0} and $\meas$ denotes the measure on $\dD$. We have~$\meas(\Dom(h))\gg1$, and therefore
  $$ |h'(x)| \ll \meas([w]). $$
  This implies at once
  \begin{equation}
    \|h'\|_\infty \ll 1 \qquad (h\in \cH^r).\label{eq:hprime-bound-unif}
  \end{equation}
  We then argue as in~\cite[Prop.~4.4]{Stadlbauer2004}: By~\cite[Prop.~4.5]{Series1981}, $\sup_{\abs{w}=n} \meas([w]) = o(1)$ as~$n\to \infty$. Therefore, since~$\abs{w} \geq r$, we may find~$r_0\geq 1$ such that
  \begin{equation}
    \rho := \sup_{h\in \cH^{r_0}} \|h'\|_\infty < 1.\label{eq:hprime-def-rho}
  \end{equation}
  Then choosing~$r\geq 1$ arbitrary, we write~$r = qr_0 + s$ with~$q\geq 0$, $0\leq s < r_0$. We write accordingly
  $$ h = h_1 \circ \dotsb \circ h_q \circ h_{q+1}, $$
  where~$h_j\in \cH^{r_0}$ for~$j\leq q$ and~$h_{q+1} \in \cH^s$ (or~$h_{q+1} = \id$ if~$s=0$).
  We deduce
  $$ \|h'\|_\infty \leq \prod_j \|h_j'\|_\infty \ll \rho^q \ll (\rho^{1/r_0})^r $$
  where we have used~\eqref{eq:hprime-bound-unif} for~$j=q+1$ and~\eqref{eq:hprime-def-rho} for~$j\leq q$. Reinterpreting~$\rho$ gives our claim.
\end{proof}

\subsection{Estimates for cuspidal inverse branches}
\label{sec:estim-cusp-inverse-branch}

When studying the convergence of the relevant transfer operator, we shall need an estimate for the derivatives of the elements of $\cH_c$. First, given a cuspidal cycle $w$, we describe $[w^\infty]:=[w, w, \dotsc]$ and its stabilizer.

\begin{lemma}\label{lem:parabolic1}
  Let~$w=a_1\dotsb a_r$ be a cuspidal cycle. Then:
  \begin{enumerate}
    \item The point~$[w^\infty] = [w, w, \dotsc]$ belongs to~$V$; it is the right-end boundary of~$[a_1]$ if~$w$ is right-cuspidal, and otherwise it is the left-end boundary of~$[a_1]$.
    \item The element~$g_w\in G$ is parabolic, and fixes~$[w^\infty]$.
    \item If $w$ is minimal, then the stabilizer of $[w^\infty]$ is generated by $g_w$.
  \end{enumerate}
\end{lemma}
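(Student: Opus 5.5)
The plan is to exploit the remark made right after the definition of cuspidal words: $w$ is right-cuspidal (resp.\ left-cuspidal) if and only if the arcs $[a_1],[a_1,a_2],\dots,[a_1,\dots,a_r]$ share their right (resp.\ left) endpoint. Since $w$ is a cuspidal cycle, $wa_1$ is cuspidal, and by periodicity of the defining relation $\hat a_i=a_{i+1}+\delta$ (indices mod $r$) every finite prefix of $w^\infty=ww\dotsb$ is cuspidal with the same $\delta$. Hence all the nested arcs $[a_1,\dots,a_k]$, $k\geq1$, share the same endpoint $v$ of $[a_1]$: the right one if $\delta=1$, the left one if $\delta=-1$. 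Their intersection is the single point $[w^\infty]$ (by~\cite[Proposition~4.5]{Series1981}), and $v$ lies in all of them since they are closed. So $[w^\infty]=v\in V$. This proves (1).

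For (2), write $[w^\infty]=\pi(w\,w^\infty)=g_w\pi(w^\infty)$, using $[w w']=g_{a_1}\dotsb g_{a_r}[w']$ for admissible concatenations. Admissibility of $ww$ holds because $wa_1$ is cuspidal, so $a_1\neq\hat a_r$. Thus $g_w$ fixes $v=[w^\infty]$. Since $v$ is a vertex of the ideal polygon $R$, it is a cusp, and the stabilizer $G_v$ of a cusp in a discrete group consists only of parabolic elements and the identity. Since $G$ is free on the side pairings and $w\neq\epsilon$ is a reduced word, $g_w\neq1$. Hence $g_w$ is parabolic.

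For (3), compare with the cycle relation described just before the example. There, starting from $v_0=v$ and repeatedly applying $g_{s}^{-1}$ for the side $s$ adjacent to the current vertex on the appropriate side, one returns to $v_0$ after a minimal number $r'$ of steps, and the product $(g_{s_0}\cdots g_{s_{r'-1}})^{\mp1}$ generates $G_{v_0}$. The key step is to show that the letters of a minimal cuspidal cycle starting at $a_1$ trace exactly this vertex cycle. Suppose $v$ is the right endpoint of $[a_1]$. Then $g_{a_1}^{-1}v$ is a vertex which is an endpoint of $[\hat a_1]$, and the next arc sharing it is $[\hat a_1-1]$ or $[\hat a_1+1]$ according to orientation; this matches $a_2=\hat a_1-\delta$, and one continues inductively. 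The step is thus a bookkeeping check that the orientation conventions match: either $g_w$ equals the cycle generator or its inverse.

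Minimality of $w$ then corresponds to $r=r'$, the first return of the vertex sequence together with the first letter. I expect the main obstacle to be this last identification. One must check that returning to the vertex $v$ forces the next letter to be $a_1$ again, so that the minimal cuspidal cycle has length exactly the vertex-cycle length and not a proper multiple. A fallback is to argue directly: $G_v$ is infinite cyclic, generated by $p$ say, so $g_w=p^k$ with $k\neq0$. Writing $p$ as a reduced word $u$ that is itself a cuspidal cycle (obtained from the vertex cycle as above), uniqueness of reduced words in the free group gives $w=u^{|k|}$ (or the analogous statement for $\hat u$). Minimality then forces $|k|=1$.
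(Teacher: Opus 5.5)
Your proposal is correct, but it proves (3) by a different route from the paper, and (1)–(2) by a somewhat different one.

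For (1)–(2), you rely on the stated remark that the nested arcs of a cuspidal word share an endpoint. You then use the shift-equivariance $\pi(aw')=g_a\pi(w')$ to get $g_w[w^\infty]=[w^\infty]$. The paper instead tracks the common vertex $v$ of $[a_1]$ and $[\hat a_r]$ backwards through the letters: $g_{a_r}v\in[a_r]\cap[\hat a_{r-1}]$, and so on, until $g_wv\in[a_1]\cap[\hat a_r]=\{v\}$. In effect this proves the case of that remark which is needed. Both arguments then get parabolicity from $g_w\neq 1$ fixing a cusp.

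For (3), you import the fact, stated without proof in Section~2.2, that the vertex-cycle transformation generates $G_v$. You check that its reduced word $u=u_1\dotsb u_m$ is a cuspidal cycle. You then conclude by uniqueness of reduced words and minimality of $w$; note that $u^{|k|}$ is reduced because $\hat u_m=u_1\pm1\neq u_1$.

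The paper avoids that input. It takes an arbitrary $\gamma\neq\id$ in $G_v$ with reduced word $w'$, and notes that $v=g_{w'}v\in[w']$ and $v=g_{w'}^{-1}v\in[\hat w']$. It argues that this forces $w'$ to be a cuspidal cycle at $v$, hence $w'\in\{w^n,\hat w^n\}$. So the paper derives the cyclicity of $G_v$ and its generator directly from the coding. Your argument is shorter, but it rests on Poincaré-type input that is essentially statement (3) in geometric language.

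The step you flag as the main obstacle is in fact immediate. In the right-cuspidal case, your tracing keeps $v_j$ as the right endpoint of $[a_{j+1}]$. Each vertex is the right endpoint of exactly one arc. Hence the vertex sequence returns to $v$ exactly when the letter sequence returns to $a_1$, and the vertex-cycle length equals $|w|$ for minimal $w$.
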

\begin{proof}
  Assume that~$w$ is right cuspidal, the other case being analogous. Since~$w$ is a right-cuspidal cycle, the two sides~$[a_1]$ and~$[\hat a_r]$ share a common vertex~$v$, say, and~$v$ is the left endpoint of~$[a_{\hat r}]$. Let~$b\in \Sigma$ be such that~$v\in [\hat a_r, b]$. Since the arcs~$[\hat a_r]$ and~$[\hat a_r, b]$ share their left-endpoint, so do the arcs~$g_{a_r} [\hat a_r] = \dD \setminus [a_r]$ and~$g_{a_r}[\hat a_r, b] = [b]$, and therefore the left endpoint~$[b]$ coincides with the right endpoint of~$[a_r]$. This implies~$b = a_r + 1 = \hat a_{r-1}$. We conclude that~$v\in [a_1] \cap [\hat a_r, \hat a_{r-1}]$ and therefore in particular~$g_{a_r}v \in [a_r] \cap [\hat a_{r-1}]$. Iterating this argument~$r$ times, we deduce that~$g_w v \in [a_1]\cap [\hat a_r] = \{v\}$, and so~$v = g_w v$. Since~$v\in C(G)$, we deduce that~$g_w$ is parabolic. On the other hand, we also deduce,
  $$ v \in \bigcap_{\ell\geq 1} \underset{\ell \text{ times}}{\underbrace{[w, \dotsc, w]}} = [w^\infty].$$
  This shows the first two points. 

  Finally, let $\gamma\neq \id $ be in the stabilizer of $v=[w^\infty]$. Since $\{g_{a}\}_{a\in\Sigma}$ generates $G$, there exists $m\geq 1$ and $b_1,\dots,b_m\in G$ such that $\gamma=g_{b_1}\cdots g_{b_m}$. We assume that $m$ is minimal, so that for $i=1,\dots, m-1$ we have $g_{b_i}\neq g_{b_{i+1}}^{-1}$, i.e. $b_{i}\neq \hat b_{i+1}$. In particular, $w':=b_1\dotsb b_m$ is admissible. We have $v=g_{w'}v\in[w']$ and $v=g_{w'}^{-1}v\in[\hat w']$, so that $v\in[w']\cap[\hat w']$ and this intersection is non-empty if and only if $w'$ is a cuspidal cycle. Since $w$ is minimal, this implies that either $w'=w^n$ or $w'=\hat w^{n}$ for some $n\geq1$, thus proving the claim.
\end{proof}
We now prove an asymptotic estimate for $h=h_w\in \cH_c$ when $w$ is a cuspidal cycle. 

\begin{lemma}\label{lem:parabolic}
  Let~$w=[a_1,\dots,a_r]$ be a cuspidal cycle. Then,
  \begin{enumerate}
    \item For all $x\in\dD$ with $\dist(x-[w^\infty])\gg1$ we have 
    $$ |x-g_{w^\ell}^{-1}\infty| \gg_w 1. $$
    uniformly in $\ell\geq1$.
    \item 
    There  exists a constant $C(w)>0$ satisfying $C(w)=C(\hat w)$ such that for all~$x\in\dD \setminus [w^\infty]$, we have
    $$ |g_{w^\ell}'(x)| \sim \frac{C(w)}{\ell^2 d(x, [w^\infty])^2} $$
    as~$\ell\to\infty$. More precisely, 
    $
    |g_{w^\ell}'(x)| = \frac{C(w)}{\ell^2 d(x, [w^\infty])^2}(1+O_w(1/\ell))
    $
    uniformly in $x$ satisfying~$\dist(x, [w^\infty])\gg 1$.
  \end{enumerate}
\end{lemma}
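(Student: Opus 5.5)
Our plan is to conjugate the parabolic element $g_w$ to a standard translation and compute explicitly. By Lemma~\ref{lem:parabolic1}, $v:=[w^\infty]\in V$ is fixed by the parabolic $g_w$. We pick $\sigma\in\SU(1,1)$ with $\sigma(1)=v$ (a rotation, say), so that $\sigma^{-1}g_w\sigma$ is a parabolic element of $\SU(1,1)$ fixing $1$. It is therefore of the shape $\pm\smat{1+ki/2 & -ki/2\\ ki/2 & 1-ki/2}$ for some real $k=k(w)\neq 0$. Raising to the power $\ell$ simply replaces $k$ by $\ell k$ (up to a sign $(\pm1)^\ell$), which is harmless since only $|j|$ matters. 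Writing $g_{w^\ell}=g_w^\ell$ (note that $w^\ell$ is admissible and $g_{w^\ell}=g_w^\ell$), we get the explicit formula
$$ g_{w^\ell} = \pm\sigma\smat{1+\ell ki/2 & -\ell ki/2\\ \ell ki/2 & 1-\ell ki/2}\sigma^{-1}. $$

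For item (1), the pole $g_{w^\ell}^{-1}\infty$ is the zero of $j(g_{w^\ell},\cdot)$. In the conjugated picture, the lower row $(\ell ki/2,\,1-\ell ki/2)$ vanishes at $y=1-2/(\ell k i)=1+2i/(\ell k)$. Mapping back by $\sigma$, we obtain a point at distance $O_w(1/\ell)$ from $v$. Hence for $x$ with $\dist(x,v)\gg1$ the distance $|x-g_{w^\ell}^{-1}\infty|\gg_w 1$ uniformly in $\ell$; the finitely many small $\ell$ are handled by the fact that the pole lies off $\dD$. Indeed, $|1+2i/(\ell k)|>1$, so the pole is outside the closed disk.

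For item (2), we use the chain rule for $j$ together with $|g'(x)|=|j(g,x)|^{-2}$ on $\dD$ for $g\in\SU(1,1)$. This gives $|g_{w^\ell}'(x)|=|(\sigma^{-1})'(x)|\cdot|P_\ell'(y)|\cdot|\sigma'(P_\ell y)|$, where $y=\sigma^{-1}x$ and $P_\ell$ is the standard parabolic. We have $|P_\ell'(y)|=|\tfrac{\ell k i}{2}(y-1)+1|^{-2}=\frac{4}{\ell^2k^2|y-1|^2}\bigl(1+O(\frac1{\ell|y-1|})\bigr)$. Moreover $P_\ell y\to 1$ at rate $O(1/\ell)$ uniformly for $|y-1|\gg1$, so $|\sigma'(P_\ell y)|=|\sigma'(1)|(1+O(1/\ell))$. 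Finally, $\sigma$ being a rotation (derivative of modulus one on the circle) gives $|y-1|=|x-v|$. Collecting terms yields the claim with $C(w)=4/k(w)^2$ (times $|\sigma'(1)|\,|(\sigma^{-1})'(x)|=1$ for a rotation), with error $O_w(1/\ell)$ uniform in $\dist(x,v)\gg1$. The pointwise asymptotic for any fixed $x\neq v$ follows likewise.

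For the symmetry $C(w)=C(\hat w)$, note that $g_{\hat w}=g_{\hat a_r}\cdots g_{\hat a_1}=g_w^{-1}$, since $g_{\hat a}=g_a^{-1}$. Thus $\hat w$ is a cuspidal cycle of the opposite handedness fixing the same vertex $v$, with parameter $-k$; as $C$ depends on $k^2$, the identity follows. The only mildly delicate point, and the main thing to check carefully, is the uniformity of the $O(1/\ell)$ error term. That error is controlled by $1/(\ell k|x-v|)$, which requires exactly the hypothesis $\dist(x,v)\gg1$, and here $d(x,[w^\infty])$ is interpreted as $|x-v|$.
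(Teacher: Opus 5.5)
Your proof is correct and is essentially the paper's argument. The paper works directly with the entries $\alpha_\ell,\beta_\ell$ of $g_{w^\ell}$: it uses $|\beta_\ell|=\ell|\beta_1|$, and it uses that the pole $z_\ell=-\bar\alpha_\ell/\bar\beta_\ell$ lies outside $\bar\D$ at distance $O(1/\ell)$ from $v$. Your conjugation by a rotation to the standard parabolic makes exactly these two facts explicit, and your constant $C(w)=4/k^2$ coincides with the paper's $|\beta_1|^{-2}$. Your observation that $g_{\hat w}=g_w^{-1}$, hence $C(w)=C(\hat w)$, fills in a point the paper leaves implicit.
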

\begin{proof}
  Write
  $$ g_{w^\ell} = \pmat{\alpha_\ell & \beta_\ell \\ \bar \beta_\ell & \bar \alpha_\ell}, \qquad \abs{\alpha_\ell}^2 - \abs{\beta_\ell}^2 = 1. $$
  Since~$g_w$ is parabolic, we have~$\alpha_\ell + \bar \alpha_\ell = \pm 2$ for some choice of sign~$\pm$, and then
  \begin{equation}\label{eq:v-alphabeta}
    v = \pm 1/ \bar \beta_\ell - \bar\alpha_\ell / \bar\beta_\ell. 
  \end{equation}
  However, again since~$g_w$ is parabolic, we have~$\abs{\beta_\ell} = \ell\abs{\beta_1}\to\infty$. By discreteness of $G$ and since $w\neq\epsilon$ we have $\beta_1\gg1$ and thus
  $$ z_\ell :=g_{w^\ell}^{-1}\infty= -\bar\alpha_\ell / \bar \beta_\ell = v+O(1/\ell). $$
  In particular, $z_\ell\to v$.
  But since~$\abs{\alpha_\ell}^2 > \abs{\beta_\ell}^2$, the point~$z_\ell$ lies outside~$\bar\D$. Let~$B$ be a neighborhood of~$v$ in~$\dD$. We deduce that
  \begin{equation}\label{eq:v-min}
    \min_{x\in\dD\setminus B} d(x, z_\ell) \gg 1, 
  \end{equation}
  which proves (1).
  Moreover, for~$x\in \dD\setminus B$ by~\eqref{eq:v-alphabeta} and~\eqref{eq:v-min} we have
  $$ \abs{g_{w^\ell}'(x)} = \frac1{\abs{\bar \beta_\ell x + \bar \alpha_\ell}^2} = \frac1{\abs{\beta_\ell}^2 d(x, z_\ell)^2} = \frac1{\ell^2 \abs{\beta_1}^2 d(x, v)^2}(1+O(1/\ell)) $$
  uniformly in~$x\in\dD\setminus B$. This yields (2) with $C(w)=|\beta_1|^{-2}$.
\end{proof}

\begin{corollary}\label{cor:parabolic}
  Let~$h=h_{w}^c\in\cH_c$ with~$w$ cuspidal. Let $w_0$ be a minimal cuspidal cycle\footnote{Notice that $w_0$ and $\ell$ are unique unless $|w|=1$} such that $w=w_0^\ell w_1$ for some $\ell\geq0$ and some prefix $w_1\neq \epsilon$ of $w_0$. Then 
  \begin{align*}
    \|h'\|_\infty \asymp (1+\ell)^{-2}.
  \end{align*}
\end{corollary}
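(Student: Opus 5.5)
We factor $g_w = g_{w_0}^\ell g_{w_1}$ and estimate the two factors separately on the domain $D_c(w)$. Since $w_1$ ranges over the finitely many nonempty prefixes of the finitely many minimal cuspidal cycles, $g_{w_1}$ belongs to a finite set of Möbius maps. Hence $|g_{w_1}'(x)|\asymp 1$ uniformly on $\dD$. The chain rule then gives
$$ h'(x) = g_{w_0^\ell}'(g_{w_1}x)\, g_{w_1}'(x), $$
so it suffices to show that $|g_{w_0^\ell}'(y)|\asymp(1+\ell)^{-2}$ uniformly for $y\in g_{w_1}(D_c(w))$, at least at one point for the lower bound and at all points for the upper bound. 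For $\ell$ bounded this is trivial, again by finiteness and because derivatives of Möbius maps in $\SU(1,1)$ on $\dD$ are bounded above and below for a fixed map. So the content lies in the regime $\ell\to\infty$.

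For large $\ell$ we apply Lemma~\ref{lem:parabolic}(2) to the cuspidal cycle $w_0$. This gives $|g_{w_0^\ell}'(y)|= \frac{C(w_0)}{\ell^2 d(y,[w_0^\infty])^2}(1+O(1/\ell))$ uniformly for $d(y,[w_0^\infty])\gg1$. The upper bound would then follow once we know that $g_{w_1}(D_c(w))$ stays a fixed distance away from the parabolic fixed point $v=[w_0^\infty]$, a vertex by Lemma~\ref{lem:parabolic1}(1). The lower bound follows as well, since $d(y,v)\le 2$ always.

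The main step, which I expect to be the obstacle, is this separation claim. Points $x\in D_c(w)$ lie in cylinders $[b_1,b_2]$ with $\ell_c(wb_1)=1$ and $\ell_c(wb_1b_2)=2$. Then $g_{w_1}x=[\,\text{rest of }w_1\ldots]$ is coded by the word $u b_1 b_2\ldots$, where $w_0=w_1u$, or more precisely by the suffix structure continuing the cusp cycle. The admissibility and cuspidality pattern says that the coding of $g_{w_1}x$ follows the cuspidal cycle pattern (same sign $\delta$) for at most $|w_0|+1$ letters and then breaks it. Meanwhile $v=[w_0^\infty]$ is the common endpoint of the nested arcs $[w_0^k]$, and points within small distance of $v$ on the relevant side are exactly those whose coding agrees with the periodic cuspidal pattern for many letters. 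On the other side of $v$ one must use the other extreme path, and this is excluded for $D_c(w)$ because $\ell_c(wb_1)=1$ forces the same $\delta$.

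Concretely, I would show that $g_{w_1}(D_c(w))$ is contained in the complement of $[w_0 w_0]\cup[\hat{\cdot}]$-type neighbourhoods of $v$. This is a finite union of cylinders of bounded depth avoiding $v$ in the closure. Since there are finitely many configurations $(w_0,w_1,b_1,b_2)$, this yields a uniform positive distance. Should the vertex removal in $D_c$ (when $\ell_c(w)=r$) matter, it only removes points and does not affect the bounds.

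Combining these steps, the factor $g_{w_1}$ contributes $\asymp1$, and the factor $g_{w_0^\ell}$ contributes $\asymp \ell^{-2}$ on the image by Lemma~\ref{lem:parabolic}. This gives $\|h'\|_\infty\asymp(1+\ell)^{-2}$.
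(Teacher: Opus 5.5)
Your proposal is correct and follows the paper's proof. The paper likewise applies the chain rule through $g_{w_1}$, uses finiteness of the possible $g_{w_1}$, disposes of the regime $\ell\ll1$ trivially, and invokes Lemma~\ref{lem:parabolic} once $g_{w_1}(\Dom(h))$ is known to stay away from $v=[w_0^\infty]$. Your separation step is stated most cleanly as $g_{w_1}(\Dom(h))=[a_1,\dots,a_{m+1}]\setminus[a_1,\dots,a_{m+2}]$, where $[w_0^\infty]=[a_1a_2\dotsb]$ and $m=|w_1|$; the closure of this arc does not contain $v$. Note that $g_{w_1}x$ is coded by $w_1b_1b_2\dotsb$, not by $ub_1b_2\dotsb$ as you wrote, although your subsequent reasoning uses the correct coding.
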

\begin{proof}
  Let $[w_0^\infty]=[a_1a_2\dotsb]$ and $|w_1|=m\ll1$.  For~$x\in \Dom(h^c_w)$, we have
  $$ (h^c_w)'(x) = g_{w_0^\ell}'(g_{w_1}(x)) g_{w_1}'(x). $$
  On the one hand, the pole of a matrix in~$\SU(1, 1)\setminus\{\id\}$ lies outside~$\bar\D$, so that~$\|g_{w_1}'\|_{\infty, \dD} < \infty$ for each~$w_1$. Since there are finitely many possibilities for $w_0$ and thus for~$w_1$, we deduce~$|g_{w_1}'(x)| \asymp 1$. Also, $g_{w_1}(\Dom(h^c_w))= [a_1,\dots,a_{m+1}]\setminus [a_1,\dots,a_{m+2}]$ and thus~$\dist(g_{w_1}(\Dom(h^c_w)), [w_0^\infty])\gg1$. By Lemma~\ref{lem:parabolic} we then have~$|g'_{w_0^\ell}(g_{w_1}(x))|\asymp 1/\ell^2$ for $\ell$ sufficiently large. This concludes the proof since the statement is obvious for $\ell\ll1$.
\end{proof}
We also record some results which will be useful in future applications.
\begin{proposition}\label{prop:metric-Hc}
  For all $h\in\cH_c$ we have
  \begin{align}\label{eq:Hc-dist-V}
    \dist(\im(h),V)\asymp \|h'\|_\infty^{1/2}.
  \end{align}
  Moreover, if $h_1\in\cH_c^*\setminus\{\id\}$, $h_2=h_{w_2}^c\in\cH_c$, and~$y\in\Dom(h_2)$ with $z=h_2(y)\in\Dom(h_1)$, then 
    \begin{align}\label{eq:Hc-dist-Domh1}
    \dist(z,\partial\Dom(h_1))\gg \begin{cases}
    \|h_2'\|_\infty^{1/2} &\text{if $|w_2|\geq2$},\\
    \dist(y,V) &\text{if $|w_2|=1$}.
 \end{cases}
  \end{align}
\end{proposition}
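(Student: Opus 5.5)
We reduce everything to the parabolic description of cuspidal branches in Corollary~\ref{cor:parabolic} and Lemma~\ref{lem:parabolic}. Write $h=h^c_w\in\cH_c$. If $|w|=1$ or $[\hat w]\subset$ finitely many configurations, then $\ell\ll1$, $\|h'\|_\infty\asymp1$, and $\im(h)$ is a finite union of depth-two cylinders minus $V$. We will check that such images stay at distance $\gg1$ from $V$, using that $D_c(w)$ excludes the cylinders adjacent to the vertices of $[\hat a_n]$ that would produce $\ell_c(wb_1)=\ell_c(wb_1b_2)$. This covers the bounded range of $\ell$. For $w$ cuspidal with $\ell$ large, we write $w=w_0^\ell w_1$ as in Corollary~\ref{cor:parabolic}, so that $\|h'\|_\infty\asymp\ell^{-2}$. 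We must then show $\dist(\im(h),V)\asymp \ell^{-1}$.

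For the upper bound, note that $\im(h)\subset[w]\subset[w_0^\ell]$, and $[w_0^\ell]$ is an arc with endpoint $v=[w_0^\infty]\in V$ (Lemma~\ref{lem:parabolic1}). Its length is $\meas(g_{w_0^\ell}(\dD\setminus[\hat a]))$. Integrating the asymptotic $|g'_{w_0^\ell}(x)|\sim C/(\ell^2d(x,v)^2)$ over the arc, we obtain a length $\asymp 1/\ell$, because the integral of $d(x,v)^{-2}$ is cut off at distance $\asymp1/\ell$ from $v$. The cutoff comes from the pole $z_\ell=v+O(1/\ell)$ in the proof of Lemma~\ref{lem:parabolic}, and the same computation gives $|g_{w_0^\ell}(x)-v|\asymp 1/\ell$ for $x$ at distance $\gg1$ from $v$.

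For the lower bound, we use $g_{w_1}(\Dom(h))$ at distance $\gg1$ from $v$, which is established in the proof of Corollary~\ref{cor:parabolic}. Applying the exact Möbius formula $g_{w_0^\ell}(x)-v$, via $g_{w_0^\ell}(x)-g_{w_0^\ell}(v)$ with $g_{w_0^\ell}$ fixing $v$, we get
\[
|g_{w_0^\ell}(x)-v|=|x-v|\,|g'_{w_0^\ell}(x)|^{1/2}|g'_{w_0^\ell}(v)|^{1/2}\asymp \ell^{-1}.
\]
Here $|g_{w_0^\ell}'(v)|=1$ since $v$ is a parabolic fixed point. So $\im(h)$ lies at distance $\asymp\ell^{-1}$ from $v$. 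Other vertices are at distance $\gg1$, since $[w_0^\ell]$ shrinks to $v$. This gives \eqref{eq:Hc-dist-V}.

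For \eqref{eq:Hc-dist-Domh1}, first note that $\partial\Dom(h_1)$ consists of endpoints of depth-two cylinders $[b_1,b_2]$. These are points of $V\cup g_{b_1}V$, and in particular lie in $V\cup W$. When $|w_2|\ge2$, the point $z=h_2(y)$ lies in $\im(h_2)$. Its distance to $V$ is $\gg\|h_2'\|^{1/2}_\infty$ by the first part. Its distance to $W\setminus V$ is $\gg1$, because $\im(h_2)$ for $|w_2|\ge2$ lies in $[a_1,a_2]$ with $(a_1,a_2)$ a cuspidal pair, hence in a cylinder not meeting the middle intervals' endpoints except at $V$; this requires a check of the combinatorics. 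When $|w_2|=1$, $h_2=g_a$ has derivative $\asymp1$, and $g_a$ maps $V$ onto $\partial$-points of depth-two cylinders. Bi-Lipschitzness of $g_a$ then gives $\dist(z,\partial\Dom(h_1))\gg\dist(y,g_a^{-1}\partial\Dom(h_1))\gg\dist(y,V)$, after checking that $g_a^{-1}$ of the boundary points lies in $V$ or is far from $y$.

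The main obstacle is this combinatorial bookkeeping. For the first part it consists in verifying that non-cuspidal or short words never let $\im(h)$ approach a vertex. For the second it consists in matching the boundary points of $\Dom(h_1)$ with $V$ and $W$ relative to $\im(h_2)$. I would handle both by the explicit case description of $D_c(w,r)$ shown in Figure~\ref{fig:domainHC}.
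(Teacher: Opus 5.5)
Your argument for \eqref{eq:Hc-dist-V} is essentially the paper's. You decompose $w=w_0^\ell w_1$, use that $g_{w_1}(\Dom(h))$ stays at distance $\gg1$ from $v=[w_0^\infty]$, and show $|g_{w_0^\ell}(x)-v|\asymp(1+\ell)^{-1}$ there. Your identity $|g(x)-g(y)|=|x-y|\,|g'(x)|^{1/2}|g'(y)|^{1/2}$, combined with $|g_{w_0^\ell}'(v)|=1$, cleanly replaces the paper's explicit formula for $g_{w^\ell}(x)-[w^\infty]$. It also holds for every $\ell\ge0$, so a separate treatment of bounded $\ell$ is really only needed when $|w|=1$. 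Your case $|w_2|=1$ of \eqref{eq:Hc-dist-Domh1} matches the paper as well.

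The gap is in the case $|w_2|\ge2$ of \eqref{eq:Hc-dist-Domh1}. You justify $\dist(z,W\setminus V)\gg1$ by saying that $\im(h_2)\subset[a_1,a_2]$ with $(a_1,a_2)$ cuspidal, and that such a cylinder meets $W$ only at $V$. That is false. With $a_2=\hat a_1+\delta$, the cylinder $[a_1,a_2]$ is an extreme subinterval of $[a_1]$. Its inner endpoint is shared with the middle interval $[a_1,\hat a_1+2\delta]\subset K_{a_1}$, so it is a point of $W$. Knowing $z\in[a_1,a_2]$ therefore gives no lower bound on $\dist(z,W)$. You flagged this as a combinatorial check, but the check as you set it up, at depth two, fails.

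The conclusion is nevertheless true if you locate $\im(h_2)$ one level deeper. Every cylinder $[b_1,b_2]\subset D_c(w_2)$ has $\ell_c(w_2b_1)=1$, which forces $b_1=\hat a_n+\delta$. Hence $\im(h_2)\subset[w_2b_1]\subset[a_1,a_2,a_3]$ with $a_1a_2a_3$ cuspidal. Now $[a_1,a_2,a_3]=g_{a_1}[a_2,a_3]$, and $[a_2,a_3]$ is the extreme cylinder of $[a_2]$ at its common vertex with $[\hat a_1]$. So it stays at distance $\gg1$ from the other endpoint of $[a_2]$, whose $g_{a_1}$-image is exactly the inner endpoint of $[a_1,a_2]$. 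This gives $\dist(\im(h_2),W)\gg1$.

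Alternatively, you can argue as the paper does. A point $g_av_2\in W$ is at distance $\gg1$ from $\im(h_2)\subset[a_1]$ unless $a=a_1$. In that case, writing $w_2=a_1w$, one has $|h_2(y)-g_{a_1}v_2|\asymp|g_wy-v_2|\gg\|(h^c_w)'\|_\infty^{1/2}\asymp\|h_2'\|_\infty^{1/2}$, by \eqref{eq:Hc-dist-V} applied to $h^c_w$. Either repair completes your plan; the depth-three version even gives the slightly stronger separation $\gg1$ from $W$.
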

\begin{proof}
  Let $w$ be a primitive cuspidal cycle and $\ell\geq1$. Then, with the same notation as in the proof of Lemma~\ref{lem:parabolic}, for $x\in\dD$ with $|x-[w^\infty]|\gg1$ we have
  \begin{align*}
    g_{w^\ell}(x)-[w^\infty]= \frac1{ \bar \beta_\ell}\Big(\pm1 - \frac{1}{ \bar\beta_\ell x+\bar\alpha_\ell }\Big)
  \end{align*}
  and thus, since $|\beta_\ell|\asymp \ell$, $|\bar\beta_\ell x+\bar\alpha_\ell|=|g_{w^{\ell}}'(x)|^{-1/2}\asymp \ell$, we deduce
  \begin{align}\label{eq:dist-gwl-winf}
    |g_{w^\ell}(x)-[w^\infty]|\asymp (1+\ell)^{-1}
  \end{align}
  for $\ell$ large enough, and thus for all~$\ell\geq 0$ since~\eqref{eq:dist-gwl-winf} holds trivially if $\ell \ll 1$. 

  Now, let $h=h_w^c\in\cH_c$ and $x\in\Dom(h)$. Writing $w=w_0^\ell w_1$ as in Corollary~\ref{cor:parabolic} and proceeding as in its proof using~\eqref{eq:dist-gwl-winf} we deduce that
  \begin{align*}
    |h(x)-[w_0^\infty]|\asymp \|h'\|_\infty^{1/2}.
  \end{align*}
  Equation~\eqref{eq:Hc-dist-V} then follows since~$\dist(\im(h),V\setminus\{[w_0^\infty]\})\asymp 1$.

  We now prove~\eqref{eq:Hc-dist-Domh1}. Every boundary point of $\Dom(h_1)$ is either a vertex $v_1\in V$ or a point $g_av_2\in W$ for some $a\in\Sigma$ and $v_2\in V$. In the first case, by~\eqref{eq:Hc-dist-V} we have $\dist(v_1,\im(h_2))\gg \|h_2'\|^{1/2}_\infty$, which also gives the claimed bound when $|w_2|=1$, since in this case $|h_2'|\asymp 1$.
In the second case, since $g_av_2$ is in the interior of $[a]$, we have $|h_2(y)-g_av_2|\gg1$ unless the first letter of $w_2$ is $a$. We may therefore assume that this is the case. Then, if $|w_2|=1$, we have
$$|h_2(y)-g_av_2|=|g_a y-g_a v_2|\gg |y-v_2|\geq \dist(y,V),$$
as desired.  Finally, assume $|w_2|\geq2$. Write $w_2=aw$ for some cuspidal $w\in\Sigma^*$. Then,
  $$|h_2(y)-g_av_2|=|g_{a}g_wy-g_av_2|\gg |g_{w}y-v_2|\gg \|(h^c_w)'\|^{1/2}_\infty\asymp \|h_2'\|^{1/2}_\infty,$$
  where we used~\eqref{eq:Hc-dist-V} once again.
\end{proof}

It will also be convenient to have a version of Lemma~\ref{lem:bounded-dist}, Corollary~\ref{cor:acs} and Lemma~\ref{lem:contraction} for $h\in\cH_c$.

\begin{proposition}\label{prop:contraction}
  For all $h\in\cH_c^*\setminus\{\id\}$ we have 
  \begin{align}
    &\|h''/h'\|_\infty\ll1,\label{eq:Hc-bounded-dist}\\
    &|h'(x)|\asymp |h'(0)|,\qquad \forall x\in\Dom(h).\notag
  \end{align}
  Moreover, there exists~$\rho\in [0, 1)$ such that for all~$r\geq 1$ and~$h\in \cH_c^r$ we have
  \begin{equation}\label{eq:Hc-contraction}
    \|h'\|_\infty \ll \rho^r
  \end{equation}
\end{proposition}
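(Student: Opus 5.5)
The plan is to reduce everything to the $\cH^*$ estimates (Lemma~\ref{lem:bounded-dist}, Corollary~\ref{cor:acs}, Lemma~\ref{lem:contraction}), using the fact that $\cF_K$ is the induced map of $\cF_c$ on $K\cup V$. For bounded distortion, take $h=h^c_{w,r}\in\cH_c^r$ and $x\in\Dom(h)$, so that $\cB^{\abs{w}}(h(x))=x$. The argument of \cite[Lemma~4.3]{Stadlbauer2004} bounds $\abs{D^2(\cB^n)/D(\cB^n)^2}$ at points whose image under $\cB^n$ stays a bounded distance from the pole $g_w^{-1}\infty$ of the Möbius map. So it suffices to show that $\dist(x, g_w^{-1}\infty)\gg1$ uniformly over $w$ and $x\in\Dom(h)$. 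Indeed, writing $h(x)=(\alpha x+\beta)/(\bar\beta x+\bar\alpha)$, we have
\[ h''/h'(x) = -2\bar\beta/(\bar\beta x+\bar\alpha), \]
so $\abs{h''/h'}\ll1$ is equivalent to $\abs{x+\bar\alpha/\bar\beta}\gg1$, since $\abs{\alpha/\beta}\asymp1$ as in Corollary~\ref{cor:acs}.

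To obtain this separation, decompose $h$ using Lemma~\ref{lem:decomp-Hc}: write $h=h_1\circ\dotsb\circ h_{r'}\circ k_1\circ\dotsb\circ k_s$ with $h_i\in\cH$ and $k_j\in\cH_c^\sharp$. The element $g_w^{-1}\infty$ is the pole of $h$, and in the disc model the pole of $g\in \SU(1,1)$ is $1/\bar z$, where $z=g^{-1}(0)$ is the reflection of $g^{-1}0$. Hence it suffices to show that $\dist(\Dom(h), \{h^{-1}(0)\}^{*})\gg1$, i.e. that the geodesic from $x$ towards $g_w^{-1}0$ stays away from the boundary. I will argue geometrically, following the scheme of the proof of Corollary~\ref{cor:parabolic}. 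The point $g_w^{-1}\infty$ lies near $[\hat w]$, more precisely close to the boundary point $\cB$-coded by $\hat a_n\hat a_{n-1}\dotsb$. The domain $D_c(w,r)$, by contrast, consists of cylinders $[b_1,b_2]$ with $\ell_c(wb_1b_2)=r+1$, which forces $[b_1,b_2]$ to be separated from $[\hat a_n]$ by at least one full cylinder of bounded depth. This gives $\dist(\Dom(h),[\hat w])\gg1$ uniformly. A finite case check over the cases (A)--(F) of Figure~\ref{fig:domainHC} confirms it. When $w$ is a power of a cuspidal cycle, Lemma~\ref{lem:parabolic}(1) covers the nearly-vertex situation directly.

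Once $\|h''/h'\|_\infty\ll1$ holds, the comparison $\abs{h'(x)}\asymp\abs{h'(0)}$ follows exactly as in \eqref{eq:h'-x-0}, because $\abs{h'(x)}=\abs{h'(0)}\abs{\alpha/\beta}^2\abs{x+\bar\alpha/\bar\beta}^{-2}$ with both factors of size $\asymp1$.

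For the contraction \eqref{eq:Hc-contraction}, I mimic Lemma~\ref{lem:contraction}. Distortion gives
\[ \abs{h'(x)}\ll \meas(\im h)/\meas(\Dom h)\ll\meas([w]), \]
since each $\Dom(h)$ contains a cylinder of depth two, so $\meas(\Dom h)\gg1$. This yields $\|h'\|_\infty\ll1$ uniformly on $\cH_c^*$. For $h\in\cH_c^r$ we have $\abs{w}\geq r$, so \cite[Prop.~4.5]{Series1981} gives some $r_0$ with $\sup_{\cH_c^{r_0}}\|h'\|_\infty<1$. Here I need an analogue of Lemma~\ref{lem:invbranch-FK}(2), namely that each $h\in\cH_c^r$ factors as $h_1\circ\dotsb\circ h_r$ with $h_i\in\cH_c$. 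This follows from Lemma~\ref{ladcaxx}, because $\cF_c^{r}=\cF_c\circ\dotsb\circ\cF_c$ and inverse branches are unique. Grouping the factors in blocks of length $r_0$ then gives $\rho^{r/r_0}$.

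The main obstacle is the uniform separation $\dist(\Dom(h),g_w^{-1}\infty)\gg1$ for long cuspidal $w$. Unlike in $\cH$, the images can approach vertices, and $\Dom(h)$ need not be contained in $K$. I would handle this through the explicit parabolic description: write $w=w_0^\ell w_1$ and use Lemma~\ref{lem:parabolic}, where $z_\ell\to v=[w_0^\infty]$ while $g_{w_1}(\Dom h)$ stays at distance $\gg1$ from $v$. For general $w\in\Lambda_c(r)$, I reduce to this case by the factorization above together with the general fact that a composition of branches whose domains are separated from their poles keeps that separation.
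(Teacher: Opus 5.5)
Your overall plan matches the paper's. The pole $g_w^{-1}\infty$ of $h=h^c_{w,r}$ should sit close to $[\hat a_n]$. Meanwhile, the condition $\ell_c(wb_1b_2)=\ell_c(wb_1)+1$ defining $D_c(w,r)$ excludes $[\hat a_n]$ together with the two depth-two cylinders of $[\hat a_n\pm1]$ abutting it, so $\dist(\Dom(h),[\hat a_n])\gg1$. Your deductions of $|h'(x)|\asymp|h'(0)|$ and of \eqref{eq:Hc-contraction} from the distortion bound are also fine.

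The gap is the first half of this picture. The claim that $\dist(g_w^{-1}\infty,[\hat a_n])\to0$ uniformly as $|w|\to\infty$ is the crux of \eqref{eq:Hc-bounded-dist}, yet you only assert it (``lies near $[\hat w]$''), and your last paragraph shows you do not regard it as settled. The workaround you offer there does not work:
\begin{itemize}
\item For $r\geq2$ the word $w$ is not of the form $w_0^\ell w_1$, so Lemma~\ref{lem:parabolic} says nothing.
\item There is no general fact that composing branches preserves separation from the pole. Expanding $h''/h'=\sum_i (h_i''/h_i')\circ(h_{i+1}\circ\dotsb\circ h_m)\cdot(h_{i+1}\circ\dotsb\circ h_m)'$, a uniform bound needs summable contraction of the tails. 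In your plan that contraction is derived afterwards from distortion, so the argument is circular.
\item The decomposition via Lemma~\ref{lem:decomp-Hc} is never actually used.
\end{itemize}

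The paper closes the step directly, using a repelling or indifferent fixed point $\psi_w$ of $g_w$.
\begin{itemize}
\item Since $g_w[\hat w]=\overline{\dD\setminus[a_1]}$, if $\hat a_n\neq a_1$ then $g_w[\hat w]\supset[\hat w]$. Hence $g_w$ has a repelling or indifferent fixed point $\psi_w\in[\hat w]\subset[\hat a_n]$.
\item If $\hat a_n=a_1$, any fixed point outside $[a_1]=[\hat a_n]$ would have to lie in $[\hat w]\subset[\hat a_n]$, which is impossible; so again $\psi_w\in[\hat a_n]$.
\item As in \cite[Lemma~4.3]{Stadlbauer2004}, $|\psi_w-g_w^{-1}\infty|\to0$ as $|w|\to\infty$. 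Indeed, a fixed point of $\smat{\alpha&\beta\\ \bar\beta&\bar\alpha}$ lies within $|\beta|^{-1}$ of $-\bar\alpha/\bar\beta$, and $|\beta|\to\infty$ by discreteness.
\end{itemize}

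Alternatively, your reflection idea can be made rigorous:
\begin{itemize}
\item $g_{\hat w}R$ lies in the region bounded by $[\hat w]$ and the geodesic joining its endpoints.
\item $g_w^{-1}(0)=g_{\hat w}(0)$ is at bounded hyperbolic distance from $g_{\hat w}R$.
\item $\sup_{|u|=n}\operatorname{diam}[u]\to0$ by \cite[Prop.~4.5]{Series1981}.
\end{itemize}
Hence $g_{\hat w}(0)$ and its reflection $g_w^{-1}\infty$ both lie within $o(1)$ of $[\hat w]$, uniformly in $w$. With either argument the cuspidal case needs no separate treatment.
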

\begin{proof}
  It suffices to prove~\eqref{eq:Hc-bounded-dist} since the remaining claims then follow as for $\cH^*$.

  Let $h=h_{w,r}^c\in\cH^r$ with $w=a_1\dotsb a_n$. We can assume $|w|$ is sufficiently large, since the claim is trivial if $|w|\ll1$.

  Let $\psi_w$ be the indifferent or repelling fixed point of $g_w$. Then, as in~\cite[Lemma~4.3]{Stadlbauer2004}\footnote{We warn the reader that in~\cite{Stadlbauer2004} the roles of $g_w$ and $g_{w}^{-1}$ are swapped.}, we have that $|\psi_w- g_w^{-1}\infty|\to 0$ as $|w|\to\infty$. Since $\frac{h''}{h'}(x)=-2(x-g_w^{-1}\infty)^{-1}$, it is thus sufficient to show that $\dist(\psi_w,\Dom(h))\gg1$.

  Next, repeating the argument in~\cite{Stadlbauer2004}, we notice $g_w[\hat w]=\overline{\dD\setminus[ a_1]}$ and we claim that~$\psi_w\in[\hat a_n]$. Indeed, if $\hat a_n\neq a_1$, then $g_w[\hat w]\supset[\hat \omega]$ and so $g_w$ has a fixed point (which has to be repellent or indifferent) in $ [\hat w]\subset[\hat a_n]$, i.e. $\psi_w\in[\hat a_n]$.
  If instead $\hat a_n= a_1$, then any possible fixed point of $g_w$ in $\dD\setminus[ a_1]=\dD\setminus[\hat a_n]$ would have to be in $[\hat w]\cap (\dD\setminus[\hat a_n])=\emptyset$. Thus, $\psi_w\notin\D\setminus[\hat a_n]$ and so $\psi_w\in[\hat a_n]$.

  Finally, recalling the notation $b^{\pm}:=\hat b\pm 1$, we notice that
  $$\Dom(h)\cap \big([\hat a_n+1]\cup[\hat a_n]\cup [\hat a_n-1]\big)\subset \overline{([ a_n^+]\setminus[a_n^+,a_n^{++}])\cup( [ a_n^-]\setminus[a_n^-,a_n^{--}] )}.$$
  The set on the right is bounded away from $[\hat a_n]$. Since the same also holds for the complementary part of $\Dom(h)$, we get
  $$\dist(\psi_w,\Dom(h))\gg \dist([\hat a_n],\Dom(h))\gg1,$$
  as desired.
\end{proof}

\section{Transfer operators} \label{sec:transfer}

Recall that~$\Theta\subset \GL(d, \R)$ is a finite subgroup. We denote
$$ \dD_\Theta := \dD \times \Theta, \qquad K_\Theta := K \times \Theta  $$
and define $\dD'_\Theta,\dots$ in the same way.

We will be interested in maps
$$ f: \dD_\Theta \to \C, $$
and their orbits under the transfer operator~$\LL$ associated with~$\cF_K$. In order to study this transfer operator, we will also need to consider the transfer operator~$\CC$ associated with~$\cF_c$.

\subsection{Skew-products}
\label{sec:skew-products}

Recall the definition~\eqref{eq:def-vthg} of the family of maps~$(\vth_g)$, and the cocycle property~\eqref{eq:vth-cocycle}.
In the remainder of the paper, we make the following two hypotheses on~$(\vth_g)$.
\begin{enumerate}
  \item For each~$g\in G$, with~$g=g_{a_1}\cdots g_{a_n}$ with~$a_1\dotsb a_n\in\Sigma^*$,
  the map~$\vth_g$ is constant on~$[a]\setminus V$ for all~$a\in \Sigma\setminus\{[\hat{a_n}]\}$.
\end{enumerate}
Regarding the assumption~(1), recall from Section~\ref{sec:usef-prop-fund} that the expression~$g = g_{a_1} \cdots g_{a_n}$ is unique.
Thanks to the assumption~(1) we can write, for any~$w\in \Sigma^*$ and~$a$ with~$wa$ admissible,
\begin{equation}
  \label{eq:vth-constant}
  \vth_w(a) := \vth_{g_w}|_{[a]\setminus V} \in \Theta.
\end{equation}
\begin{enumerate}  \stepcounter{enumi}
  \item There exists~$r_0\in \N$ with the following property. Given any~$b_1, b_2\in \Sigma$ and~$\vth_0\in\Theta$, we may find~$w\in \Sigma^*$ with~$b_1 w b_2$ admissible, such that~$\ell_K(b_1 w b_2) = r_0$ and
  \begin{equation}
    \vth_{b_1 w}(b_2) = \vth_0.\label{eq:vth-mixing}
  \end{equation}
\end{enumerate}
Note that with the notation~\eqref{eq:vth-constant}, the cocycle property implies
\begin{equation}
  \vth_{a_1 \dotsb a_n b_1 \dotsb b_m}(c) = \vth_{b_1 \dotsb b_m}(c) \vth_{a_1 \dotsc a_n}(b_1). \label{eq:vth-cocycle-letter}
\end{equation}
We also note, for potential use in future applications, that assumption~(1) is invoked in our work only for~$[a] \cap \Dom(h) \neq \varnothing$ where~$h = h_g$ is an inverse branch in~$\cH$ or~$\cH_c$.

We define a map~$\cF_\Theta: (\DcF\setminus W)_\Theta \to K_\Theta$ by setting
$$ \cF_\Theta(x, \vth_0) := (\cF_K(x), \vth_0 \vth_g (x)), \qquad (\cF_K(x) = g x, g \in G). $$
Using the cocycle rule~\eqref{eq:vth-cocycle}, the inverse branches of~$\cF_\Theta$ are given by
$$ \{ (y, \vth_1) \mapsto (h(y), \vth_1 \vth_h(y)),\  h\in \cH\}, $$
where for $h=h_w$ in $\cH$, $\cH_c$ or~$\cH_c^\sharp$,
\begin{align*}
  \vth_{h}(y):=\vth_{g_w}(y)=\vth_{g_w^{-1}}(h(y))^{-1},\qquad y\in\Dom(h)\setminus V.
\end{align*}

\subsection{The $s$-twisted transfer operator}\label{sec:transferop-stwist}

Define, for all~$s\in \C$, the $s$-twisted transfer operator acting on a map~$f:\dD_\Theta \to\C$ by
\begin{equation}\label{eq:def-twistedop-s}
  \begin{array}{rccl}
    \LL_s[f]: & K_\Theta &\to& \C,\\ & (y, \vth_1) & \mapsto & \displaystyle \ssum{h\in \cH \\ y\in \Dom(h)} \abs{h'(y)}^s f(h(y), \vth_1 \vth_h(y))
  \end{array}
\end{equation}
whenever the sum converges absolutely.

We will also need to define analogous operators for the map~$\cF_c$. 
We will give two versions, one with the inverse branches~\eqref{eq:def-Hcr} in~$\cH_c$ and one with its restricted version~$\cH_c^{\sharp}$ defined in~\eqref{eq:def-Hcsharp}.
Given~$f:\dD_\Theta \to \C$ as above, we let
\begin{align*}
  \CC_s[f](y, \vth_1)&= \ssum{h\in \cH_c \\ y\in \Dom(h)}  |h'(y)|^s f(h(y), \vth_1 \vth_{h}(y) ) ,\qquad (y,\vth_1)\in\dD'_\Theta
\end{align*}
and let~$\CC^\sharp_s[f](y, \vth_1)$ be defined in the same way but where the sum ranges over~$h\in \cH_c^\sharp$. By definition this means that for $(y,\vth_1)\in\dD'_\Theta$
$$ \CC_s^\sharp[f](y,\vth_1) = \begin{cases} \CC_s[f](y,\vth_1), & (y\not\in K), \\ 0 & (y\in K). \end{cases} $$
Note that by the triangle inequality, for~$\sigma, \tau \in \R$ we have
$$ \abs{\CC_{\sigma+i\tau}[f]} \leq \CC_\sigma[\abs{f}] $$
whenever the right-hand side is finite.

We also define the untwisted versions of~$\LL$,~$\CC_s$ and~$\CC^\sharp_s$ by
$$
\LL:=\LL_1,\qquad \CC:=\CC_1, \qquad \CC^\sharp:=\CC^\sharp_1.
$$
Notice that for a non-negative~$f$ we have~$\CC^\sharp[f]\leq \CC[f]$.

Recall the definition of~$\varphi$ from~\eqref{eq:def-invar-Fc}.
By Proposition~\ref{prop:varphi-invar}, we have
$$ \CC[\varphi] = \varphi $$
and thus, since~$\varphi\asymp 1$, for bounded~$f$ we have~$\CC^\sharp[f], \CC[f]\ll 1$. Finally, 
for all~$f$ integrable on~$\dD$, we have
\begin{equation}
  \int_{\dD} \CC[f](y) \df s(y) = \int_{\dD} f(y) \df s(y).\label{eq:transferop-invarc}
\end{equation}
This can also be stated as the fact that the dual~$\CC^*$ of~$\CC$ preserves the Lebesgue measure.

The relevance of~$\CC_s$ and~$\CC^\sharp_s$ is evident from Lemma~\ref{lem:decomp-Hc-HK}.
Indeed, one immediately deduces that for~$y\in K$ and~$\vth_1\in\Theta$, we have 
\begin{align}
  \LL_s[f](y, \vth_1)&= \sum_{m=0}^\infty \CC_s (\CC^\sharp_{s})^m[f](y, \vth_1)\notag\\
  &=\CC_s (\Id-\CC^\sharp_s)^{-1}[f](y, \vth_1),\label{eq:rel-LLs-CCs}
\end{align}
whenever the series defining $\LL_s[f]$ is absolutely convergent.

\begin{remark}
  We mention that the family of $s$-twists~\eqref{eq:def-twistedop-s} was defined and studied in~\cite[Section~5]{Morita1997} in relation with the Selberg zeta function of Fuchsian groups (see~\cite{Mayer1991} for the $\SL(2, \Z)$ case).
\end{remark}

\subsection{Convergence for~$\Re(s)$ below~$1$}\label{sec:conv-res-below-1}

Our next goal is to show that there exists~$\sigma_0 < 1$ such that the sum in the definition~\eqref{eq:def-twistedop-s} of~$\LL_s[f]$ converges normally for all bounded~$f$ and all~$s \in \C$ with~$\Re(s)\geq \sigma_0$.
We start with the following lemma which is relative to the case when~$\Re(s)=1$. 

\begin{lemma}\label{lem:CCsharp-contracts}
  We have~$(\CC^\sharp)^2[\varphi ]< \rho  \varphi$ for some~$\rho\in(0,1)$. 
\end{lemma}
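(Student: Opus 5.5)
The starting point is the invariance $\CC[\varphi]=\varphi$ (Proposition~\ref{prop:varphi-invar}) together with positivity of the operator. Since $\CC^\sharp[f]=\1_{\dD\setminus K}\,\CC[f]$ for $f\geq 0$, we get
$$ (\CC^\sharp)^2[\varphi](y) \leq \CC^\sharp[\varphi](y) \leq \varphi(y), $$
and $(\CC^\sharp)^2[\varphi]$ vanishes on $K$. So the statement reduces to a uniform strict inequality. Fix $y\notin K$. I would write
$$ \varphi(y)-(\CC^\sharp)^2[\varphi](y) = \CC\big[\varphi-\CC^\sharp[\varphi]\big](y) \geq \CC[\varphi\1_K](y) \geq \sum_{\substack{h\in\cH_c,\ y\in\Dom(h)\\ h(y)\in K}} |h'(y)|\,\varphi(h(y)). $$
Here I used $\varphi-\CC^\sharp[\varphi]=\varphi\1_K$. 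Since $\varphi\asymp 1$, it suffices to show that for every $y\in\dD'\setminus K$ there is at least one branch $h\in\cH_c$ with $y\in\Dom(h)$, $h(y)\in K$, and $|h'(y)|\gg 1$ uniformly in $y$.

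To produce such a branch, I would use one-letter words. For $w=a$ with $|w|=1$, the branch $h^c_a(y)=g_ay$ has domain $D_c(a)=\bigcup [b_1,b_2]$ over admissible pairs with $\ell_c(ab_1)=1$ and $\ell_c(ab_1b_2)=2$. That is, $b_1\neq\hat a$ and $(\hat b_1,b_2)$ is not in the same cuspidal direction as $(\hat a,b_1)$. Recall that $g_a y\in K_a$ exactly when $y\in[b]$ with $|b-\hat a|>1$. So for $y\in[b_1,b_2]$ I would choose $a$ with $|\hat a-b_1|>1$. Then $\delta_1=0$, so automatically $\ell_c(ab_1b_2)=2$ and $y\in D_c(a)$. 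There are at least $2n-3\geq 1$ such letters $a$ since $n\geq2$. For this branch, $|h'(y)|=|g_a'(y)|\asymp 1$: the pole of $g_a$ lies outside $\bar\D$ and there are finitely many $a$.

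For vertices $y\in V$ the same choice works. In the definition of $D_c(a,1)$ vertices are removed only when $|w|\geq 2$. Moreover $g_ay$ lies in $W\subset K$ (at an endpoint of $K_a$; I take $K_a$ closed as a union of closed arcs), so the bound still holds. This gives
$$ \CC[\varphi\1_K](y)\geq c>0 $$
uniformly, hence $(\CC^\sharp)^2[\varphi]\leq \varphi-c\leq(1-c/\sup\varphi)\varphi$. One can take $\rho=1-c/(2\sup\varphi)$ to obtain the strict inequality.

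The main thing to check carefully is the identification $\CC^\sharp[\varphi]=\varphi\1_{\dD\setminus K}$. It follows from the invariance $\CC[\varphi]=\varphi$ on $\dD'$, including at points of $\partial K$ where $\varphi$ is discontinuous. I would verify that Proposition~\ref{prop:varphi-invar} gives the pointwise identity rather than just an a.e. one. If it does not, I would instead bound $\CC^\sharp[\varphi]\leq\varphi$ a.e. and run the same argument a.e., which suffices for later use.
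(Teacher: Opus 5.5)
Your proposal is correct and is essentially the paper's argument. From $\CC[\varphi]=\varphi$ one gets $\CC^\sharp[\varphi]=\varphi\1_{\dD\setminus K}$; then for $y\notin K$ the difference $\varphi(y)-(\CC^\sharp)^2[\varphi](y)=\CC[\varphi\1_K](y)$ is bounded below by the contribution of a single one-letter branch $h^c_a$ with $[a,b]\subset K$ (where $y\in[b]$), using $\varphi\asymp 1$ and $|{h^c_a}'|\asymp 1$. Your explicit check that $y\in D_c(a)$ (via $\delta_1=0$) is a detail the paper leaves implicit, while the vertex case is unnecessary since $\CC$ is only evaluated on $\dD'_\Theta$.
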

\begin{proof}
  We have~$0=\CC^\sharp[\varphi ](y)< \CC[\varphi ](y)=\varphi(y)$ for all~$y\in K$. If~$y\notin K$ the two sums coincides and one has~$\CC^\sharp[\varphi ](y)= \CC[\varphi ](y)=\varphi(y)$. In particular, letting $f:=\CC^\sharp[\varphi ]$, we have~$0\leq f\leq\varphi$ and~$ f<\varphi$ on~$K$. If~$y\in K$ then~$\CC^\sharp[f](y) = 0 < \CC[\varphi](y)$, as desired. On the other hand if~$y\not\in K$, $y=[b, \dotsc]$, then we can always find~$h\in \cH_c$ with~$y\in \Dom(h)$ and~$h(y)\in K$; it suffices to pick~$h = h^c_a$ for some~$a\in \Sigma$ satisfying~$[a, b] \subset K$, where we used the notation~\eqref{eq:def-hwc}. For this~$h$ we have~$f(h(y))=0$, and therefore
$$  \CC^\sharp[f](y)-\varphi(y) =\CC[f-\varphi](y)\leq - |h'(y)|\varphi(h(y))  < -c \varphi(y),$$
for some $c\in(0,1)$, since~$\varphi\asymp1$ and $|{h_a^c}'|\asymp1$ uniformly for~$a\in\Sigma$.
Together with~$\CC^\sharp[f]=0$ on~$K$, this gives the claimed
bound with~$\rho=1-c$.
\end{proof}

\begin{lemma}\label{lem:concn}
  Let $\eps>0$. Then, for all $1/2+\eps<\sigma\ll1$ we have $\CC_\sigma[\1] \ll_\eps \1$.
\end{lemma}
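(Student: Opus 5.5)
Fix $y\in\dD'$ and write $\CC_\sigma[\1](y)=\sum_{h\in\cH_c,\ y\in\Dom(h)}|h'(y)|^\sigma$. The plan is to show this sum is $O_\eps(1)$ uniformly in $y$. We sort the branches by the structure of their words. By definition of $\Lambda_c=\Lambda_c(1)$, every $w\in\Lambda_c$ satisfies $\ell_c(w)=1$. Hence $w$ is cuspidal, either right- or left-cuspidal (a single letter counts as both). The word is then determined by its first letter $a_1$, the direction $\delta\in\{\pm1\}$ and its length $n=|w|$, since $a_{i+1}=\hat a_i+\delta$. So there are at most $2|\Sigma|$ words of each length, and $|\Lambda_c\cap\Sigma^n|\ll 1$ uniformly in $n$.

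Next, each such word can be written as in Corollary~\ref{cor:parabolic}: $w=w_0^\ell w_1$, where $w_0$ is one of the finitely many minimal cuspidal cycles and $w_1\neq\epsilon$ is a prefix of $w_0$. Since $|w_0|\ll1$, we have $\ell\asymp n/|w_0|$, so $\ell\gg n$ up to an additive constant. Corollary~\ref{cor:parabolic} gives $\|h'\|_\infty\asymp(1+\ell)^{-2}$. Therefore
\[
\CC_\sigma[\1](y)\le\sum_{h\in\cH_c}\|h'\|_\infty^\sigma\ll\sum_{w_0}\sum_{w_1}\sum_{\ell\ge0}(1+\ell)^{-2\sigma},
\]
where the outer two sums range over finite sets of size $O(1)$. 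For $\sigma\ge1/2+\eps$ the inner series is bounded by $\sum_{\ell\ge0}(1+\ell)^{-1-2\eps}\ll_\eps 1$. The restriction $\sigma\ll1$ only ensures that $\|h'\|_\infty^\sigma\le C^\sigma$ stays uniformly controlled; since $\|h'\|_\infty\ll1$, the bound holds for every bounded $\sigma$.

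The main step to check carefully is that every $h\in\cH_c$ really has a cuspidal word and fits the decomposition of Corollary~\ref{cor:parabolic}. This comes down to showing that $\ell_c(w)=1$ forces all the $\delta_i$ to equal a common $\pm1$. If some $\delta_i=0$ or two consecutive $\delta_i$ differ, then $\delta_i\delta_{i+1}\neq1$ and $\ell_c$ increases. The definition excludes the last index from this count, $i<n-1$. So the final transition $\delta_{n-1}$ might be unconstrained, and $w$ could be cuspidal except at its last letter. In that case we write $w=w'a_n$ with $w'$ cuspidal and use $h'(y)=g_{w'}'(g_{a_n}y)\,g_{a_n}'(y)$. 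Here $|g_{a_n}'|\asymp1$, and the estimate of Lemma~\ref{lem:parabolic} applies to $g_{w'}$ at the point $g_{a_n}y$. That point stays at distance $\gg1$ from the relevant cusp because the last pair is non-cuspidal, so we again get $|h'(y)|\ll(1+\ell)^{-2}$. The counting of words then goes through with an extra factor $|\Sigma|$.
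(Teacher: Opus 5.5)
Your argument is correct and is exactly the paper's proof. Every $w\in\Lambda_c$ is cuspidal, so writing $w=w_0^\ell w_1$ and applying Corollary~\ref{cor:parabolic} reduces $\CC_\sigma[\1]$ to $O(1)$ copies of $\sum_{\ell\ge0}(1+\ell)^{-2\sigma}\ll_\eps 1$, using that there are finitely many choices of $w_0,w_1$.

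The hedge in your last paragraph is unnecessary, and the case it treats never occurs. For $n\ge3$, $\delta_{n-1}$ does enter the count in $\ell_c$ through the index $i=n-2$. For $|w|=2$, the condition $[\hat w]\not\subset K$ in the definition of $\Lambda_c$ forces the last pair to be cuspidal. So no word in $\Lambda_c$ ends with a non-cuspidal transition.
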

\begin{proof}
  This follows from Corollary~\ref{cor:parabolic} since there are only finitely possibilities for $w_0$ and $w_1$.
\end{proof}

We deduce the following contraction result for $(\CC^\sharp)^2$ on the left of $\Re(s)=1$ line. 

\begin{lemma}\label{lem:convergence-H-CF}
  There exists~$\sigma_0<1$ and~$\rho<1$ such that for $\sigma_0\leq \sigma\leq 1$ 
  $$ (\CC^\sharp_{ \sigma})^2[\varphi] < \rho \varphi. $$
\end{lemma}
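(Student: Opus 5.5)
The plan is a perturbation argument in the exponent~$\sigma$ around~$\sigma=1$, where Lemma~\ref{lem:CCsharp-contracts} already gives $(\CC^\sharp)^2[\varphi]<\rho_1\varphi$ for some $\rho_1\in(0,1)$. We compare $(\CC^\sharp_\sigma)^2[\varphi]$ with $(\CC^\sharp_1)^2[\varphi]$ uniformly in $y$. We expand
$$ (\CC^\sharp_\sigma)^2[\varphi](y) = \ssum{h_1,h_2\in\cH_c^\sharp\\ y\in\Dom(h_1\circ h_2)} \abs{(h_1\circ h_2)'(y)}^{\sigma}\varphi(h_1h_2 y), $$
and since $\varphi\asymp 1$ it suffices to show that
$$ \sup_y \sum \Bigl(\abs{(h_1 h_2)'(y)}^\sigma-\abs{(h_1h_2)'(y)}\Bigr)\to 0 \qquad (\sigma\to1^-), $$
because then $(\CC^\sharp_\sigma)^2[\varphi]\leq \rho_1\varphi+o(1)\varphi$, and we choose $\sigma_0$ so that the error is at most $\frac{1-\rho_1}{2}\varphi$, giving $\rho=(1+\rho_1)/2$.

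The key steps are as follows. First, we write $u=\abs{(h_1h_2)'(y)}$, which is $\ll 1$ by Proposition~\ref{prop:contraction}, and we use the elementary bound $0\le u^\sigma-u\le (1-\sigma)\,u^{\sigma}\log(C/u)$. Second, by the chain rule $u=\abs{h_1'(h_2y)}\abs{h_2'(y)}$, so $u^\sigma\log(C/u)\ll_\eps u^{\sigma-\eps}\leq \abs{h_1'(h_2 y)}^{\sigma-\eps}\abs{h_2'(y)}^{\sigma-\eps}$. The double sum is then bounded by $\CC_{\sigma-\eps}[\CC_{\sigma-\eps}[\1]]$ (dropping the $\sharp$ restriction, which only enlarges a nonnegative sum). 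Third, we apply Lemma~\ref{lem:concn} with, say, $\sigma-\eps\ge 3/4$: this gives $\CC_{\sigma-\eps}[\1]\ll\1$, and hence the double sum is $O(1)$ uniformly in $y$. Multiplying by the factor $(1-\sigma)$, the total difference is $O(1-\sigma)$, which tends to $0$ as $\sigma\to1^-$.

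The main point requiring care is the uniform summability in the second and third steps, that is, the fact that $\sum_{h\in\cH_c}\|h'\|_\infty^{\sigma'}$ converges for $\sigma'>1/2$ uniformly in the base point. This is exactly what Corollary~\ref{cor:parabolic} provides: $\|h'\|_\infty\asymp(1+\ell)^{-2}$ with finitely many choices of $(w_0,w_1)$, packaged as Lemma~\ref{lem:concn}. Because that corollary gives sup-norm bounds, we can pull $\sup_y$ outside the sums. The inequality obtained is non-strict with constant $\rho$, which is strict as written once $\rho>\rho_1$ is slack, since $\varphi>0$.
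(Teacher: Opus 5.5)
Your argument is correct and is the same continuity argument the paper uses: the paper's proof just says the lemma ``follows by continuity combining Lemma~\ref{lem:CCsharp-contracts} and~\ref{lem:concn}''. You make this quantitative, with an error $O(1-\sigma)$ controlled by $\CC_{\sigma-\eps}\CC_{\sigma-\eps}[\1]\ll 1$.

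One harmless slip: the lower bound $0\le u^\sigma-u$ requires $u\le 1$, which need not hold for short words, since only $\|h'\|_\infty\ll 1$ is known. However, you only use the upper bound $u^\sigma-u\le(1-\sigma)u^\sigma\log(C/u)$ with $C\ge 1$, and that bound is valid for all $0<u\le C$.
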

\begin{proof}
  The result follows by continuity combining Lemma~\ref{lem:CCsharp-contracts} and~\ref{lem:concn}.\end{proof}

\begin{proposition}\label{prop:Lst-bounded}
  Let~$f:\dD_\Theta \to \C$ be bounded. There exists~$\sigma_0 \in (0, 1)$ such that the sum in~\eqref{eq:def-twistedop-s} converges absolutely and uniformly for all~$s \in \C$ with~$\Re(s)\geq \sigma_0$.
\end{proposition}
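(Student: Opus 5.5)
The approach is to combine the decomposition \eqref{eq:rel-LLs-CCs} of $\LL_s$ in terms of $\CC_s$ and $\CC_s^\sharp$ with the contraction from Lemma~\ref{lem:convergence-H-CF}. Since $\abs{\vth_1\vth_h(y)}$ plays no role in the absolute values, it suffices to bound the positive series
$$ \ssum{h\in \cH \\ y\in \Dom(h)} \abs{h'(y)}^{\sigma}\, \|f\|_\infty, \qquad \sigma=\Re(s), $$
uniformly in $y\in K$ and $\sigma\geq\sigma_0$. Because $\|h'\|_\infty\ll 1$ uniformly for $h\in\cH$ by \eqref{eq:hprime-bound-unif}, we have $\abs{h'(y)}^\sigma\ll\abs{h'(y)}^{\sigma_0}$ for $\sigma\geq\sigma_0$. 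It therefore suffices to treat $\sigma\in[\sigma_0,1]$, and in fact only $\sigma=\sigma_0$.

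Next we use Lemma~\ref{lem:decomp-Hc-HK}. Each $h\in\cH$ with $y\in\Dom(h)$ corresponds bijectively to a tuple $(h_1,\dots,h_r)$ with $h_1,\dots,h_{r-1}\in\cH_c^\sharp$ and $h_r\in\cH_c$, and $h=h_1\circ\dotsb\circ h_r$ on the relevant point. The chain rule gives $\abs{h'(y)}^\sigma=\prod_i\abs{h_i'(h_{i+1}\circ\dotsb\circ h_r(y))}^\sigma$. Summing over tuples with $r$ fixed yields exactly $\CC_\sigma(\CC_\sigma^\sharp)^{r-1}[\1](y)$, since all terms are nonnegative and rearrangement is allowed. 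Thus
$$ \sum_{h}\abs{h'(y)}^\sigma=\sum_{m\geq0}\CC_\sigma(\CC^\sharp_\sigma)^m[\1](y), $$
as an identity in $[0,\infty]$.

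It then remains to prove that this series is bounded. Since $\varphi\asymp1$, we have $\1\ll\varphi$. By Lemma~\ref{lem:convergence-H-CF}, $(\CC^\sharp_\sigma)^{2k}[\varphi]\leq\rho^k\varphi$, using positivity and monotonicity of the operators. Lemma~\ref{lem:concn}, together with $\varphi\asymp1$, gives $\CC_\sigma[\varphi]\ll\1$ and $\CC^\sharp_\sigma[\varphi]\ll\varphi$ for $\sigma\geq\sigma_0>1/2$, shrinking $\sigma_0$ toward $1$ if needed. Splitting $m=2k$ or $2k+1$, each term is $\ll\rho^k$, so the sum is $\ll(1-\rho)^{-1}$ uniformly in $y$ and $\sigma\in[\sigma_0,1]$.

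Uniform convergence in $s$ then follows from the Weierstrass M-test, since the majorant $\sum_h\abs{h'(y)}^{\sigma_0}\|f\|_\infty$ is independent of $s$. The main point requiring care is justifying the rearrangement identity above for the positive series at $\sigma<1$; \eqref{eq:rel-LLs-CCs} was only stated under absolute convergence, but Lemma~\ref{lem:decomp-Hc-HK} is a pure bijection of index sets, so Tonelli applies directly.
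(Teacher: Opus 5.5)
Your proposal is correct and is essentially the paper's argument. The paper also takes $\sigma_0$ from Lemma~\ref{lem:convergence-H-CF}, bounds $(\CC^\sharp_{\sigma_0})^m[f]\ll\rho^{m/2}\varphi$, and concludes via \eqref{eq:rel-LLs-CCs}; you supply details it leaves implicit, namely the bound on the outer $\CC_\sigma$ via Lemma~\ref{lem:concn}, and the Tonelli justification that the positive rearrangement holds without assuming convergence.

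One minor caveat. The bound $|h'|^{\sigma}\ll|h'|^{\sigma_0}$ is uniform only for bounded $\Re(s)$, unless $\|h'\|_\infty\leq 1$ for all $h\in\cH$. So for uniformity on the whole half-plane, apply the M-test to the tail over the cofinitely many $h$ with $\|h'\|_\infty<1$.
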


\begin{proof}
  Take~$\sigma_0$ as in Lemma~\ref{lem:convergence-H-CF}.
  Then for all~$m\geq 1$ we have~$(\CC^\sharp_{\sigma_0})^m[f] \ll (\CC^\sharp_{\sigma_0})^m[\varphi]\ll \rho^{m/2} \varphi$, with an implicit constant independent of~$m$. The result then follows by~\eqref{eq:rel-LLs-CCs}.
\end{proof}

\subsection{Spectral properties at~\texorpdfstring{$s=1$}{s = 1}}

Finally we record the properties of $\LL$.
By Proposition~\ref{prop:Lst-bounded}, we know that~$\LL$ is a bounded operator from~$L^\infty(\dD_\Theta)$ to~$L^\infty(K_\Theta)$.
Recall the definition of~$\xi$ in~\eqref{eq:def-inv-meas}.
We extend~$\xi$ to a map on~$\dD_\Theta$ supported inside~$K_\Theta$ by letting it be constant on the second coordinate:
$$ \xi(x, \vth_0) := \begin{cases} \xi(x), & (x, \vth_0) \in K_\Theta, \\ 0, & \text{otherwise}. \end{cases} $$
By~\eqref{eq:def-inv-meas} we have that~$\xi$ is an eigenfunction of eigenvalue~$1$ for~$\LL$,
\begin{equation}
  \LL[\xi] = \xi. \label{eq:invar-LL-xi}
\end{equation}

By Proposition~\ref{prop:xi-invar}, the absolutely continuous measure~$\xi(x) \df s(x)$ is~$\cF_K$-invariant, and analogously to \eqref{eq:transferop-invarc}, we also have
\begin{equation}
  \int_{K_\Theta} \LL[f](y, \vth_1) \df s(y)\df\vth_1 = \int_{\dD_\Theta} f(y,\vth_1) \df s(y) \df\vth_1 \label{eq:transferop-invar}
\end{equation}
for all~$f$ integrable on~$\dD_\Theta$, where~$\df\vth_1$ denotes the uniform counting probability measure on~$\Theta$.

\subsection{Cost functions}\label{cfn}

We let~$\vphi = (\vphi_h)_{h\in \cH}$ be a family of maps~$\vphi_h:\Dom(h)\to \R^d$ which belong to $H^{\kappa_0}$ for some $\kappa_0\in(0,1]$, where we recall $H^\kappa$ is introduced in Section~\ref{fsp0}.
Since we have the disjoint union~$\DcF\setminus W=\cF_K^{-1}(K)=\bigcup_{h\in\cH}\im(h)$, the family~$(\vphi_h)_{h\in \cH}$ can be equivalently reinterpreted as a function~$\Phi_K$ on $(\DcF\setminus W)_\Theta$ defined as
\begin{align}
  \Phi_K(x,\vth_0) &{}:=\vth_0 \vth_{h}(y)^{-1} \vphi_{h}(y)&& x=h(y) \text{ for some~$h\in\cH$}\label{eq:def-PhiK}, \\
  \Phi_K(x) &{}:= \Phi_K(x, 1).\notag
\end{align}
By construction we then also have
\begin{equation}\label{eq:PhiK-vphi}
  \vth_1 \vphi_{h}(y)=\Phi_K(h(y),\vth_1 \vth_h(y)) \quad \forall h\in \cH,\ y\in \Dom(h).
\end{equation}

It will be useful to define $\vphi_h$ also for inverse branches of any depth, by letting~$\vphi_h = 0$ if~$h = \id$ and otherwise
\begin{align}\label{cascs}
  \vphi_h(y)=\sum_{i=1}^{n} \vth_{h_{i+1} \circ \dotsb \circ h_n}(y) \vphi_{h_i}(h_{i+1} \circ \dotsb \circ h_n(y))
\end{align} 
for $h=h_1 \circ \dotsb \circ h_n\in \cH^n$, $h_i\in\cH$.

\begin{remark}
  In the introduction, we discussed for simplicity a situation where the family of maps~$(\vphi_g)$ is defined for all~$g\in G$. In such a situation, the relation~\ref{cascs} holds automatically by the cocycle property~\eqref{xax}.
\end{remark}

\subsection{The transfer operator \texorpdfstring{$\LL_{s,\vt}$}{L_st}}

We can now define the main generating function we will work with. For all~$s\in \C$ with~$\Re(s)\geq\sigma_0$ the quantity defined in Proposition~\ref{prop:Lst-bounded}, all~$\vt\in\R^d$ and all bounded map~$f:\dD_\Theta \to \C$, we let
\begin{equation}\label{eq:def-twistedop-general}
  \begin{array}{rccl}
    \LL_{s, \vt}[f]: & K_\Theta &\to& \C,\\ & (y, \vth_1) & \mapsto & \displaystyle \ssum{h\in \cH \\ y\in \Dom(h)} \abs{h'(y)}^s e^{i \inpd{\vt, \vth_1  \vphi_h(y)}} f(h(y), \vth_1 \vth_h(y)).
  \end{array}
\end{equation}

\begin{remark}\label{rmkex}
  Any bounded function $f:K_\Theta \to \C$ extends trivially to a function on $\dD_\Theta$ by setting it to zero on $\dD_\Theta\setminus K_\Theta$. With a slight abuse of notation, we also denote $\LL_{s, \vt}$ the operator acting on bounded functions on $K_\Theta$, obtained by applying the original operator~\eqref{eq:def-twistedop-general} to this extension. Clearly, in this case~\eqref{eq:transferop-invar} holds also with $\dD_\Theta$ in the right-hand side replaced by $K_\Theta$.
\end{remark}

\section{Moments conditions}\label{sec:moments-conditions}

In this section we shall introduce some hypothesis on the period functions~$(\vphi_h)$ which we will need in order to carry out the functional analysis of the operators~$\LL_{s, \vt}$. 

Given~$\kappa\in(0,1]$, we recall the definition of~$\kappa$-Hölder norm and semi-norm given in Section~\ref{fsp0}.
We also quote from~\cite[Section~4.1]{BettinDrappeau2022} the following properties of $\|\|_{(\kappa)}$:
\begin{align}
  &\|fg\|_{(\kappa)}\leq \|f\|_{(\kappa)}\|g\|_{\infty}+\|g\|_{(\kappa)}\|f\|_{\infty}, && f\in H^\kappa,\label{hld1}\\
  &\|f\circ h\|_{(\kappa)}\ll \|h'\|^\kappa_\infty \|f\|_{(\kappa),\im(h)}, && f\in H^\kappa, h\in\cH^*,\label{hld2}\\
  &\|e^{if}\|_{(\kappa)}\ll\|f\|_{(\kappa')}^{\kappa/\kappa'} &&  f\in H^{\kappa'}, \kappa'\in[\kappa,1],\label{hld3}\\
  & \|{f}\|_{(\kappa)} \ll \|{f}\|_\infty^{1-\kappa/\kappa'} \|{f}\|_{(\kappa')}^{\kappa/\kappa'}&&  f\in H^{\kappa'}, \kappa'\geq\kappa. \label{hld4}
\end{align}
We note that here and below, norms are always implicitly restricted to the domains of the function under consideration, so that in particular in~\eqref{hld2} we mean~$\|f\|_{(\kappa), \im(h)} = \|f\|_{(\kappa), \Dom(f) \cap \im(h)}$.

{\renewcommand{\thehypothesis}{\ensuremath{(M_K)}}
  \begin{hypothesis}[Moments conditions]\label{hyp:moments}
    For given positive numbers~$\alpha, \kappa_0, \lambda$, we have
    \begin{align}
      &\sum_{h\in\cH}  \| h'\|_\infty  \|\vphi_{h}\|_{\infty} ^{\alpha} < \infty, \label{eq:moment-condition} \\
      &\sum_{h\in\cH}  \| h'\|_\infty \| \vphi_{h}  \|^\lambda_{(\kappa_0)}  < \infty. \label{eq:holder-condition}
    \end{align}  
  \end{hypothesis}
}

Note that the sums are over~$\cH$, the inverse branches of~$\cF_K$, see~\eqref{eq:def-H1}.
For our applications in~\cite{BettinEtAla,BettinEtAl3}, it will be crucial to reduce the sum over~$h$ to a smaller set. This is the point of the following definition.

{\renewcommand{\thehypothesis}{\ensuremath{(M_c)}}
  \begin{hypothesis}[Cuspidal moments conditions]\label{hyp:moments-cuspidal}
    For given positive numbers~$\alpha, \kappa_0, \lambda$ there exist a family~$(\vphi_h)_{h\in \cH_c}$ of maps from~$\Dom(h)\setminus V$ to $\R^d$ such that the following conditions are satisfied.
    \begin{enumerate}
      \item We have
      \begin{align}
        &\ssum{h_1, h_2 \in \cH_c} \|h_1'\|_{\infty} \|h_2'\|_{\infty} \| \vphi_{h_1} \|_{\infty,\im(h_2)}^{\alpha} < \infty, \label{eq:moment-condition-cuspidal} \\
        &\ssum{h_1, h_2 \in \cH_c}  \|h_1'\|_{\infty} \|h_2'\|_\infty \| \vphi_{h_1}\|_{(\kappa_0), \im(h_2)}^\lambda < \infty. \label{eq:holder-condition-cuspidal}
      \end{align}

      \item For all $(x,\vth_0)\in (\DcF\setminus W)_\Theta$, we have
      \begin{align}\label{eq:ref-PhiK-Phic}
        \Phi_K(x,\vth_0) = \sum_{j=0}^{r_{x}-1}\Phi_c(\cF_{c,\Theta}^{j}(x,\vth_0)),
      \end{align}
      with~$r_x$ as in~\eqref{eq:def-return-Fc} and $\Phi_c:(\dD'\setminus W)_\Theta\to \R^d$ and $\cF_{c,\Theta}:(\dD'\setminus W)_ \Theta\to \dD_\Theta$ defined by
      \begin{align}
        &\Phi_c(x,\vth):= \vth \vth_{h}(y)^{-1} \vphi_{h}(y),&& x=h(y) \text{ for some~$h\in\cH_c$, $y\in\Dom(h)\setminus V$} \label{eq:def-Phictwo}  \\
        &\cF_{c,\Theta}(x,\vth):=(\cF_c(x),\vth \vth_{h^{-1}}(x))
      \end{align}
      which are well-posed since~$\dD'\setminus W=\bigcup_{h\in\cH_c}h(\Dom(h)\setminus V)$, where the union is disjoint. We also denote\footnote{Equivalently, $ \vth_{h}(x)^{-1}\vphi_{h}(x)=\Phi_c(h(x))$, $\forall h\in \cH_c,\ x\in \Dom(h)\setminus V.$} 
      \begin{align}
      \Phi_c(x):=\Phi_c(x,1).\label{eq:def-Phic}
      \end{align}
    \end{enumerate}
  \end{hypothesis}
}
We remark that condition~\eqref{eq:ref-PhiK-Phic} is natural when, as in our applications, one has functions $(\vphi_{g})_g$ for each $g\in G$ which satisfy the cocycle condition
$$ \vphi_{g_1 g_2}(x) = \vphi_{g_2}(x) + \vth_{g_2}(x) \vphi_{g_1}(g_2x)\qquad \forall g_{1},g_2\in G.$$
Indeed, in this case,~\eqref{eq:ref-PhiK-Phic} amounts to say that the cocycle relation holds for the decompositions~\eqref{eq:decomp-Hc-HK}.
We extend the notation~$\vphi_h$ to~$h\in \cH_c^n$ for any~$n\geq 0$ in the same way as~\eqref{cascs}.

Assuming Hypothesis~\ref{hyp:moments-cuspidal}, for all~$s\in\C$ with~$\Re(s)>1/2$ and all~$\vt\in \R^d$, we define the~$(s, \vt)$-twisted operators~$\CC_{s,\vt}, \CC_{s,\vt}^\sharp$ acting on maps~$f:\dD_\Theta \to \C$ by
\begin{align*}
  \CC_{s, \vt}[f](y, \vth_1) &= \ssum{h\in \cH_c \\ y\in \Dom(h)}  |h'(y)|^se^{i \inpd{\vt, \vth_1 \vphi_{h}(y)}} f(h(y), \vth_1 \vth_{h}(y)),\qquad (y,\vth)\in\dD'_\Theta
\end{align*}
and~$\CC_{s,\vt}^\sharp$ defined in the same way with~$\cH_c^\sharp$ instead of~$\cH_c$.
Notice that~\eqref{eq:ref-PhiK-Phic} implies that the following analogue of~\eqref{eq:rel-LLs-CCs} holds:
\begin{align}
  \LL_{s, \vt}[f](y, \vth_1)&=\CC_{s, \vt} (\Id-\CC^\sharp_{s, \vt})^{-1}[f](y, \vth_1),\qquad (y,\vth_1)\in K_\Theta\label{eq:ref-LLst-CCst}
\end{align}
for~$\Re(s)\geq\sigma_0$.

The advantage of Hypothesis~\ref{hyp:moments-cuspidal} over Hypothesis~\ref{hyp:moments} lies in the fact that the sums over~$h_i$ in~\eqref{eq:moment-condition-cuspidal}--\eqref{eq:holder-condition-cuspidal} are restricted to the set~$\cH_c$, which is easier to parametrize since it consists essentially of powers of finitely many parabolic elements. On the other hand, the domains of~$\vphi_{h_1}$ in conditions~\eqref{eq:moment-condition-cuspidal} and~\eqref{eq:holder-condition-cuspidal} are now larger and in particular can approach the vertices of the fundamental domain, which poses additional challenges in applications.

\begin{lemma}[\ref{hyp:moments-cuspidal} implies~\ref{hyp:moments}]\label{lem:moments-c-implies-K}
Let~$\kappa_0>0$. Suppose that~\ref{hyp:moments-cuspidal} holds for~$\alpha>0$ and~$\lambda>0$. Then~\ref{hyp:moments} holds with the same parameters.
\end{lemma}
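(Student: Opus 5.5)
The approach is to express each $h\in\cH$ through its cuspidal decomposition (Lemma~\ref{lem:decomp-Hc-HK}) and to control $\|\vphi_h\|$ by the sum of the cuspidal pieces. Then I will bound the resulting sums over tuples $(h_1,\dots,h_r)$ geometrically, using the contraction of $\CC^\sharp$ (Lemma~\ref{lem:CCsharp-contracts}, Lemma~\ref{lem:convergence-H-CF}).

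First, fix $h\in\cH$ and a component $K_a\subset\Dom(h)$; there are only finitely many such $a$. On $K_a$ write $h=h_1\circ\dotsb\circ h_r$ with $h_1,\dots,h_{r-1}\in\cH_c^\sharp$ and $h_r\in\cH_c$. By~\eqref{eq:ref-PhiK-Phic} and~\eqref{eq:PhiK-vphi}, with $y\in K_a$ and $y_i:=h_{i+1}\circ\dotsb\circ h_r(y)$,
$$\vphi_h(y)=\sum_{i=1}^r \vth_{h_{i+1}\circ\dotsb\circ h_r}(y)\,\vphi_{h_i}(y_i).$$
Since $\Theta$ is finite, we may take it orthogonal after a fixed change of basis, so the $\vth$ factors cost only constants in the norms. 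For $i<r$ the point $y_i$ lies in $\im(h_{i+1})$ with $h_{i+1}\in\cH_c$. For $i=r$ the point $y$ lies in $K$. I would handle that term by choosing any $h_{r+1}=h^c_b\in\cH_c$ with $K_a\subset \im(h^c_b)$, for instance $b=a$, which by Figure~\ref{fig:domainHC}(A) has $K_a\subset\im$. This needs a small check, but any $h_{r+1}$ with $|h_{r+1}'|\asymp1$ will do. Thus every term has the shape $\|\vphi_{h_i}\|_{\infty,\im(h_{i+1})}$ with consecutive pairs. For the Hölder seminorm, I would use~\eqref{hld1} and~\eqref{hld2}: the term $\vphi_{h_i}\circ(h_{i+1}\circ\dotsb\circ h_r)$ has $\kappa_0$-seminorm at most $\|(h_{i+1}\circ\dotsb\circ h_r)'\|_\infty^{\kappa_0}\,\|\vphi_{h_i}\|_{(\kappa_0),\im(h_{i+1})}$, which is bounded by $\|\vphi_{h_i}\|_{(\kappa_0),\im(h_{i+1})}$. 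Here I use that the $\vth$ factors are constant on $K_a$ by assumption~(1) of Section~\ref{sec:skew-products}.

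Second, I raise these to the power $\alpha$ (respectively $\lambda$). If $\alpha\le1$, I use $(\sum x_i)^\alpha\le\sum x_i^\alpha$. If $\alpha>1$, I use $(\sum_i x_i)^\alpha\le \sum_i (1+i)^{2\alpha}... $; more concretely, $\bigl(\sum_i x_i\bigr)^\alpha\le C_\alpha\sum_i \rho_1^{-i} x_i^\alpha$ by Hölder with weights $\rho_1^{i/\alpha'}$, for any fixed $\rho_1<1$. Then
$$\sum_{h\in\cH}\|h'\|_\infty\|\vphi_h\|_\infty^\alpha\ll \sum_{r\ge1}\sum_{i\le r}\rho_1^{-i}\sum_{(h_1,\dots,h_r)}\|h'\|_\infty\,\|\vphi_{h_i}\|^\alpha_{\infty,\im(h_{i+1})}.$$
By the chain rule together with bounded distortion (Proposition~\ref{prop:contraction}), $\|h'\|_\infty\asymp\prod_j|h_j'(y_j)|$. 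I then sum in order. The factors for $j>i+1$ are summed via $(\CC^\sharp)^{m}[\varphi]\ll\rho^{m/2}\varphi$ and $\CC[\varphi]=\varphi$, giving a factor of order $\rho^{(r-i-1)/2}$. The pair $(h_i,h_{i+1})$ contributes exactly the summand of~\eqref{eq:moment-condition-cuspidal}. The factors for $j<i$ are summed with $\varphi$-weights via the same contraction, now viewed as a sum over preimages. Here I would use the dual form: the mass $\sum |(h_1\circ\dotsb\circ h_{i-1})'|$ over admissible prefixes is $\ll (\CC^\sharp)^{i-1}$-type bounded, giving $\rho^{(i-1)/2}$. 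Choosing $\rho_1$ with $\rho_1^{-1}\rho^{1/2}<1$ makes the double sum over $r,i$ converge. The Hölder condition~\eqref{eq:holder-condition} is handled identically with $\lambda$ and~\eqref{eq:holder-condition-cuspidal}.

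The main obstacle is the decoupling step: bounding $\sum\|h'\|_\infty$ over prefix and suffix chains by operator iterates. The domain constraints $y_{j}\in\Dom(h_j)$ tie the sums together, and the sup norm $\|h'\|_\infty$ must be replaced by pointwise derivatives. Bounded distortion handles the latter. For the former, I would bound the prefix sum $\sum_{h_1,\dots,h_{i-1}}\prod|h_j'|$ at a fixed point by $(\CC^\sharp)^{i-1}[\1]\ll\rho^{(i-1)/2}$, using Lemma~\ref{lem:convergence-H-CF} at $\sigma=1$ and $\varphi\asymp1$. The suffix I would bound by integrating: summing over $h_{i+2},\dots,h_r$ with the constraint landing in $\Dom(h_{i+1})$ is again a value of $(\CC^\sharp)^{r-i-2}\CC[\1]$, which is uniformly small. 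Finally, $\|h_{i+1}'\|_\infty$ times $\|h_i'\|_\infty$ matches the weights in~\eqref{eq:moment-condition-cuspidal}.
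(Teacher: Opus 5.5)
Your proposal is correct and follows essentially the paper's argument: the cuspidal decomposition of Lemma~\ref{lem:decomp-Hc-HK}, the auxiliary branch $h^c_a$ to handle the innermost term, the pairing of $h_i$ with $h_{i+1}$ to recover the double sum in~\eqref{eq:moment-condition-cuspidal}, and the contraction of $\CC^\sharp$ for the remaining branches. The paper packages this through a weighted operator $\tilde\CC$ with $\tilde\CC[\1]\ll 1$ on $K$ and $\CC\tilde\CC[\1]\ll 1$ everywhere, and uses the cruder factor $r_h^\alpha$ where you use geometric Hölder weights. One small slip: the suffix bound should read $\CC(\CC^\sharp)^{r-i-2}[\1](y)$, not $(\CC^\sharp)^{r-i-2}\CC[\1](y)$. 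The innermost branch $h_r\in\cH_c$ acts on $y\in K$, where $\CC^\sharp$ vanishes, so the order as you wrote it would give zero.
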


\begin{proof}
  We recall the definition~\eqref{eq:def-return-Fc} of the return time~$r_x$ of~$\cF_c$ to~$K$ and its relation~\eqref{eq:return-h-r} with the decomposition of~$h$. Then, given~$h\in\cH$ and $a\in\Sigma$ with $K_a\subseteq \Dom(h)$, from Lemma~\ref{lem:decomp-Hc-HK} we have that $h$ decomposes on $K_a$ as~$h=h_1\circ \dots \circ  h_{r_h}$ as in~\eqref{eq:decomp-Hc-HK}. By the cocycle property~\eqref{eq:ref-PhiK-Phic}, we then have for all~$y\in K_a$ 
  \begin{align*}
    \abs{\vphi_h(y)}^\alpha&\ll\bigg(\sum_{j=1}^{r_h} \abs{\vphi_{h_{j}}(h_{j+1}\circ \cdots \circ h_{r_h}(y))}\bigg)^\alpha\leq  r_{h} ^\alpha \sum_{j=1}^{r_h} \abs{\vphi_{h_{j}}(h_{j+1}\circ \cdots \circ h_{r_h}(y))}^\alpha.
  \end{align*}
  and so letting $h_{r_h+1}:=h_{a,1}^c$, for which $\Dom(h)\cap\im(h_{r_h+1})=K_a$,  we have
    \begin{align}\label{stpacs}
    \|\vphi_h\|_{\infty,K_a}^\alpha&\ll  r_{h}^\alpha \sum_{j=1}^{r_h} \|\vphi_{h_{j}}\|^\alpha_{\infty,\im(h_{j+1})}.
  \end{align}
Now, let
  $$ \tilde\CC[f](y) := \ssum{h\in \cH_c \\ y \in \Dom(h)} \abs{h'(y)} \|\vphi_h\|_{\infty,\im(k) }^{\alpha}f(h(y)),\qquad (y\in\im(k),\ k\in\cH_c)$$
and similarly let~$\tilde\CC^\sharp$ be defined by the same expression restricted to~$h\in \cH_c^\sharp$. We claim that
  \begin{align}
    \tilde\CC[1](y) \ll 1, && (y\in K), \label{eq:bound-Cw-1} \\
    \CC \tilde\CC[1](y) \ll 1, && (y\in \dD). \label{eq:bound-Cw-2}
  \end{align}
  Indeed, whenever~$y\in K$, we have that $k$ belongs to the finite set $\{h_{b,1}^c,b\in\Sigma\}$. In particular,~$|k'| \asymp 1$, and so
  $$ \tilde\CC[1](y) = \ssum{h_1 \in \cH_c,\\y\in\Dom(h_1)} \abs{h_1'(y)} \|\vphi_{h_1}\|_{\infty, \im(k) }^{\alpha} \ll 1 $$
  by~\eqref{eq:moment-condition-cuspidal} and positivity. This shows the first item. To prove the second item, we write
  $$ \CC \tilde\CC[1](y) = \ssum{h_1, h_2 \in \cH_c \\ y \in \Dom(h_2),\\h_2(y)\in\Dom(h_1)} \abs{h_1'(h_2(y)) h_2'(y)}  \|\vphi_{h_1}\|_{\infty,\im(h_2)}^{\alpha}  \ll 1 $$
  by~\eqref{eq:moment-condition-cuspidal} again.

  By~\eqref{stpacs} and~\eqref{eq:h'-x-0} and unfolding the decomposition~\eqref{eq:decomp-Hc-HK}, for any $y_a\in K_a$ we have
  \begin{align*}
      \sum_{h\in\cH,\atop K_a\subset \Dom(h)}  \| h'\|_\infty  \|\vphi_{h}\|_{\infty,K_a} ^{\alpha}&\ll\ssum{h\in \cH\\ y_a\in \Dom(h)}\!  \abs{h'(y_a)}r_h^\alpha\sum_{j=1}^{r_{h}} \|\vphi_{h_{j}}\|_{\infty,\im(h_{j+1})}^{\alpha}\\
      &\leq\sum_{r\geq0}(r+1)^\alpha \tilde\CC(\CC^\sharp)^{r}      [\1](y_a)\\
      &\quad+\sum_{r_1,r_2\geq0}(r_1+r_2+2)^{\alpha}\CC(\CC^\sharp)^{r_1}\tilde\CC^\sharp (\CC^\sharp)^{r_2}
      [\1](y_a)\\
      &\ll \Big( \tilde\CC+2^\alpha \CC\tilde\CC^\sharp+ \sum_{r_1\geq1}(r_1+2)^{\alpha}\CC(\CC^\sharp)^{r_1}\tilde\CC^\sharp \Big) [\1](y_a)\numberthis \label{eq:expansion-sumh-operators}
  \end{align*}
where we bounded the sums over $r$ and $r_2$ using the spectral radius inequality~$\spr(\CC^\sharp)<1$ implied by Lemma~\ref{lem:CCsharp-contracts}. We have~$\tilde\CC[\1](y) \ll 1$ for all~$y\in K$ by~\eqref{eq:bound-Cw-1}, and~$\CC^\sharp\tilde\CC^\sharp[\1](y) \leq \CC\tilde\CC^\sharp[\1](y) \leq \CC\tilde\CC[\1](y) \ll 1$ for all~$y\in \dD$ by~\eqref{eq:bound-Cw-2}, and so the above is uniformly bounded, and~\eqref{eq:moment-condition} follows by summing over $a\in\Sigma$.
  
  We now turn to~\eqref{eq:holder-condition}. With the same notation and the same argument as above, using that the maps $\vth_g$ are constant on $K_a$, we have
  \begin{align*} 
    \| \vphi_{h}  \|^\lambda_{(\kappa_0), K_a}     &\ll  r_{h}^\lambda\sum_{j=1}^{r_h}\| \vphi_{h_{j}}\circ h_{j+1}\circ \cdots \circ h_{r_h}\|^{\lambda}_{(\kappa_0),K_a} .
  \end{align*}
  By~\eqref{hld2} and~\eqref{eq:Hc-contraction} we have
  \begin{align*} 
    &\| \vphi_{h_{j}}\circ h_{j+1}\circ \cdots \circ h_{r_h}\|_{(\kappa_0), K_a}\\
    &\hspace{1em}\leq \|(h_{j+1}\circ \cdots \circ h_{r_h})'\|^{\kappa_0}_{\infty,K_a}\cdot
    \| \vphi_{h_{j}}\|_{(\kappa_0),\im(h_{j+1})}
    \ll  \| \vphi_{h_{j}}\|_{(\kappa_0),\im(h_{j+1})}
  \end{align*}
  Thus,
  \begin{align*}
    \sum_{h\in\cH,\atop K_a\subset \Dom(h)} \|h'\| \| \vphi_{h}  \|^\lambda_{(\kappa_0), K_a}  \ll \sum_{h\in\cH,\atop K_a\subset \Dom(h)} \|h'\|r_h^\lambda \sum_{j=1}^{r_h} \| \vphi_{h_{j}}\|_{(\kappa_0),\im(h_{{j+1}})}^{\lambda}
  \end{align*}
  and we conclude by the same arguments as in~\eqref{eq:expansion-sumh-operators}.
\end{proof}

For convenience we record the following variant of~\eqref{eq:moment-condition}, which comes at almost no cost.
\begin{lemma}\label{lem:moments-log}
  Let~$\alpha_1>0$, and assume that the bound~\eqref{eq:moment-condition} holds for all~$\alpha<\alpha_1$. Then for any~$m\geq 1$ and any~$\alpha<\alpha_1$, we have
  $$     \sum_{h\in\cH}\|h' \cdot (\log \abs{h'})^m \cdot |\vphi_{h}|^{\alpha}\|_{\infty}  < \infty. $$
\end{lemma}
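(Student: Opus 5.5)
The idea is to trade the logarithmic factor for a slightly smaller power of $\|h'\|_\infty$, and then to borrow that lost power back from the moment by Hölder's inequality. Fix $m\geq1$ and $\alpha<\alpha_1$, and choose $\alpha'\in(\alpha,\alpha_1)$. By~\eqref{eq:h'-x-0} we have $|h'(x)|\asymp\|h'\|_\infty$ on $\Dom(h)$. By Lemma~\ref{lem:contraction}, $\|h'\|_\infty\ll1$. Hence, for any $\eta>0$,
\[
\|h'\cdot(\log|h'|)^m\cdot|\vphi_h|^\alpha\|_\infty\ll_{\eta,m}\|h'\|_\infty^{1-\eta}\,\|\vphi_h\|_\infty^\alpha ,
\]
where we used $|\log|h'(x)||\leq|\log\|h'\|_\infty|+O(1)$ and $t|\log t|^m\ll_\eta t^{1-\eta}$ for $t\ll1$. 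It therefore suffices to show
\[
\sum_{h\in\cH}\|h'\|_\infty^{1-\eta}\|\vphi_h\|_\infty^\alpha<\infty
\]
for some small $\eta>0$.

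Apply Hölder's inequality with exponents $p=\alpha'/\alpha$ and $q=p/(p-1)$, splitting
\[
\|h'\|_\infty^{1-\eta}\|\vphi_h\|_\infty^\alpha=\bigl(\|h'\|_\infty^{1/p}\|\vphi_h\|_\infty^{\alpha}\bigr)\cdot\|h'\|_\infty^{1/q-\eta}.
\]
The first factor raised to the power $p$ gives $\|h'\|_\infty\|\vphi_h\|_\infty^{\alpha'}$, whose sum is finite by~\eqref{eq:moment-condition} applied at $\alpha'<\alpha_1$. The second factor raised to the power $q$ gives $\|h'\|_\infty^{1-q\eta}$. So we need
\[
\sum_{h\in\cH}\|h'\|_\infty^{\sigma}<\infty\qquad\text{for some }\sigma=1-q\eta<1 .
\]
Since $q$ is fixed once $\alpha'$ is chosen, we may take $\eta$ small enough that $\sigma\geq\sigma_0$.

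The remaining step, which is the only non-formal input, is this convergence. I would deduce it from Proposition~\ref{prop:Lst-bounded} with $f=\1$ and $s=\sigma\geq\sigma_0$: for each $y\in K$ the sum $\sum_{h:\,y\in\Dom(h)}|h'(y)|^{\sigma}$ converges, and it is bounded uniformly in $y$. To pass from this to $\sum_h\|h'\|_\infty^\sigma$:
\begin{itemize}
\item use $\|h'\|_\infty\asymp|h'(y)|$ for $y\in\Dom(h)$, again by~\eqref{eq:h'-x-0};
\item pick one point $y_b\in K_b$ for each $b\in\Sigma$;
\item note that every $\Dom(h)$ is a union of some of the finitely many $K_b$, so each $h$ is counted at least once when summing over $b$.
\end{itemize}
This gives $\sum_h\|h'\|_\infty^\sigma\ll\sum_b\LL_\sigma[\1](y_b)<\infty$, which concludes the argument.
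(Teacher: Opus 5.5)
Your proof is correct and follows essentially the same route as the paper: you obtain convergence of $\sum_{h\in\cH}\|h'\|_\infty^{\sigma}$ for some $\sigma<1$ from Proposition~\ref{prop:Lst-bounded}, and combine it via Hölder's inequality with~\eqref{eq:moment-condition} at an exponent $\alpha'\in(\alpha,\alpha_1)$. The only cosmetic difference is that you turn the logarithm into a power loss $\|h'\|_\infty^{-\eta}$ before applying Hölder, whereas the paper keeps $(\log|h'|)^{M}$ in the second Hölder factor and bounds that sum directly; your passage from the pointwise sums $\LL_\sigma[\1](y_b)$ to $\sum_h\|h'\|_\infty^{\sigma}$ spells out a step the paper leaves implicit.
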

\begin{proof}
  The sum~$\sum_{h\in \cH, y \in \Dom(h)} \abs{h'(y)}\log(|h'(y)|)^M$ is finite for all~$M\geq 1$ by Proposition~\ref{prop:Lst-bounded}. The claimed bound follows from Hölder's inequality along with~\eqref{eq:moment-condition}.
\end{proof}

\section{Spectral analysis of transfer operators} \label{sec:transfer:pert}

In this section we set up the functional spaces on which~$\LL_{s, \vt}$ will act and show uniform spectral gap and Dolgopyat-type estimate, generalizing the arguments in~\cite{BaladiVallee2005,BettinDrappeau2022,LeeSun2019}. These will be the main ingredients to establish Theorem~\ref{th:limitlaw:quasi}. 

From this section onwards, we shall assume Hypothesis~\ref{hyp:moments} holds. For certain results, we shall also require the stronger Hypothesis~\ref{hyp:moments-cuspidal}, which will be assumed explicitly. 

\subsection{Functional space}

Given $0< \kappa\leq \kappa_0$ and $(s,\vt)\in\C\times\R^d$ with $\Re(s)\geq\sigma_0$, we will at first consider $\LL_{s, \vt}$ as an operator acting on $H^\kappa$ (cf.~Remark~\ref{rmkex}). Notice that $\LL_{s, \vt}[f]$ is indeed in $H^\kappa$ whenever $f\in H^\kappa$ since 
the indicator function for the condition $y\in \Dom(h)$ in~\eqref{eq:def-twistedop-general} is locally constant on $K$.

Throughout the rest of the paper, we fix an arbitrary $\eps\in(0,\alpha/2)$, on which all subsequent implicit bounds might depend, even if not explicitly indicated. We also fix a value $\kappa>0$ satisfying
\begin{align}\label{dfnkappa}
  \kappa \leq\frac{\kappa_0 \min(\lambda,\alpha/2,1) (\alpha - \min( 1,\alpha-\eps))}{2(\alpha - \min(\lambda,\alpha/2,1))}.
\end{align}
Notice that this in particular implies 
\begin{align}\label{dfnkappa2}
  \kappa \leq \frac{\min(1,\lambda)}2 \kappa_0,
\end{align}
which will be used in later estimates (see e.g. the proof of Lemma~\ref{lem:perturbation-LL1t}),  and that~\eqref{dfnkappa} simplifies to $\kappa\leq\kappa_0/2$ if $\alpha\geq2$ and $\lambda\geq1$. 

\begin{remark}
  We take the opportunity to mention that the choice of $\kappa$ in~\cite[eq.~(5.3)]{BettinDrappeau2022} should be corrected similarly to~\eqref{dfnkappa}. This is needed to prove the analogue of~\eqref{sbsa} which is implicitly used in Section~7 therein.
\end{remark}

\begin{lemma} \label{cond:hennion}
  The space~$(H^\kappa, \norm{\cdot}_{\kappa})$ is a Banach space, and the norm~$\norm{\cdot}_{\kappa}$ is pre-compact with respect to~$\norm{\cdot}_\infty$.
\end{lemma}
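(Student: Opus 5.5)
The plan is to reduce to the classical case of Hölder functions on compact intervals. By construction, $K$ is a finite disjoint union of closed arcs $K_s$, each bounded away from the vertices, and $\Theta$ is finite. So $K_\Theta$ is a finite union of the compact pieces $K_s\times\{\vth_0\}$. On each piece, $\|\cdot\|_\kappa$ restricted to that piece is the usual $\kappa$-Hölder norm on a compact interval. The global norm is equivalent to the maximum (or the sum) of the norms on the finitely many pieces, since both the sup-norm and the Hölder seminorm are suprema over pieces. Hence $H^\kappa$ is isomorphic, as a normed space, to a finite product of spaces $C^{0,\kappa}(K_s;\R^d)$. It therefore suffices to prove both claims for a single compact arc $I$.

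\emph{Completeness.} Let $(f_n)$ be Cauchy in $\|\cdot\|_\kappa$ on $I$. It is Cauchy in sup-norm, so it converges uniformly to a bounded function $f$. For fixed $x\neq y$ we have
\[
\frac{|(f_n-f_m)(x)-(f_n-f_m)(y)|}{|x-y|^\kappa}\le \|f_n-f_m\|_{(\kappa)}\le\eps \qquad (n,m\ge N).
\]
Letting $m\to\infty$ pointwise gives $\|f_n-f\|_{(\kappa)}\le\eps$. Thus $f\in H^\kappa$ and $f_n\to f$ in $\|\cdot\|_\kappa$.

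\emph{Pre-compactness.} The statement means that the unit ball $\{\|f\|_\kappa\le 1\}$ is relatively compact for $\|\cdot\|_\infty$; this is the Hennion-type condition. This follows from Arzelà--Ascoli on each compact arc. Functions in the ball are uniformly bounded by $1$ and equicontinuous, since $|f(x)-f(y)|\le|x-y|^\kappa$ uniformly in $f$. So every sequence has a uniformly convergent subsequence on each of the finitely many pieces, and by a diagonal extraction over the pieces, one subsequence converges uniformly on all of $K_\Theta$. The limit again has $\|\cdot\|_\kappa\le1$, by the same pointwise limiting argument as in the completeness step, so the ball is in fact compact in $L^\infty$.

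The only point requiring care is that the seminorm only compares points inside the same $K_s$, with the same $\vth_0$. That is exactly why I decompose into pieces first: no continuity across different arcs is needed, and finiteness of the number of arcs and of $\Theta$ makes the diagonal argument trivial. I do not expect a genuine obstacle; the proof is a routine adaptation, as in \cite[section~5.1]{Evans2010}.
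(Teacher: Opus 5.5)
Your proof is correct and follows the same route as the paper. The paper also splits $H^\kappa$ into a finite direct sum of Hölder spaces on compact arcs (your pieces $K_s\times\{\vth_0\}$), obtains completeness from finiteness of the decomposition, and obtains pre-compactness from Arzel\`a--Ascoli; you have simply written out the routine details that the paper leaves implicit.
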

\begin{proof}
  The first claim follows from the bijection~$H^\kappa \rightarrow \bigoplus_{a \in \Sigma} H^\kappa([a])$ by finiteness of~$\Sigma$, and the second claim is implied by Arzela--Ascoli's theorem. 
\end{proof}

\subsection{Dominant spectral properties}

The goal of this subsection is to prove the following Ruelle--Perron--Frobenius Theorem (see~\cite[Theorem~1.5]{Baladi2000}) for the family~$\LL_{\sigma, 0}$ with~$\sigma$ real. Recall that~$\sigma_0$ is defined in Lemma~\ref{lem:convergence-H-CF} and that~$\spr$ denotes the spectral radius.

\begin{proposition}\label{prop:spectrum-Ls0}
  For all~$\sigma \in \R$ with~$\sigma\geq\sigma_0$, the following hold.
  \begin{enumerate}[(i)]
    \item The operator~$\LL_{\sigma, 0}:H^\kappa \rightarrow H^\kappa$ is quasi-compact. It has a maximal eigenvalue~$\lambda_{\sigma,0}>0$ with a corresponding positive eigenfunction~$\xi_{\sigma,0}$.
    \item  The dominant eigenvalue~$\lambda_{\sigma,0}$ is simple and there is a centered disk of radius strictly less than~$\lambda_{\sigma,0}$ which contains all other eigenvalues.
    \item In particular, $\LL$ has leading eigenvalue~$1$ with eigenfunction~$\xi$ and its spectral decomposition is given by
    $$ \LL = \PP + \NN $$
    where~$\PP$ is a projection,~$\spr(\NN) < 1$, and~$\PP \NN = \NN\PP = 0$.
  \end{enumerate}
\end{proposition}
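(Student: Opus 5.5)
We follow the standard Ruelle--Perron--Frobenius route of \cite{Baladi2000,BaladiVallee2005}, treating the skew-product structure on $K_\Theta$ through the mixing hypothesis~(2) of Section~\ref{sec:skew-products}.

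\emph{Step 1: quasi-compactness.} We first establish a Lasota--Yorke (Doeblin--Fortet) inequality. For $f\in H^\kappa$, $n\geq1$ and $\sigma\geq\sigma_0$, iterating gives
$$\LL_{\sigma,0}^n[f](y,\vth_1)=\sum_{h\in\cH^n,\,y\in\Dom(h)}\abs{h'(y)}^\sigma f(h(y),\vth_1\vth_h(y)).$$
The weight $\abs{h'}^\sigma$ has Hölder seminorm $\ll \abs{h'(y)}^\sigma$ by the bounded distortion of Lemma~\ref{lem:bounded-dist}. The factor $\vth_h$ is locally constant on each $K_b$ by assumption~(1), and $\|f\circ h\|_{(\kappa)}\ll \|h'\|_\infty^\kappa\|f\|_{(\kappa)}$ by~\eqref{hld2}. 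Combining these with the contraction $\|h'\|_\infty\ll\rho^n$ from Lemma~\ref{lem:contraction}, we obtain
$$\|\LL^n_{\sigma,0}f\|_{(\kappa)}\ll \rho^{\kappa n}\,\LL^n_{\sigma,0}[1]\,\|f\|_{(\kappa)}+\LL^n_{\sigma,0}[1]\,\|f\|_\infty .$$
Here $\LL^n_{\sigma,0}[1]$ is finite by Proposition~\ref{prop:Lst-bounded}, and it grows like $\spr(\LL_{\sigma,0})^n$ up to subexponential factors. The uniform comparability $\abs{h'(y)}\asymp\abs{h'(y')}$ from Corollary~\ref{cor:acs} gives $\LL^n_{\sigma,0}[1]\ll\|\LL^n_{\sigma,0}[1]\|_{L^1}$. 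Together with the compact embedding of Lemma~\ref{cond:hennion}, Hennion's theorem then yields an essential spectral radius at most $\rho^\kappa\spr(\LL_{\sigma,0})<\spr(\LL_{\sigma,0})$, which is quasi-compactness.

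\emph{Step 2: positive eigenfunction.} The operator is positive and $\spr>0$. A Krein--Rutman/Perron argument applied to the cone of positive functions with log-Hölder control, preserved by bounded distortion, gives an eigenvalue $\lambda_{\sigma,0}=\spr(\LL_{\sigma,0})$ with a nonnegative eigenfunction $\xi_{\sigma,0}$. Alternatively, one can take $\lim_N\frac1N\sum_{n<N}\lambda^{-n}\LL^n[1]$ and use quasi-compactness. To get strict positivity, note that every $K_{b}\times\{\vth\}$ reaches every other $K_{b'}\times\{\vth'\}$ in exactly $r_0$ steps by~\eqref{eq:vth-mixing}. Hence $\LL^{r_0}_{\sigma,0}[g]>0$ everywhere whenever $g\ge0$ is nonzero on some $K_b\times\{\vth\}$.

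\emph{Step 3: simplicity and peripheral spectrum.} This is the main obstacle, because the aperiodicity must come from the finite group $\Theta$. Suppose $\LL f=\lambda_{\sigma,0}e^{i\theta}f$. Normalize with $\xi_{\sigma,0}$, i.e. pass to $\tilde\LL g=\lambda^{-1}\xi^{-1}\LL[\xi g]$, which is a Markov operator. Equality in the triangle inequality forces $g(h(y),\vth_1\vth_h(y))=e^{i\theta}g(y,\vth_1)$ for all branches $h\in\cH$, with $\abs g$ constant. Applying this to $r_0$-step branches, each component is reachable with every $\Theta$-value by hypothesis~(2), and there are also branches of lengths $r_0$ and $r_0+1$ between the same pieces. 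From this we deduce $e^{i\theta}=1$ and that $g$ is constant. Continuity of $g$ together with the contraction of Lemma~\ref{lem:contraction} handles the dependence on $y$. The same argument shows the eigenvalue is algebraically simple: a Jordan block would contradict the boundedness of $\tilde\LL^n$. This gives~(ii).

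\emph{Step 4: the case $\sigma=1$.} By~\eqref{eq:invar-LL-xi}, $\xi$ is a positive eigenfunction with eigenvalue $1$. By~\eqref{eq:transferop-invar}, the dual operator preserves Lebesgue measure, so $\spr(\LL)=1$ and $\lambda_{1,0}=1$. Then $\PP f=\xi\int f\,\df s\,\df\vth_1/\int\xi$ is the spectral projection, and $\NN=\LL-\PP$ has $\spr(\NN)<1$ by~(i)–(ii), with $\PP\NN=\NN\PP=0$.
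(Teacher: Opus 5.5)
Your proposal follows essentially the same route as the paper: a Lasota--Yorke inequality with Hennion's criterion (Lemmas~\ref{cond:hennion} and~\ref{lem:quasicomp}) for quasi-compactness, positivity of the weights $|h'|^\sigma$ for the positive eigenpair, topological mixing of $\cF_\Theta$ deduced from hypothesis~(2) (Lemma~\ref{lem:cF-topmix}) for simplicity and for excluding other peripheral eigenvalues, and $\LL[\xi]=\xi$ together with~\eqref{eq:transferop-invar} for $\sigma=1$. One claim in Step~2 is false as stated and needs correcting: $\LL_{\sigma,0}^{r_0}[g]>0$ does not follow from $g\geq0$ being nonzero somewhere on $K_b\times\{\vth\}$, since if $g$ is supported on a small arc then $\LL^{r_0}_{\sigma,0}[g]$ vanishes at most points; instead, use mixing in the form of Lemma~\ref{lem:cF-topmix}, where the number of steps $m$ depends on the open arc on which $\xi_{\sigma,0}>0$, and conclude from $\LL_{\sigma,0}^m[\xi_{\sigma,0}]=\lambda_{\sigma,0}^m\xi_{\sigma,0}$, and note likewise that the branches with consecutive step counts needed in Step~3 come from lengthening the word $w_0$ inside that arc rather than directly from hypothesis~(2).
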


This shows alternatively the existence of an invariant measure. Note that~$\LL  \xi =\xi$, together with
Proposition~\ref{prop:spectrum-Ls0}, yields the existence of a unique finite measure~$\nu$ on $K_\Theta$ such that~$\LL^* \nu=\nu$ and~$\int \xi d\nu=1$ by means of the dual operator. By~\eqref{eq:transferop-invar} we have
\begin{align}\label{fdn}
 \df \nu(x, v) = {\fm}^{-1}\df s(x) \df v,
 \end{align}
where 
\begin{equation}\label{dfnm}
\fm:=\int_{K} \xi \df s(x)
\end{equation}
is the total mass of $\xi$ on $K$. The projector in the spectral decomposition has the explicit formula:
\begin{equation}
  \PP f = \Big(\int_{K_\Theta} f \df\nu \Big) \xi .\label{eq:def-eigenproj}
\end{equation}

Proposition~\ref{prop:spectrum-Ls0} is proven by following the arguments in~\cite[Theorem 1.5]{Baladi2000}, and~\cite[Theorem~5.9]{LeeSun2019} for skew-product extensions.  We highlight the two most important ingredients in the proof.

\begin{lemma} \label{lem:quasicomp}
  Let~$\rho$ be as in Lemma~\ref{lem:contraction}. There exists a function $(\sigma_0,\infty)\ni \sigma\mapsto A_\sigma\in\R_{+}$ which is continuous, piecewise smooth, decreasing, and with $A_1=1$ such that for all~$r \geq 1$ and~$\sigma_0 \leq \sigma \ll 1$, 
  \begin{align}
    \|\LL_{\sigma,0}^r f\|_{\infty} &\ll A_\sigma^r  \|f\|_{\infty}, \label{asx1}\\
    \|\LL_{\sigma, 0}^r f\|_{(\kappa)} &\ll A_\sigma^r  (\|f\|_\infty+\rho^{\kappa r}\|f\|_{(\kappa)}),\label{asx2}
  \end{align}
  where the implicit constant depends at most the choice of the fundamental domain~$R$ and the boundedness of~$\sigma$.
 Moreover, for any~$\sigma_1>\sigma_0$ and~$\eps>0$, there is a choice of map~$\sigma\mapsto A_\sigma$ such that~$A_{\sigma_1} < \spr(\LL_{\sigma_1,0}) + \eps$.
\end{lemma}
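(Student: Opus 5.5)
Throughout, positivity is used freely: for real $\sigma$ and $\vt=0$ all weights $|h'(y)|^\sigma$ are nonnegative, so everything reduces to controlling the positive operator applied to the constant function $\1$. The natural candidate is
$$ A_\sigma := \limsup_{r\to\infty}\ \sup_{y\in K}\big(\LL_{\sigma,0}^r[\1](y)\big)^{1/r}, $$
or, for regularity, a smoothed upper envelope of this spectral-radius-type quantity. Since $\1$ is constant in $\Theta$ and $\LL_{\sigma,0}^r[\1](y,\vth_1)=\sum_{h\in\cH^r,\,y\in\Dom(h)}|h'(y)|^\sigma$, the twist $\vth$ plays no role here.

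\textbf{Step 1 (the bound \eqref{asx1}).} By Lemma~\ref{lem:invbranch-FK}, $\cH^r$ is the set of $r$-fold compositions. Using bounded distortion \eqref{eq:h'-x-0}, the quantity $a_r(\sigma):=\sup_{y\in K}\sum_{h\in\cH^r,\,y\in\Dom(h)}\|h'\|_\infty^\sigma$ is submultiplicative up to a constant: $a_{r+s}\le C a_r a_s$. Hence, with $A_\sigma := \inf_r (C a_r(\sigma))^{1/r}$, we get $\|\LL^r_{\sigma,0}f\|_\infty\le a_r(\sigma)\|f\|_\infty\le C A_\sigma^r\|f\|_\infty$. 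Finiteness of $a_1(\sigma)$ for $\sigma\ge\sigma_0$ is Proposition~\ref{prop:Lst-bounded}. Each $a_r$ is a sum of decreasing log-convex functions of $\sigma$, so $A_\sigma$ is decreasing. To obtain continuity and piecewise smoothness, I would replace the infimum by a fixed choice $A_\sigma:=(Ca_{r_1}(\sigma))^{1/r_1}$ on finitely many subintervals, glued by taking minima; this is allowed because the bounds only need to hold for $\sigma\ll1$. For $A_1$, we have $\LL[\xi]=\xi$ with $\xi\asymp1$ on $K$, so $\LL^r[\1]\asymp1$, and \eqref{asx1} holds with $A_1=1$. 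I would normalize the choice so that $A_1=1$ exactly, for instance by setting $A_\sigma:=\sup_{\sigma'\ge\sigma}$ of a function that equals $1$ at $\sigma=1$, possibly after rescaling.

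\textbf{Step 2 (the Lasota--Yorke bound \eqref{asx2}).} For $f\in H^\kappa$, write
$$ \LL^r_{\sigma,0}f(x)-\LL^r_{\sigma,0}f(y)=\sum_h\big(|h'(x)|^\sigma-|h'(y)|^\sigma\big)f(h(x))+\sum_h|h'(y)|^\sigma\big(f(h(x))-f(h(y))\big), $$
with $x,y$ in the same $K_b$, so that the same branches occur. For the first sum, \eqref{eq:bounded-dist} gives $\big||h'(x)|^\sigma-|h'(y)|^\sigma\big|\ll|h'(y)|^\sigma|x-y|$, hence a contribution $\ll a_r(\sigma)\|f\|_\infty|x-y|^\kappa$. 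For the second sum, Lemma~\ref{lem:contraction} gives $|h(x)-h(y)|\ll\rho^r|x-y|$, hence a contribution $\ll a_r(\sigma)\rho^{\kappa r}\|f\|_{(\kappa)}|x-y|^\kappa$. Combining these with Step 1 yields \eqref{asx2}.

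\textbf{Step 3 (the final claim).} Given $\sigma_1$ and $\eps$, note that by Gelfand's formula $\spr(\LL_{\sigma_1,0})$ on $L^\infty$ equals $\lim_r a_r(\sigma_1)^{1/r}$, since for positive operators $\|\LL^r\|_\infty=\sup\LL^r\1$. Choose $r_1$ large so that $(Ca_{r_1}(\sigma_1))^{1/r_1}<\spr+\eps$, and use this $r_1$ in the definition near $\sigma_1$. It remains to see that the spectral radius on $H^\kappa$ is at least the $L^\infty$ one; positivity, with $\1\in H^\kappa$, gives exactly this. The main obstacle I expect is reconciling the exact normalization $A_1=1$ with the continuity, monotonicity and piecewise smoothness of a single global function while keeping the implicit constants uniform. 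I would handle this by working on a compact range of $\sigma$ and taking a minimum of finitely many smooth decreasing candidates $(Ca_{r}(\sigma))^{1/r}$, each multiplied by a factor tending to $1$ at $\sigma=1$.
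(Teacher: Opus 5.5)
Your Steps 2 and 3 follow the paper's argument. Step 1, however, has a genuine gap, located exactly where you anticipated.

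First, the displayed inequality runs the wrong way. From $a_{r+s}\le C a_r a_s$ and $A_\sigma:=\inf_r (Ca_r(\sigma))^{1/r}$, the definition of the infimum gives only $A_\sigma^r\le C a_r(\sigma)$, which is a \emph{lower} bound on $a_r$. An upper bound $a_r\ll A_\sigma^r$ at the exact growth rate is what a positive eigenfunction would give. At this stage such an eigenfunction is available only at $\sigma=1$: this lemma is an input to Proposition~\ref{prop:spectrum-Ls0}, so $\xi_{\sigma,0}$ cannot be used.

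With a fixed $r_1$ you do get $a_r(\sigma)\le C_{r_1}\,(a_{r_1}(\sigma)^{1/r_1})^r$. But then the candidate at $\sigma=1$ is $(Ca_{r_1}(1))^{1/r_1}$, which is in general strictly larger than $1$: already $\sup_K\LL^{r_1}[\1]\ge1$, because its average over $K$ is $1$. Neither a finite minimum of such candidates nor a rescaling that forces the value $1$ at $\sigma=1$ is justified. Dividing by a factor like $a_{r_1}(1)^{1/r_1}>1$ loses a factor exponential in $r$ in the bound you actually proved.

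The normalization is not cosmetic. The property $A_\sigma<1$ for $\sigma>1$ is used later, for instance at the end of the proof of Proposition~\ref{prop:spectrum-Ls2}.

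Finally, $\sigma\mapsto\|h'\|_\infty^\sigma$ is decreasing only when $\|h'\|_\infty\le1$. Lemma~\ref{lem:contraction} guarantees this for $h\in\cH^r$ only when $r$ is large, so your monotonicity claim is unjustified for small $r$.

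The missing idea is to measure growth against the invariant density rather than in the plain sup norm. Fix $m$ with $\|h'\|_\infty<1$ for every $h\in\cH^m$, and set $A_\sigma:=\|\LL^m_{\sigma,0}[\xi]/\xi\|_\infty^{1/m}$. Then:
\begin{itemize}
\item $A_1=1$ exactly, because $\LL[\xi]=\xi$.
\item $A_\sigma$ is decreasing, because every weight $|h'(y)|^\sigma$ with $h\in\cH^m$ is.
\item Positivity gives $\|\LL^m_{\sigma,0}[f]/\xi\|_\infty\le A_\sigma^m\|f/\xi\|_\infty$, and this iterates with no loss of constants.
\item \eqref{asx1} then follows from $\xi\asymp1$ together with $\|\LL_{\sigma,0}\|_\infty\ll1$ for the remaining $r\bmod m$ steps.
\item Taking $m$ large gives the final claim without disturbing $A_1=1$.
\end{itemize}

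Your log-convexity remark could also be turned into a repair, though it is not the construction you proposed. By H\"older's inequality in $\sigma$, interpolating between a crude bound at some $\sigma_a<1$ and the exact bound $\LL^r[\1]\ll1$ at $\sigma=1$ gives $\LL^r_{\sigma,0}[\1]\ll B^{r(1-\sigma)/(1-\sigma_a)}$ for $\sigma_a\le\sigma\le1$. For $\sigma\ge1$ one has $\LL^r_{\sigma,0}[\1]\ll\rho^{r(\sigma-1)}$.
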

\begin{proof}
  By Lemma~\ref{lem:contraction}, there exists $m\geq1$ such that $\|h'(x)\|_\infty<1$ for all $h\in \cH^m$. For $\sigma\geq\sigma_0$, let $A_\sigma :=  \norm{\LL^m_{\sigma, 0}[\xi]/\xi}_\infty^{1/m}$. In particular, $A_1=1$ by~\eqref{eq:invar-LL-xi}, and for any given~$\sigma_1>\sigma_0$ and~$\eps>0$, by choosing~$m$ large we can ensure~$A_{\sigma_1} < \spr(\LL_{\sigma_1,0})+\eps$. It is also clearly continuous, piecewise smooth and decreasing.
  Moreover, for~$f\in H^\kappa$, we have
  \begin{align*}
    \|\LL^m_{\sigma,0} [f]/\xi \|_\infty = \big\|\LL^m_{\sigma, 0} \big[\tfrac{f}{\xi} \xi\big]/\xi\big\|_\infty \leq \norm{f/\xi}_\infty\norm{\LL^m_{\sigma, 0}[\xi]/\xi}_\infty = A^m_\sigma \norm{f/\xi}_\infty,
  \end{align*}
  by positivity. By induction, $\|\LL^{rm}_{\sigma,0} [f]/\xi \|_\infty\leq A^{rm}_\sigma \norm{f/\xi}_\infty$ for $r
  \geq1$ and~\eqref{asx1} follows since~$\xi \asymp 1$ and $\|\LL_{\sigma,0}\|_\infty\ll 1$ for $\sigma_0<\sigma\ll1$.
  Regarding the second estimate, for all~$r\geq 1$ we have
  \begin{align*}
    \|  \LL_{\sigma, 0}^r f\|_{(\kappa)} \leq {}&  \ssum{h\in \cH^r} \left\|  | h'|^\sigma \cdot f\circ h\right\|_{(\kappa)}\\
    \ll{}&  \ssum{h\in \cH^r} \big(\|  | h'|^\sigma\|_{(\kappa)}\|f\|_{\infty}+ \|h'\|_{\infty}^{\sigma+\kappa} \|f\|_{(\kappa)}\big) 
  \end{align*}
  by~\eqref{hld1}-\eqref{hld2}. By bounded distortion~\eqref{eq:bounded-dist} we have~$\||h'|^{\sigma}\|_{(\kappa)} \ll |\sigma|^\kappa \|h'\|_{\infty}^{\sigma-\kappa} \|h''\|_\infty^{\kappa} \ll |\sigma|^\kappa  \|h'\|_\infty^\sigma$. Moreover, by~\eqref{eq:contraction} we have $\|h'\|_{\infty}^{\kappa}\ll \rho^{\kappa r}$.
  Thus~\eqref{asx2} follows by~\eqref{eq:h'-x-0} and~\eqref{asx1}.
\end{proof}

\begin{lemma}\label{lem:cF-topmix}
  The map~$\cF_\Theta$ is topologically mixing.
\end{lemma}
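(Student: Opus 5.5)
To prove that $\cF_\Theta$ is topologically mixing, I will show that for any two non-empty open sets $U\times\{\vth\}$ and $U'\times\{\vth'\}$ in $K_\Theta$ (basic open sets, since $\Theta$ is discrete), there is $r_1$ such that for all $r\geq r_1$ we have $\cF_\Theta^{-r}(U\times\{\vth\})\cap (U'\times\{\vth'\})\neq\varnothing$. Equivalently, for every $r\geq r_1$ there must be an inverse branch $h\in\cH^r$ and a point $y\in U\cap\Dom(h)$ with $h(y)\in U'$ and $\vth\,\vth_h(y)=\vth'$. The plan has three steps: shrink $U'$ to a cylinder, connect cylinders to each $K_b$ by branches of prescribed depth, and adjust the $\Theta$-component using mixing hypothesis (2) of Section~\ref{sec:skew-products}.

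\emph{Step 1 (cylinders).} By Lemma~\ref{lem:contraction}, images $\im(h)$ for $h\in\cH^m$ have diameter $\ll\rho^m$, and these images partition $K$ up to a null set. Hence $U'$ contains a full cylinder $\im(h_0)$ for some $h_0\in\cH^{m}$, $h_0=h_{w_0,m}$, and $\Dom(h_0)\supset K_{b}$ for some letter $b$. Similarly, $U$ contains the image of a cylinder. It therefore suffices to reach a whole $K_b$ from an arbitrary point of $U$.

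\emph{Step 2 (base dynamics).} For the map $\cF_K$ alone, I will use that $\cF_K$ is full-branched on the pieces $K_c$ (Markov with big images). For any $x\in K_c$, the branch $h_{c',1}$ with $c'\neq\hat c$ and $c'$ non-cuspidal with $c$ maps $x$ into $K_{c'}$, and $\Dom(h_{c',1})=\bigcup_{b\ne \hat c'}K_b$. Since there are at least two sides on each side of $\hat c$ (recall $2n\geq4$), chains of such length-one words connect any $K_c$ to any $K_{c''}$ in a bounded number of steps. Combined with Step 1, this gives, for each $r$ large, an admissible word $w_0\,u\,v$ with $\ell_K$ equal to the correct depth, realised by a branch in $\cH^r$ sending a point of $U$ into $\im(h_0)\subset U'$. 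To get all sufficiently large $r$ rather than an arithmetic progression, I need aperiodicity: I will exhibit two loops $K_c\to K_c$ of coprime $\cF_K$-depths, for instance of depths $1$ and $2$. The branch $h_{c,1}$ with $c$ satisfying $|c-\hat c|>1$ (or a length-$2$ loop through another letter) should supply this.

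\emph{Step 3 (fiber).} To match the $\Theta$-coordinate, I insert in the middle of the connecting word a block $b_1wb_2$ given by hypothesis (2), with $\ell_K(b_1wb_2)=r_0$ and $\vth_{b_1w}(b_2)$ equal to any prescribed $\vth_0$. By the cocycle rule \eqref{eq:vth-cocycle-letter}, the total multiplier factors as a product in which this block's contribution can be chosen freely, so one can solve for $\vth_0$ to make the total equal to $\vth^{-1}\vth'$. Since the block has fixed $K$-length $r_0$, the depth shifts by a constant, which Step 2's aperiodicity absorbs.

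I expect the main obstacle to be the bookkeeping at the junctions between concatenated words. One must check that admissibility and the exact count $\ell_K$ are preserved when gluing $b_1wb_2$ to neighbouring letters, since a junction may add or remove a non-cuspidal pair. The junction letters also have to be chosen so that the multipliers are evaluated on arcs where assumption (1) makes them constant. I would handle this by padding each junction with a single non-cuspidal letter, which is available because $2n\geq4$. This fixes each junction's contribution to $\ell_K$ at exactly one.
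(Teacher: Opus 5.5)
Your Step 2 is where the argument fails. Its claims do not follow from the geometry of $R$. When $2n=4$, each letter has exactly one non-cuspidal successor, so ``two sides on each side of $\hat c$'' is false, and a letter $c$ with $|c-\hat c|>1$ need not exist.

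Worse, without hypothesis (2) the base map $\cF_K$ need not be mixing at all. Take an ideal quadrilateral with adjacent sides paired, $\hat 0=1$ and $\hat 2=3$. This is the standard domain with vertices $-1,0,1,\infty$ for the free index-two subgroup of $\Gamma(2)$. Here $(a,b)$ is cuspidal if and only if $b=\hat a\pm1$, and non-cuspidal if and only if $b=\hat a+2$. Since $\hat a\not\equiv a \pmod 2$, cuspidal pairs preserve the parity of letters and non-cuspidal pairs reverse it. Consequences:
\begin{itemize}
\item your length-one chains only link $0\leftrightarrow 3$ and $1\leftrightarrow 2$;
\item there is no loop of odd depth, in particular no depth-one loop;
\item $\cF_K$ maps $K_0\cup K_2$ into $K_1\cup K_3$ and back, so it has period $2$.
\end{itemize}
So the aperiodicity your Step 3 relies on (``which Step 2's aperiodicity absorbs'') cannot come from the combinatorial loop argument you sketch. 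It has to come from hypothesis (2) itself. The uniformity of $r_0$ in $(b_1,b_2)$ is exactly what rules out the example above, where $\ell_K(b_1wb_2)-1\equiv b_1+b_2 \pmod 2$.

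The repair is to drop Step 2 entirely, which is what the paper does.
\begin{enumerate}
\item Pick a cylinder $[a_1\cdots a_k]\subset U'$.
\item Take an arbitrary $(y,\vth)\in K_\Theta$ with $y\in K_{b_2}$. Apply hypothesis (2) with $b_1=a_k$ and $\vth_0=\vth^{-1}\vth'\,\vth_{a_1\cdots a_{k-1}}(a_k)^{-1}$, as dictated by \eqref{eq:vth-cocycle-letter}. The branch $h_{a_1\cdots a_kw,m}$ then sends $(y,\vth)$ to a point of the cylinder with fibre coordinate $\vth'$.
\item The depth is $m=\ell_K(a_2\cdots a_kwb_2)$. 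It depends only on the cylinder and $r_0$, not on $(y,\vth)$, because the number of non-cuspidal pairs is additive when words are glued at a shared letter. For the same reason no junction padding is needed.
\end{enumerate}
This gives $\cF_\Theta^m\bigl([a_1\cdots a_k]\times\{\vth'\}\bigr)\supseteq K_\Theta$, which is stronger than the intersection property you aimed for. Every larger depth follows in either of two ways. You can append to the cylinder word letters forming non-cuspidal pairs; there are $2n-3\geq 1$ choices at each step, and each raises $\ell_K$ by exactly one while staying inside $U'$. Alternatively, use $\cF_\Theta(K_\Theta)\supseteq K_\Theta$.
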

\begin{proof}
  Let~$r_0$ be given by Hypothesis~\eqref{eq:vth-mixing}.
  Let~$\mathcal V\subset K$ be a non-empty open arc and~$\vth_1 \in \Theta$. We wish to show that for some~$m \geq 1$ depending on~$\mathcal V$, and all~$n\geq m$, we have~$\cF_\Theta^n(\mathcal V\times\{\vth_1\}) = K_\Theta$. Since the diameter of a fundamental arc~$[w_0]$, $w_0 = a_1\dotsb a_n \in \Sigma^*$, tends uniformly to~$0$ as~$n\to \infty$, we deduce that~$\mathcal V$ contains such an interval with~$n\geq 1$.
  
  Let~$x\in K$ and $\vth_2\in\Theta$. We apply the hypothesis~\eqref{eq:vth-mixing} with~$b_1 = a_n$, $b_2\in \Sigma$ the letter such that~$x\in [b_2]$, and
  $$ \vth_0 = \vth_2^{-1} \vth_1 \vth_{a_1 \dotsb a_{n-1}}(a_n)^{-1}. $$
  We deduce the existence of~$w$ with~$a_n w b_2 \in \Sigma^*$ such that
  $$ \vth_{a_n w}(b_2) = \vth_2^{-1} \vth_1 \vth_{a_1 \dotsb a_{n-1}}(a_n)^{-1} \quad \text{and} \quad \ell_K(a_n w b_2) = r_0. $$
  Let~$h = g_{w_0 w}$. By the definition~\eqref{eq:def-ellK} of $\ell_K$ we have
  $$ m := \ell_K(a_2 \dotsb a_n w b_2) = \ell_K(w_0) + r_0 - 2, $$
  which depends neither on~$x$ nor on~$\vth_2$. Then, we have
  $$ \cF_\Theta^m( h(x), \vth_1) = (x, \vth_1 \vth_{w_0 w}(b_2)). $$
  However by the cocycle property we have
  $$ \vth_1 \vth_{w_0 w}(b_2) = \vth_1 \vth_{a_n w}(b_2) \vth_{a_1 \dotsb a_{n-1}}(a_n)  = \vth_2 $$
  by construction, and so
  $$ \cF_\Theta^m( h(x), \vth_1) = (x, \vth_2). $$
  Since we have~$h (x) \subset [w_0] \subset \mathcal V$, $(x, \vth_2)$ is arbitrary and~$m$ is independent of~$(x, \vth_2)$, we deduce
  $$ \cF_\Theta^m(\mathcal V \times \{\vth_1\}) = K_\Theta $$
  as required.
\end{proof}

The proof of Proposition~\ref{prop:spectrum-Ls0} follows now by standard arguments, see~\cite[Theorem~1.5]{Baladi2000}.
Lemmas~\ref{cond:hennion} and~\ref{lem:quasicomp} (Lasota-Yorke inequalities) show that Hennion's criterion~\cite{Hennion1993} is statisfied. This implies quasi-compactness of~$\LL_{\sigma,0}$ acting on~$H^\kappa$. The positivity of some eigenvalue~$\lambda_{\sigma,0}$ of largest modulus and its eigenfunction~$\xi_{\sigma,0}$ follows from the fact that~$\LL_{\sigma,0}$ is weighted by the positive real-valued function~$g := |h'|^\sigma$. 

For the second item, we use Lemma~\ref{lem:cF-topmix} (topological mixing) to conclude that~$\lambda_{\sigma,0}$ is a unique and simple eigenvalue of largest modulus by density and convergence arguments. In particular, by change of variable, the operator~$\LL$ admits~$1$ as dominant eigenvalue with the associated eigenfunction~$\xi$. This spectral gap yields a spectral decomposition
\[ \LL_{\sigma,0}=\lambda_{\sigma,0} \PP_{\sigma,0}+\NN_{\sigma,0} \]
where the eigenprojection~$\PP_{\sigma,0}$ has an image spanned by~$\xi_{\sigma,0}$ and~$\spr(\NN_{\sigma,0})<\lambda_{\sigma,0}$ by the quasi-compactness. The spectral projection~$\PP_{\sigma, 0}$ has an integral form similar to~\eqref{eq:def-eigenproj}.

\subsection{Perturbation estimates}

\subsubsection{Estimates relative to~$\cF_K$}
We recall the definition~\eqref{eq:def-PhiK} and its equivalent counterpart~\eqref{eq:PhiK-vphi}.
For all~$m\geq0$, the operator on~$L^\infty(K_\Theta)$
\begin{align*}
  f\mapsto  \LL[ (\log|\cF_K'|)^m f](y, \vth_1) &= \ssum{h\in \cH\\ y\in \Dom(h)} (-\log |h'(y)|)^m\abs{h'(y)} f(h(y), \vth_1 \vth_h(y))
\end{align*}
is well-defined by Lemma~\ref{lem:moments-log}, as well as bounded on~$H^\kappa$ by~\eqref{eq:bounded-dist} and Proposition~\ref{prop:Lst-bounded}. 
In the same way, the operators
\begin{align*}
  f\mapsto  \LL[ \Phi_K(\log|\cF_K'|)^m  f](y, \vth_1) &= \ssum{h\in \cH\\ y\in \Dom(h)}  \vth_1\vphi_h(y)(-\log |h'(y)|)^m\abs{h'(y)} f(h(y), \vth_1 \vth_h(y)),\\
  f\mapsto  \LL[ \Phi_K\cdot  \Phi_K^T f](y, \vth_1) &= \ssum{h\in \cH\\ y\in \Dom(h)} (\vth_1\vphi_h(y))(\vth_1\vphi_h(y))^T \abs{h'(y)} f(h(y), \vth_1 \vth_h(y))
\end{align*}
are well-defined provided that Hypothesis~\ref{hyp:moments} holds with~$\alpha>1$ and~$\alpha>2$ respectively. Here~$\norm{\cdot}_2$ is the Euclidean norm.

Finally, we state some perturbation estimates and expansions.

\begin{lemma}\label{lem:perturbation-LL1t}
  Assume Hypothesis~\ref{hyp:moments}. For all~$s$ and~$\vt$ with~$\abs{s-1}$ and~$\nvt$ sufficiently small, we have
  \begin{align}
    \|\LL_{s, \vt}-\LL\|_\infty &\ll  \abs{s-1} + \nvt+\nvt^{\alpha-\eps}\label{fbsa},\\
    \|\LL_{s, \vt}-\LL\|_{\kappa} &\ll \abs{s-1} + \nvt+\nvt^{\alpha-\eps}.\label{sbsa}
  \end{align}
\end{lemma}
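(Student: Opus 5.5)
We write the difference branch by branch and bound each piece with Hypothesis~\ref{hyp:moments}, the convergence for $\Re(s)\geq\sigma_0$ from Proposition~\ref{prop:Lst-bounded}, and the distortion estimates. For $f$ bounded and $y\in K$,
$$ (\LL_{s,\vt}-\LL)[f](y,\vth_1)=\ssum{h\in\cH\\ y\in\Dom(h)} \Big(\abs{h'(y)}^s e^{i\inpd{\vt,\vth_1\vphi_h(y)}}-\abs{h'(y)}\Big) f(h(y),\vth_1\vth_h(y)). $$
We split the weight as $(\abs{h'}^s-\abs{h'})e^{i\inpd{\cdot}}+\abs{h'}(e^{i\inpd{\vt,\vth_1\vphi_h}}-1)$. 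For the first term, the mean value theorem gives $\abs{\abs{h'}^s-\abs{h'}}\ll\abs{s-1}\,\abs{h'}^{\sigma'}\abs{\log\abs{h'}}$ with $\sigma'=\min(1,\Re s)\geq\sigma_0$. Summing over $h$ is finite by Proposition~\ref{prop:Lst-bounded}, possibly after slightly shrinking the range of $s$ so that $\abs{h'}^{\sigma'}\abs{\log\abs{h'}}\ll\abs{h'}^{\sigma_0}$, as in Lemma~\ref{lem:moments-log}. For the second term we use $\abs{e^{iu}-1}\leq 2\min(1,\abs u)\leq 2\abs u^{\beta}$ with $\beta=\min(1,\alpha-\eps)$. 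Since $\Theta$ is finite and acts by bounded matrices, this gives $\ll\nvt^{\beta}\sum_h\|h'\|_\infty\|\vphi_h\|_\infty^{\beta}$. This sum is finite by \eqref{eq:moment-condition} and Hölder's inequality, using $\sum_h\|h'\|_\infty<\infty$ and $\beta\leq\alpha$. Since $\nvt^\beta\leq\nvt+\nvt^{\alpha-\eps}$, this proves \eqref{fbsa}.

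For \eqref{sbsa} we also need the Hölder seminorm of each summand on each $K_a$. The indicator of $y\in\Dom(h)$ is locally constant on $K$, and so is $\vth_h$ by assumption~(1) on the multiplier. Hence by \eqref{hld1}--\eqref{hld2} each summand contributes
$$ \big\|\abs{h'}^s e^{i\inpd{\vt,\vth_1\vphi_h}}-\abs{h'}\big\|_{\infty}\|f\|_{(\kappa)}\|h'\|_\infty^{\kappa}+\big\|\abs{h'}^s e^{i\inpd{\vt,\vth_1\vphi_h}}-\abs{h'}\big\|_{(\kappa)}\|f\|_\infty. $$
The first piece is handled exactly as in the sup-norm step. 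In the second piece, the $(s-1)$-part is controlled by bounded distortion~\eqref{eq:bounded-dist}: $\|\abs{h'}^s-\abs{h'}\|_{(\kappa)}\ll\abs{s-1}\|h'\|_\infty^{\sigma_0}$ up to logarithms, which are again summable. The real difficulty is
$$ \big\|\abs{h'}(e^{i\inpd{\vt,\vth_1\vphi_h}}-1)\big\|_{(\kappa)}\ll\|h'\|_\infty\Big(\|e^{i\inpd{\vt,\vth_1\vphi_h}}-1\|_{(\kappa)}+\|e^{i\inpd{\vt,\vth_1\vphi_h}}-1\|_\infty\Big). $$

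This term is the main obstacle: it is where the constraint \eqref{dfnkappa} on $\kappa$ enters. The plan is to interpolate with \eqref{hld4} between sup-norm and $\kappa_0$-Hölder norm. Write $u=\inpd{\vt,\vth_1\vphi_h}$ and $E=e^{iu}-1$. Then $\|E\|_\infty\leq 2\min(1,\nvt\|\vphi_h\|_\infty)$ and $\|E\|_{(\kappa_0)}\ll\min\big(\nvt\|\vphi_h\|_{(\kappa_0)},\,(\nvt\|\vphi_h\|_{(\kappa_0)})^{\kappa_0/\kappa_0}\big)$; more usefully, \eqref{hld3} gives $\|E\|_{(\kappa')}\ll(\nvt\|\vphi_h\|_{(\kappa_0)})^{\kappa'/\kappa_0}$ for $\kappa'\leq\kappa_0$. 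Take $\mu=\min(\lambda,\alpha/2,1)$. Then \eqref{hld4} gives
$$ \|E\|_{(\kappa)}\ll\|E\|_\infty^{1-\theta}\,\big(\nvt\|\vphi_h\|_{(\kappa_0)}\big)^{\mu\theta'} $$
with exponents chosen so that the Hölder part carries total power $\leq\lambda$ on $\|\vphi_h\|_{(\kappa_0)}$. The sup part is then bounded by $\min(1,\nvt\|\vphi_h\|_\infty)^{1-\theta}\leq(\nvt\|\vphi_h\|_\infty)^{\gamma}$ for a suitable $\gamma$. Applying Hölder's inequality to the weights $\|h'\|_\infty$ with conjugate exponents matching \eqref{eq:moment-condition} (power $\alpha$) and \eqref{eq:holder-condition} (power $\lambda$) bounds the sum by $\nvt^{\text{total}}$. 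Condition \eqref{dfnkappa}, and in particular \eqref{dfnkappa2} $\kappa\leq\tfrac12\min(1,\lambda)\kappa_0$, is exactly designed so that the total exponent of $\nvt$ is at least $\min(1,\alpha-\eps)$. We then conclude \eqref{sbsa}. I expect the bookkeeping of these exponents, checking that \eqref{dfnkappa} delivers precisely $\nvt^{\min(1,\alpha-\eps)}$, to be the delicate step. If it falls short, the fallback is a weaker power $\nvt^{c}$, which still suffices for most later uses.
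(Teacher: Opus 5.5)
Your route is the paper's. The sup-norm bound \eqref{fbsa} is fine, and interpolating $E=e^{iu}-1$ between $L^\infty$ and the $\kappa_0$-seminorm lands on exactly the summand the paper obtains (the paper applies \eqref{hld3} at $\kappa_1=\kappa/\beta$, with $\beta=\min(1,\alpha-\eps)$, and then \eqref{hld4} to $\vphi_h$). However, you stop precisely at the step that is the content of \eqref{sbsa}, and your account of where \eqref{dfnkappa} enters is inaccurate, so as written the proposal does not prove the stated bound. The fallback $\nvt^{c}$ would not suffice either: for instance, the $O(\nvt^2+\nvt^{2\alpha-2\eps})$ errors in Lemma~\ref{lem:estim-s0-subclt} come from squaring \eqref{sbsa}.

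The step closes as follows. By \eqref{hld4} with $\kappa'=\kappa_0$, and using $\|E\|_{(\kappa_0)}\le\|u\|_{(\kappa_0)}\ll\nvt\|\vphi_h\|_{(\kappa_0)}$,
\[
\|E\|_{(\kappa)}\ll\min(1,\nvt\|\vphi_h\|_\infty)^{1-\kappa/\kappa_0}\,\big(\nvt\|\vphi_h\|_{(\kappa_0)}\big)^{\kappa/\kappa_0}\le\nvt^{\beta}\,\|\vphi_h\|_\infty^{\beta-\kappa/\kappa_0}\,\|\vphi_h\|_{(\kappa_0)}^{\kappa/\kappa_0},
\]
since $0\le\beta-\kappa/\kappa_0\le1-\kappa/\kappa_0$. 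The exponent on the Hölder factor is forced to be $\kappa/\kappa_0$ (there is no free $\mu\theta'$), and reaching the full power $\nvt^{\beta}$ costs nothing. What \eqref{dfnkappa} provides is summability. Set $\mu=\min(\lambda,\alpha/2,1)$, $p=\alpha/(\beta-\kappa/\kappa_0)$ and $q=\mu\kappa_0/\kappa$; then \eqref{dfnkappa} gives exactly $1-\frac1p-\frac1q\ge\frac{\alpha-\beta}{2\alpha}>0$. Hence Hölder's inequality against \eqref{eq:moment-condition} and \eqref{eq:holder-condition} (the latter at power $\mu\le\lambda$) applies. It also leaves room for a third exponent $r<\infty$, which is what absorbs the weight $\|h'\|_\infty^{\Re(s)}$ in the paper's version. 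The weaker \eqref{dfnkappa2} alone does not give $\frac1p+\frac1q<1$ when $\lambda$ is small and $\beta/\alpha$ is close to $1$.

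There is also a smaller omission. With your splitting $(\abs{h'}^s-\abs{h'})e^{iu}+\abs{h'}(e^{iu}-1)$, \eqref{hld1} produces the cross term $\|\abs{h'}^s-\abs{h'}\|_\infty\|e^{iu}\|_{(\kappa)}$, which you never treat. It is harmless: by \eqref{hld3} it is $\ll\abs{s-1}\,\|h'\|_\infty^{\sigma'}(1+\abs{\log\|h'\|_\infty})\,\|\vphi_h\|_{(\kappa_0)}^{\kappa/\kappa_0}$. This is summable by Cauchy--Schwarz, because $2\kappa/\kappa_0\le\lambda$ by \eqref{dfnkappa2}. Alternatively, split as $(\abs{h'}^s-\abs{h'})+\abs{h'}^s(e^{iu}-1)$, as the paper does, which avoids the cross term altogether.
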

\begin{proof}
  Let $\vt$ and $s-1$ be sufficiently small. We first prove~\eqref{fbsa}. We have $e^{y}=1+O(|y|^{\alpha-\eps}+|y|)$ for~$y\in i\R$, so that from the definition~\eqref{eq:def-twistedop-general}, Proposition~\ref{prop:Lst-bounded} and Lemma~\ref{lem:moments-log} we obtain~\eqref{fbsa} by H\"older's inequality.

  We now move to~\eqref{sbsa}.
  Let $f\in H^\kappa$. By~\eqref{fbsa} it suffices to bound $ \|\LL_{s, \vt}[f]-\LL[f]\|_{(\kappa)}$. By~\eqref{hld1}-\eqref{hld2} and splitting the differences, we have
  \begin{align*}
    &\|   ( \abs{h'(y)}^s e^{i \langle \vt, \vth_1\vphi_h \rangle}-    \abs{h'(y)}) f(h(y), \vth_1 \vth_h(y))\|_{(\kappa)}
    \\
    &\hspace{2.5em}\ll \|   ( \abs{h'(y)}^se^{i \langle \vt, \vth_1\vphi_h \rangle}-    \abs{h'(y)})\|_{\infty}\| f\|_{(\kappa)}+ \|    \abs{h'(y)}^s\|_{(\kappa)}\| (\e^{i \langle \vt, \vth_1\vphi_h \rangle}-1)\|_{\infty}\|f\|_\infty\\
    &\hspace{2.5em}\quad+
    \| \abs{h'(y)}^s-    \abs{h'(y)}\|_{(\kappa)}\|f\|_\infty+\|\abs{h'(y)}^\sigma\|_{\infty}\| (e^{i \langle \vt, \vth_1\vphi_h \rangle}-1)\|_{(\kappa)}\|f\|_\infty.
  \end{align*}
  The first three summands on the right can be dealt as for~\eqref{fbsa}, where for the third one we use that $\||h'|^{s}-|h'| \|_{(\kappa)}\ll |s-1|(\|h'\|_{\infty}+\|h'\|_{\infty}^{\sigma})|\log|h'||$ which follows, as in the proof of Lemma~\ref{lem:quasicomp}, from $|h'|^{s}-|h'| =\int_1^s|h'|^w\log|h'|\,dw$. It remains to deal with the last summand. 
  We let 
  \begin{align*}
    \kappa_1 &= \kappa/\min( 1,\alpha-\eps).
  \end{align*}
  Notice that clearly $\kappa_1\geq\kappa$ and, by~\eqref{dfnkappa}, we also have $\kappa_1\leq \kappa_0/2\leq1/2$. By~\eqref{hld3} and~\eqref{hld4} we then have
  \begin{equation*}\begin{split} 
      \| e^{i \langle \vt, \vth_1\vphi_h \rangle}-1 \|_{(\kappa)}=\| e^{i \langle \vt, \vth_1\vphi_h \rangle} \|_{(\kappa)}& \ll  \nvt^{\kappa/\kappa_1} \|\vphi_h\|_{(\kappa_1)}^{\kappa/\kappa_1}\\
      &\ll \nvt^{\kappa/\kappa_1}
      \|\vphi_h\|_\infty^{\kappa/\kappa_1-\kappa/\kappa_0}  \|\vphi_h\|_{(\kappa_0)}^{\kappa/\kappa_0}
      \\
    \end{split}  
  \end{equation*}
  Next, we let
  \begin{align*}
    (\kappa/\kappa_1-\kappa/\kappa_0)p= \alpha,\qquad \kappa/\kappa_0 q=\min(\lambda,\alpha/2,1),
  \end{align*}
  so that by~\eqref{dfnkappa}, we have
  $$ 1 - \frac1p - \frac1q \geq \frac{\alpha - \min(1, \alpha - \eps)}{2\alpha} > 0, $$
  and let~$r\in (1, \infty)$ be such that~$\frac1p + \frac1q + \frac1r=1$. Then, by H\"older inequality we have
  \begin{align*}
    &\ssum{h\in \cH} \|\abs{h'(y)}^\sigma\|_{\infty}\| (e^{i \langle \vt, \vth_1\vphi_h \rangle}-1)\|_{(\kappa)}\|f\|_\infty\\
    &\hspace{5em}\ll \nvt^{\min(1,\alpha-\eps)} \ssum{h\in \cH} 
    \|h'(y)\|_{\infty}^\sigma  \|\vphi_h\|_\infty^{\kappa/\kappa_1-\kappa/\kappa_0}  \|\vphi_h\|_{(\kappa_0)}^{\kappa/\kappa_0}\\
    &\hspace{5em}\leq \nvt^{\min(1,\alpha-\eps)} \bigg(\ssum{h\in \cH} 
    \|h'(y)\|_{\infty}^{1+r(\sigma-1)}\bigg)^{1/r}\bigg(\ssum{h\in \cH}\|h'(y)\|_{\infty} \|\vphi_h\|_\infty^{\alpha} \bigg)^{1/p}\\
    &\hspace{5em}\quad\times\bigg( \ssum{h\in \cH}\|h'(y)\|_{\infty} \|\vphi_h\|_{(\kappa_0)}^{\min(\lambda,\alpha/2,1)}\bigg)^{1/q}\\
    &\hspace{5em}\ll \nvt^{\min(1,\alpha-\eps)},
  \end{align*}
  by Proposition~\ref{prop:Lst-bounded} and~\eqref{eq:moment-condition}-\eqref{eq:holder-condition}, provided that $\sigma-1$ is small enough in terms of~$\eps$ and~$\alpha$.
\end{proof}

If~$\alpha>2$ we can refine~\eqref{fbsa} further. 

\begin{lemma}\label{lem:expansion-Lst-clt}
  Assume~$\alpha>2$. There exists~$t_0>0$ such that for~$|s-1|\ll\nvt<t_0$, we have
  \begin{align*}
    \LL_{s, \vt}[f] &= \LL[f]+i\inpd{\vt ,  \LL[ \Phi_K f]}-(s-1)\LL[\log |\cF_K'| f]+{}\\
    &\quad-\tfrac12\vt^T\LL[ \Phi_K\cdot  \Phi_K^T f]\vt-i(s-1)\inpd{\vt , \LL[ \Phi_K \log |\cF_K'| f]}\\
    &\quad+\tfrac12(s-1)^2\LL[\log |\cF_K'|^2 f] +O(\nvt^{3}+\nvt^{\alpha-\eps})
  \end{align*}
  in~$L^\infty(K_\Theta)$.
\end{lemma}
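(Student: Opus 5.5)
Since $\cF_K$ acts on $\Dom(h)$ as $h^{-1}$, we have $\log|\cF_K'(h(y))| = -\log|h'(y)|$, and by \eqref{eq:PhiK-vphi} the phase is $\inpd{\vt,\vth_1\vphi_h(y)}$. The plan is to Taylor-expand each summand of \eqref{eq:def-twistedop-general} in the two small quantities $u:=-(s-1)\log|h'(y)|$ and $v:=\inpd{\vt,\vth_1\vphi_h(y)}$, working pointwise in $(y,\vth_1)\in K_\Theta$. Each summand is then $|h'(y)|\,\e^{u+iv}f(h(y),\vth_1\vth_h(y))$.

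We expand
$$ \e^{u+iv} = 1 + u + iv + \tfrac12 u^2 + iuv - \tfrac12 v^2 + R(u,v). $$
The identification of the terms is direct. The term $iv$ gives $i\inpd{\vt,\LL[\Phi_Kf]}$. The term $u$ gives $-(s-1)\LL[\log|\cF_K'|f]$. The term $-\tfrac12 v^2$ gives $-\tfrac12\vt^T\LL[\Phi_K\Phi_K^Tf]\vt$. The term $iuv$ gives $-i(s-1)\inpd{\vt,\LL[\Phi_K\log|\cF_K'|f]}$. The term $\tfrac12u^2$ gives $\tfrac12(s-1)^2\LL[\log|\cF_K'|^2f]$. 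Each of these operators is well defined and bounded on $L^\infty$: this follows from Lemma~\ref{lem:moments-log} together with \eqref{eq:moment-condition} (using $\alpha>2$ for the quadratic term in $\vphi$), Hölder's inequality, and Proposition~\ref{prop:Lst-bounded} for the pure $\log$ moments.

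The remaining step is to bound $\sum_h|h'(y)|\,|R(u,v)|\,\|f\|_\infty$. The remainder is controlled by
$$ |R(u,v)| \ll (|u|^3+|v|^3)\,\e^{|\Re u|}. $$
For the $v$-part alone we use the sharper estimate $|R(0,v)|\ll\min(|v|^2,|v|^3)\le|v|^{\alpha-\eps}$ when $2<\alpha-\eps\le3$, and $|R(0,v)|\ll|v|^3$ when $\alpha\ge3$ (using only the $\alpha$-moment via $\min$). This gives a contribution $\nvt^{\alpha-\eps}\sum_h\|h'\|_\infty\|\vphi_h\|_\infty^{\alpha-\eps}$ or $\nvt^3$, both finite by \eqref{eq:moment-condition}.

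The mixed and $u$-dominated parts, such as $|u|^3$ and $|u|^2|v|$, carry the factor $|h'|^{1-|s-1|}$. This factor is absorbed as in Lemma~\ref{lem:perturbation-LL1t}: we apply Hölder with Proposition~\ref{prop:Lst-bounded}, valid for $\Re s\ge\sigma_0$ with $|s-1|$ small, together with Lemma~\ref{lem:moments-log} to handle the powers of $\log|h'|$. Using the assumption $|s-1|\ll\nvt$, every such term is $O(\nvt^3)$, and terms involving $|v|$ up to order two are bounded via moments of order less than $\alpha$.

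The main obstacle is exactly this remainder bookkeeping. One must split the remainder so that no moment of $\vphi$ higher than $\alpha-\eps$ is ever required, while simultaneously controlling the $|h'|^{\Re s}$ weight slightly below $1$. I would handle it by writing
$$ R(u,v)=\bigl[\e^{u}-1-u-\tfrac12u^2\bigr]\e^{iv}+(\e^u-1-u)(\e^{iv}-1)+u(\e^{iv}-1-iv)+R(0,v), $$
and bounding each bracket separately. In each product, the $u$-factor is paired with a $\log$ moment and the $v$-factor with an $\alpha$-moment, and the two are combined through Hölder's inequality.
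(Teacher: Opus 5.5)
Your proposal follows the paper's proof. You expand $|h'|^{s-1}$ to third order, which gives an error $O(|s-1|^3)=O(\nvt^3)$ by Proposition~\ref{prop:Lst-bounded}. You expand $\e^{iv}$ to second order with remainder $\ll\min(|v|^2,|v|^3)$. You then control the cross terms by H\"older's inequality together with Lemma~\ref{lem:moments-log}; this is more explicit than the paper's lumped error term.

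Three small repairs are needed.

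\emph{Sign of $u$.} The summand is $|h'(y)|\,\e^{u+iv}f(\cdot)$ with $u:=(s-1)\log|h'(y)|=-(s-1)\log|\cF_K'(h(y))|$, not $-(s-1)\log|h'(y)|$. Your identification of the main terms already uses the correct sign.

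\emph{Splitting of $R(u,v)$.} Your displayed splitting is not an identity. With the factor $\e^{iv}$ on the first bracket it overcounts by $\bigl[\e^u-1-u-\tfrac12u^2\bigr](\e^{iv}-1)$. The correct version is
\[
R(u,v)=\bigl[\e^{u}-1-u-\tfrac12u^2\bigr]+(\e^u-1-u)(\e^{iv}-1)+u(\e^{iv}-1-iv)+R(0,v).
\]
With this, each term is bounded exactly as you describe.

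\emph{Missing case $\alpha-\eps\le 2$.} This case can occur, since $\eps$ is only assumed to be less than $\alpha/2$. There $|R(0,v)|\ll|v|^2$, and the sum is $O(\nvt^2)$ by the second moment, which is finite because $\alpha>2$. Since $\nvt^2\le\nvt^{\alpha-\eps}$ for $\nvt<1$, this suffices; it corresponds to the paper's choice $\beta=2$ in that range.
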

\begin{proof}
  Write~$\LL_{s, t}[f] = \LL[\e^{i\inpd{\vt, \Phi_K}} \abs{h'}^{s-1} f]$. First we expand~$\abs{h'}^{s-1}$ to third order, which, by Lemma~\ref{prop:Lst-bounded},  gives an error term~$O(|s-1|)^3 = O(\nvt^3)$ for~$t_0$ small enough. Then we expand the exponential by~$e^{it}=1+it-t^2/2+O(|t|^{\beta})$ for real~$t$, with $\beta=\min(3,{\alpha-\eps})$ if $\alpha-\eps\geq 2$ and $\beta=2$ otherwise. The ensuing error terms are bounded by
  $$ \sum_{h\in\cH} \abs{h'}^\sigma(1+\abs{\log\abs{h'}}^2)(\nvt \norm{\vphi_h}_{\infty})^{\beta}. $$
  where~$\sigma = \Re(s)$. By the Hölder inequality, and applying Lemmas~\ref{prop:Lst-bounded} and~\ref{lem:moments-log}, we have that this is~$O(\nvt^\beta)$ provided that~$t_0$ is small enough.
\end{proof}
\begin{lemma}\label{lem:expansion-Lst-clt2}
  Assume~$\alpha>2$. There exists~$t_0>0$ such that for~$|s-1|\ll\nvt<t_0$ and $f\in H^\kappa$ we have
\begin{align*}
\|  \LL_{s,\vt}[f]-\LL[f]-i\inpd{\vt,\LL[\Phi_K f]}
       +(s-1)\LL[\log|\cF_K'|f]\|_{\kappa}
\ll \nvt^{\min(2,\alpha-1-\eps)}\|f\|_\kappa.
\end{align*}
\end{lemma}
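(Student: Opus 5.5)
Write the difference branchwise. For $h\in\cH$ with $y\in\Dom(h)$, put $u_h:=\inpd{\vt,\vth_1\vphi_h}$ and $v_h:=\log|h'|$. Then $\LL_{s,\vt}[f]-\LL[f]-i\inpd{\vt,\LL[\Phi_K f]}+(s-1)\LL[\log|\cF_K'|f]$ equals
$$\sum_{h\in\cH,\, y\in\Dom(h)} |h'(y)|\, R_h(y)\, f(h(y),\vth_1\vth_h(y)),$$
where $R_h:=\e^{iu_h+(s-1)v_h}-1-iu_h-(s-1)v_h$. We split $R_h=R^{(1)}_h+R^{(2)}_h$ with
$$R^{(1)}_h:=\e^{iu_h}-1-iu_h,\qquad R^{(2)}_h:=\e^{iu_h}\bigl(\e^{(s-1)v_h}-1\bigr)-(s-1)v_h.$$
We then bound the $\|\cdot\|_\infty$ and $\|\cdot\|_{(\kappa)}$ norms of each summand, using \eqref{hld1}--\eqref{hld2} exactly as in the proof of Lemma~\ref{lem:perturbation-LL1t}.

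\textbf{Sup-norm part.} We have $|R^{(1)}_h|\ll \min(|u_h|^2,|u_h|)\le |u_h|^{\beta}$ for any $\beta\in[1,2]$. Take $\beta=\min(2,\alpha-\eps)$. By \eqref{eq:moment-condition} and Proposition~\ref{prop:Lst-bounded}, this contributes $O(\nvt^{\beta})\|f\|_\infty$. For $R^{(2)}_h$, write it as $(\e^{iu_h}-1)(\e^{(s-1)v_h}-1)+O(|s-1|^2 v_h^2|h'|^{-\eta})$. Using $|s-1|\ll\nvt$ and Lemma~\ref{lem:moments-log} (with Hölder, shifting $\sigma$ slightly below $1$), this gives $O(\nvt^{2})$ plus $O(\nvt\cdot\nvt^{\min(1,\alpha-1-\eps)})$. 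Since $\alpha>2$, both are $\ll\nvt^{\min(2,\alpha-1-\eps)}$.

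\textbf{Hölder part.} This is where the exponent $\alpha-1-\eps$ should come from. By \eqref{hld1}, $\|\,|h'|R_h\, f\circ h\|_{(\kappa)}$ is bounded by three pieces:
\begin{itemize}
\item[(a)] $\|\,|h'|R_h\|_\infty\|h'\|_\infty^\kappa\|f\|_{(\kappa)}$, handled as in the sup-norm part;
\item[(b)] $\|\,|h'|\,\|_{(\kappa)}\|R_h\|_\infty\|f\|_\infty$, handled the same way via bounded distortion \eqref{eq:bounded-dist};
\item[(c)] $\|h'\|_\infty\|R_h\|_{(\kappa)}\|f\|_\infty$.
\end{itemize}
For (c), use $\frac{d}{du}(\e^{iu}-1-iu)=i(\e^{iu}-1)$. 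For $x,x'$ in the same $K_a$ this gives
$$|R^{(1)}_h(x)-R^{(1)}_h(x')|\ll |u_h(x)-u_h(x')|\,\min(1,\|u_h\|_\infty).$$
Interpolating between the two terms of the difference yields
$$\|R^{(1)}_h\|_{(\kappa)}\ll \nvt^{1+\gamma}\,\|\vphi_h\|_{(\kappa_1)}^{\kappa/\kappa_1}\,\|\vphi_h\|_\infty^{\gamma}$$
for suitable $\gamma\le1$ and $\kappa_1$ as in Lemma~\ref{lem:perturbation-LL1t}. We aim for $1+\gamma=\min(2,\alpha-1-\eps)$. Then split the powers of $\|\vphi_h\|_\infty$ and $\|\vphi_h\|_{(\kappa_0)}$ via \eqref{hld4}, and apply the three-factor Hölder inequality exactly as in the last display of Lemma~\ref{lem:perturbation-LL1t}, with $p,q,r$ chosen so that the $\|\vphi_h\|_\infty$ exponent times $p$ is at most $\alpha$. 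The loss of one full power compared to the sup-norm bound comes from the derivative factor $|u_h(x)-u_h(x')|$, which eats a unit of moment. The term $R^{(2)}_h$ is handled similarly: $v_h$ has $\|v_h\|_{(\kappa)}\ll1$ by bounded distortion, and the factor $|s-1|\ll\nvt$ supplies the extra power.

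\textbf{Main obstacle.} The key difficulty is the exponent bookkeeping in (c). We must check that the choice \eqref{dfnkappa} of $\kappa$ leaves room, i.e. $1/p+1/q<1$, once the $\|\vphi_h\|_\infty$ exponent is raised by $\gamma$. We first try $\kappa_1=\kappa$ (so no $\|\cdot\|_{(\kappa_1)}$-interpolation loss in $\nvt$) and put the extra factor $\nvt^{\gamma}\|\vphi_h\|_\infty^{\gamma}$ into the $p$-factor. If the room is insufficient, we reduce the target exponent accordingly; the statement's $\min(2,\alpha-1-\eps)$ suggests exactly this margin.
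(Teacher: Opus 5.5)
\textbf{Verdict.} Your route is the paper's, but the proposal stops at the one step that needs an argument, so as written it does not prove the stated exponent.

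\textbf{What matches the paper.} Your decomposition, the sup-norm estimate (which the paper simply takes from Lemma~\ref{lem:expansion-Lst-clt}), pieces (a)--(b), and your handling of $R^{(2)}_h$ are fine. The paper does the $s$-dependence slightly differently: it first proves the bound with $\LL_{s,0}$ in place of $\LL$, then expands $|h'|^s$. This is equivalent to your $R^{(2)}_h$. For (c) you reach the paper's mechanism. With $q(t)=\e^{it}-1-it$ and $|q'(t)|\le\min(2,|t|)$, take $\kappa_1=\kappa$ and then apply \eqref{hld4} to $\vphi_h$. This gives
\[
\|q(u_h)\|_{(\kappa)}\ll\nvt^{\beta}\,\|\vphi_h\|_\infty^{\beta-\kappa/\kappa_0}\,\|\vphi_h\|_{(\kappa_0)}^{\kappa/\kappa_0},\qquad \beta=1+\gamma\in[1,2],
\]
which is exactly the paper's bound; the paper gets it by applying \eqref{hld4} to $q(u_h)$ itself. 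Hölder as in Lemma~\ref{lem:perturbation-LL1t} then finishes.

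\textbf{The gap.} You say one ``must check'' that \eqref{dfnkappa} leaves room, and otherwise propose to lower the target exponent. That fallback would not prove the statement. The room is not automatic; it is what the precise form of \eqref{dfnkappa} is designed to supply, and it must be checked. Hölder with exponents $\alpha/(\beta-\kappa/\kappa_0)$ and $\lambda/(\kappa/\kappa_0)$ needs
\[
\frac{\beta-\kappa/\kappa_0}{\alpha}+\frac{\kappa/\kappa_0}{\lambda}<1 .
\]
The leftover H\"older factor goes on $\sum_h\|h'\|_\infty<\infty$.

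\textbf{How to close it.} Since $\alpha>2$, condition \eqref{dfnkappa} reads
\[
\frac{\kappa}{\kappa_0}\Bigl(\frac{1}{\min(1,\lambda)}-\frac{1}{\alpha}\Bigr)\le\frac{\alpha-1}{2\alpha}.
\]
Hence the left-hand side above is at most $\frac{\beta}{\alpha}+\frac{\alpha-1}{2\alpha}$. This is $\le 1-\frac{\eps}{2\alpha}$ as soon as $2\beta\le\alpha+1-\eps$, and that inequality holds for $\beta=\min(2,\alpha-1-\eps)$.

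\textbf{Smaller corrections.}
\begin{itemize}
\item Your $\gamma$ must lie in $[0,1]$. If $\alpha-\eps<2$, the target $1+\gamma=\alpha-1-\eps$ would force $\gamma<0$. Then $\min(1,\|u_h\|_\infty)\le\|u_h\|_\infty^{\gamma}$ fails when $\|u_h\|_\infty>1$. Take $\gamma=0$ there, as the paper does with $\beta=1$; this suffices because $\nvt\le\nvt^{\alpha-1-\eps}$ for $\nvt<1$, and $2\le\alpha+1-\eps$ holds since $\eps<\alpha/2<\alpha-1$.
\item Your displayed bound for general $\kappa_1\neq\kappa$ is missing a factor $\|\vphi_h\|_\infty^{1-\kappa/\kappa_1}$; it is not homogeneous in $(\vt,\vphi_h)$. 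Stick with $\kappa_1=\kappa$.
\item The loss relative to the sup-norm exponent does not come from the derivative ``eating a unit of moment'': the $\|\vphi_h\|_\infty$-exponent is only $\beta-\kappa/\kappa_0$. It comes from the H\"older constraint above. Indeed, the same argument allows any $\beta\in\bigl[1,\min\bigl(2,\frac{\alpha+1-\eps}{2}\bigr)\bigr]$, and $\min(2,\alpha-1-\eps)$ is just a convenient choice in that range.
\end{itemize}
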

\begin{proof}
First, we note that it suffices to prove the bound for $\|\|_{(\kappa)}$ since the required bound for the infinity norm follows immediately from Lemma~\ref{lem:expansion-Lst-clt}.

Let $q(t)=e^{it}-1-it$ and $\beta=\min(2,\alpha-1-\eps)$ if $\alpha-\eps\geq2$ and $\beta=1$ otherwise. Since $q(t)\ll |t|^\beta$ and $q'(t)\ll |t|^{\beta-1}$, by~\eqref{hld4} we have
\begin{align*}
\|q(\inpd{\vt,\vth_1\vphi_h(y)})\|_{(\kappa)}&\ll \|q(\inpd{\vt,\vth_1\vphi_h(y)})\|_{\infty}^{1-\kappa/\kappa_0}\|q(\inpd{\vt,\vth_1\vphi_h(y)})\|_{(\kappa_0)}^{\kappa/\kappa_0}\\
&\ll\nvt^{\beta}\cdot\|\vphi_h\|_\infty^{\beta-\kappa/\kappa_0}\|\vphi_h\|_{(\kappa_0)}^{\kappa/\kappa_0}.
\end{align*}
By~\eqref{dfnkappa} we have
\begin{align*}
\frac{\beta-\kappa/\kappa_0}{\alpha}+\frac{\kappa/\kappa_0}{\lambda}\leq \frac{\beta}{\alpha}-\frac{\kappa}{\kappa_0}\frac{\min(1,\lambda)-\alpha}{\alpha\min(1,\lambda)}\leq \frac{\beta}{\alpha}+\frac{\alpha-1}{2\alpha}\leq1-\frac{\eps}{2\alpha}<1,
\end{align*}
since $2\beta\leq\alpha+1-\eps$ for $\eps<\alpha/2$. Thus,
we can apply H\"older's inequality as in Lemma~\ref{lem:perturbation-LL1t} and obtain
\begin{align*}
\|  \LL_{s,\vt}[f]-\LL_{s,0}[f]-i\inpd{\vt,\LL_{s,0}[\Phi_K f]}\|_{(\kappa)}
\ll \nvt^\beta\|f\|_\kappa
\end{align*}
for $|s-1|$ small enough. Expanding $|h'|^s$, with the additional powers of $\log|h'|$ treated as in
Lemma~\ref{lem:moments-log}, we then obtain the claimed bound.  
\end{proof}

\subsubsection{Estimates relative to~$\cF_c$}

In this subsection, we assume the hypotheses~\ref{hyp:moments-cuspidal}. Our goal is to provide technical estimates needed in order to rephrase the parameters of the limit law in Theorem~\ref{th:limitlaw:quasi} in terms of~$\cF_c$ and its inverse branches, whenever possible. We recall the definition~\eqref{eq:def-Phic} of~$\Phi_c$ and its relation~\eqref{eq:ref-PhiK-Phic} with $\Phi_K$.

The following lemma relates~$\LL_{1, \vt}$ to~$\CC_{1, \vt}$, using the decomposition given in Lemma~\ref{lem:decomp-Hc-HK}.
\begin{lemma}\label{lem:expansion-L1t-subclt}
  Assume Hypothesis~\ref{hyp:moments-cuspidal}.
  Then, for $f\in L^\infty(K_\Theta)$ we have
  \begin{align*}
    &\int_{K_\Theta} (\LL_{1, \vt}-\LL)[f] \df\nu=\int_{K_\Theta} \big( (\CC_{1, \vt}-\CC) + \CC (\Id-\CC^\sharp)^{-1} (\CC_{1, \vt}^\sharp-\CC^\sharp) \big) (\Id-\CC^\sharp)^{-1}[ f] \df\nu\\
    &\hspace{3em}+\int_{K_\Theta} \big( (\CC_{1, \vt}-\CC) + \CC (\Id-\CC^\sharp)^{-1} (\CC_{1, \vt}^\sharp-\CC^\sharp) \big)(\CC_{1, \vt}^\sharp-\CC^\sharp) (\Id-\CC^\sharp)^{-1}[ f] \df\nu\\
    &\hspace{3em}+O\big( \nvt^2+\nvt^{2\alpha-2\eps}\big)
  \end{align*}
\end{lemma}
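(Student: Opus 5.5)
We start from the identity~\eqref{eq:ref-LLst-CCst} at $s=1$: on $K_\Theta$, $\LL_{1,\vt}=\CC_{1,\vt}(\Id-\CC^\sharp_{1,\vt})^{-1}$ and $\LL=\CC(\Id-\CC^\sharp)^{-1}$. Write $A:=\CC_{1,\vt}-\CC$, $B:=\CC^\sharp_{1,\vt}-\CC^\sharp$ and $R:=(\Id-\CC^\sharp)^{-1}$, which is bounded on $L^\infty$ by Lemma~\ref{lem:CCsharp-contracts}. The resolvent identity gives
\[ (\Id-\CC^\sharp_{1,\vt})^{-1}=R+RBR+RBRBR+\dotsb, \]
valid once $\|B\|$ is small in a suitable sense. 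Expanding $\LL_{1,\vt}-\LL=(\CC+A)(\Id-\CC^\sharp_{1,\vt})^{-1}-\CC R$, we separate the terms that are linear in the perturbations $(A,B)$ from the quadratic remainder:
\[ \LL_{1,\vt}-\LL=(A+\CC RB)R+(A+\CC RB)BR+\big(A\,RB\,R+\text{terms with at least two factors in }\{A,B\}\text{ beyond the displayed ones}\big). \]
The first two groups are exactly the two main terms in the statement, once we recall that $(\CC_{1,\vt}-\CC)+\CC(\Id-\CC^\sharp)^{-1}(\CC^\sharp_{1,\vt}-\CC^\sharp)=A+\CC RB$. What remains consists of products containing either $A$ and $B$ together, or at least two factors $B$, always sandwiched between the bounded operators $\CC$ and $R$. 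Note that this remainder is not symmetric. To see this, expand the product of $(A+\CC RB)$ with $(\Id-\CC^\sharp_{1,\vt})^{-1}-R$: the expansion up to the $BR$ term is kept, and all of the $RBRBR\dotsb$ tail is remainder.

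It therefore suffices to show that every remainder term, integrated against $\nu$, is $O(\nvt^2+\nvt^{2\alpha-2\eps})$. The integral against $\nu$ is essential here, since operator norms of $B$ alone are only $O(\nvt^{\min(1,\alpha-\eps)})$. The natural route is to bound $|\int A\,RB\,R f\,\df\nu|$ and the $B\dots B$ terms by expanding each factor into sums over inverse branches. For each factor, the twist contributes $|e^{i\inpd{\vt,\vth_1\vphi_{h}}}-1|\le\min(2,\nvt|\vphi_h|)$, and we use $\min(2,x)\le x^{\min(1,\alpha-\eps)}$. This turns each remainder into an expression of the form
\[ \nvt^{2\min(1,\alpha-\eps)}\sum_{h_1,h_2}\|h_1'\|\|h_2'\|\,\|\vphi_{h_1}\|^{\beta}_{\infty,\im(h_2)}\cdots, \]
where consecutive factors are linked through intervening powers of $\CC^\sharp$ and $\CC$. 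The key point is that two adjacent perturbation factors only ever meet through the pair $(h_1,h_2)$ with $h_2(y)\in\Dom(h_1)$. This is precisely the structure controlled by the two-branch moment bound~\eqref{eq:moment-condition-cuspidal}, in the same spirit as~\eqref{eq:bound-Cw-1}–\eqref{eq:bound-Cw-2} and~\eqref{eq:expansion-sumh-operators} in Lemma~\ref{lem:moments-c-implies-K}. The intermediate powers $(\CC^\sharp)^m$ are summed using Lemma~\ref{lem:CCsharp-contracts}, and the integration against $\nu=\fm^{-1}\df s\,\df v$ is handled via~\eqref{eq:transferop-invarc}, which removes the outermost $\CC$.

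The main obstacle is the step where the two twist factors come from different, non-adjacent branches. There we would need a bound like $\nvt^2\E|\vphi|\,|\vphi'|$ with only $\alpha$-th moments available. To handle it, I would use $\min(2,\nvt|u|)\min(2,\nvt|v|)\le\min(2,\nvt|u|)^2+\min(2,\nvt|v|)^2$. Each square is bounded by $\nvt^{\min(2,2\alpha-2\eps)}|u|^{\min(2,2\alpha-2\eps)}$, which needs a moment of order $\min(2,2\alpha-2\eps)$. This exceeds $\alpha$ when $\alpha<2$, so we must be more careful. The plan is to use instead $\min(2,x)^2\ll x^{\alpha-\eps}\cdot\nvt^{\alpha-\eps}$-type bounds, distributing one $\nvt^{\alpha-\eps}$ to each factor, so that only $\alpha-\eps$ moments are needed. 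Each moment is then absorbed via~\eqref{eq:moment-condition-cuspidal} after applying Cauchy–Schwarz against the invariant density $\varphi$. Because $\varphi\asymp1$ and $\CC[\varphi]=\varphi$, the measure is preserved at each step, and this yields the claimed $O(\nvt^2+\nvt^{2\alpha-2\eps})$.
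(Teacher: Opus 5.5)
There is a genuine gap, and it lies in the decomposition rather than in the moment estimates.

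\textbf{The Neumann expansion is not available.} Expanding $(\Id-\CC^\sharp_{1,\vt})^{-1}=R+RBR+RBRBR+\dotsb$ requires $B$ to be small. Under Hypothesis~\ref{hyp:moments-cuspidal}, however, the maps $\vphi_h$ are only controlled on images $\im(h_2)$ of a further cuspidal branch, via \eqref{eq:moment-condition-cuspidal}. They may blow up near $V$, so $\|B\|_{\infty}$ need not tend to $0$ as $\vt\to0$. Your claim that $\|B\|_\infty=O(\nvt^{\min(1,\alpha-\eps)})$ is unfounded. Were it true, every quadratic term would be $O(\nvt^{2\min(1,\alpha-\eps)})=O(\nvt^2+\nvt^{2\alpha-2\eps})$ and nothing would remain to prove.

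\textbf{The displayed identity is also off.} The expansion produces $ARBR$ and $\CC RBRBR$, not $ABR$ and $\CC RBBR$. Moreover $ARBR=ABR+A\CC^\sharp RBR$ double-counts $ABR$. The exact statement is
\[ \LL_{1,\vt}-\LL-DR-DBR=D\,\CC^\sharp_{1,\vt}(\Id-\CC^\sharp_{1,\vt})^{-1}BR,\qquad D:=A+\CC RB. \]
This follows from $\CC_{1,\vt}(\Id-\CC^\sharp_{1,\vt})^{-1}-\CC R=D(\Id-\CC^\sharp_{1,\vt})^{-1}$ (resolvent identity) and $(\Id-\CC^\sharp_{1,\vt})^{-1}=\Id+\CC^\sharp_{1,\vt}(\Id-\CC^\sharp_{1,\vt})^{-1}$.

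\textbf{Term-by-term bounds fail.} Expanding this remainder in powers of $B$ creates terms with \emph{adjacent} perturbation factors, such as $ABBR$ or $\CC RBBBR$. These cannot be bounded by $O(\nvt^2+\nvt^{2\alpha-2\eps})$ individually, because two consecutive twists need a mixed moment of the type \eqref{eq:moment-condition-cuspidal2}, which is not assumed. So the configuration you single out (non-adjacent twists) is the harmless one, while your expansion manufactures the harmful one. Your proposed fix does not work either: $\min(2,x)^2\ll x^{\alpha-\eps}$ only yields $O(\nvt^{\alpha-\eps})$, and Cauchy--Schwarz needs moments of order $2\min(1,\alpha-\eps)$, which in general exceeds $\alpha$.

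\textbf{The missing idea} is to keep $(\Id-\CC^\sharp_{1,\vt})^{-1}$ unexpanded. It is bounded on $L^\infty$ uniformly in $\vt$, by the domination $|(\CC^\sharp_{1,\vt})^m f|\le(\CC^\sharp)^m|f|$ and Lemma~\ref{lem:CCsharp-contracts}. In the exact remainder every $B$ is then immediately preceded by a transfer operator, since
\[ \CC^\sharp_{1,\vt}(\Id-\CC^\sharp_{1,\vt})^{-1}B=(\Id-\CC^\sharp_{1,\vt})^{-1}\CC^\sharp_{1,\vt}B,\qquad \CC RB=\CC B+\CC R\CC^\sharp B. \]
For such compositions the twist attached to $h_1$ is evaluated on $\im(h_2)$. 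Hence \eqref{eq:moment-condition-cuspidal} and H\"older give
\[ \|\CC_{1,\vt}B\|_\infty+\|\CC B\|_\infty+\|\CC^\sharp B\|_\infty\ll\nvt+\nvt^{\alpha-\eps}, \]
and $|\CC^\sharp_{1,\vt}Bg|\le|\CC_{1,\vt}Bg|$ pointwise. The outer $A$ is controlled after integration, $\int|A[g]|\df\nu\ll(\nvt+\nvt^{\alpha-\eps})\|g\|_\infty$, by covering $\dD$ with images of cuspidal branches. These bounds compose multiplicatively, so the remainder is $O((\nvt+\nvt^{\alpha-\eps})^2)$ with no mixed moments needed.

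The adjacent-twist contributions, $ABR$ and $\CC RBBR$, are exactly what the second main term $DBR$ collects. That is why the lemma keeps this term instead of absorbing it into the error. It is shown to be $O(\nvt^2)$ only later, under \eqref{eq:moment-condition-cuspidal2}.
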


\begin{proof}
  First, we observe that proceeding as in~\eqref{fbsa} one has that~\eqref{eq:moment-condition-cuspidal}  implies
  \begin{align} 
    &\|\CC_{1,\vt}(\CC^\sharp_{1, \vt}-\CC^\sharp)\|_{\infty}+    \|\CC(\CC^\sharp_{1, \vt}-\CC^\sharp)\|_{\infty}+ \|\CC^\sharp(\CC^\sharp_{1, \vt}-\CC^\sharp)\|_{\infty} \ll \nvt + \nvt^{\alpha-\eps},\label{perturbation-CC1ta}
    \\
    &  \int_{\dD_\Theta}|(\CC_{1, \vt}-\CC)[f]|\df\nu\ll  \nvt + \nvt^{\alpha-\eps} \qquad \forall f\in L^\infty(\dD_\Theta).\label{perturbation-CC1tb}
  \end{align}
  Recalling~\eqref{eq:ref-LLst-CCst}, a quick computation shows that on bounded functions supported on $K_\Theta$ we have
  \begin{align*}
    \LL_{1, \vt}-  \LL&=\CC_{1,\vt}(\Id-\CC_{1, \vt}^\sharp)^{-1}-\CC(\Id-\CC^\sharp)^{-1}\\
    &= \big((\CC_{1, \vt}-\CC)+\CC(\Id-\CC^\sharp)^{-1}(\CC_{1, \vt}^\sharp-\CC^\sharp)\big)(\Id-\CC^\sharp)^{-1}\\
    &\quad +\big((\CC_{1, \vt}-\CC)+\CC(\Id-\CC^\sharp)^{-1}(\CC_{1, \vt}^\sharp-\CC^\sharp)\big)
    (\CC_{1, \vt}^\sharp-\CC^\sharp)(\Id-\CC^\sharp)^{-1}\\
    &\quad +\big((\CC_{1, \vt}-\CC)+\CC(\Id-\CC^\sharp)^{-1}(\CC_{1, \vt}^\sharp-\CC^\sharp)\big)\CC^\sharp_{1,\vt}(\Id-\CC_{1,\vt}^\sharp)^{-1}(\CC_{1, \vt}^\sharp-\CC^\sharp)(\Id-\CC^\sharp)^{-1}.
    &\quad\end{align*}
  We apply this to $f$, and integrate over $K_\Theta$ and the result follows by estimating the contribution of the last line by~\eqref{perturbation-CC1ta}-\eqref{perturbation-CC1tb}.
\end{proof}

\subsection{Dolgopyat-type estimate for the twists}

The goal of this section is to show the existence of a strip~$\{s\in \C, \Re(s)>\sigma\}$ for some~$\sigma<1$, in which~$\LL_{s, \vt}$ is contracting in a quantitative sense. 
To this end we use a method originating from work of Dolgopyat~\cite{Dolgopyat1998}. This method relies on a property of the map~$\cF_K$ known as uniform non-integrability (UNI).
Dolgopyat's paper contains two arguments. One method (Section~5 \emph{ibid}) relies on a stronger hypothesis, and is simpler to implement. The other method (Sections~(6--8) \emph{ibid.}) relies on a weaker condition, but is much more involved. The conditions were called respectively strong UNI and weak UNI in~\cite{BaladiVallee2005,BaladiVallee2005a}.

The generalization of Dolgopyat's argument to a case with countably many branches was obtained by Baladi--Vallée~\cite{BaladiVallee2005a}, under the weak UNI condition. The specific case of the Gauss map was included in the earlier work of Baladi-Vallée~\cite{BaladiVallee2005}, and there the strong UNI condition was used.

In the following, we show that the strong UNI condition also holds in our case (this implies automatically the weak UNI condition), using the explicit construction of the natural extension from Section~\ref{sec:natur-extens-invar}.
We then justify that the strong UNI argument due to Dolgopyat~\cite{Dolgopyat1998} and Baladi--Vallée~\cite{BaladiVallee2005} hold in our situation.

\subsubsection{The strong UNI condition}

Our aim in this section is to show the ``strong UNI'' property of~$\cF_K$ acting on the circle; see~\cite[p.~357]{BaladiVallee2005} for the case of the Gauss map.
Given~$n \geq 1$ and~$h, k \in \cH^n$, we denote
\begin{equation} \label{def:uni:dist}
  \Delta(h, k) := \inf_{x\in \Dom(h)\cap\Dom(k)} \abs{\frac{h''}{h'}(x) - \frac{k''}{k'}(x)}. 
\end{equation}
Since for a disk automorphism $g$ we have 
$$e^{i\theta}\frac{g''(e^{i\theta})}{g'(e^{i\theta})}=\big(|g'(e^{i\theta})|-1\big)-i\frac{d}{d\theta}\log |g'(\e^{i\theta})|,\qquad \theta\in\R,$$
by~\eqref{eq:contraction} we have that $\Delta$ can be expressed in terms of the derivative of $\log\frac{|h'(z)|}{|k'(z)|}$ as
\begin{align}\label{aldelt}
\Delta(h,k)=\inf_{e^{i\theta}\in \Dom(h)\cap\Dom(k)} \Big|\tfrac{d}{d\theta}\log\tfrac{|h'(e^{i\theta})|}{|k'(e^{i\theta})|}\Big|+O(\rho^n).
\end{align} 

\begin{proposition} \label{uni:strong} 
  There exists a constant $\rho<1$ such that for all~$\eta>0$, $n\geq 2$ and $k \in \cH^n$, we have
  $$ \ssum{h\in \cH^n \\ \im(h) \subset K \\ \Delta(h, k)\leq \eta} \|h'\|_\infty \ll \eta + \rho^n. $$
\end{proposition}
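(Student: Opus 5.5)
By \eqref{eq:comparison-distortion}, $\Delta(h,k)\asymp \abs{h^{-1}(0)-k^{-1}(0)}$ uniformly in $h,k\in\cH^n\setminus\{\id\}$. So the condition $\Delta(h,k)\le\eta$ says that the pole $h^{-1}(\infty)$, equivalently the point $h^{-1}(0)$, lies in an $O(\eta)$-neighbourhood of the fixed point $z_k:=k^{-1}(0)$. The plan is to interpret $h^{-1}(0)$ through the natural extension of Section~\ref{sec:natur-extens-invar}, and to bound the $\|h'\|_\infty$-mass of branches whose ``backward point'' lies in a small disc. This mirrors Baladi--Vallée, where the pole of $h$ is $-q_{n-1}/q_n$, the $n$-th approximation of a point in the past, and the set of branches with a given pole in a small interval has small measure.

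First I compute $h^{-1}(0)$ for $h=h_{w,n}$, $w=a_1\dotsb a_m$. Writing $g_w=\smat{\alpha&\bar\beta\\ \beta&\bar\alpha}$, the pole is $g_w^{-1}\infty=-\bar\alpha/\beta$ and $h^{-1}(0)=-\bar\beta/\bar\alpha$, which is its reflection $1/\bar z$ in the circle. Since $\abs{\beta}\to\infty$, the point $-\bar\beta/\bar\alpha$ lies within $O(\abs{\beta}^{-2})=O(\|h'\|_\infty)=O(\rho^n)$ of $\dD$. It is within $O(\rho^n)$ of $g_w^{-1}(0)$'s boundary analogue, namely any point of $g_w^{-1}(\dD\setminus[a_1])$, which equals the cylinder $[\hat w]=[\hat a_m,\dotsc,\hat a_1]$ up to distortion. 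This uses the same contraction argument as Lemma~\ref{lem:contraction}, applied to the reversed word. Indeed $g_w^{-1}$ contracts $\dD\setminus[a_1]$ (bounded away from its repelling point, as in the proof of Proposition~\ref{prop:contraction}) onto $[\hat w]$, with diameter $\ll\rho^n$ since $\ell_K(\hat w)\asymp n$ by \eqref{eq:invlc}-type symmetry of the $K$-length. Hence $\Delta(h,k)\le\eta$ forces $[\hat w]\cap B(z_k, C\eta+C\rho^n)\ne\varnothing$.

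Next I bound $\|h'\|_\infty$ via the natural extension. By \eqref{chva}, $\|h'\|_\infty\asymp |h'(x)|\xi(h(x))\asymp \zeta(\{x\}\times[\hat w])/\df s(x)$. Here $\im(h)\subset K$ is used so that $\xi(h(x))\asymp1$ and the region $[\hat w]$ is bounded away from $x\in K$. So $\|h'\|_\infty\asymp\meas([\hat w])$ uniformly. The cylinders $[\hat w]$, for distinct $h\in\cH^n$ with a fixed domain component, are pairwise disjoint up to endpoints. This follows from the distinctness statement in Lemma~\ref{lem:invbranch-FK}, transported to the second coordinate by the bijectivity of the natural-extension map on $N$, exactly as in the proof of Proposition~\ref{prop:varphi-invar}, but for $\cF_K$ (Stadlbauer's set $Y$). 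Summing over the finitely many domain components $K_b$, I get
$$\sum_{\Delta(h,k)\le\eta}\|h'\|_\infty\ll \meas\Big(\bigcup_{[\hat w]\cap B(z_k,C\eta+C\rho^n)\neq\varnothing}[\hat w]\Big)\ll \eta+\rho^n,$$
since every such cylinder has diameter $\ll\rho^n$ and so lies in $B(z_k, C'\eta+C'\rho^n)$.

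The main obstacle is the disjointness/uniform-diameter step: words $w\in\Lambda_K(n)$ end with a cuspidal tail, so the reversed cylinders $[\hat w]$ can be long parabolic blocks. I would handle this by decomposing $h=h_1\circ\dotsb\circ h_n$ (Lemma~\ref{lem:invbranch-FK}(2)) and noting that $\hat w$ then has $K$-length at least $n-1$, so the diameter bound $\ll\rho^n$ follows from Lemma~\ref{lem:contraction} applied in reverse. Disjointness I would derive directly from the unique decomposition of admissible words: distinct $w$ with the same $K$-length pattern cannot have one reversed word be a prefix of another.
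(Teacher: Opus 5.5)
Your proof is correct, but it follows a different route from the paper's.

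The paper re-indexes the sum via the reversal involution $(a_1,a_2,w,b_1,b_2)\mapsto(\hat b_2,\hat b_1,\hat w,\hat a_2,\hat a_1)$ on words. This turns the backward condition $g^{-1}(0)\in I$ into a forward condition $g(0)\in J_1$. The paper then recognizes the sum as $\LL^n[\1^*_{J_2}]$ evaluated at finitely many points, smooths $\1_{J_2}$, and applies the spectral gap of Proposition~\ref{prop:spectrum-Ls0}(iii). This costs taking $\rho\in(\spr(\NN),1)$ and losing $\rho^{n/2}$ in the smoothing.

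You instead work in a single fibre of the natural extension. By \eqref{chva}, $|h'(x)|\xi(h(x))$ is the mass of the past cylinder $[\hat w]$ for the fibre measure $\zeta(\df x,\df y)/\df s(x)$, and these cylinders are disjoint. Once they are localized in a ball of radius $O(\eta+\rho^n)$, the bound is plain measure counting. This needs no mixing or quasi-compactness, uses the $\rho$ of Lemma~\ref{lem:contraction}, and makes explicit the role of the natural extension that the paper only alludes to.

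Three of your steps are easier than you suggest.

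\emph{The ``main obstacle'' disappears.} $[\hat w]$ is an arc, and \eqref{chva} itself gives $\meas([\hat w])\le 2|h'(x)|\xi(h(x))\ll\|h'\|_\infty\ll\rho^n$, because $\xi\asymp1$ at $h(x)\in K$. This is exactly where $\im(h)\subset K$ is needed: since $\xi$ blows up at $V$, $[\hat w]$ could otherwise be much longer than $\|h'\|_\infty$. By contrast, bounding $\ell_K(\hat w)$ and applying Lemma~\ref{lem:contraction} ``in reverse'' would still require a bounded-distortion argument for $g_w^{-1}$ on all of $\dD\setminus[a_1]$, not merely on $K$.

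\emph{Disjointness is one line}, for a fixed $b$ with $K_b\subset\Dom(h)$. The condition $\im(h)\subset K$ forces $(a_1,a_2)$ to be non-cuspidal, so $\ell_K(wb)=n+1$. If $w'=uw$ with $u\neq\epsilon$, then $wb$ is a subword of $w'_2\dotsb w'_{m'}b$, whence $n=\ell_K(w'_2\dotsb w'_{m'}b)\geq n+1$, a contradiction.

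\emph{The localization of $h^{-1}(0)$ is the identity} $|g_w^{-1}(y)-g_w^{-1}(0)|=|\alpha|^{-1}|\alpha-\beta y|^{-1}\ll\|h'\|_\infty$, valid for $y\in\dD\setminus[a_1]$ away from $g_w(\infty)=\alpha/\beta$. Note also that in your convention $h^{-1}(0)=-\bar\beta/\alpha$, not $-\bar\beta/\bar\alpha$.
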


The arguments we propose is an extension of those in~\cite[Section~3.4]{BaladiVallee2005}. The question is translated entirely in terms of a dual dynamical system, which is also Markov and hyperbolic, whose inverse branches are~$\{h^{-1}, h\in \cH^*\}$: then, by means of~\eqref{eq:comparison-distortion}, this allows to translate the condition~$\Delta(h, k)\leq \eta$ into a condition of type~$h(x)\in I$ for some interval~$I$ of length~$O(\eta)$. This is then implemented as an input test function for the dual transfer operator. That such a system exists is related to the existence of the natural extension going backwards in the coding, see the comment in~\cite[p.~363]{BaladiVallee2005}. This is the main reason we needed to explicit the natural extensions for~$\cF_K$ in Section~\ref{sec:natur-extens-invar}.

\begin{proof}
  We let $\rho\in(\spr(\NN), 1)$, which is allowed by Proposition~\ref{prop:spectrum-Ls0}, and suppose~$\rho$ is close enough to~$1$ that it satisfies also Lemma~\ref{lem:contraction}.

  For all~$a\in\Sigma$, we let~$x_{a}$ be an arbitrary point of~$K\cap(\dD\setminus[\hat a])$. By~\eqref{eq:h'-x-0}, for all~$h\in\cH^*$ we have
  $$ \|h'\|_{\infty} \asymp \ssum{(b_1, b_2)\text{ non-cusp.}\\ [b_1,b_2]\subset \Dom(h)} \|h'\|_{\infty, [b_1, b_2]} \asymp \ssum{(b_1, b_2)\text{ non-cusp.}\\ [b_1,b_2]\subset \Dom(h)} \abs{h'(g_{b_1}g_{b_2} x_{b_2})}. $$
  Next we note that
  $$ \abs{h'(g_{b_1b_2} x_{b_2})} = \frac{\abs{(h\circ g_{b_1b_2})'(x_{b_2})}}{\abs{g_{b_1b_2}'(x_{b_2})}} \asymp \abs{(h\circ g_{b_1b_2})'(x_{b_2})}. $$
  For all~$(a_2, b_1)\in \Sigma$, we define
  \begin{align*}
    W:= {}&  \{(a_1, a_2, w, b_1, b_2)\in\Sigma^2\times\Sigma^*\times\Sigma^2\mid (a_1, a_2) \text{ and } (b_1, b_2) \text{ non-cuspidal}, \ell_K(a_2 w b_1)=n\} \\
    W':={}& \{(h, b_1, b_2)\in\cH^n\times\Sigma^2\mid \im(h)\subset K, (b_1, b_2)\text{ non-cuspidal}, [b_1, b_2]\subset \Dom(h)\}.
  \end{align*}
  Then the map
  \begin{align}
    W\ni (a_1, a_2, w, b_1, b_2) \mapsto (h_{a_1a_2w,n}, b_1, b_2)\in W'
    \label{eq:bij-Wstar-Hn}
  \end{align}
  is a bijection  and so, from the above discussion, we have
  $$ \ssum{h\in \cH^n  \\ \Delta(h, k)\leq \eta} \|h'\|_\infty \asymp \ssum{(a_1, a_2) \\ (b_1, b_2) \\ \text{non-cusp.}} \ssum{w\\ \ell_K(a_2 w b_1)=n \\ \Delta(g_{a_1a_2w}, k)\leq \eta} \abs{g_{a_1a_2 w b_1b_2}'(x_{b_2})}. $$
  We first transform the condition involving~$\Delta(h, k)$. By~\eqref{eq:comparison-distortion}, we have
  $$ \Delta(h, k) \gg \abs{g_{a_1a_2w}^{-1}(0) - k^{-1}(0)}. $$
  It follows that there exists a set~$I\subset\C$ of diameter~$O(\eta)$, depending only on~$k$, such that
  $$ \Delta(h, k)\leq \eta \implies g_{a_1a_2w}^{-1}(0) \in I. $$
  Define
  $$ J_1 := \bigcup_{\substack{(b_1, b_2) \\ \text{ non-cusp.}}} g_{b_1b_2}^{-1}(I), $$
  so that $J_1$ a finite union of sets of diameter $O(\eta)$. For all non-cuspidal pairs $(b_1,b_2)$ we then have
  $$ \Delta(h, k)\leq \eta \implies g_{a_1a_2wb_1b_2}^{-1}(0) \in J_1. $$
  Finally, using~\eqref{eq:h'-x-0} and the fact that $|g'(0)|=|(g^{-1})'(0)|$ for $g\in \SU(1, 1)$, we have
  $$ \abs{g_{a_1a_2 w b_1b_2}'(x_{b_2})} \asymp \abs{g_{a_1a_2 w b_1b_2}'(0)} = \abs{(g_{a_1a_2 w b_1b_2}^{-1})'(0)}. $$
  From the above, we conclude that
  $$ \ssum{h\in \cH^n \\ \Delta(h, k)\leq \eta} \|h'\|_\infty \ll \ssum{(a_1, a_2) \\ (b_1, b_2) \\ \text{non-cusp.}} \ssum{w\\ \ell_K(a_2 w b_1)=n\\ g_{a_1a_2wb_1b_2}^{-1}(0) \in J_1} \abs{(g_{a_1a_2 w b_1b_2}^{-1})'(0)}. $$
  We may now use the fact that the set~$W$ is invariant under the involution
  $$ (a_1, a_2, w, b_1, b_2) \mapsto (\hat b_2, \hat b_1, \hat w, \hat a_2, \hat a_1), $$
  to the effect that
  $$ \ssum{(a_1, a_2) \\ (b_1, b_2) \\ \text{non-cusp.}} \ssum{w\\ \ell_K(a_2 w b_1)=n \\ g_{a_1a_2wb_1b_2}^{-1}(0) \in J_1} \abs{(g_{a_1a_2 w b_1b_2}^{-1})'(0)}
  = \ssum{(a_1, a_2) \\ (b_1, b_2) \\ \text{non-cusp.}} \ssum{w\\ \ell_K(a_2 w b_1)=n \\ g_{a_1a_2wb_1b_2}(0) \in J_1} \abs{g_{a_1a_2 w b_1b_2}'(0)}. $$
  We now use again~\eqref{eq:h'-x-0} and~\eqref{eq:diffh-0-h'}, obtaining
  \begin{align*}
    \ssum{(a_1, a_2) \\ (b_1, b_2) \\ \text{non-cusp.}} \ssum{w\\ \ell_K(a_2 w b_1)=n \\ g_{a_1a_2wb_1b_2}(0) \in J_1} \abs{g_{a_1a_2 w b_1b_2}'(0)} 
    \ll{} \ssum{(a_1, a_2) \\ (b_1, b_2) \\ \text{non-cusp.}} \ssum{w\\ \ell_K(a_2 w b_1)=n \\ g_{a_1a_2wb_1b_2}(x_{b_2}) \in J_2} \abs{g_{a_1a_2 w b_1b_2}'(x_{b_2})}
  \end{align*}
  for some set~$J_2$ which, by~\eqref{eq:contraction}, is a finite union of sets of diameter $O(\eta+\rho^n)$.
  In the last expression, we have
  $$ \abs{g_{a_1a_2 w b_1b_2}'(x_{b_2})} = \abs{g_{a_1a_2 w}'(g_{b_1b_2}x_{b_2}) g_{b_1b_2}'(x_{b_2})} \ll \abs{g_{a_1a_2 w}'(g_{b_1b_2}x_{b_2})}, $$
  and therefore, letting~$y_{b_1b_2} := g_{b_1b_2} x_{b_2}$ and using the bijection~\eqref{eq:bij-Wstar-Hn},
  \begin{align*}
    \ssum{h\in \cH^n \\ \Delta(h, k)\leq \eta} \|h'\|_\infty     \ll{}& \ssum{(a_1, a_2) \\ (b_1, b_2) \\ \text{non-cusp.}} \ssum{w\\ \ell_K(a_2 w b_1)=n \\ g_{a_1a_2w}(y_{b_1b_2}) \in J_2} \abs{g_{a_1a_2 w}'(y_{b_1b_2})} \\
    ={}& \ssum{(b_1, b_2) \\ \text{non-cusp.}} \ssum{h\in \cH^n \\ y_{b_1b_2} \in \Dom(h)}  \abs{h'(y_{b_1b_2})}\cdot \1_{ J_2}(h(y_{b_1b_2})) \\
    ={}& \ssum{(b_1, b_2) \\ \text{non-cusp.}} \LL^n[\1^*_{J_2}](y_{b_1b_2}), \numberthis\label{eq:strongUNI-laststep}
  \end{align*}
  where $\1^*_{J_2}(x,\vth_0):=\1_{J_2}(x)$ for all $(x,\vth_0)\in K_\Theta$.
  To estimate~$\LL^n[\1^*_{J_2}]$, we first upper-bound the map~$\1_{J_2}$ by a smooth, bounded map~$\psi$ on~$\dD$ such that
  $$ \int_K \psi = O(\eta + \rho^{n/2}), \qquad \|\psi'\|_\infty = O(\rho^{-n/2}). $$
  Letting $\psi^*(x,\vth_0):=\psi(x)$, we deduce~$\|\psi^*\|_{\kappa} = \norm{\psi}_\infty + \norm{\psi}_{(\kappa)} \ll 1 + \norm{\psi'}_\infty^\kappa \ll  \rho^{-\kappa n/2}$. By Proposition~\ref{prop:spectrum-Ls0}.(iii), it follows that
  $$ \LL^n[\psi^*](y) = \frac1\fm\Big(\int_K \psi\Big) \xi(y) + O(\rho^n \|\psi^*\|_{\kappa}) \ll \eta + \rho^{n/2} $$
  uniformly in~$y\in K$. Since~$\1_{J_2}\leq \psi$ and the sum in~\eqref{eq:strongUNI-laststep} over~$(b_1, b_2)$ is finite, we deduce the claimed bound, up to the value of~$\rho$.
\end{proof}

\subsubsection{Estimates on the quasi-inverse for large twists}

We start with an uniform version of~\eqref{eq:holder-condition} for inverse branches of $\cH^n$.
\begin{lemma}\label{lem:ub}
  Assume~\eqref{eq:holder-condition} holds. Then uniformly in $n\geq1$ we have
  \begin{align}
    &\sum_{h\in\cH^n}  \| h'\|_\infty \cdot \|\vphi_{h}\|_{(\kappa_0)} ^{\lambda} \ll1.
  \end{align}
\end{lemma}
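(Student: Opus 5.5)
The plan is to expand $\vphi_h$ along the decomposition of $h$ into first-order inverse branches and to control each summand by the contraction of the inner branches. The two $\LL$-iterates coming from the outer and inner pieces are then absorbed by the invariance $\LL[\xi]=\xi$.

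\textbf{Step 1 (expansion and contraction).} Let $h\in\cH^n$. By Lemma~\ref{lem:invbranch-FK}(2), write $h=h_1\circ\dotsb\circ h_n$ with $h_j\in\cH$, and set $k_i:=h_{i+1}\circ\dotsb\circ h_n$ (with $k_n=\id$). Definition~\eqref{cascs} gives
$$ \vphi_h=\sum_{i=1}^n \vth_{k_i}\,\vphi_{h_i}\circ k_i. $$
By assumption~(1) of Section~\ref{sec:skew-products}, each $\vth_{k_i}$ is constant on every $K_a\subset\Dom(h)$. Since the Hölder seminorm is taken piecewise on the $K_a$, the factor $\vth_{k_i}$ costs at most a constant depending only on the finite group $\Theta$. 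Then \eqref{hld2} together with Lemma~\ref{lem:contraction} yields
$$ \|\vphi_h\|_{(\kappa_0)}\ll \sum_{i=1}^n \rho^{\kappa_0(n-i)}X_i,\qquad X_i:=\|\vphi_{h_i}\|_{(\kappa_0),\im(k_i)}\le \|\vphi_{h_i}\|_{(\kappa_0)}. $$

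\textbf{Step 2 (raising to the power $\lambda$).} If $\lambda\le1$, subadditivity gives $\|\vphi_h\|_{(\kappa_0)}^\lambda\ll\sum_i\rho^{\lambda\kappa_0(n-i)}X_i^\lambda$. If $\lambda>1$, set $a_i=\rho^{\kappa_0(n-i)}$ and apply Hölder in the form $(\sum a_iX_i)^\lambda\le(\sum a_i)^{\lambda-1}\sum a_iX_i^\lambda$; since $\sum_i a_i\ll1$ uniformly in $n$, this gives $\|\vphi_h\|_{(\kappa_0)}^\lambda\ll\sum_i\rho^{\kappa_0(n-i)}X_i^\lambda$. In both cases there is some $c>0$ with
$$ \|\vphi_h\|_{(\kappa_0)}^\lambda\ll\sum_{i=1}^n\rho^{c(n-i)}X_i^\lambda. $$

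\textbf{Step 3 (summing over $h$).} Fix $i$ and split $h=(h_1\circ\dotsb\circ h_{i-1})\circ h_i\circ k_i$. By the chain rule and bounded distortion (\eqref{eq:h'-x-0}, Lemma~\ref{lem:bounded-dist}), $\|h'\|_\infty$ is comparable to the value of $|h'|$ at any point of $\Dom(h)$. This value factors as the product of the sup-norms of the derivatives of the three pieces, up to constants. Summing over the outer block $h_1\circ\dotsb\circ h_{i-1}\in\cH^{i-1}$, evaluated at a point of its domain, gives a quantity $\ll\LL^{i-1}[\1]\ll\LL^{i-1}[\xi]=\xi\ll1$. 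Summing over $h_i$ with weight $X_i^\lambda$ gives at most $\sum_{h\in\cH}\|h'\|_\infty\|\vphi_h\|_{(\kappa_0)}^\lambda<\infty$ by \eqref{eq:holder-condition}. Summing over $k_i\in\cH^{n-i}$ is again bounded by $\LL^{n-i}[\1]\ll1$. Hence the sum in the statement is $\ll\sum_{i=1}^n\rho^{c(n-i)}\ll1$, uniformly in $n$.

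The main technical point is Step 3. The sets of triples $(h_1\circ\dotsb\circ h_{i-1},h_i,k_i)$ that actually occur are constrained by the compatibility condition~\eqref{adda}. I would handle this by simply dropping the constraint, which only enlarges the positive sums. The evaluation points are taken in each $K_a$ (there are finitely many, with $|\Sigma|$ fixed), so the distortion comparisons are legitimate and the dependence on $a$ is harmless.
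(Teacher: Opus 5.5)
Your proposal is correct and is essentially the paper's proof: you expand $\vphi_h$ via \eqref{cascs}, contract with \eqref{hld2} and Lemma~\ref{lem:contraction}, pass to the power $\lambda$ by H\"older or subadditivity, and sum independently over outer block, middle branch and inner block using \eqref{eq:holder-condition} and a uniform bound on $\LL^m[\1]$. One small slip: the chain $\LL^{i-1}[\1]\ll\LL^{i-1}[\xi]$ is not literally valid, because $\xi$ is extended by zero off $K$ while outer and inner blocks may have images outside $K$. Quote \eqref{asx1} instead, as the paper does, or first apply $\LL$ once using Proposition~\ref{prop:Lst-bounded} to get a bounded function on $K$, which is then $\ll\xi$.
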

\begin{proof}
  Writing each $h\in \cH^n$ as $h=h_1 \circ \dotsb \circ h_n\in \cH^n$ with $h_i\in\cH$, we have
  \begin{align*}
    \|\vphi_h\|_{(\kappa_0)}\ll \sum_{i=1}^{n}  \|\vphi_{h_i}(h_{i+1} \circ \dotsb \circ h_n)\|_{(\kappa_0),\Dom(h)} \ll \sum_{i=1}^{n}  \|\vphi_{h_i}\|_{(\kappa_0)}\rho^{n-i}
  \end{align*} 
  by~\eqref{cascs}, \eqref{hld2} and~\eqref{eq:contraction}. If $\lambda\geq 1$ this implies by H\"older's inequality
  \begin{align*}
    \|\vphi_h\|_{(\kappa_0)}^\lambda \ll \Big(\sum_{i=1}^{n}  \|\vphi_{h_i}\|_{(\kappa_0)}^\lambda\rho^{(n-i)}\Big) \Big( \sum_{i=1}^{n} \rho^{(n-i)}\Big)^{\lambda-1}\ll \sum_{i=1}^{n}  \|\vphi_{h_i}\|_{(\kappa_0)}^\lambda\rho^{(n-i)}
  \end{align*} 
  and the same bounds hold trivially for $\lambda<1$, save for the fact that $\rho$ is replaced by $\rho^\lambda$. Thus, enlarging the value of $\rho\in(0,1)$ if needed, we have
  \begin{align*}
    \sum_{h\in\cH^n}  \| h'\|_\infty \cdot \|\vphi_{h}\|_{(\kappa_0)} ^\lambda & \ll \ssum{n_1+n_2=n-1,\\ n_1,n_2\geq0} \rho^{n_2} \ssum{h_1\in\cH^{n_1},h\in\cH,h_2\in\cH^{n_2},\atop\Dom(h_1\circ h\circ h_2)\neq\emptyset}  \| (h_1\circ h\circ h_2)'\|_\infty \|\vphi_{h}\|_{(\kappa_0)}^\lambda \\
    & \ll \sum_{n_1+n_2=n-1,\atop n_1,n_2\geq0}\rho^{n_2}\ll1,
  \end{align*}
  by~\eqref{eq:holder-condition} and~\eqref{asx1}.
\end{proof}

Next, we prove a Lasota--Yorke estimate for~$\LL_{s,\vt}$.
\begin{lemma} \label{est:ly:t-part}  
  Let~$\rho$ be as in Lemma~\ref{lem:contraction}. Then, there exists $\delta_0>0$ such that for all~$r \geq 1$,~$s=\sigma+i\tau$ with~$1-\delta_0\leq\sigma\ll 1$, $\vt\in\R^d$, and $f\in H^\kappa$, we have
  \[  \| \LL_{s,\vt}^r [f] \|_{(\kappa)}  \ll  A_{2\sigma-1}^{r/2}\big((\nvt^{\min(1,\lambda)/2}+|s|^\kappa) \| f \|_\infty + \rho^{\kappa r} \| f\|_{(\kappa)}\big). \]
\end{lemma}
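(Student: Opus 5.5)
We expand $\LL_{s,\vt}^r[f]$ over inverse branches of order $r$, using the cocycle relation~\eqref{cascs} so that the twist along $h=h_1\circ\dotsb\circ h_r\in\cH^r$ is $e^{i\inpd{\vt,\vth_1\vphi_h(y)}}$:
$$ \LL^r_{s,\vt}[f](y,\vth_1)=\ssum{h\in\cH^r\\ y\in\Dom(h)} |h'(y)|^s e^{i\inpd{\vt,\vth_1\vphi_h(y)}} f(h(y),\vth_1\vth_h(y)). $$
Since $\vth_h$ and the condition $y\in\Dom(h)$ are locally constant on each $K_a$, the seminorm of each summand is controlled by~\eqref{hld1}. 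This gives three contributions: (a) $\||h'|^s\|_{(\kappa)}\|f\|_\infty$, (b) $\|h'\|_\infty^\sigma\,\|e^{i\inpd{\vt,\vth_1\vphi_h}}\|_{(\kappa)}\|f\|_\infty$, and (c) $\|h'\|_\infty^{\sigma}\|f\circ h\|_{(\kappa)}\le\|h'\|_\infty^{\sigma+\kappa}\|f\|_{(\kappa)}$ by~\eqref{hld2}.

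For (a), write $|h'|^s=|h'|^\sigma e^{i\tau\log|h'|}$ and use bounded distortion~\eqref{eq:bounded-dist} as in the proof of Lemma~\ref{lem:quasicomp}, which gives $\||h'|^s\|_{(\kappa)}\ll|s|^\kappa\|h'\|_\infty^\sigma$ (the logarithmic derivative of $|h'|^s$ is $O(|s|)$, and we interpolate via~\eqref{hld4}). For (c), by~\eqref{eq:contraction} we get $\|h'\|^{\kappa}_\infty\ll\rho^{\kappa r}$.

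In both cases we are left with $\sum_{h\in\cH^r}\|h'\|_\infty^\sigma$. We bound it by Cauchy--Schwarz against the trivial constant, or directly as $\sum\|h'\|^\sigma\le(\sum\|h'\|^{2\sigma-1})^{1/2}(\sum\|h'\|)^{1/2}$. Then~\eqref{eq:h'-x-0} and~\eqref{asx1} give $\sum_{h\in\cH^r}\|h'\|_\infty^{2\sigma-1}\ll\|\LL^r_{2\sigma-1,0}\1\|_\infty\ll A_{2\sigma-1}^r$, while $\sum\|h'\|_\infty\ll1$. Here we need $2\sigma-1\ge\sigma_0$, which fixes $\delta_0$.

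The main obstacle is (b), where the factor $\nvt^{\min(1,\lambda)/2}$ must appear uniformly in $r$. By~\eqref{hld3} with $\kappa'=\kappa_0$ we have $\|e^{i\inpd{\vt,\vth_1\vphi_h}}\|_{(\kappa)}\ll\min(1,\nvt\|\vphi_h\|_{(\kappa_0)})^{\kappa/\kappa_0}$. We bound this by $(\nvt\|\vphi_h\|_{(\kappa_0)})^{\min(1,\lambda)/2}$, which is legitimate since $\kappa/\kappa_0\le\min(1,\lambda)/2$ by~\eqref{dfnkappa2} and $\min(1,x)^a\le x^b$ for $b\le a$. We then apply Cauchy--Schwarz:
$$ \sum_{h\in\cH^r}\|h'\|_\infty^\sigma\|\vphi_h\|_{(\kappa_0)}^{\min(1,\lambda)/2}\le\Big(\sum\|h'\|_\infty^{2\sigma-1}\Big)^{1/2}\Big(\sum\|h'\|_\infty\|\vphi_h\|_{(\kappa_0)}^{\min(1,\lambda)}\Big)^{1/2}. $$
The first factor is $\ll A_{2\sigma-1}^{r/2}$ as above. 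The second is $\ll1$ uniformly in $r$ by Lemma~\ref{lem:ub}; if $\lambda>1$ we first use Hölder to pass from exponent $1$ to $\lambda$, which is where some care is needed to keep the bound uniform in $r$. This produces the claimed $\nvt^{\min(1,\lambda)/2}$. Collecting (a), (b) and (c), and absorbing constants (since $A_{2\sigma-1}\ge A_\sigma$ because $A$ is decreasing), gives the stated inequality.
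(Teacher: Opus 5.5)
Your architecture is the paper's: expand $\LL_{s,\vt}^r$ over $\cH^r$, split each summand's seminorm via \eqref{hld1}, treat $|h'|^s$ by distortion and $f\circ h$ by \eqref{hld2} and \eqref{eq:contraction}, and close with Cauchy--Schwarz against $\sum_{h\in\cH^r}\|h'\|_\infty^{2\sigma-1}\ll A_{2\sigma-1}^r$ and Lemma~\ref{lem:ub}. Parts (a), (c) and the Cauchy--Schwarz bookkeeping are correct. Your parenthetical $A_{2\sigma-1}\ge A_\sigma$ holds only for $\sigma\le 1$, but you never need it.

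The one step that fails as written is the bound on the twist in (b), for two reasons.
\begin{itemize}
\item \eqref{hld3} gives $\|e^{iF}\|_{(\kappa)}\ll\|F\|_{(\kappa_0)}^{\kappa/\kappa_0}$ with no cap at $1$. The H\"older seminorm of $e^{iF}$ is not bounded when $F$ oscillates fast: for instance $e^{iNx}$ has $\kappa$-seminorm of order $N^\kappa$.
\item More seriously, the inequality $\min(1,X)^a\le X^b$ requires $b\le a$, whereas \eqref{dfnkappa2} gives the opposite, $a=\kappa/\kappa_0\le b=\min(1,\lambda)/2$. For $X=\nvt\|\vphi_h\|_{(\kappa_0)}<1$ one has $X^{\kappa/\kappa_0}\ge X^{\min(1,\lambda)/2}$. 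So your justification breaks down precisely when $\nvt$ is small.
\end{itemize}

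The conclusion you wanted is nevertheless true, and there are two easy repairs.

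\emph{Linear bound.} Since $|e^{iu}-e^{iv}|\le|u-v|$ for real $u,v$, and every arc $K_a$ has diameter at most $2$, you have $\|e^{iF}\|_{(\kappa)}\le\|F\|_{(\kappa)}\le 2^{\kappa_0-\kappa}\|F\|_{(\kappa_0)}$. Combined with \eqref{hld3}, this gives $\|e^{i\inpd{\vt,\vth_1\vphi_h}}\|_{(\kappa)}\ll\min(X,X^{\kappa/\kappa_0})\le X^{\min(1,\lambda)/2}$, because $\kappa/\kappa_0\le\min(1,\lambda)/2\le1$. The rest of your argument then goes through verbatim.

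\emph{Absorption into $|s|^\kappa$.} Alternatively, as the paper's final step effectively does, keep the bound $X^{\kappa/\kappa_0}\le(1+\nvt^{\min(1,\lambda)/2})(1+\|\vphi_h\|_{(\kappa_0)}^{\min(1,\lambda)/2})$. Then run Cauchy--Schwarz and Lemma~\ref{lem:ub}, and absorb the resulting constant term into $|s|^\kappa\|f\|_\infty$. This is allowed since $|s|\ge\sigma\ge 1-\delta_0$.

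The first repair gives a twist contribution that is genuinely small in $\nvt$; the second yields only the lemma as stated.
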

\begin{proof}
  Let $\delta_0:=(1-\sigma_0)/2$.  As in the proof of~\eqref{asx2}, we have~$\||h'|^{s}\|_{(\kappa)} \ll |s|^\kappa  \|h'\|_\infty^\sigma$. Also, since $\kappa/\kappa_0\leq \min(1,\lambda)/2$, by~\eqref{hld3} we have
  \begin{equation*}\begin{split} 
      \| e^{i \langle \vt, \vth_1\vphi_h \rangle} \|_{(\kappa)}& \ll \nvt^{\min(1,\lambda)/2} \big(1+\|\vphi_h\|_{(\kappa_0)}^{\lambda/2}\big)
    \end{split}
  \end{equation*}
  Thus, by~\eqref{hld1}-\eqref{hld2}, and~\eqref{eq:contraction} we have
  \begin{align*}
    \|  \LL_{s,\vt}^r f\|_{(\kappa)} \ll {}&   \ssum{h\in \cH^r} \|h'\|_{\infty}^\sigma \Big(  \big(|s|^\kappa +\nvt^{\kappa/\kappa_0} \|\vphi_h\|_{(\kappa_0)}^{\kappa/\kappa_0}\big) \|f\|_{\infty}+  \rho^{\kappa r}\|f\|_{(\kappa)}\Big).
  \end{align*}
  The claimed bound then follows since by the Cauchy-Schwarz inequality and~\eqref{asx1} we have
  \begin{align*}
    \|  \LL_{s,\vt}^r f\|_{(\kappa)}^2 \ll {}&   A_{2\sigma-1}^r \ssum{h\in \cH^r} \|h'\|_{\infty} \Big( \big(|s|^{2\kappa} +\nvt^{2\kappa/\kappa_0} \|\vphi_h\|_{(\kappa_0)}^{2\kappa/\kappa_0}\big) \|f\|^2_{\infty}+ \rho^{2\kappa r} \|f\|_{(\kappa)}^2\Big)\\
    \ll{}&A_{2\sigma-1}^{r}\big((|s|^{2\kappa} +\nvt^{\min(1,\lambda)} ) \|f\|^2_{\infty}+ \rho^{2\kappa r} \|f\|_{(\kappa)}^2\big)
  \end{align*}
  where in the last step we used Lemma~\ref{lem:ub} and~\eqref{dfnkappa2}.
\end{proof}

Using Lemma~\ref{est:ly:t-part} and Proposition~\ref{uni:strong}, we can establish the following~$L^2$-type estimate. 

\begin{proposition} \label{dolg:L2:est}
  Let~$\rho$ be as in Lemma~\ref{lem:contraction}.  There exists~$\delta_0>0$ such that for all~$\vt \in\R^d$ and~$s=\sigma+i \tau$ satisfying~$ 1-\delta_0\leq \sigma\ll1$ and~$|\tau| \geq \rho^{-n}$, $n\geq 2$, we have  
  \begin{align}\label{bsmm}
    \int_{K_\Theta} |\LL_{s,\vt}^n [f]|^2 \df\nu \ll A_{2\sigma-1}^n\rho^{\kappa n/2}\|f\|_{\infty
    }\big((1+\nvt^{\min(1,\lambda)})\|f\|_{\infty}+|\tau|^{-\kappa}\|f\|_{(\kappa)} \big) 
  \end{align}
  for all $f\in H^\kappa$.
\end{proposition}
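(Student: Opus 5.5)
The plan is the Dolgopyat/Baladi--Vallée $L^2$ argument with strong UNI, as in \cite[Section~3]{BaladiVallee2005} and \cite{BettinDrappeau2022}. We expand the square using the $n$-th iterate. Writing $\LL_{s,\vt}^n[f](y,\vth_1)$ as a sum over $h\in\cH^n$ with $y\in\Dom(h)$, we have
\begin{align*}
  \int_{K_\Theta}|\LL_{s,\vt}^n[f]|^2\df\nu \leq \ssum{h,k\in\cH^n}\Bigl|\int_{\Dom(h)\cap\Dom(k)} |h'|^{\sigma}|k'|^{\sigma}\,\e^{i\tau\Psi_{h,k}}\,F_{h,k}\,\df\nu\Bigr|,
\end{align*}
where $\Psi_{h,k}=\log|h'|-\log|k'|$. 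The amplitude $F_{h,k}$ collects the twists $\e^{i\inpd{\vt,\vth_1(\vphi_h-\vphi_k)}}$ and the values $f(h(y),\cdot)\overline{f(k(y),\cdot)}$; the integral over $\Theta$ is finite and harmless. We then split the pairs $(h,k)$ into close pairs, with $\Delta(h,k)\leq\eta$, and far pairs, with $\Delta(h,k)>\eta$, where $\eta=|\tau|^{-1/2}$ or a similar power to be optimised against $\rho^{n}$ (recall $|\tau|\geq\rho^{-n}$).

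For close pairs we bound the integral trivially by $\|f\|_\infty^2\|h'\|_\infty^\sigma\|k'\|_\infty^\sigma$. We apply Cauchy--Schwarz in the form $\|h'\|^\sigma\leq\|h'\|^{1/2}\|h'\|^{\sigma-1/2}$, so that one factor contributes $A_{2\sigma-1}^{n}$ via \eqref{asx1}. For fixed $k$ we use Proposition~\ref{uni:strong} to bound $\sum_{h:\Delta(h,k)\leq\eta}\|h'\|_\infty\ll\eta+\rho^n$. The restriction $\im(h)\subset K$ there is harmless after composing with one extra branch, or after absorbing a bounded number of initial letters. This gives a contribution $\ll A_{2\sigma-1}^n(\eta+\rho^n)^{1/2}\|f\|_\infty^2$, or better with a Hölder split. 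Choosing $\eta\approx\rho^{\kappa n}$ yields the factor $\rho^{\kappa n/2}$.

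For far pairs we integrate by parts once (van der Corput) on each component arc $K_b$. By \eqref{aldelt}, $|\frac{\df}{\df\theta}\Psi_{h,k}|\geq\Delta(h,k)-O(\rho^n)\gg\eta$, and bounded distortion (Lemma~\ref{lem:bounded-dist} and Corollary~\ref{cor:acs}) gives $\Psi''_{h,k}\ll1$. Since the amplitude is only $\kappa$-Hölder and not $C^1$, we use the Hölder version of integration by parts: mollify at scale $|\tau|^{-1}$, or split the arc into pieces of length $|\tau|^{-1}$. Either way the integral is bounded by $\|h'\|^\sigma\|k'\|^\sigma\bigl((\tau\eta)^{-1}\|F_{h,k}\|_\infty+|\tau|^{-\kappa}\eta^{-\kappa}\|F_{h,k}\|_{(\kappa)}\bigr)$. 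We then control $\|F_{h,k}\|_{(\kappa)}$. The $f$ part gives $\|f\|_\infty\rho^{\kappa n}\|f\|_{(\kappa)}$ by \eqref{hld2} and \eqref{eq:contraction}. The twist part gives $\nvt^{\min(1,\lambda)/2}(1+\|\vphi_h\|_{(\kappa_0)}^{\lambda/2})\|f\|_\infty^2$ by \eqref{hld3} and \eqref{dfnkappa2}. Summing over $h,k$ with Cauchy--Schwarz, \eqref{asx1} and Lemma~\ref{lem:ub} (exactly as in Lemma~\ref{est:ly:t-part}), and taking $\eta$ so that $(|\tau|\eta)^{-1}\leq\rho^{\kappa n/2}$, gives the remaining terms of \eqref{bsmm}. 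These are $(1+\nvt^{\min(1,\lambda)})\|f\|_\infty^2$ and $|\tau|^{-\kappa}\|f\|_\infty\|f\|_{(\kappa)}$, each with the factor $A_{2\sigma-1}^n\rho^{\kappa n/2}$.

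The main obstacle is the far-pair oscillatory estimate with a merely Hölder amplitude, together with the interplay of the parameters $\eta$, $\rho^n$ and $|\tau|$. The twist $\e^{i\inpd{\vt,\cdot}}$ is unbounded in Hölder norm, so its contribution must be routed through Lemma~\ref{lem:ub} via Cauchy--Schwarz rather than bounded uniformly. One must also check that the $O(\rho^n)$ error in \eqref{aldelt} is dominated by $\eta$; this is where the condition $|\tau|\geq\rho^{-n}$ enters. If the exponents do not match, we would first try splitting the arcs at scale $|\tau|^{-1/2}$ and adjusting $\eta$ accordingly, at the cost of a smaller power of $\rho$, which is then renamed.
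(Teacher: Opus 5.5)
Your outline is the paper's proof: you expand the square over pairs $(h,k)\in\cH^n\times\cH^n$, bound the pairs with $\Delta(h,k)\le\eta$ by Cauchy--Schwarz, \eqref{asx1} and Proposition~\ref{uni:strong}, and bound the pairs with $\Delta(h,k)>\eta$ by a H\"older-amplitude van der Corput estimate, with amplitude norms controlled via Lemma~\ref{lem:ub} as in Lemma~\ref{est:ly:t-part}. Your eventual choice $\eta\approx\rho^{\kappa n}$ works as well as the paper's $\eta=\rho^{n/2}/c$.

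\textbf{The gap is in the far-pair bound.} From $|\Psi_{h,k}'|\gg\eta$ and $\Psi_{h,k}''\ll1$ alone, integration by parts does not give $(|\tau|\eta)^{-1}\|F\|_\infty$. The term $|\tau|^{-1}\int|F|\,|\Psi''_{h,k}|/(\Psi'_{h,k})^2$ is in general only $O(|\tau|^{-1}\eta^{-2}\|F\|_\infty)$. The first-derivative bound needs $\Psi'_{h,k}$ to be monotone, so that this integral telescopes to at most $2/\eta$. The paper obtains this by noting that $\Psi''_{h,k}$ is a rational function of $e^{i\theta}$ of bounded degree. Each arc of $\Dom(h)\cap\Dom(k)$ therefore splits into $O(1)$ subintervals on which $\Psi'_{h,k}$ is monotone, and only then is \cite[Lemma~4.1]{BettinDrappeau2022} applied.

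This input is not cosmetic. With the $\eta^{-2}$ loss you must balance the close-pair bound $(\eta+\rho^n)^{1/2}$ (the Cauchy--Schwarz bound that produces $A_{2\sigma-1}^n$) against $\rho^n\eta^{-2}$ at $|\tau|\approx\rho^{-n}$. This yields $\rho^{\kappa n/2}$ only if $\kappa\le 2/5$. But \eqref{dfnkappa} allows $\kappa=\kappa_0/2$, hence up to $1/2$, for instance when $\alpha\ge2$ and $\lambda\ge1$. Renaming $\rho$ does not help, since the hypothesis $|\tau|\ge\rho^{-n}$ rescales the same way.

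Some smaller corrections:
\begin{itemize}
\item The mollification (or Abel summation) scale must be $(|\tau|\eta)^{-1}$, not $|\tau|^{-1}$. At scale $|\tau|^{-1}$ the derivative term is $|\tau|^{-\kappa}\eta^{-1}\|F\|_{(\kappa)}$ instead of $(|\tau|\eta)^{-\kappa}\|F\|_{(\kappa)}$. With $\eta=\rho^{\kappa n}$ this destroys the factor $\rho^{\kappa n/2}$ in front of $|\tau|^{-\kappa}\|f\|_\infty\|f\|_{(\kappa)}$.
\item $\eta$ must depend on $n$ only. A choice like $\eta=|\tau|^{-1/2}$ fails to dominate the $O(\rho^n)$ error in \eqref{aldelt} once $|\tau|\gg\rho^{-2n}$. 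The hypothesis $|\tau|\ge\rho^{-n}$ serves to make $|\tau|\eta$ large, not to control that error.
\item The weight $|h'|^\sigma|k'|^\sigma$ is not constant. Its H\"older seminorm, which is $\ll\|h'\|_\infty^\sigma\|k'\|_\infty^\sigma$ by bounded distortion, belongs in the amplitude and produces the $|\sigma|^\kappa$ term in the paper.
\item The restriction $\im(h)\subset K$ needs no extra branch: since $f$ vanishes off $K_\Theta$ and $n\ge2$, only such $h$ contribute.
\end{itemize}
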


\begin{proof}
  Expanding the square, we have 
  \begin{equation} \label{L2:exp}
    \int_{K_\Theta} | \LL_{s,\vt}^n[f]|^2 d\nu= \sum_{h,k \in \cH^n } I(h,k)
  \end{equation}
  where 
  \begin{align*}
    I(h,k) &= \int_{ \Dom(h)\cap\Dom(k)\times\Theta} e^{i \tau \log \frac{|h'(y)|}{|k'(y)|} }  p_{h}(y,\vth_1)\overline{p_{k}(y,\vth_1) } d\nu, \\
    p_{h}(y,\vth_1) &= e^{i \inpd{\vt, \vth_1  \vphi_h(y)}} |h'(y)|^\sigma \cdot f(h(y), \vth_1 \vth_h(y) )  .
  \end{align*}
  Here we note that since~$f$ is supported on~$K_\Theta$, the sum over~$h, k$ is void unless there exists~$y\in \Dom(h) \cap \Dom(k)$ with~$h(y), k(y) \in K$; since~$n\geq 2$, this implies that~$\im(h), \im(k) \subset K$.
  We decompose~\eqref{L2:exp} into two parts according to the distance~\eqref{def:uni:dist}. Let~$I^-$ denote the sum over the branches satisfying~$\Delta(h,k)\leq \eta$ and~$I^+$ denote the sum over the branches satisfying~$\Delta(h,k)> \eta >0$. 
  
  Regarding~$I^-$, we observe that 
  \[ | I(h,k) | \leq \| p_{h}p_k \|_\infty \ll  \| f \|_\infty^2 \| h' \|_\infty^\sigma \| k' \|_\infty^\sigma  \]
  hence
  by the Cauchy--Schwarz inequality for $\delta_0$ sufficiently small we have
  \begin{align}
    I^-&\leq\| f \|_\infty^2    \Big(\ssum{h,k \in \cH^n \\ \im(h), \im(k) \subset K \\ \Delta(h,k)\leq \eta} \| h' \|_\infty\| k' \|_\infty\Big)^{1/2} \Big(\sum_{h,k \in \cH^n } \|h'\|_\infty^{2\sigma-1}\| k' \|_\infty^{2\sigma-1}\Big)^{1/2}\notag\\
    &\ll \| f \|_\infty^2  A_{2\sigma-1}^{n}(\eta+\rho^n)^{1/2}\label{1csd}
  \end{align}
  by Lemma~\ref{lem:quasicomp} and Proposition~\ref{uni:strong}. 
  
  For~$I^+$, writing $\Psi_{h,k}(\theta)=\log\tfrac{|h'(e^{i\theta})|}{|k'(e^{i\theta})|}$ and assuming $\rho^n<c\eta$ with $c>0$ sufficiently small, by~\eqref{aldelt} we have $|\Psi_{h,k}'(\theta)|>\eta/2$. Moreover, $\Psi_{h,k}''$ is a rational function of  $e^{i\theta}$ of bounded degree, and thus the domain can be divided into $O(1)$ subintervals on each of which \(\Psi_{h,k}'\) is monotone. Thus, recalling that~$\nu$ is proportional to the Lebesgue measure on its first component, using the version of Van der Corput Lemma in~\cite[Lemma~4.1]{BettinDrappeau2022} and~\eqref{hld1} we have
  \begin{align} 
    I^+ &\leq  \sum_{h, k \in \cH^n \atop \Delta(h,k) > \eta} \left| I(h,k) \right| \ll \sum_{h, k \in \cH^n} \left( \frac{\|p_hp_k\|_\infty}{|\tau| \eta}+ \frac{\|p_h\|_{(\kappa)}\|p_k\|_\infty+\|p_h\|_\infty\|p_k\|_{(\kappa)}}{(|\tau| \eta)^\kappa} \right)\notag\\
    &\ll A_{2\sigma-1}^{n}\Big(  \frac{\| f \|_\infty^2}{|\tau|\eta}+\| f \|_\infty  \frac{(\nvt^{\min(1,\lambda)}+|\sigma|^\kappa) \| f \|_\infty + \rho^{\kappa n} \| f\|_{(\kappa)} }{(|\tau|\eta)^\kappa}\Big)
    \label{2csd}  \end{align}
  where the last line follows as in the proof of Lemma~\ref{est:ly:t-part}, with~$s$ replaced by~$\sigma$
in the factor~$|h'|^s$.

  Finally, we pick $\eta=\rho^{n/2}/c$ so that for $|\tau|\geq\rho^{-n}$ the bounds~\eqref{1csd}-\eqref{2csd} give
  \begin{align*}
    I^-&\ll  A_{2\sigma-1}^{n}\| f \|_\infty^2  \rho^{n/4}\\
    I^+    &\ll   A_{2\sigma-1}^{n} \Big( \rho^{\kappa n/2} (1+\nvt^{\min(1,\lambda)}+|\sigma|^\kappa) \| f \|_\infty^2+\frac{ \rho^{\kappa n/2} \| f\|_{(\kappa)}\| f \|_\infty }{|\tau|^\kappa} \Big).
  \end{align*}
  Inserting these results in~\eqref{L2:exp} and recalling $\kappa\leq1/2$ we get the claimed bound.
\end{proof}

Finally we transfer the~$L^2$-bound in Proposition~\ref{dolg:L2:est} to the infinity norm bound.

\begin{proposition} \label{dolg:Linf:est2}
  There exist~$\delta_0,c,C,\tau_1>0$ and $\rho\in(0,1)$ such that for all~$\vt \in\R^d$ and~$s=\sigma+i \tau$ satisfying~$ 1-\delta_0\leq \sigma\ll1$,~$|\tau|\geq\tau_1$, and~$\nvt<|\tau|^{c}$, we have  
  \begin{align*}
    \| \LL_{s,\vt}^{n} [f] \|_\infty  &\ll  \rho^{n}(\|f\|_{\infty}+|\tau|^{-\kappa}\|f\|_{(\kappa)} )
  \end{align*}
  for $n=C\log|\tau|+O(1)$ and all $f\in H^\kappa$.
\end{proposition}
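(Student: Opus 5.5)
The plan is to follow the standard Dolgopyat--Baladi--Vallée passage from the $L^2$ bound to the sup norm. We write $\LL^{n}_{s,\vt} = \LL^{n_2}_{s,\vt}\circ \LL^{n_1}_{s,\vt}$ and combine Cauchy--Schwarz with the positivity of $\LL_{2\sigma-1,0}$ to bound the outer factor pointwise by an $L^2$ norm. Concretely, for $y\in K$,
$$ |\LL^{n_2}_{s,\vt}[g](y)|^2 \le \LL_{2\sigma-1,0}^{n_2}[\1](y)\,\LL^{n_2}_{1,0}[|g|^2](y), $$
since $|h'|^{2\sigma} = |h'|^{2\sigma-1}|h'|$. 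The first factor is $\ll A_{2\sigma-1}^{n_2}$ by~\eqref{asx1}. For the second factor we use the spectral decomposition of $\LL$ in Proposition~\ref{prop:spectrum-Ls0}(iii):
$$ \LL^{n_2}[|g|^2] = \Big(\int_{K_\Theta}|g|^2\df\nu\Big)\xi + O\big(\rho_1^{n_2}\||g|^2\|_\kappa\big) $$
with $\rho_1>\spr(\NN)$. Here $\||g|^2\|_\kappa\ll \|g\|_\infty\|g\|_\kappa$ by~\eqref{hld1}.

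Next we apply this with $g=\LL^{n_1}_{s,\vt}[f]$. The integral is controlled by Proposition~\ref{dolg:L2:est}, which applies provided $|\tau|\ge \rho^{-n_1}$, i.e.\ $n_1\le \log|\tau|/\log(1/\rho)$. This gives
$$ \int|g|^2\df\nu \ll A_{2\sigma-1}^{n_1}\rho^{\kappa n_1/2}\|f\|_\infty\big((1+\nvt^{\min(1,\lambda)})\|f\|_\infty+|\tau|^{-\kappa}\|f\|_{(\kappa)}\big). $$
For the remainder term we use~\eqref{asx1} (with $\sigma$ in place of the weight, via $|\LL_{s,\vt}f|\le \LL_{\sigma,0}|f|$) to get $\|g\|_\infty\ll A_\sigma^{n_1}\|f\|_\infty$. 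We use Lemma~\ref{est:ly:t-part} to get
$$ \|g\|_{(\kappa)}\ll A_{2\sigma-1}^{n_1/2}\big((\nvt^{\min(1,\lambda)/2}+|\tau|^\kappa)\|f\|_\infty+\rho^{\kappa n_1}\|f\|_{(\kappa)}\big). $$
The dangerous factor $|\tau|^\kappa$ is killed by $\rho_1^{n_2}$ once $n_2 = C_2\log|\tau|$ with $C_2$ large. Likewise $\nvt^{\min(1,\lambda)}\le|\tau|^{c\min(1,\lambda)}$ is absorbed, either by $\rho^{\kappa n_1/2}$ in the main term or by $\rho_1^{n_2}$ in the remainder, after choosing $c$ small relative to $C_1 := 1/\log(1/\rho)$ and $C_2$. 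We take $n_1 = \lfloor C_1\log|\tau|\rfloor$, $n_2=\lfloor C_2\log|\tau|\rfloor$, and $n=n_1+n_2$. The term $\rho^{\kappa n_1}\|f\|_{(\kappa)}$ gives at least $|\tau|^{-\kappa}\|f\|_{(\kappa)}$ up to a power of $\rho^n$, after possibly shrinking $\rho$; this is where $C_1\kappa\log(1/\rho)\ge\kappa$ is used. Square roots of products of the form $\|f\|_\infty\cdot(\ldots)$ are handled by AM--GM.

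The main obstacle is the factor $A_{2\sigma-1}^{n}$ (together with $A_\sigma^{n}$), which exceeds $1$ when $\sigma<1$. Since $A_1=1$ and $\sigma\mapsto A_\sigma$ is continuous and piecewise smooth (Lemma~\ref{lem:quasicomp}), we have $A_{2\sigma-1}\le 1+O(1-\sigma)$. So choosing $\delta_0$ small enough that $A_{2\sigma-1}^{1/2}A_\sigma$ stays below $\rho^{-\kappa/8}$ keeps the net decay rate at some $\rho'<1$ per step. This fixes $\rho$ in the statement as a power of the various contraction rates. Finally, $\tau_1$ is chosen large so that $n_1,n_2\ge2$ and the hypothesis $|\tau|\ge\rho^{-n_1}$ of Proposition~\ref{dolg:L2:est} is met. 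All implicit constants are uniform since $\sigma$ stays in a bounded range.
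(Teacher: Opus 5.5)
Your proposal is correct and matches the paper's proof. Both arguments split $n$ into two blocks of length $\asymp\log|\tau|$, bound the outer block pointwise via Cauchy--Schwarz with $|h'|^{2\sigma}=|h'|^{2\sigma-1}|h'|$ plus the spectral gap of $\LL$, feed the inner block into Proposition~\ref{dolg:L2:est} and Lemma~\ref{est:ly:t-part}, and then shrink $\delta_0$ and $c$ so that the $A_{2\sigma-1}^n$ and $\nvt^{\min(1,\lambda)}$ factors are absorbed.

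The only difference is cosmetic and lies in where the factor $|\tau|^{-\kappa}$ in front of $\|f\|_{(\kappa)}$ comes from. You take it from $\rho^{\kappa n_1}$ by choosing $n_1=\lfloor\log|\tau|/\log(1/\rho)\rfloor$. The paper instead takes it from the outer spectral decay, by imposing $\rho_1^{\ell_1/2}<|\tau|^{-\kappa}$.
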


\begin{proof}

  We fix $\rho_1$ such that  $\max(\rho^{\kappa/4},\spr(\cN))<\rho_1<1$ with~$\rho$ as in Lemma~\ref{lem:contraction}. 

  We let $n=\ell_1+\ell_2$.
  By using the Cauchy--Schwarz inequality and~\eqref{asx1} we have
  \begin{align*}
    \| \LL_{s,\vt}^{n} [f] \|^2_\infty=\| \LL_{s,\vt}^{\ell_1} [\LL_{s,\vt}^{\ell_2}[f]] \|^2_\infty  &\ll A_{2\sigma-1}^{\ell_1} \| \LL_{1,0}^{\ell_1}[ | \LL_{s,\vt}^{\ell_2} [f] |^2 ] \|_\infty.
  \end{align*}
  By Proposition~\ref{prop:spectrum-Ls0}.(iii) and \eqref{eq:def-eigenproj} we have $\LL^{\ell_1}_{1,0}[f]\ll \int f \,d\nu+\rho_1^{\ell_1}\|f\|_{\kappa}$ for $f\in H^\kappa$ and thus we deduce
  \begin{align}\label{cacfd}
    \| \LL_{s,\vt}^{n} [f] \|^2_\infty  &\ll A_{2\sigma-1}^{\ell_1} \left( \int_{K_\Theta} | \LL^{\ell_2}_{s,\vt}[f] |^2 d\nu +  \rho_1^{\ell_1} \| | \LL_{s,\vt}^{\ell_2}[f]|^2  \|_{(\kappa)}  \right).
  \end{align}
  By Lemma~\ref{est:ly:t-part} and~\eqref{asx1}, and using $A_{\sigma}\ll A^{1/2}_{2\sigma-1}$ by Cauchy-Schwarz, we have 
  \begin{align*}
    \| | \LL_{s,\vt}^{\ell_2}[f] |^2 \|_{(\kappa)}  &\leq  \| \LL_{s,\vt}^{\ell_2}[f] \|_{(\kappa)} \| \overline{\LL_{s,\vt}^{\ell_2}[f] }\|_{\infty} + \| \LL_{s,\vt}^{\ell_2}[f] \|_\infty \| \overline{\LL_{s,\vt}^{\ell_2}[f] }\|_{(\kappa)}\\
    &\ll A_{2\sigma-1}^{\ell_2} \|f \|_\infty \big((1+\nvt^{\min(1,\lambda)}+|\tau|^\kappa) \| f \|_\infty + \rho_1^{\ell_2} \| f\|_{(\kappa)}\big).
  \end{align*}
  Inserting this bound and~\eqref{bsmm} into~\eqref{cacfd} we deduce that for $|\tau|>\rho^{-\ell_2}$
  \begin{align*}
    \| \LL_{s,\vt}^{n} [f] \|^2_\infty  &\ll A_{2\sigma-1}^{n} \rho_1^{\kappa \ell_2/2}\|f\|_{\infty
    }((1+\nvt^{\min(1,\lambda)})\|f\|_{\infty}+|\tau|^{-\kappa}\|f\|_{(\kappa)} )\\
    &\quad+A_{2\sigma-1}^{n} \rho_1^{\ell_1}\|f \|_\infty \big((1+\nvt^{\min(1,\lambda)}+|\tau|^\kappa) \| f \|_\infty + \rho_1^{\ell_2} \| f\|_{(\kappa)}\big)\\
    &\ll A_{2\sigma-1}^{n} (\rho_1^{\kappa \ell_2/2}+\rho_1^{\ell_1/2})\|f\|_{\infty
    }((1+\nvt^{\min(1,\lambda)})\|f\|_{\infty}+|\tau|^{-\kappa}\|f\|_{(\kappa)} ),
  \end{align*}
  where the last bound holds with the additional assumption $\rho_1^{-\ell_1/(2\kappa)}>|\tau|$. Finally, we pick $\ell_1=c_1\log |\tau|+O(1)$, $\ell_2=c_2\log|\tau|+O(1)$ for some fixed $c_1,c_2>0$ chosen so that $\rho_1^{-\ell_1/(2\kappa)}>|\tau|>\rho^{-\ell_2}$ is satisfied. Notice $n=(1+c_1/c_2)\ell_2+O(1)=(1+c_2/c_1)\ell_1+O(1)$ and thus $\rho_1^{\kappa \ell_2/2}+\rho_1^{\ell_1/2}\ll\rho_2^n$ with $\rho_2=\max(\rho_1^{\kappa/(2+2c_1/c_2)},\rho_1^{1/(2+2c_2/c_1)})$. 
  By continuity of $A_\sigma$ we can pick $\delta>0$ small enough so that  $A_{2\sigma-1}<\rho_2^{-1/2}$. With these assumptions we then have
  \begin{align*}
    \| \LL_{s,\vt}^{n} [f] \|^2_\infty  &\ll  \rho_2^{n/2}\|f\|_{\infty
    }((1+\nvt^{\min(1, \lambda)})\|f\|_{\infty}+|\tau|^{-\kappa}\|f\|_{(\kappa)} ).
  \end{align*}
  Finally, we notice that $\rho_2^{n/4}\ll |\tau|^{-c_3}$ for some $c_3>0$. Thus, assuming $\nvt\ll |\tau|^c $ for some small enough $c>0$ we deduce the claimed bound.
\end{proof}

\begin{proposition}\label{prop:dolgopyat-estimates}
  There exists~$c, \delta_0, \tau_1>0$ and $\rho\in(0,1)$ such that for~$s=\sigma+i\tau$ and $\vt\in\R^d$ satisfying
  $$ 1 - \delta_0\leq \sigma \ll 1, \qquad \abs{\tau} \geq \tau_1, \qquad \nvt\leq |\tau|^c$$
  the operator~$\LL_{s,\vt}$ acting on~$H^\kappa$ has spectral radius at most~$\rho$.
  More precisely, there exists $C>0$ such that for
  \begin{align}\label{def1t}
    \|\cdot\|_{1,\tau}:=\|\cdot\|_\infty+(1+|\tau|)^{-\kappa}\|\cdot\|_{(\kappa)}
  \end{align}
  and under the above conditions
  \begin{align}\label{bdasc} \sum_{n\geq 0}\| \LL_{s, \vt}^n[f]  \|_{1, \tau} \ll \abs{\tau}^{C} \|f\|_{1,\tau}\qquad \forall f\in H^\kappa.
  \end{align}
\end{proposition}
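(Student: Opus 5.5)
The plan is to iterate the $L^\infty$ contraction of Proposition~\ref{dolg:Linf:est2} in blocks of length $n_0 = C\log|\tau|+O(1)$, using the Lasota--Yorke inequality of Lemma~\ref{est:ly:t-part} to control the Hölder part. Both bounds are expressed in the norm $\|\cdot\|_{1,\tau}$ of~\eqref{def1t}. The estimate~\eqref{bdasc} will then follow by summing a geometric series over blocks, with a polynomial loss in $|\tau|$ coming from a single block. The spectral radius claim follows from the same block contraction: for $\|\cdot\|_{1,\tau}$, which is equivalent to $\|\cdot\|_\kappa$ for fixed $\tau$, we will get $\|\LL_{s,\vt}^{m n_0}\|_{1,\tau}\ll \rho_3^{m n_0}$. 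This gives $\spr(\LL_{s,\vt})\le \rho_3$ after possibly renaming $\rho$.

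First, fix $\delta_0$ small, so that $A_{2\sigma-1}$ is close to $1$ by continuity of $\sigma\mapsto A_\sigma$ with $A_1=1$ (Lemma~\ref{lem:quasicomp}). Fix $c$ as in Proposition~\ref{dolg:Linf:est2}, shrinking it if needed so that $\nvt^{\min(1,\lambda)/2}\le |\tau|^{\kappa}$.

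For the Hölder part, Lemma~\ref{est:ly:t-part} gives, for any $r\ge 1$,
\[
(1+|\tau|)^{-\kappa}\|\LL_{s,\vt}^r f\|_{(\kappa)} \ll A_{2\sigma-1}^{r/2}\big(\|f\|_\infty + \rho^{\kappa r}(1+|\tau|)^{-\kappa}\|f\|_{(\kappa)}\big),
\]
since $|s|^\kappa\ll |\tau|^\kappa$ and $\nvt^{\min(1,\lambda)/2}\ll|\tau|^\kappa$. This shows $\|\LL^r_{s,\vt}\|_{1,\tau}\ll A_{2\sigma-1}^{r/2}\ll \tilde A^{r}$ with $\tilde A$ slightly above $1$.

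Next, apply this with $r=n_0$ and split $\LL^{2n_0}=\LL^{n_0}\circ\LL^{n_0}$:
\begin{itemize}
\item the sup norm of $\LL^{2n_0}f$ is bounded by Proposition~\ref{dolg:Linf:est2} applied to $\LL^{n_0}f$, giving $\rho^{n_0}\|\LL^{n_0}f\|_{1,\tau}$;
\item the Hölder part is bounded by the displayed inequality, giving $\|\LL^{n_0}f\|_\infty + \rho^{\kappa n_0}\|\LL^{n_0} f\|_{1,\tau}$, and the first term is again $\ll \rho^{n_0}\|f\|_{1,\tau}$.
\end{itemize}
Altogether this yields $\|\LL^{2n_0}f\|_{1,\tau}\ll \tilde A^{n_0}\rho_4^{n_0}\|f\|_{1,\tau}$ with $\rho_4=\max(\rho,\rho^\kappa)<1$. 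Choosing $\delta_0$ so that $\tilde A\rho_4<1$, and $\tau_1$ large enough that the implied constant is absorbed (the contraction factor is a negative power of $|\tau|$), gives $\|\LL^{2n_0}\|_{1,\tau}\le 1/2$.

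Finally, write $n=2n_0 m+j$ with $0\le j<2n_0$. Then $\|\LL^n f\|_{1,\tau}\ll \tilde A^{j}2^{-m}\|f\|_{1,\tau}$. Summing over $n$ gives $\ll n_0\tilde A^{2n_0}\|f\|_{1,\tau}\ll |\tau|^{C}\|f\|_{1,\tau}$, since $\tilde A^{n_0}$ is a power of $|\tau|$.

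The main obstacle is the bookkeeping. We must ensure that the losses $A_{2\sigma-1}^{n}$ and the $|s|^\kappa$, $\nvt$ factors in Lemma~\ref{est:ly:t-part} are dominated by the gain $\rho^{n_0}$. This requires $\delta_0$ small relative to the rate in Proposition~\ref{dolg:Linf:est2}, and possibly choosing $\delta_0$ after $C$ and $\rho$ are fixed, which is legitimate since those do not depend on $\delta_0$ once $\delta_0$ is below the value supplied by Proposition~\ref{dolg:Linf:est2}.
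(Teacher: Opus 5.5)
Your proposal is correct and follows essentially the same route as the paper. The paper likewise composes $\LL_{s,\vt}^{2n}=\LL_{s,\vt}^{n}\circ\LL_{s,\vt}^{n}$ with $n=C\log|\tau|+O(1)$, bounds the sup-norm part by Proposition~\ref{dolg:Linf:est2} and the Hölder part by Lemma~\ref{est:ly:t-part}, shrinks $\delta_0$ and $c$ and enlarges $\tau_1$ to get $\|\LL_{s,\vt}^{2n}\|_{1,\tau}\le\rho^{n/2}$, and then sums using the crude bound $\|\LL_{s,\vt}^r\|_{1,\tau}\ll e^{O(r|1-\sigma|)}$ on a single block.

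Two bookkeeping points you leave implicit are harmless:
\begin{itemize}
\item The sup-norm half of $\|\LL_{s,\vt}^r\|_{1,\tau}\ll A_{2\sigma-1}^{r/2}$ comes from \eqref{asx1} together with $A_\sigma\ll A_{2\sigma-1}^{1/2}$, not from the Hölder estimate you display.
\item A $\tau$-uniform spectral radius needs the block bound in the form $(\tilde A\rho_4)^{n_0/2}$ rather than merely $\le 1/2$. You do have that form available.
\end{itemize}
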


\begin{proof}
  Let $\delta_0,c,C,\tau_1,n$, $\vt$ and $s$ as in Proposition~\ref{dolg:Linf:est2}.
  For $f\in H_\kappa$ and some $\rho\in(0,1)$ we have
  \begin{align*}
    \| \LL_{s,\vt}^{2n} [f] \|_{1,\tau}
    &\ll  A_{2\sigma-1}^{n/2}\big(1+\nvt^{\min(1,\lambda)/2}/|\tau|^\kappa \big) \| \LL_{s,\vt}^{n}[f] \|_\infty + \rho^{n} |\tau|^{-\kappa}\| \LL_{s,\vt}^{n}[f]\|_{(\kappa)}\\
    &\ll  \rho^n A_{2\sigma-1}^{n/2}\big(1+\nvt^{\min(1,\lambda)/2}/|\tau|^\kappa\big) \| f \|_{1,\tau} \end{align*}
  where in both lines we applied Proposition~\ref{dolg:Linf:est2} and Lemma~\ref{est:ly:t-part}. Reducing $\delta_0$ and $c$ and enlarging $\tau_1$, if needed, we then deduce
  \begin{align*}
    \| \LL_{s,\vt}^{2n} [f] \|_{1,\tau}  &\leq  \rho^{n/2} \| f \|_{1,\tau} 
  \end{align*}
  This then implies that~$\LL_{s, \vt}$ acting on~$(H^\kappa, \|\cdot\|_{1,\tau})$ satisfies~$\spr(\LL_{s,\vt})\leq \rho^{1/4}$, and $\|( \id -\LL_{s,\vt}^{2n})^{-1} \|_{1,\tau}\ll1$. Since we also have the crude bound~$\|\LL_{s,\vt}^r\|_{1,\tau}\ll e^{O(r|1-\sigma|)}$ by~\eqref{asx1} and Lemma~\ref{est:ly:t-part}, we deduce 
  \begin{align*}
    \sum_{r\geq 0}\|\LL_{s,\vt}^r \|_{1,\tau}  &\leq   \big(1 - \| \LL_{s,\vt}^{2n}\|_{1,\tau}\big)^{-1} \sum_{r=0}^{2n-1}\|\LL_{s,\vt}^r\|_{1,\tau}\ll  ne^{O(n|1-\sigma|)}\ll |\tau|^{O(|\sigma-1|)}\log|\tau|,
  \end{align*}
  Since $\|\|_\kappa$ and $\|\|_{1,\tau}$ are equivalent we obtain that the operator~$\LL_{s,\vt}$ acting on~$H^\kappa$ has spectral radius strictly less than~$1$, uniformly in $|\tau|\geq\tau_1$. This concludes the proof.
\end{proof}

\subsection{Spectral properties of the perturbed operator}
We are now ready to generalize Proposition~\ref{prop:spectrum-Ls0} to~$\LL_{s, \vt}$ with arbitrary~$s, \vt$. We recall that we are assuming Hypothesis~\ref{hyp:moments} and~\eqref{dfnkappa}.

\begin{proposition}\label{prop:spectrum-Ls2}
  For all~$s \in \C$ with~$\Re(s)\geq\sigma_0$ and~$\vt \in \R^d$, the operator~$\LL_{s,\vt}$ acting on $H^\kappa$ satisfies the following.
  \begin{enumerate}[(i)]
    \item For some neighborhood $U$ of~$(1,0)$ and all~$(s,\vt)\in U$, the operator~$\LL_{s,\vt}:H^\kappa \rightarrow H^\kappa$ has a simple maximal eigenvalue~$\lambda_{s,\vt}$ and the rest of the spectrum is contained in a disk of radius strictly smaller than~$\inf_{(s,\vt)\in U}|\lambda_{s,\vt}|$.
    \item For all $\tau_0>0$, there exists $t_0,\delta>0$, $\rho\in(0,1)$ such that $\spr(\LL_{s, \vt})<\rho$ for $1-\lambda<\Re(s)\ll 1$, $\abs{\Im(s)}\geq \tau_0$, and $\nvt<t_0$.
  \end{enumerate}
\end{proposition}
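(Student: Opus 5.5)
Part (i) follows from perturbation theory around the unperturbed operator $\LL=\LL_{1,0}$. By Proposition~\ref{prop:spectrum-Ls0}(iii), $\LL=\PP+\NN$ on $H^\kappa$, with $\PP$ a rank-one projection onto $\xi$ and $\spr(\NN)<1$. Lemma~\ref{lem:perturbation-LL1t}, estimate~\eqref{sbsa}, gives
\[ \|\LL_{s,\vt}-\LL\|_\kappa\ll |s-1|+\nvt+\nvt^{\alpha-\eps}, \]
so $(s,\vt)\mapsto\LL_{s,\vt}$ is continuous at $(1,0)$ in operator norm on $H^\kappa$. The plan is then the following.

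First, fix a circle $\Gamma_1$ around $1$ and a circle $\Gamma_2$ centred at $0$ of radius $r_2\in(\spr(\NN),1)$, both avoiding the spectrum of $\LL$. Second, by upper semicontinuity of the spectrum and norm continuity of the resolvent, for $(s,\vt)$ in a small neighbourhood $U$ the resolvent $(z-\LL_{s,\vt})^{-1}$ exists on $\Gamma_1\cup\Gamma_2$ and on the annulus between them minus the disk bounded by $\Gamma_1$. Third, the Riesz projection $\PP_{s,\vt}=\frac{1}{2\pi i}\oint_{\Gamma_1}(z-\LL_{s,\vt})^{-1}\df z$ is close to $\PP$ in norm, hence has rank one. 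Fourth, its image is spanned by an eigenvector with a simple eigenvalue $\lambda_{s,\vt}$ close to $1$, and the rest of the spectrum lies in the disk of radius $r_2$. Shrinking $U$ so that $|\lambda_{s,\vt}|>r_2'>r_2$ gives the uniform statement $\inf_U|\lambda_{s,\vt}|>r_2$.

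Part (ii) splits into two ranges of $\tau=\Im(s)$. For $|\tau|\ge\tau_1$, with $\tau_1$ as in Proposition~\ref{prop:dolgopyat-estimates}, the bound $\spr<\rho$ holds for $1-\delta_0\le\sigma\ll1$ and $\nvt\le|\tau|^c$. In particular it holds for $\nvt<t_0\le\tau_1^c$.

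The main obstacle is the compact range $\tau_0\le|\tau|\le\tau_1$. Here I would argue by continuity and compactness, using the following steps.

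\textbf{Step 1: no unimodular eigenvalue at $\sigma=1$, $\vt=0$.} Fix $s=1+i\tau$ with $\tau\neq0$. By the Lasota--Yorke estimates (Lemma~\ref{est:ly:t-part} with~\eqref{asx1}) and Hennion's theorem, $\LL_{1+i\tau,0}$ is quasi-compact with essential spectral radius $<1$, and $\spr\le1$ by domination $|\LL_{1+i\tau,0}f|\le\LL|f|$. Suppose it had an eigenvalue $e^{i\theta}$ with eigenfunction $f$. The standard Wielandt/aperiodicity argument (as in~\cite{BaladiVallee2005}) shows that $|f|$ is proportional to $\xi$. It then shows that $|h'(y)|^{i\tau}f(h(y),\vth_1\vth_h(y))/f(y,\vth_1)=e^{i\theta}$ for all $h\in\cH$.

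\textbf{Step 2: the cohomological equation is impossible.} This says $\tau\log|h'|$ is cohomologous, modulo $2\pi$, to a constant. It would contradict strong UNI (Proposition~\ref{uni:strong}): taking $n$ large, differentiating in $y$ gives $\frac{h''}{h'}-\frac{k''}{k'}$ controlled by $O(\rho^n)$ uniformly over branches. That contradicts the existence, guaranteed by Proposition~\ref{uni:strong}, of branch pairs with $\Delta(h,k)>\eta$ carrying positive mass. Hence $\spr(\LL_{1+i\tau,0})<1$ for each $\tau\ne0$.

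\textbf{Step 3: extend to nearby parameters.} The spectral radius is upper semicontinuous under norm-continuous perturbations, and $\LL_{s,\vt}$ depends continuously on $(s,\vt)$ in $\|\cdot\|_\kappa$, uniformly for $\tau$ bounded. For the $\vt$- and $\sigma$-variation this is~\eqref{sbsa}-type estimates; for the $\tau$-variation it follows from $\||h'|^{i\tau}-|h'|^{i\tau'}\|_\kappa\ll|\tau-\tau'|(1+|\log|h'||)\|h'\|^{\sigma}$ combined with Lemma~\ref{lem:moments-log}. So each $\tau\in[\tau_0,\tau_1]$ has a neighbourhood in $(\sigma,\tau,\vt)$ where $\spr<\rho_\tau<1$.

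\textbf{Step 4: compactness.} Covering $[\tau_0,\tau_1]$ (and its negative) by finitely many such neighbourhoods yields uniform $\delta,t_0,\rho$. Combining with the large-$|\tau|$ range concludes the proof.
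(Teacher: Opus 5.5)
Your part (i), your treatment of $|\tau|\geq\tau_1$ via Proposition~\ref{prop:dolgopyat-estimates}, Step 1, and the perturbation/compactness Steps 3--4 are the paper's argument. The genuine difference is Step 2. The paper does not use UNI there. It takes a cuspidal cycle $b_1\dotsb b_r$ with $y\in[b_1]$, a letter $c$ with $[c,b_1]\subset K$, and $w=(b_1\dotsb b_r)^N$ with $N=|\Theta|$. The branches $h=h_{cw^\ell,1}\in\cH$ then all contain $y$ in their domain and carry the same multiplier $\vth_h(y)=\vth_c(b_1)$, and $h(y)\to[c,w,w,\dotsc]\in K$. So the side $\lambda\, u(y,\vth_1)/u(h(y),\vth_1\vth_h(y))$, with $u=f/\xi$, converges by mere continuity of $u$. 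Meanwhile $|h'(y)|\sim C\ell^{-2}$ (Lemma~\ref{lem:parabolic}) makes $|h'(y)|^{i\tau}$ rotate forever. This is elementary, needs no quantitative regularity of $u$, and exploits the cusps directly.

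Your UNI route is also viable, and would survive in a setting without cusps, but three points need repair.

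\emph{Regularity.} You cannot ``differentiate in $y$'', since the eigenfunction is only in $H^\kappa$. Replace differentiation by a finite difference at a fixed scale. Take $h,k\in\cH^n$ and an arc $J\subset K_b\subset\Dom(h)\cap\Dom(k)$ of length $\gg1$, and set $\Psi_{h,k}(\theta)=\log(|h'(e^{i\theta})|/|k'(e^{i\theta})|)$. The relation $e^{i\tau\Psi_{h,k}}=u(k(\cdot),\cdot)/u(h(\cdot),\cdot)$ has right-hand side varying by $\ll\|u\|_{(\kappa)}\rho^{\kappa n}\varepsilon^\kappa$ over subarcs of length $\varepsilon$, because $|u|=1$ and by \eqref{eq:contraction}. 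If $\Delta(h,k)>\eta$ and $n$ is large, then \eqref{aldelt} shows that $\Psi_{h,k}'$ has constant sign on $J$, with modulus between $\eta/2$ and $O(1)$. Choosing $\varepsilon\asymp\min(1,|\tau|^{-1})$, the left-hand side varies by $\gg|\tau|\eta\varepsilon$, which is a contradiction for $n$ large.

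\emph{Existence of a far branch.} Proposition~\ref{uni:strong} is only an upper bound, so by itself it does not produce $h$. You must combine it with the lower bound $\sum_{h\in\cH^n,\ y\in\Dom(h),\ h(y)\in K}\|h'\|_\infty\geq\xi(y)/\sup_K\xi\gg1$, which follows from $\LL^n[\xi]=\xi$. Restricting to $h$ with $y\in\Dom(h)$ matters, since for disjoint domains $\Delta(h,k)$ is an infimum over the empty set.

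\emph{The range $\Re(s)\geq1+\delta$.} Your compactness only covers $|\Re(s)-1|<\delta$ when $\tau_0\le|\tau|\le\tau_1$. The statement also requires $1+\delta\leq\Re(s)\ll1$. The paper handles this separately via $\spr(\LL_{s,\vt})\leq A_{2\sigma-1}^{1/2}<\rho$, using \eqref{asx1} and Lemma~\ref{est:ly:t-part}.
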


\begin{proof}
  The first item is a consequence of Proposition~\ref{prop:spectrum-Ls0}, equation~\eqref{sbsa} and perturbation theory~\cite[Theorem~IV.3.16]{Kato1995}.

  For the second item, first assume that~$\Re(s) = 1$ and~$\vt = 0$. We already know that~$\spr(\LL_{s, 0})\leq 1$ by Proposition~\ref{prop:spectrum-Ls0} and positivity.
  Assume by contradiction that~$\spr(\LL_{s, \vt})=1$, which by Lemma~\ref{est:ly:t-part} and Hennion's criterion~\cite{Hennion1993}, means that there is an eigenvalue~$\lambda$ of~$\LL_{s, \vt}$ of modulus~$1$. Let~$f$ be an eigenfunction for~$\lambda$, and~$\tilde f := f/\xi$. Multiplying~$f$ by a constant if necessary, and by the compactness of~$K_\Theta$, we may find~$(x_0, \vth_0) \in K_\Theta$ such that~$\|\tilde f\|_\infty = |\tilde f(x_0, \vth_0)| = 1$.
  We first assume $\vt=0$, $\Re(s)=1$ and write~$s = 1+i\tau$, with $\tau\in \R_{\neq0}$. Following the arguments in~\cite[p.~365]{BaladiVallee2005}, we deduce that~$|\tilde f| = 1$ identically on~$K_\Theta$, and that for all~$h\in\cH$ and all~$(y, \vth_1)\in K_\Theta$ with~$y\in \Dom(h)$ and~$h(y)\in K$, we have
  \begin{equation}
    \frac{\lambda \tilde f(y, \vth_1)}{\tilde f(h(y), \vth_1 \vth_h(y))} =  \abs{h'(y)}^{i\tau}
    \label{eq:expr-eigenval-argument}
  \end{equation}
  Let~$b_1\dotsb b_r\in\Sigma^*$ be any cuspidal cycle such that~$y\in [b_1]$. Let~$c\in\Sigma$ be such that~$[c, b_1]\subset K$. Finally, let~$N\geq 1$ be the order of~$\Theta$, and~$w := (b_1\dotsb b_r)^N$. For all~$\ell\geq 1$, the word~$cw^{\ell}$ belongs to~$\Lambda_K(1)$, and defines a branch~$h = h_{c w^\ell, 1}\in \cH$ for which~$y\in \Dom(h)$.
  Using the cocycle property~\eqref{eq:vth-cocycle-letter}, we have
  $$ \vth_h(y) = \vth_{b_1 \dotsb b_r}(b_1)^{\ell N} \vth_c(b_1) = \vth_c(b_1)  $$
  which is independent of~$\ell$. On the other hand, as~$\ell\to\infty$, we have
  $$ h(y) = g_c g_w^\ell y \to [c, w, w, \dotsc] \in K. $$
  By continuity, we deduce that the left-hand side of~\eqref{eq:expr-eigenval-argument} converges as~$\ell\to\infty$.
  On the other hand, by Lemma~\ref{lem:parabolic}.(3), we have~$h'(y) = g_c'(g_w^\ell y) g_{w^\ell}'(y) \sim C \ell^{-2}$ for some constant~$C$ as~$\ell\to \infty$.
  Therefore the right-hand side of~\eqref{eq:expr-eigenval-argument} cannot converge to a constant, yielding the desired contradiction. Thus,~$\spr(\LL_{1+i\tau, 0})<1$ for $\tau\neq0$.

  By perturbation theory~\cite[Theorems~IV.3.1 and IV.2.14]{Kato1995} and compactness, for arbitrary fixed~$\tau_1>0$, we deduce $\spr(\LL_{s, \vt})<\rho$ for $1+\delta>\Re(s)>1-\delta$, $\tau_0\leq \abs{\Im(s)}\leq \tau_1$,  $\nvt<t_0$ for some $\rho\in(0,1)$ and $t_0,\delta>0$ small enough. We pick~$\tau_1$ as in Proposition~\ref{prop:dolgopyat-estimates}; this Proposition then ensures that the inequality~$\spr(\LL_{s, \vt})<\rho$ persists for all~$\Im(s)\geq \tau_1$. This concludes the proof for~$\Re(s)<1+\delta$; in the remaining case~$1+\delta\leq \Re(s)\ll 1$ we have~$\spr(\LL_{s,\vt}) \leq A_{2\sigma-1}^{1/2} < \rho$ by \eqref{asx1}, Lemma~\ref{est:ly:t-part} and picking~$\rho<1$ larger if necessary.
\end{proof}

\section{Asymptotic behaviour of the leading eigenvalue}\label{sec:asympt-behav-lead}

In this section, we study the behaviour of the leading eigenvalue~$\lambda(s,\vt):=\lambda_{s,\vt}$ of~$\LL_{s,\vt}$ for~$(s,\vt)\approx (1,0)$.
Some of our estimates will be given also in a different form under the assumption~\ref{hyp:moments-cuspidal}.
Most of our results are based on the perturbation estimates from~\cite{Kloeckner2019} (as in~\cite{BettinDrappeau2022}), which take the bounds~\ref{lem:perturbation-LL1t} as input.
We mention however that some of our foregoing statements could be proved under weaker conditions~\cite{KellerLiverani1999} using from Lasota-Yorke inequalities.

The following variant of Kac's formula~\cite[Theorem~9.3]{PollicottYuri1998} (see also~\cite[eq.~(3.1)]{PeneThomine2021}) relates integrals relative to~$\cF_c$ to integrals relative to~$\cF_K$. We recall the definition~\eqref{eq:def-return-Fc} of the return time of~$\cF_c$ to~$K$.

\begin{lemma}\label{lem:wrap}
  For~$f\in L^1(\dD)$ we have
  \begin{align*}
    \int_K \sum_{m=0}^{r_x-1}f(\cF_c^m(x))\xi(x)\df s(x)=\int_{\dD} f(x)\varphi(x)\df s(x)
  \end{align*}
\end{lemma}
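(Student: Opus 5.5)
We plan to prove the identity by a Kac-type tower argument built from the inverse branches of $\cF_c$, using only the invariance $\CC[\varphi]=\varphi$ from Proposition~\ref{prop:varphi-invar}, the equality $\varphi=\xi$ on $K$, and the decomposition of Lemma~\ref{lem:decomp-Hc}.

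\textbf{Step 1 (reduction to $\cF_c$-integrals).} For fixed $m\geq 0$, set $E_m:=\{x\in K\cap\DcF : r_x>m\}$. The left-hand side can then be written as $\sum_{m\geq 0}\int_{E_m} f(\cF_c^m x)\,\varphi(x)\df s(x)$. Since $\varphi\df s$ is $\cF_c$-invariant, each term equals $\int_{\dD} f(y)\,\1_{E_m}$-pushforward density$(y)\df s(y)$. Concretely, the term is $\int_\dD f\cdot \CC^m[\varphi\1_{E_m}]\df s$, where we use the duality~\eqref{eq:transferop-invarc} in the weighted form $\int \CC^m[g]\,u\,\df s=\int g\,(u\circ\cF_c^m)\df s$. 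So it suffices to show, for almost every $y$,
\[ \sum_{m\geq 0}\CC^m[\varphi\1_{E_m}](y)=\varphi(y). \]
For $f\geq 0$ we may interchange sums and integrals by Tonelli, and the general $L^1$ case then follows by linearity.

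\textbf{Step 2 (identify the sum).} A point $z\in\cF_c^{-m}(y)$ with $z\in E_m$ is exactly a point $z\in K$ whose first $m$ iterates $\cF_c(z),\dots,\cF_c^m(z)$ avoid $K\cup V$, except possibly at time $m$, which is excluded by $r_z>m$. By Lemma~\ref{ladcaxx}, such preimages correspond to compositions $h=h_1\circ\dotsb\circ h_m$ with each $h_i$ in $\cH_c^\sharp$, ending in $K$. Following the structure of Lemma~\ref{lem:decomp-Hc}, we then obtain
\[ \sum_{m\ge0}\CC^m[\varphi\1_{E_m}]=\sum_{m\geq 0}\CC_{K\to}(\CC^\sharp)^m\text{-type sum}, \]
and more cleanly $\varphi\1_{E_m}$ pulled back satisfies the following. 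Write $\varphi=\varphi\1_K+\varphi\1_{K^c}$ and use $\CC[\varphi]=\varphi$. On $K^c$ we have $\CC^\sharp=\CC$, so
\[ \varphi=\CC[\varphi\1_K]+\CC^\sharp[\varphi]\ \text{ off }K,\qquad \varphi=\varphi\1_K \text{ on } K. \]
Iterating $\varphi=\varphi\1_K+\CC^\sharp$-type recursion, we get $\varphi=\sum_{m=0}^{M-1}(\CC^\sharp)^{m}\,\Pi[\varphi\1_K]+(\CC^\sharp)^M[\varphi]$. Here $\Pi$ denotes the operator ``apply $\CC$ once then restrict to $K^c$'' for $m\geq1$, and the identity for $m=0$. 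This is exactly the combinatorial expansion of $\sum_m\CC^m[\varphi\1_{E_m}]$, since the branches of $\CC^m$ applied to $\1_{E_m}$ are precisely one $\cH_c$ step out of $K$ followed by $\cH_c^\sharp$ steps.

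\textbf{Step 3 (remainder).} By Lemma~\ref{lem:CCsharp-contracts}, $(\CC^\sharp)^M[\varphi]\ll\rho^{M/2}\varphi\to0$ uniformly, so letting $M\to\infty$ gives the identity in Step 1. The main obstacle is the bookkeeping in Step 2. One has to check that the branch sets match exactly, including the role of vertices $V$, the restricted domains $D_c^\sharp$, and the measure-zero set where $r_x$ is infinite or orbits hit $V$. For this we would rely on the disjointness statements in Lemmas~\ref{ladcaxx} and~\ref{lem:decomp-Hc}, and on the full-measure property of $\DcF$.
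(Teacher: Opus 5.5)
Your argument is correct. Its key step is reached by a different route than the paper's.

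\textbf{Common part.} Both proofs start the same way. You write the left side as $\sum_{m\ge0}\int_{E_m}f(\cF_c^m x)\varphi(x)\df s(x)$ and unfold along inverse branches. Everything then reduces to
\[
\sum_{m\ge1}(\CC^\sharp)^m[\xi\1_K]=\varphi \quad \text{on } \dD\setminus K,
\]
which is exactly \eqref{eq:rel-sumxi-phi}.

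\textbf{The paper's route.} The paper proves this identity by direct geometry. By \eqref{chva}, each term $\xi(h(y))\abs{h'(y)}$ equals $\int_{[\hat w]}\zeta(\df y,\df t)/\df s(y)$. One then checks that the arcs $[\hat w]$, over the words that occur, tile exactly $\dD\setminus B_y$. This uses neither the $\cF_c$-invariance of $\varphi$ nor any contraction property. It is therefore independent of Proposition~\ref{prop:varphi-invar}, and it gives the identity at every point of $\dD'\setminus K$.

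\textbf{Your route.} You derive the identity abstractly from $\CC[\varphi]=\varphi$ and $\varphi=\xi$ on $K$, via a first-return decomposition, and kill the remainder with Lemma~\ref{lem:CCsharp-contracts}. This is the standard Kac-tower argument. It is shorter and avoids all word bookkeeping, and it would work for any inducing scheme with an invariant density and a contracting excursion operator. The cost is that it rests on the natural-extension proof of Proposition~\ref{prop:varphi-invar}. It also gives the identity only almost everywhere, which is enough here.

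\textbf{Correction to Step~2.} The displayed formulas are garbled, though the idea is right.
\begin{itemize}
  \item The clean identity is $\varphi=\varphi\1_K+\CC^\sharp[\varphi]$ almost everywhere on $\dD$. On $K$ it holds because $\CC^\sharp$ vanishes there; off $K$ it holds because $\CC^\sharp=\CC$ there and $\CC[\varphi]=\varphi$.
  \item Your ``off $K$'' display, $\varphi=\CC[\varphi\1_K]+\CC^\sharp[\varphi]$, counts $\CC^\sharp[\varphi\1_K]$ twice.
  \item Since $\Pi=\1_{K^c}\CC=\CC^\sharp$, your expansion $\sum_m(\CC^\sharp)^m\Pi[\varphi\1_K]$ is shifted by one power and omits the term $\CC^\sharp[\varphi\1_K]$.
\end{itemize}
Iterating the correct identity gives
\[
\varphi=\sum_{m=0}^{M-1}(\CC^\sharp)^m[\varphi\1_K]+(\CC^\sharp)^M[\varphi].
\]
This is exactly what matches $\CC^m[\varphi\1_{E_m}]=(\CC^\sharp)^m[\varphi\1_K]$ for $m\ge1$. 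In that identity all $m$ branches lie in $\cH_c^\sharp$, as your first sentence of Step~2 correctly says; it is not ``one $\cH_c$ step followed by $\cH_c^\sharp$ steps''.

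Finally, Step~1 uses only the Lebesgue duality $\int\CC^m[g]\,u\,\df s=\int g\,(u\circ\cF_c^m)\df s$, not the invariance of $\varphi\df s$.
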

\begin{proof}
Let~$n\in\N$ and let $A$ be the full-measure subset of $\dD$ consisting of the points whose $\cB$-iterates  visit $K$ infinitely often, as in the proof of Proposition~\ref{prop:varphi-invar}. Recall the return time~$r_x$ defined in~\eqref{eq:def-return-Fc}.  We write~$K_n=\{x\in K\cap A\mid r_x=n\}$ and~$K_n^{>}=\{x\in K\cap A \mid r_x>n\}=\{x\in K\cap A\mid \cF_c^i(x)\notin K, i=1,\dots,n\}$ and $K_0^>=K\cap A$. We have
  \begin{align*}
    \int_K \sum_{m=0}^{r_x-1}f(\cF_c^m(x))\xi(x)\df s(x)&=\sum_{n=1}^{\infty} \sum_{m=0}^{n-1}  \int_{K_n} f(\cF_c^m(x))\xi(x)\df s(x)
    =\sum_{m=0}^{\infty}  \int_{K_m^>} f(\cF_c^m(x))\xi(x)\df s(x).
  \end{align*}
  The contribution of~$m=0$ is~$\int_{K} f(x)\xi(x)\df s(x)=\int_{K} f(x)\varphi(x)\df s(x)$ since~$\xi(x)=\varphi(x)$ on~$K$.  Now for~$m\geq1$ we have
  $$
  K^>_m=(K\cap A)\cap \sideset{}{^*}\bigcup_{h=h_1\circ \dotsb\circ h_m} h(\Dom(h))
  $$
  where~$*$ indicates the condition that~$h_1,\dots,h_{m}\in\cH_c^\sharp$ are such that~$\Dom(h_1\circ \dotsb\circ h_m)\neq\emptyset$. In particular, $\Dom(h)\subset \Dom(h_m)\subset \dD\setminus K$.   Thus, 
  \begin{align*}
    \sum_{m\geq1}\int_{K_m^>} f(\cF_c^m(x))\xi(x)\df s(x)&= \sideset{}{^*}\sum_{h=h_1\circ \dotsb\circ h_m,\, m\geq1 }\int_{\Dom(h)\cap h^{-1}(K)}   f(y) \xi(h(y))|{h}'(y)|\df s(y)\\
    &= \int_{\dD\setminus K} f(y)\sideset{}{^*}\sum_{\substack{h=h_1\circ \dotsb\circ h_m,\, m\geq1\\ y\in\Dom(h),\, h(y)\in K }} \xi(h(y))|{h}'(y)|\df s(y).
  \end{align*}
  The result then follows provided we prove
  \begin{equation}\label{eq:rel-sumxi-phi}
    \sideset{}{^*}\sum_{\substack{h=h_1\circ \dotsb\circ h_m,\, m\geq1\\ y\in\Dom(h),\, h(y)\in K }} \xi(h(y))|{h}'(y)|=\varphi(y) \qquad (y \in \dD' \setminus K).
  \end{equation}
  Write~$y\in[a]\setminus K$ for some~$a\in \Sigma$. By~\eqref{chva} the contribution of~$m=1$ to the left hand side is
  $$ \sideset{}{^*}\sum_{\substack{b\in \Sigma,\, | a-\hat b|>1}} \xi(h_b(y))|{h_b}'(y)|=  \int_{t\in \dD\setminus ([a]\cup[a+1]\cup[a-1])}  \frac{\zeta(\df y, \df t)}{\df s(y)}. $$
  Now assume~$m>1$ and thus~$y\in[a,\hat a+\delta]$ with $\delta \in \{\pm 1\}$. If we write~$h$ as~$h=h_{w}$ for~$w=a_1\dotsb a_k$ with~$w\in\Sigma^*$, then the condition~$h(y)\in K$ is equivalent to~$[a_1,a_2]\subset K$ (i.e.~$[\hat a_2,\hat a_1]\seq K$), the condition~$y\in\Dom(h)$ is equivalent to~$\hat a_k=a+\delta$ and, given the other conditions, the condition~$h_i\in \cH_c^\sharp$ for all~$i$ and~$m\geq2$ is equivalent to~$\ell_{K}(a_2\dotsb a_k)=1$ and~$k\geq2$.
  Thus, by~\eqref{chva}
  \begin{align*}
    \sideset{}{^*}\sum_{\substack{h=h_1\circ \dotsb\circ h_m,\, m\geq2\\ y\in\Dom(h),\, h(y)\in K }} \xi(h(y))|{h}'(y)|&=\sum_{\substack{w=a_1\dotsb a_k\in\Sigma^*,\, k\geq2\\ \hat a_k=a+\delta,\,[\hat a_2,\hat a_1]\seq K,\\\ell_K(a_2\dots a_k)=1}}\xi(h_w(y))|h_w'(y)|\\
    &=\sum_{\substack{w=a_1\dotsb a_k\in\Sigma^*,\, k\geq2\\ \hat a_k=a+\delta,\,[\hat a_2,\hat a_1]\seq K,\\\ell_K(a_2\dots a_k)=1}}   \int_{t\in [\hat a_k,\dots,\hat a_1]}   \frac{\zeta(\df y, \df t)}{\df s(y)}\\
    &=   \int_{t\in [a+\delta]}   \frac{\zeta(\df y, \df t)}{\df s(y)}
  \end{align*}
  and~\eqref{eq:rel-sumxi-phi} follows by the definition~\eqref{eq:def-invar-Fc} of~$\varphi$.
\end{proof}

\subsection{Perturbation theory and existence}

\begin{lemma}\label{lem:approx-partialder}
  There exist~$\delta,t_0>0$ such that whenever~$\abs{s-1}\leq\delta$ and~$\nvt\leq t_0$, we have
  \begin{equation}
    \label{eq:partialder-lambda}
    \partial_{10}\lambda(s,\vt) = -\fd + O(\abs{s-1} + \nvt +\nvt^{\alpha-\eps}),
  \end{equation}
  where 
  \begin{equation}
    \fd  :=\int_K  \log \abs{\cF_K'(y)} \xi(y)\frac{\df s(y)}{\fm}  = \int_{\dD} \log \abs{\cF_c'(y)} \varphi(y)\frac{\df s(y)}{\fm} >0\label{eq:def-fd}
  \end{equation}
  and $\fm$ as defined in~\eqref{dfnm}.
\end{lemma}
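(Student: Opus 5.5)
The plan is to apply analytic perturbation theory to the family $(s,\vt)\mapsto\LL_{s,\vt}$ near $(1,0)$, starting from the simple isolated eigenvalue $\lambda(1,0)=1$ given by Proposition~\ref{prop:spectrum-Ls0}(iii) and Proposition~\ref{prop:spectrum-Ls2}(i).

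First I compute $\partial_{10}\lambda$ at $(1,0)$. In $s$ the family is holomorphic, since $\partial_s\LL_{s,\vt}[f]=-\LL_{s,\vt}[\log|\cF_K'|f]$. Absolute convergence near $\Re(s)=1$ follows from Proposition~\ref{prop:Lst-bounded} and the bounded distortion estimate~\eqref{eq:bounded-dist}, which together give boundedness on $H^\kappa$. The standard first-order formula for a simple eigenvalue then gives
$$\partial_s\lambda(s,\vt)=\int_{K_\Theta}(\partial_s\LL_{s,\vt})[\xi_{s,\vt}]\,\df\nu_{s,\vt},$$
where $\xi_{s,\vt}$ and $\nu_{s,\vt}$ are the eigenvector and eigenfunctional normalised so that $\int\xi_{s,\vt}\,\df\nu_{s,\vt}=1$. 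At $(1,0)$, by~\eqref{fdn} and~\eqref{eq:def-eigenproj}, this equals
$$-\int_{K_\Theta}\LL[\log|\cF_K'|\,\xi]\,\df\nu=-\fm^{-1}\int_K\log|\cF_K'(y)|\,\xi(y)\,\df s(y),$$
where the last equality uses~\eqref{eq:transferop-invar}. This is $-\fd$.

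Next comes the error term. The eigendata $\lambda,\xi_{s,\vt},\nu_{s,\vt}$ depend on the operator through the Riesz projection. By~\eqref{sbsa}, $\|\LL_{s,\vt}-\LL\|_\kappa\ll|s-1|+\nvt+\nvt^{\alpha-\eps}$, so the perturbation bounds of Kato or Kloeckner give
$$\|\xi_{s,\vt}-\xi\|_\kappa+\|\nu_{s,\vt}-\nu\|\ll|s-1|+\nvt+\nvt^{\alpha-\eps}.$$
I also need $\|\partial_s\LL_{s,\vt}-\partial_s\LL_{1,0}\|_\kappa$ to be of the same order. This follows by repeating the proof of Lemma~\ref{lem:perturbation-LL1t} with an extra factor $\log|h'|$, absorbed via Lemma~\ref{lem:moments-log} and Hölder's inequality, and with the same choice~\eqref{dfnkappa} of $\kappa$. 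Combining these estimates in the formula above yields~\eqref{eq:partialder-lambda}.

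Then I prove the second expression for $\fd$. Since $\cF_K(x)=\cF_c^{r_x}(x)$, the chain rule gives
$$\log|\cF_K'(x)|=\sum_{m=0}^{r_x-1}\log|\cF_c'(\cF_c^m x)|.$$
Applying Lemma~\ref{lem:wrap} with $f=\log|\cF_c'|$ turns the $K$-integral against $\xi$ into the $\dD$-integral against $\varphi$. For integrability, Corollary~\ref{cor:parabolic} gives $|\log|\cF_c'||\ll\log(1+\ell)$ on the images of cuspidal branches of mass $\asymp\ell^{-2}$, which is summable.

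Finally, positivity: $\cF_c$ is the composition of $\cB$-steps, and each $\cB$-step is expanding on average. More simply, I would use Lemma~\ref{lem:contraction}: for every $x$, $\log|(\cF_K^r)'(x)|\ge r\log(1/\rho)-O(1)$. By the Birkhoff ergodic theorem for the invariant measure $\xi\,\df s$ (which is ergodic by the spectral gap), $\fd=\lim r^{-1}\log|(\cF_K^r)'|\ge\log(1/\rho)>0$.

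The main obstacle is the uniform perturbation bound for $\partial_s\LL_{s,\vt}$ in the Hölder norm, where the $\log|h'|$ factor must be handled alongside the twist $e^{i\inpd{\vt,\vth_1\vphi_h}}$.
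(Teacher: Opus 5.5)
Your proposal is correct and follows essentially the same route as the paper: first-order perturbation of the simple eigenvalue with errors controlled through~\eqref{sbsa} (the paper delegates this to~\cite[Lemma~7.1]{BettinDrappeau2022}), the invariance~\eqref{eq:transferop-invar} to identify $\partial_{10}\lambda(1,0)=-\fd$, the chain rule with Lemma~\ref{lem:wrap} for the second expression of $\fd$, and Lemma~\ref{lem:contraction} for positivity. One simplification: Birkhoff's theorem and ergodicity are not needed for $\fd>0$, since integrating your pointwise bound $\log|(\cF_K^r)'|\geq r\log(1/\rho)-O(1)$ against $\xi\,\df s$ and using $\int_K\log|(\cF_K^r)'|\,\xi\,\df s=r\fm\fd$ (chain rule plus invariance) already gives $\fd\geq\log(1/\rho)-O(1/r)$, which is essentially how the paper argues.
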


\begin{proof}
  Proceeding as in~\cite[Lemma~7.1]{BettinDrappeau2022} and keeping track of the error terms using~\eqref{sbsa}, we obtain the estimate~\eqref{eq:partialder-lambda} with  
  $$\fm\fd=-\int_K \partial_{1,0} \LL_{1, 0}[\xi]\df s(y)=-\int_K \sum_{h\in\cH} \abs{h'(y)} \log \abs{h'(y)}\xi(h(y)) \df s(y) = \int_K  \log \abs{\cF_K'(y)} \xi(y)\df s(y)  $$
  where the last equality follows by~$h'(y)=1/\cF_K'(h(y))$ and~\eqref{eq:transferop-invar}.

  Notice that for all~$m\geq1$ by the chain rule and~\eqref{eq:transferop-invar} we have
  \begin{align*}
    \frac1m\int_K  \log \abs{(\cF_K^m)' (y)} \xi(y)\df s(y)&=\frac1m\sum_{j=0}^{m-1}\int_K  \log \abs{\cF_K' (\cF_K^{j} (y))} \xi(y)\df s(y)\\
    &=\int_K  \log \abs{\cF_K'(y)} \xi(y)\df s(y)=\fm\fd.
  \end{align*}
  The fact that~$\fd>0$ follows from the left-hand side expression, since by~\eqref{eq:contraction} we have~$\abs{(\cF_K^m)'} > 1$ for~$m$ large enough. Finally, the second expression for~$\fd$ in~\eqref{eq:def-fd} follows by Lemma~\ref{lem:wrap} since~$\log \abs{\cF_K'(y)}=\sum_{m=0}^{r_y-1}\log \abs{\cF_c'(\cF_c^m(y))}$.
\end{proof}

By the implicit function theorem (cf.~\cite[Lemma~7.2]{BettinDrappeau2022}) we obtain the following.
\begin{lemma}\label{lem:existence-s0}
  For all~$\eta>0$, there exists~$t_0>0$ and a unique function~$s_0:[-t_0,t_0]^d\to\C$ such that~$s_0(0) = 1$ and, for~$\nvt\leq t_0$,
  $$ \abs{s_0(\vt) - 1} \leq \eta, \qquad \lambda(s_0(\vt), \vt) = 1. $$
\end{lemma}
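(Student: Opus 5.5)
The plan is to apply the holomorphic implicit function theorem to the function $F(s,\vt):=\lambda(s,\vt)-1$ near $(s,\vt)=(1,0)$, following \cite[Lemma~7.2]{BettinDrappeau2022}. Two preliminary facts are needed. First, by Proposition~\ref{prop:spectrum-Ls2}(i), there is a neighbourhood $U$ of $(1,0)$ on which $\LL_{s,\vt}$ has a simple isolated leading eigenvalue $\lambda(s,\vt)$. Second, we have $\lambda(1,0)=1$, by Proposition~\ref{prop:spectrum-Ls0}(iii).

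\emph{Step 1: regularity.} For fixed $\vt$, the map $s\mapsto\LL_{s,\vt}$ is holomorphic from a disc around $1$ into the bounded operators on $H^\kappa$. Indeed, each summand $|h'|^s$ is entire, and the series converges locally uniformly in $\|\cdot\|_\kappa$ by Proposition~\ref{prop:Lst-bounded} together with the Hölder bounds in the proof of Lemma~\ref{lem:quasicomp}. It follows from Kato's perturbation theory, via the Riesz projection formula, that $s\mapsto\lambda(s,\vt)$ is holomorphic for $|s-1|\le\delta$. By Lemma~\ref{lem:perturbation-LL1t}, $(s,\vt)\mapsto\LL_{s,\vt}$ is jointly continuous in operator norm, since $\|\LL_{s,\vt}-\LL_{s',\vt'}\|_\kappa$ is small when both pairs are close to $(1,0)$. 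The same Riesz projection argument then shows that $\lambda$ is jointly continuous on $U$. Differentiability in $\vt$ is not needed.

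\emph{Step 2: nondegeneracy.} By Lemma~\ref{lem:approx-partialder}, we have $\partial_s\lambda(s,\vt)=-\fd+O(|s-1|+\nvt+\nvt^{\alpha-\eps})$ with $\fd>0$. Shrinking $\delta$ and $t_0$, this gives $|\partial_s\lambda+\fd|\le\fd/2$ on the closed polydisc $\{|s-1|\le\delta\}\times\{\nvt\le t_0\}$.

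\emph{Step 3: existence and uniqueness.} This is the one step needing care, because we only have continuity in $\vt$. The plan is a Rouché/contraction argument in $s$ alone. For fixed $\vt$, define
\[
T_\vt(s):=s+\fd^{-1}\bigl(\lambda(s,\vt)-1\bigr).
\]
A fixed point of $T_\vt$ is exactly a solution of $\lambda(s,\vt)=1$. By Step 2, $|T_\vt'(s)|=|1+\fd^{-1}\partial_s\lambda|\le 1/2$, so $T_\vt$ is a contraction on the disc $\{|s-1|\le\eta'\}$ with $\eta'=\min(\eta,\delta)$. To check that $T_\vt$ maps this disc into itself, write
\[
|T_\vt(s)-1|\le \tfrac12|s-1|+|T_\vt(1)-1| = \tfrac12|s-1|+\fd^{-1}|\lambda(1,\vt)-1|.
\]
By continuity, $|\lambda(1,\vt)-1|\le\fd\eta'/2$ once $t_0$ is small enough, so the right-hand side is at most $\eta'$. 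Banach's fixed point theorem then gives a unique $s_0(\vt)$ in the disc with $\lambda(s_0(\vt),\vt)=1$, and $s_0(0)=1$ because $\lambda(1,0)=1$. Continuity of $\vt\mapsto s_0(\vt)$ follows from the uniform contraction, although the statement does not require it.

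The main obstacle is the joint continuity in Step 1. The estimate \eqref{sbsa} only compares $\LL_{s,\vt}$ with $\LL$, not two arbitrary nearby points. I would close this gap by running the same Hölder-inequality argument on the difference $e^{i\langle\vt,\cdot\rangle}-e^{i\langle\vt',\cdot\rangle}$, which yields $\|\LL_{s,\vt}-\LL_{s,\vt'}\|_\kappa\ll|\vt-\vt'|^{\min(1,\alpha-\eps)}$.
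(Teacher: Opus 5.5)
Your proof is correct and follows essentially the paper's route: the paper cites a continuity-only implicit function theorem (\cite[Theorem~1.1]{Kumagai1980}), whose injectivity hypothesis in $s$ comes from the derivative estimate of Lemma~\ref{lem:approx-partialder}, and your contraction argument for $T_\vt$ is a hands-on proof of exactly that statement. Note that for existence and uniqueness your Step~3 uses only three things: holomorphy of $\lambda(\cdot,\vt)$ for each fixed $\vt$, the uniform derivative bound, and continuity of $\lambda(1,\cdot)$ at $\vt=0$ (which follows from \eqref{sbsa} and simplicity of the eigenvalue). So the joint-continuity issue you flag as the main obstacle is needed only for continuity of $s_0$, which the statement does not ask for.
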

\begin{proof}
  This follows from a general form of the implicit functions theorem, \textit{e.g.}~\cite[Theorem~1.1]{Kumagai1980}, whose hypotheses are satisfied by virtue of Lemma~\ref{lem:approx-partialder}.
\end{proof}

\subsection{The sub-CLT case}\label{sec:sub-clt-case}
Let
\begin{align*}
  \cJ_K &:= \int_{K_\Theta} \big( \e^{i\langle t, v  \Phi_K(y)\rangle}-1\big) \xi(y) \frac{\df s(y) \df v}{\fm}, \\
  \cJ_c &:= \int_{\dD_\Theta} \big( \e^{i\langle t, v  \Phi_c(y)\rangle}-1\big) \varphi(y) \frac{\df s(y) \df v}{\fm}, & (\text{assuming~\ref{hyp:moments-cuspidal}})
\end{align*}
where~$\Phi_K, \Phi_c$ are defined in~\eqref{eq:def-PhiK} and~\eqref{eq:def-Phic} respectively.

\begin{lemma}\label{lem:estim-s0-subclt}
  \begin{enumerate}
    \item For~$\nvt \leq t_0$, we have
    \begin{equation}
      s_0(\vt) - 1 = \frac{1}{\fd}\cJ_K + O(\nvt^2 + \nvt^{2\alpha-2\eps}).\label{eq:expansion-s0t}
    \end{equation}
    \item If we assume~\ref{hyp:moments-cuspidal} and
    \begin{align}
      &\ssum{h_1, h_2,h_3 \in \cH_c} \|h_1'\|_{\infty} \|h_2'\|_{\infty}\|h_3'\|_{\infty} \| \vphi_{h_1}|_{\im(h_2)} \|_\infty\| \vphi_{h_2}|_{\im(h_3)} \|_\infty < \infty, \label{eq:moment-condition-cuspidal2}
    \end{align}
    then the same asymptotic formula holds with~$\cJ_c$ instead of~$\cJ_K$.
  \end{enumerate}
\end{lemma}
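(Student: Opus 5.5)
We first prove part (1) by the standard first-order perturbation of the leading eigenvalue, and then deduce part (2) from Lemma~\ref{lem:expansion-L1t-subclt} combined with the Kac-type formula of Lemma~\ref{lem:wrap}.

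\textbf{Part (1).} By Proposition~\ref{prop:spectrum-Ls2}(i) and the Kloeckner perturbation estimates, which take \eqref{sbsa} as input (as in \cite[Lemma~7.3]{BettinDrappeau2022}), we expect
\[
\lambda(1,\vt)-1=\int_{K_\Theta}(\LL_{1,\vt}-\LL)[\xi]\,\df\nu+O\big(\|\LL_{1,\vt}-\LL\|_\kappa\,\|\LL_{1,\vt}-\LL\|_{\infty}\big).
\]
Using \eqref{fbsa} and \eqref{sbsa}, the error term is $O(\nvt^2+\nvt^{2\alpha-2\eps})$. We evaluate the main term via \eqref{eq:transferop-invar}, \eqref{eq:PhiK-vphi} and the invariance of the uniform measure on $\Theta$; this gives
\[
\int_{K_\Theta}(\LL_{1,\vt}-\LL)[\xi]\,\df\nu=\cJ_K .
\]
Next we Taylor expand $\lambda(s,\vt)$ in $s$ at $s=1$ using Lemma~\ref{lem:approx-partialder}:
\[
1=\lambda(s_0,\vt)=\lambda(1,\vt)-\fd(s_0-1)+O\big(|s_0-1|(|s_0-1|+\nvt+\nvt^{\alpha-\eps})\big).
\]
A first pass gives $|s_0-1|\ll \nvt+\nvt^{\alpha-\eps}$, since $\cJ_K$ is of that size by the moment condition. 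Substituting this back yields \eqref{eq:expansion-s0t}.

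\textbf{Part (2).} It suffices to show $\cJ_K=\cJ_c+O(\nvt^2+\nvt^{2\alpha-2\eps})$. Apply Lemma~\ref{lem:expansion-L1t-subclt} with $f=\xi$. Its second line is quadratic in the perturbations $\CC_{1,\vt}-\CC$ and $\CC^\sharp_{1,\vt}-\CC^\sharp$. By Hölder and the condition \eqref{eq:moment-condition-cuspidal2} (or \eqref{perturbation-CC1ta} together with a pairing of $e^{ia}-1$ factors) we expect this to be $O(\nvt^2+\nvt^{2\alpha-2\eps})$; this is exactly the role of the triple-sum hypothesis. The first line is linear. Put
\[
g:=(\Id-\CC^\sharp)^{-1}\xi ,
\]
and use the duality $\int\CC[\cdot]\df s=\int\cdot\,\df s$ from \eqref{eq:transferop-invarc}. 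The first line then becomes
\[
\int_{\dD_\Theta}\big(\e^{i\langle\vt,v\Phi_c\rangle}-1\big)\,\tilde g\,\frac{\df s\,\df v}{\fm},
\]
where $\tilde g$ collects $g$ together with the Neumann series $\sum_m(\CC^\sharp)^m$ coming from the term $\CC(\Id-\CC^\sharp)^{-1}$.

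The main obstacle is identifying this density $\tilde g$ with $\varphi$. We would verify it through \eqref{eq:rel-sumxi-phi}: summing $\xi\circ h$ over compositions of $\cH_c^\sharp$-branches reproduces $\varphi$ off $K$, while $\varphi=\xi$ on $K$. Equivalently, Lemma~\ref{lem:wrap} applied to $f=\e^{i\langle\vt,v\Phi_c\rangle}-1$ converts the $\cF_K$-integral of a Birkhoff sum along the excursion into an integral against $\varphi$. The care needed is bookkeeping: the total of the linear terms should equal $\sum_{m<r_x}$ of the single-step increments. Both $\e^{i\langle\vt,\Phi_K\rangle}-1$ (via \eqref{eq:ref-PhiK-Phic}) and this sum agree with $\sum_m(\e^{i\langle\vt,\Phi_c\circ\cF_c^m\rangle}-1)$ up to cross terms that are products of two increments, and these cross terms are bounded again by \eqref{eq:moment-condition-cuspidal2}. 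This yields $\cJ_K=\cJ_c+O(\nvt^2+\nvt^{2\alpha-2\eps})$, which concludes part (2).
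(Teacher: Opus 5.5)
Your proposal is correct and follows the paper's proof. Part (1) is the first-order perturbation of $\lambda(s,\vt)$ as in \cite[Lemma~7.3]{BettinDrappeau2022}. Part (2) applies Lemma~\ref{lem:expansion-L1t-subclt} with $f=\xi$, identifies its first line with $\cJ_c$ via $(\Id-\CC^\sharp)^{-1}[\xi]=\varphi$ (which is what Lemma~\ref{lem:wrap} encodes), and bounds the adjacent-pair second line by \eqref{eq:moment-condition-cuspidal2}. One bookkeeping detail: since the outer integral is over $K_\Theta$, the duality to invoke is $\int_{K_\Theta}\CC(\Id-\CC^\sharp)^{-1}[F]\,\df\nu=\int_{\dD_\Theta}F\,\df\nu$, which follows from \eqref{eq:rel-LLs-CCs} and \eqref{eq:transferop-invar}, rather than \eqref{eq:transferop-invarc}.

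Your closing remark is inaccurate, though your argument does not need it. The difference between $\e^{i\langle\vt,\Phi_K\rangle}-1$ and the excursion sum of single-step increments also contains non-adjacent and higher-order products, which \eqref{eq:moment-condition-cuspidal2} does not control. Conversely, \eqref{perturbation-CC1ta} alone does not control the adjacent ones. The non-adjacent and higher-order terms are already contained in the $O(\nvt^2+\nvt^{2\alpha-2\eps})$ term of Lemma~\ref{lem:expansion-L1t-subclt}.
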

\begin{proof}
  We proceed as in~\cite[Lemma~7.3]{BettinDrappeau2022} and obtain by~\eqref{eq:moment-condition} and~\eqref{sbsa}
  \begin{align}\label{eq:expansion-lambdast}
    \lambda(s, \vt)     = {}& {- \fd} (s-1)+ \int_{K_\Theta} \LL_{1, \vt}[\xi] \df \nu + O(\abs{s-1}^2 + \nvt^2 + \nvt^{2\alpha-2\eps}).
  \end{align}
  Noting that~$\LL_{1,\vt}[\xi] = \LL[\e^{i\inpd{\vt, \Phi_K}} \xi]$ and using~\eqref{eq:transferop-invar}, we obtain by Hypothesis~\ref{hyp:moments} that
  \begin{equation}
    \lambda(s, \vt) =1 -\fd(s-1) + \cJ_K + O(\abs{s-1}^2 + \nvt^2 + \nvt^{2\alpha-2\eps}).\label{eq:expansion-lambdast-2}
  \end{equation}

  Assume next Hypothesis~\ref{hyp:moments-cuspidal}.
  We start from~\eqref{eq:expansion-lambdast}. By Lemma~\ref{lem:expansion-L1t-subclt} applied with~$f=\xi$ we obtain
  \begin{align*}
    \int_{K_\Theta}    (\LL_{1, \vt}-\LL)[\xi]\df\nu&=\tilde\cJ_c+\tilde\cI_c+O(\nvt^2 + \nvt^{2\alpha-2\eps}).
  \end{align*}
  where 
  \begin{align*}
    \tilde\cJ_c&=
    \int_{K_\Theta}\big((\CC_{1, \vt}-\CC)+\CC (\Id-\CC^\sharp)^{-1}(\CC_{1, \vt}^\sharp-\CC^\sharp)\big)(\Id-\CC^\sharp)^{-1}[\xi]\df\nu\\
    \tilde\cI_c&=
    \int_{K_\Theta}\big((\CC_{1, \vt}-\CC)+\CC (\Id-\CC^\sharp)^{-1}(\CC_{1, \vt}^\sharp-\CC^\sharp)\big)(\CC^\sharp_{1, \vt}-\CC^\sharp)(\Id-\CC^\sharp)^{-1}[\xi]\df\nu\\
  \end{align*}
  We start by analyzing $\tilde\cJ_c$. Expanding the definitions of the transfer operators, the integrand in the right-hand side becomes
  \begin{align*}
    & \ssum{h=h_1\circ\dotsb \circ h_m, m\geq1 \\ y\in \Dom(h)} |h'(y)| \sum_{i=1}^m \Big( \e^{i \inpd{\vt, \vth_1 \vth_{h_{i+1}\circ \dotsb \circ h_m}(y)\vphi_{h_i}(h_{i+1}\circ \dotsb \circ h_m(y))}}-1\Big) \xi(h(y)) \\
    ={}& \ssum{h\in \cH \\ y\in \Dom(h)}|h'(y)|\sum_{j=0}^{r_{h(y)}-1} \Big(e^{i \inpd{\vt,  \Phi_c(\cF_{c,\Theta}^{j}(h(y),\vth_1\vth_h(y)))}}-1\Big)  \xi(h(y))
  \end{align*}
  where the sum on the first line is over~$h_1,\dots,h_m$ with~$h_1,\dots,h_{m-1}\in\cH_c^\sharp$ and~$h_m\in\cH_c$, so that by Lemma~\ref{lem:decomp-Hc-HK}~$h$ ranges over~$\cH$ with~$m=r_{h(y)}$. We integrate over $(y,\vth_1)\in K_\Theta$ and, after expressing $\Phi_c\circ \cF_{c,\Theta}^j$ in terms of $\Phi_c\circ \cF_c^j$, we make a  suitable change of variable in $\vth_1$ to absorb the multiplier. Lemma~\ref{lem:wrap} then yields that $\tilde\cJ_c=\cJ_c$.
  In the same way, the integrand in $\tilde\cI_c$ is 
  \begin{align*}
    \ssum{h\in \cH \\ y\in \Dom(h)} |h'(y)| \sum_{j=0}^{r_{h(y)}-2} \Big(e^{i \inpd{\vt,  \Phi_c(\cF_{c,\Theta}^{j}(h(y),\vth_1\vth_h(y)))}}-1\Big) \Big(e^{i \inpd{\vt,  \Phi_c(\cF_{c,\Theta}^{j+1}(h(y),\vth_1\vth_h(y))}}-1\Big)  \xi(h(y)).
  \end{align*}
  Expressing back~$h = h_1\circ \dotsb \circ h_m$ and passing to absolute values, this is bounded by
  \begin{align*}
    \ll{}& \nvt^2 \ssum{h = h_1 \circ\dotsb\circ h_m \\ m\geq 2 \\ y \in \Dom(h)} \abs{h'(y)} \sum_{r=1}^{m-1} \|\vphi_{h_r}\|_{\infty, \im h_{r+1}} \|\vphi_{h_{r+1}}\|_{\infty, \im h_{r+2}} \\
    \ll{}& \nvt^2 \sum_{h_1, h_2, h_3 \in \cH_c} \norm{h'_1}_\infty \norm{h'_2}_\infty \norm{h'_3}_\infty \|\vphi_{h_1}\|_{\infty, \im h_2} \|\vphi_{h_2}\|_{\infty, \im h_3}
  \end{align*}
  where~$\im h_{r+2}$ in the first line and~$\im h_3$ in the second line are replaced by~$K$ if~$r=m-1$; here we used the boundedness of
  $$ \sum_{j\geq 0} \ssum{h = h_1 \circ \dotsb \circ h_j \\ h_1,\dots,h_j \in \cH_c^\sharp} \norm{h'}_\infty $$
  which follows from~\ref{lem:convergence-H-CF}. Thus the hypothesis~\eqref{eq:moment-condition-cuspidal2} readily implies that~$\tilde \cI_c \ll \nvt^2$ as claimed.

  Since~$\cJ_K, \cJ_c, \tilde\cI_c\ll \nvt^{\alpha-\eps}+\nvt$ we deduce~$s_0(\vt) - 1\ll \nvt^{\alpha-\eps}+\nvt$ in either case~\ref{hyp:moments} or case~\ref{hyp:moments-cuspidal} and~\eqref{eq:moment-condition-cuspidal2}. Setting~$s=s_0(\vt)$ in~\eqref{eq:expansion-lambdast} we then deduce the claimed result.
\end{proof}
\begin{corollary}\label{corest}
  Let~$\alpha\geq 1$, then for all~$\eps>0$ and all~$\nvt \leq t_0$, we have
  $$ s_0(\vt) - 1 =  i \langle\vt, \mu_0\rangle  + O(\nvt^2 + \nvt^{\alpha}). $$
  where
  \begin{equation}
    \mu_0 := \frac{\frac1{|\Theta|}\sum_{\vth\in\Theta}\vth}{\fd} \times \left\{\begin{aligned} &\int_K \Phi_K(x) \xi(x) \frac{\df s(x)}{\fm}, &&  \\ &\int_{\dD} \Phi_c(x) \varphi(x) \frac{\df s(x)}{\fm}, && (\text{assuming~\ref{hyp:moments-cuspidal} and~\eqref{eq:moment-condition-cuspidal2}}). \end{aligned}\right.\label{eq:def-muphi}
  \end{equation}
\end{corollary}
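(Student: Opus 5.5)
The plan is to start from Lemma~\ref{lem:estim-s0-subclt}. It gives $s_0(\vt)-1 = \fd^{-1}\cJ_K + O(\nvt^2+\nvt^{2\alpha-2\eps})$, and the analogous formula with $\cJ_c$ under \ref{hyp:moments-cuspidal} and \eqref{eq:moment-condition-cuspidal2}. It therefore suffices to expand $\cJ_K$ (respectively $\cJ_c$) to first order in $\vt$. For real $u$ we use the elementary bound $\e^{iu}-1 = iu + O(\min(|u|^2,|u|))$, and hence $\e^{iu}-1-iu \ll |u|^{\beta}$ for any $\beta\in[1,2]$. Applying this with $u=\inpd{\vt, v\Phi_K(y)}$ gives
$$ \cJ_K = i\Big\langle \vt, \int_{K_\Theta} v\,\Phi_K(y)\,\xi(y)\frac{\df s(y)\df v}{\fm}\Big\rangle + O\Big(\nvt^{\beta}\int_{K_\Theta} |\Phi_K(y)|^{\beta}\xi(y)\frac{\df s(y)\df v}{\fm}\Big), $$
using that $\Theta\subset\GL(d,\R)$ is finite, so $|v\Phi_K|\asymp|\Phi_K|$ uniformly in $v\in\Theta$.

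Next we separate variables. The integrand $v\Phi_K(y)$ factorizes, and $\df v$ is the uniform probability on $\Theta$, so the main term is $i\langle\vt, (\tfrac1{|\Theta|}\sum_\vth \vth)\int_K \Phi_K\xi\,\df s/\fm\rangle$. After dividing by $\fd$ this is exactly $i\inpd{\vt,\mu_0}$ with the first expression in \eqref{eq:def-muphi}. For the integrability of $\Phi_K$ we use \eqref{eq:transferop-invar} together with $\xi\asymp 1$ on $K$: we have $\int_K|\Phi_K|^\beta\xi\,\df s = \int_K \LL[|\Phi_K|^\beta\xi]\,\df s$. By \eqref{eq:PhiK-vphi} this is bounded by $\sum_{h\in\cH}\|h'\|_\infty\|\vphi_h\|_\infty^\beta$, which is finite by \eqref{eq:moment-condition} whenever $\beta\le\alpha$; for $\beta<\alpha$ it also follows from Lemma~\ref{lem:moments-log}. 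We then choose $\beta=\min(2,\alpha)$, or $\min(2,\alpha-\eps)$ if the endpoint causes trouble. This gives an error $O(\nvt^2+\nvt^{\alpha})$ (up to $\eps$), which matches the claim. The error $\nvt^{2\alpha-2\eps}$ from Lemma~\ref{lem:estim-s0-subclt} is absorbed, since $2\alpha-2\eps\ge\alpha$ once $\eps\le\alpha/2$ and $\nvt\le t_0\le1$.

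For the cuspidal form we do the same with $\cJ_c$. Integrability of $\Phi_c$ against $\varphi\,\df s$ comes from $\CC[\varphi]=\varphi$, i.e.\ \eqref{eq:transferop-invarc}. This bounds $\int_{\dD}|\Phi_c|^\beta\varphi\,\df s$ by $\sum_{h_1\in\cH_c}\|h_1'\|_\infty\sup_y$ of the local contributions. We localize $y$ to $\im(h_2)$ by inserting one more application of $\CC$, which reduces the bound exactly to the double sum \eqref{eq:moment-condition-cuspidal}. Alternatively, the two expressions for $\mu_0$ can be identified directly through Lemma~\ref{lem:wrap} applied with $f=\Phi_c$, together with \eqref{eq:ref-PhiK-Phic}, after absorbing the multiplier by the change of variable in $v$.

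The step I expect to need the most care is the endpoint case of the error exponent. The claim is $O(\nvt^\alpha)$ rather than $O(\nvt^{\alpha-\eps})$, but the moment bounds we have only give $\beta\le\alpha$ integrability, and Lemma~\ref{lem:estim-s0-subclt} carries $\eps$-losses. To handle this, I would apply \eqref{eq:moment-condition} at exactly $\beta=\alpha$ when $\alpha\le2$, and $\beta=2$ otherwise. The $\nvt^{2\alpha-2\eps}$ term is then harmless for $\alpha\ge1$ with small $\eps$, since $2\alpha-2\eps\ge\alpha$ in that range.
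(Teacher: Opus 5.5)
Your proposal is correct and follows essentially the paper's argument: you expand $\cJ_K$ from Lemma~\ref{lem:estim-s0-subclt} to first order, bound the remainder with the moment condition~\eqref{eq:moment-condition}, absorb the $\nvt^{2\alpha-2\eps}$ term, and identify the cuspidal expression for $\mu_0$ via~\eqref{eq:ref-PhiK-Phic} and Lemma~\ref{lem:wrap}, with the multiplier absorbed into the $v$-average. Your explicit choice $\beta=\min(2,\alpha)$ in $\e^{iu}-1-iu\ll|u|^\beta$ is the precise form of the paper's loosely stated expansion $\e^{iu}=1+iu+O(u^2+|u|^\alpha)$, and it correctly uses the moment bound at exactly $\alpha$, so no $\eps$-loss appears.
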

\begin{proof}
  This follows immediately from the Taylor expansion~$\e^{iu} = 1 + iu + O(u^2+ u^{\alpha})$ and the bound~\eqref{eq:moment-condition}. 
  Notice that the two expressions for~$\mu_0$ coincide by~\eqref{eq:ref-PhiK-Phic} and Lemma~\ref{lem:wrap}.
\end{proof}
\subsection{The CLT case}

In this section we assume
\begin{equation}
  \label{eq:assumptions-CLT}
  \text{Hypothesis~\ref{hyp:moments} holds with the additional condition } \alpha>2.
\end{equation}
Our goal is to extract the term of order~$\nvt^2$ in the analysis above, following~\cite[Section~7.2]{BettinDrappeau2022}.
In order to do so, we introduce the following notation:
\begin{align*}
  \KK[f] := {}& \frac1\xi (\Id - \NN)^{-1} \NN[f\xi], \\
  \vpsi := {}& \Phi_K - \mu_0 \log\abs{\cF_K'}, \\
  \vchi := {}& \KK [\vpsi],
\end{align*}
where $\NN$ is as in Proposition~\ref{prop:spectrum-Ls0}. To see that~$\vchi$ is well-defined, note first that~$\PP[\vpsi\xi] = 0$ by construction, so that
\begin{align*}
  \NN[\vpsi \xi](y, \vth) ={}& \LL[\vpsi \xi](y, \vth) \\
  ={}& \sum_{h\in\cH} \abs{h'(y)} \vpsi(h(y), \vth\vth_h(y)) \xi(h(y)) \\
  ={}& \sum_{h\in\cH} \abs{h'(y)} \xi(h(y)) \big( \Phi_K(h(y), \vth \vth_h(y)) - \mu_0 \log\abs{\cF_K'(h(y))}\big) \\
  ={}& \sum_{h\in\cH} \abs{h'(y)} \xi(h(y)) \big( \vth \vphi_h(y) + \mu_0 \log\abs{h'(y)}\big)
\end{align*}
by~\eqref{eq:PhiK-vphi}. The conditions~\ref{hyp:moments} along with Lemma~\ref{prop:Lst-bounded} ensures that this defines a function in~$H^\kappa$. Since~$\spr(\NN)<1$ on~$H^\kappa$ by Proposition~\ref{prop:spectrum-Ls0}, this show that~$\vchi$ is well-defined.

\begin{lemma}\label{lem:estim-s0-clt}
  Assuming~\eqref{eq:assumptions-CLT}, we have
  \begin{equation}
    s_0(\vt) - 1 =  i \langle\vt, \mu_0\rangle - \tfrac12 \vt^T \Sigma_0 \vt + O(\nvt^3 + \nvt^{\alpha-\eps}),\label{eq:approxs0-alpha-2}
  \end{equation}
  with
  \begin{equation}
    \Sigma_0 := \frac{1}{\fd}\int_{K_\Theta} (\vpsi + \vchi - \vchi\circ \cF_\Theta\big)\cdot(\vpsi + \vchi - \vchi\circ \cF_\Theta\big)^T \xi\df\nu.\label{eq:def-sigmagam}
  \end{equation}
\end{lemma}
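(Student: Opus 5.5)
The plan is to follow the second-order perturbation argument of~\cite[Section~7.2]{BettinDrappeau2022}: expand the leading eigenvalue $\lambda(s,\vt)$ around $(1,0)$ to second order, then solve $\lambda(s_0(\vt),\vt)=1$ using Lemma~\ref{lem:existence-s0}.

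First, I write the perturbation expansion of $\lambda(s,\vt)$ from~\cite{Kloeckner2019}, as in Lemma~\ref{lem:estim-s0-subclt}, but one order further. Put $\DD := \LL_{s,\vt}-\LL$. Then
\[
\lambda(s,\vt) = 1 + \int_{K_\Theta} \DD[\xi]\df\nu + \int_{K_\Theta} \DD(\Id-\NN)^{-1}\NN\DD[\xi]\df\nu + O(\|\DD\|^3),
\]
where the operator norm is taken from $H^\kappa$ to $L^\infty$ (or to $H^\kappa$). The first-order term $\int\DD[\xi]\df\nu$ is expanded with Lemma~\ref{lem:expansion-Lst-clt}, applied to $f=\xi$ and integrated against $\nu$; by~\eqref{eq:transferop-invar} this replaces $\LL[\cdot]$ by plain integrals. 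For the second-order term it is enough to use the first-order expansion $\DD\approx i\inpd{\vt,\LL[\Phi_K\,\cdot]}-(s-1)\LL[\log|\cF_K'|\,\cdot]$ in the $\|\cdot\|_\kappa$ norm, with remainder $O(\nvt^{\min(2,\alpha-1-\eps)})$ from Lemma~\ref{lem:expansion-Lst-clt2}. Since $|s-1|\ll\nvt$ on the relevant range, that remainder multiplied by the first-order size $O(\nvt)$ is $O(\nvt^3+\nvt^{\alpha-\eps})$. I take this range from the corollary $s_0(\vt)-1=O(\nvt)$ of Corollary~\ref{corest}.

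Next I substitute $s=s_0(\vt)$. To first order, $s_0-1 = i\inpd{\vt,\mu_0}+O(\nvt^2)$. The point is that $\Phi_K$ and $\log|\cF_K'|$ then always appear through the combination $\vt^T\vpsi = \vt^T(\Phi_K-\mu_0\log|\cF_K'|)$, up to higher-order errors. Collecting the quadratic terms gives
\[
-\tfrac12\int \big(\inpd{\vt,\vpsi}\big)^2\xi\df\nu - \int \inpd{\vt,\vpsi}\,\KK\big[\inpd{\vt,\vpsi}\big]\,\xi\df\nu,
\]
together with $-\fd\,(s_0-1-i\inpd{\vt,\mu_0})$ coming from $\partial_{10}\lambda$. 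The $-\fd(s_0-1)$ piece has its expansion given by Lemma~\ref{lem:approx-partialder}.

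Finally, I identify the cross term with the coboundary form in~\eqref{eq:def-sigmagam}. I use that $\LL$ is the transfer operator of $\cF_\Theta$, so that $\int f\cdot g\circ\cF_\Theta\,\xi\df\nu=\int \LL[f\xi]\,g\df\nu$. I also use the identity $\LL[\vpsi\xi]=\NN[\vpsi\xi]=\xi(\KK[\vpsi]-\NN\xi\KK[\vpsi]/\xi)$, which comes from the resolvent relation $(\Id-\NN)^{-1}\NN=(\Id-\NN)^{-1}-\Id$. Expanding $(\vpsi+\vchi-\vchi\circ\cF_\Theta)(\cdots)^T$ and using invariance of $\xi\,\df\nu$, the $\vchi\vchi^T$ terms cancel and what remains equals the variance $\int\vpsi\vpsi^T\xi\df\nu + 2\,\mathrm{Sym}\int \vpsi\,\vchi^T\xi\df\nu$. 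This is the standard Green--Kubo computation. Dividing by $\fd$ gives~\eqref{eq:approxs0-alpha-2}.

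The step I expect to be the main obstacle is bookkeeping the error terms under only $\alpha>2$ moments. The second-order perturbation formula needs $\DD$ to be small in the $H^\kappa\to H^\kappa$ norm, with errors like $\nvt^{\alpha-\eps}$ and not $\nvt^3$. This is exactly why Lemma~\ref{lem:expansion-Lst-clt2} and the choice~\eqref{dfnkappa} of $\kappa$ are needed. I would verify that every product of a first-order error with a first-order main term stays within $O(\nvt^3+\nvt^{\alpha-\eps})$, possibly after shrinking $\eps$.
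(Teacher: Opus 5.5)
You follow the paper's route: Kloeckner's second-order expansion of $\lambda(s,\vt)$ at $s=s_0(\vt)$, Lemma~\ref{lem:expansion-Lst-clt} for the first-order term, Lemma~\ref{lem:expansion-Lst-clt2} in $H^\kappa$ for the second-order term, and the coboundary rewriting via $\int f\,(g\circ\cF_\Theta)\,\xi\df\nu=\int\LL[f\xi]\,g\df\nu$. Three parts of your proposal are correct and agree with the paper:
\begin{itemize}
\item your error bookkeeping;
\item your Green--Kubo check that $\int(\vpsi+\vchi-\vchi\circ\cF_\Theta)(\vpsi+\vchi-\vchi\circ\cF_\Theta)^T\xi\df\nu=\int(\vpsi\vpsi^T+\vpsi\vchi^T+\vchi\vpsi^T)\xi\df\nu$;
\item the quadratic form you finally collect, which is~\eqref{eq:expr-sigma-variant}.
\end{itemize}

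However, the second-order formula you display is wrong, and it does not yield the term you collect. In~\cite{Kloeckner2019} the operator between the two copies of $D=\LL_{s,\vt}-\LL$ is the reduced resolvent
\[
(\Id-\LL)^{-1}(\Id-\PP)=(\Id-\PP)+(\Id-\NN)^{-1}\NN ,
\]
not $(\Id-\NN)^{-1}\NN$. A quick test shows the difference: for $\smat{1&0\\0&0}+\eta\smat{0&1\\1&0}$ the top eigenvalue is $1+\eta^2+O(\eta^4)$, while your formula gives $1+O(\eta^3)$.

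Used literally, your formula gives the wrong $\Sigma_0$. To first order $D[\xi]=i\LL[\inpd{\vt,\vpsi}\xi]$, and $\NN\LL=\NN^2$. So your formula produces
\[
(\Id-\NN)^{-1}\NN^2[\inpd{\vt,\vpsi}\xi]=\xi\KK[\inpd{\vt,\vpsi}]-\NN[\inpd{\vt,\vpsi}\xi].
\]
As a result, $\fd\,\vt^T\Sigma_0\vt$ would lose the lag-one Green--Kubo term $2\int\inpd{\vt,\vpsi}\inpd{\vt,\vpsi\circ\cF_\Theta}\xi\df\nu$, which is nonzero in general.

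The term $-\int\inpd{\vt,\vpsi}\KK[\inpd{\vt,\vpsi}]\xi\df\nu$ that you collect comes instead from the correct resolvent. The link is $(\Id-\LL)^{-1}(\Id-\PP)\LL=(\Id-\NN)^{-1}\NN$, which follows from $\PP\LL=\PP$ and $\LL=\PP+\NN$; this is the identity $(\Id-\LL)^{-1}(\Id-\PP)\LL[f\xi]=\xi\KK[f]$ used in the paper. Equivalently, $(\Id-\NN)^{-1}\NN$ is correct only when applied to the multiplier $(\e^{i\inpd{\vt,\Phi_K}}\abs{\cF_K'}^{1-s}-1)\xi$. It is not correct when applied to $D[\xi]=\LL[(\e^{i\inpd{\vt,\Phi_K}}\abs{\cF_K'}^{1-s}-1)\xi]$, which carries an extra $\LL$.

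A smaller point concerns the linear term $-\fd(s-1)$. Take it exactly from Lemma~\ref{lem:expansion-Lst-clt}, using $\int\LL[\log\abs{\cF_K'}\xi]\df\nu=\fd$. Expanding $\lambda$ in $s$ through Lemma~\ref{lem:approx-partialder} only gives an error $O(\abs{s-1}(\abs{s-1}+\nvt))=O(\nvt^2)$, which is too weak at this order.
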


\begin{remark}
  The remark in~\cite[p.~1408]{BettinDrappeau2022} applies to~$\Sigma_0$: this matrix is symmetric, positive semi-definite, and definite if and only if the image set~$(\vpsi + \vchi - \vchi\circ \cF_\Theta)(K_\Theta)$ span~$\R^d$; the analogue of the expansion~(7.9) \emph{ibid.} holds as soon as~$\alpha>1$, and the matrix~$\Sigma_0$ is well-defined whenever~$\alpha>2$.
\end{remark}

\begin{proof}
  We let~$s = s_0(\vt) = 1+i \langle\vt, \mu_0\rangle  + O(\nvt^2 )$ and apply Lemma~\ref{lem:expansion-Lst-clt}
  \begin{align*}
    \int_{K_\Theta} \LL_{s, \vt}[\xi] \df\nu &= {} 1 +i\fd \langle\vt, \mu_0\rangle- \fd(s-1)-\frac12\int_{K_\Theta} \vt^T\LL[ \Phi_K\cdot  \Phi_K^T \xi] \vt \df\nu \\
    {}&
    \quad  + \langle\vt, \mu_0\rangle \int_{K_\Theta} \inpd{\vt , \LL[ \Phi_K \log |\cF_K'|\xi]} \df\nu\\
    &\quad- \frac12\langle\vt, \mu_0\rangle ^2\int_{K_\Theta} \LL[\log |\cF_K'|^2 \xi] \df\nu +  O(\nvt^3 + \nvt^{\alpha-\eps})\\
    &= {} 1 +i\fd \langle\vt, \mu_0\rangle- \fd(s-1)-\frac12\vt^T \int_{K_\Theta} \vpsi\cdot \vpsi^T \xi \df\nu \cdot \vt \\
    {}& \quad +  O(\nvt^3 + \nvt^{\alpha-\eps}).
  \end{align*}  
  Moreover, by Lemma~\ref{lem:expansion-Lst-clt2}, for~$f\in H^\kappa$ we also have
  \begin{align*}
    (\LL_{s, \vt}-\LL)[f] &=i\inpd{\vt , \LL[\vpsi f]} + O((\nvt^{2}+\nvt^{\alpha-1-\eps})\|f\|_\kappa),
  \end{align*}
  in $H^\kappa$.  Thus, since $(\Id-\LL)^{-1}(\Id-\PP)$ is bounded on $H^\kappa$, and using~\eqref{sbsa} we have
  \begin{align*}
    \int_{K_\Theta} (\LL_{s, \vt}-\LL) & (\Id-\LL)^{-1}(\Id - \PP) (\LL_{s, \vt}-\LL)\xi\df\nu  \\
    = {}& -\int_{K_\Theta} \langle \vt, \vpsi \rangle (\Id-\LL)^{-1}(\Id-\PP)\LL[\langle \vt, \vpsi \rangle \xi]\df\nu + O(\nvt^{3} + \nvt^{\alpha-\eps})\\   
    = {}& -\vt^T \int_{K_\Theta} \vpsi \cdot \vchi^T \xi\df\nu\cdot \vt + O(\nvt^{3} + \nvt^{\alpha-\eps})    
  \end{align*}
  since~$(\Id-\LL)^{-1}(\Id-\PP)\LL[f\xi] = \xi\K[f]$ by the identities~$\PP\LL = \PP$ and~$\LL = \PP + \NN$.

  By the third expansion in~\cite[Theorem~2.6]{Kloeckner2019} with~$s=s_0(t)$, we have
  $$ 0 =  \int_{K_\Theta} (\LL_{s, \vt}-\LL) \xi \df\nu + \int_{K_\Theta} (\LL_{s, \vt}-\LL) (\Id-\LL)^{-1}(\Id - \PP) (\LL_{s, \vt}-\LL)\xi\df\nu + O(\nvt^3) $$
  and thus one obtains the approximation~\eqref{eq:approxs0-alpha-2} for~$s_0(\vt)$ with
  \begin{equation}
    \Sigma_0 = \frac1\fd \int_{K_\Theta} (\vpsi \vpsi^T + \vchi \vpsi^T + \vpsi \vchi^T) \xi \df \nu. \label{eq:expr-sigma-variant}
  \end{equation}
  Using the identity~$\LL[\xi(f\circ\cF_\Theta)]=\xi f$, one then proceeds as in~\cite[Lemma~7.4]{BettinDrappeau2022} to show that~$\Sigma_0$ can be written as in~\eqref{eq:def-sigmagam}.
\end{proof}

\begin{remark}
  We believe an expression similar to~\eqref{eq:def-sigmagam}, but involving~$\cF_c$ and its inverse branches, should hold under the assumptions~\ref{hyp:moments-cuspidal}. One way to prove this is to use the fact that~$(K, \cF_K)$ is obtained from~$(\dD, \cF_c)$ by induction on~$K$, and follow the arguments worked out in~\cite{PeneThomine2021}. However, following this proof would require quite more about the spectral properties of~$\CC$ (the issue is to show boundedness of the series~(3.4) in~\cite{PeneThomine2021}). An easier alternative way would be to argue that the leading eigenvalues of~$\LL_{s,\vt}$ and~$\CC_{s,\vt}$ equal~$1$ at the same point~$s=s_0(\vt)$, and check that the arguments from this section carry over (compare also the distributional point of view described in~\cite[Section~5]{PeneThomine2021}). In our applications, neither expression gives an advantage over the other for the computation of~$\Sigma_0$, so we content with~\eqref{eq:def-sigmagam}.
\end{remark}

\section{Meromorphic continuation of the quasi-inverse}
\label{sec:continuation-quasiinv}

We let $\dD'':=\dD\setminus (V\cup \partial K)$.
Given $f:\dD''_\Theta\to\C$, let
\begin{align}
  \|f\|_{(\kappa),\dD''_\Theta}&:=\sup_{a,b\in\Sigma,\atop b\neq \hat a}\sup_{\vth\in\Theta,\, x,x'\in\dD''\cap[a,b] \atop x\neq x'}\frac{|f(x,\vth)-f(x',\vth)|}{|x-x'|^\kappa},\label{adaca}\\
  \|f\|_{\kappa,\dD''_\Theta}&:=\|f\|_{\infty,\dD''_\Theta}+\|f\|_{(\kappa),\dD''_\Theta}\notag
\end{align}
and let $H^\kappa_{\dD_{\Theta}''}$ be the space of those functions satisfying $\|f\|_{\kappa,\dD''_\Theta} < \infty$. For $f\in H^\kappa_{\dD''_\Theta}$, we define its values on $\partial K\times\Theta$ by taking limits from within $K$.
Similarly to~\eqref{def1t} we also define 
\begin{align*}
  \|f\|_{1,\tau,\dD''_\Theta}&:=\|f\|_{\infty,\dD''_\Theta}+(1+|\tau|)^{-\kappa}\|f\|_{(\kappa),\dD''_\Theta}.
\end{align*}

For  $s\in\C$ with $\Re(s)>1$ and $\vt\in\R^d$, we define the following operator from $H^\kappa_{\dD''_\Theta}$ to $L^\infty(K_\Theta)$
\begin{align*}
  S_{\LL;s,\vt}[f](y,\vth_1)&:= \sum_{h\in\cH^*,\atop y\in\Dom(h)}e^{i \inpd{\vt, \vth_1  \vphi_h(y)}} |h'(y)|^s \cdot f(h(y), \vth_1 \vth_h(y) ),\qquad \forall (y,\vth_1)\in K_\Theta
\end{align*}
and define similarly $S_{\CC;s,\vt}:L^\infty(\dD''_\Theta)\to L^\infty(\dD''_\Theta)$ in the same way but with the sum ranging over $h\in\cH_c^*$. 

We need perturbation estimates for $\LL_{s,\vt}$ acting on the space of functions with the larger domain $\dD_{\Theta}''$.

\begin{lemma}\label{gvam}
  There exists $\delta>0$, such that for all $(s,\vt)\in\C\times \R^d$ with $\Re(s)>1-\delta$, we have 
  \begin{enumerate}
    \item For all $f\in H^\kappa_{\dD_{\Theta}''}$ we have that $\LL_{s,\vt}[f]\in H^\kappa$ and is holomorphic in $s$.
    \item For all $f\in H^\kappa_{\dD_{\Theta}''}$ and all $\vt\ll1$, $1-\delta<\Re(s)\ll1$, we have
    \begin{align}
      &\|\LL_{s,\vt}[f]\|_{1,\abs{\Im(s)}}\ll \|f\|_{1,\abs{\Im(s)},\dD''_\Theta},\notag\\
      &\|\LL_{s,\vt}[f]-\LL[f]\|_{\infty}\ll (\abs{s-1} + \nvt+\nvt^{\alpha-\eps})\|f\|_{\infty,\dD''_\Theta},\label{acdsaaa}\\
      &\|\LL_{s, \vt}-\LL\|_{\kappa,\dD''_\Theta} \ll \abs{s-1} + \nvt+\nvt^{\alpha-\eps}.\label{acdsaaa2}
    \end{align}
  \end{enumerate}
\end{lemma}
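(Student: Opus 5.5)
The plan is to reduce everything to the estimates already proved for $\LL_{s,\vt}$ acting on $H^\kappa$ (functions on $K_\Theta$). The key point is that, for $h\in\cH$ and $y\in\Dom(h)$, the image $h(y)$ always lies in a single cylinder $[a,b]$ with $b\neq\hat a$ on which the extended function $f\in H^\kappa_{\dD''_\Theta}$ is Hölder. Indeed $h=h_{w,1}$ maps each $K_c\subset\Dom(h)$ into the cylinder $[a_1,a_2]$ (or $[a_1,c]$ if $|w|=1$), and the admissibility of $w$ gives $a_2\neq\hat a_1$. Moreover the image avoids $V$, and since $h(K_c)$ is an interval, points of $\partial K$ inside it can be handled by the limit convention in the definition. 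Hence for each $h$, on each component $K_c$ of $\Dom(h)$ we have
$$\|f\circ h\|_{(\kappa),K_c}\ll \|h'\|_\infty^\kappa\|f\|_{(\kappa),\dD''_\Theta},$$
which is the analogue of~\eqref{hld2}. Also $\|f\circ h\|_\infty\le\|f\|_{\infty,\dD''_\Theta}$.

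With this substitute for~\eqref{hld2}, item (1) follows as before. Absolute and uniform convergence of the series for $\Re(s)\ge\sigma_0$ comes from Proposition~\ref{prop:Lst-bounded}, which only uses boundedness of $f$, so I take $\delta\le 1-\sigma_0$. Holomorphy in $s$ follows from locally uniform convergence of holomorphic summands. Membership in $H^\kappa$ is proved termwise: I bound $\||h'|^s\|_{(\kappa)}\ll|s|^\kappa\|h'\|_\infty^\sigma$ by bounded distortion~\eqref{eq:bounded-dist}, and I bound the Hölder norm of the exponential factor by~\eqref{hld3} together with~\eqref{eq:holder-condition}. The indicator of $y\in\Dom(h)$ is locally constant on $K$.

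For item (2), the first inequality is the $r=1$ case of the argument of Lemma~\ref{est:ly:t-part}, with $f\circ h$ estimated as above. It gives
$$\|\LL_{s,\vt}f\|_{(\kappa)}\ll (|s|^\kappa+\nvt^{\min(1,\lambda)/2})\|f\|_\infty+\|f\|_{(\kappa)}.$$
Multiplying by $(1+|\tau|)^{-\kappa}$, and using $|s|\ll 1+|\tau|$ and $\vt\ll1$, yields the $\|\cdot\|_{1,|\Im s|}$ bound. The bound~\eqref{acdsaaa} is proved verbatim as~\eqref{fbsa}, since that argument used only $\|f\|_\infty$. For~\eqref{acdsaaa2} I follow the four-term splitting in the proof of~\eqref{sbsa}, replacing $\|f\|_{(\kappa)}$ by $\|f\|_{(\kappa),\dD''_\Theta}$ via the displayed composition bound. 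The Hölder–inequality bookkeeping with exponents $p,q,r$ and the choice~\eqref{dfnkappa} is unchanged.

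I expect the main obstacle to be the geometric claim in the first paragraph. It requires checking from Lemma~\ref{lem:invbranch-FK} that for every branch in $\cH$, $h(K_c)$ lies inside a single depth-2 cylinder $[a,b]$ with $b\ne\hat a$ and does not straddle points of $V$. Points of $\partial K$ may still fall inside $h(K_c)$, but this is harmless because the seminorm~\eqref{adaca} is taken over all of $\dD''\cap[a,b]$. Once this is verified, everything else is a transcription of earlier proofs.
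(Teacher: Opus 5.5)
Your proposal is correct and follows the paper's argument. The paper's proof only asserts that the proofs of \eqref{asx1}, Lemma~\ref{est:ly:t-part} (with $r=1$) and \eqref{fbsa}--\eqref{sbsa} carry over to $f\in H^\kappa_{\dD''_\Theta}$. Your observation that each $h(K_c)=g_w(K_c)$ lies in the interior of a single depth-two cylinder $[a_1,a_2]$ (or $[a_1,c]$ if $|w|=1$), with $\vth_h$ constant on $K_c$ and $f$ $\kappa$-H\"older on that cylinder, is exactly the justification left implicit there.

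Two small remarks. First, since $\partial K\subset W$ consists of endpoints of depth-two cylinders, $h(K_c)$ in fact never meets $\partial K$, so your caveat about $\partial K$ is not needed. Second, for the H\"older part of the series you need $\delta\le(1-\sigma_0)/2$, as in Lemma~\ref{est:ly:t-part}, so that the Cauchy--Schwarz step converges; $\delta\le 1-\sigma_0$ alone is not enough.
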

\begin{proof}
  The proof is obtained proceeding as in the proofs of~\eqref{asx1} and~Lemma~\ref{est:ly:t-part} (for $r=1$) and of~\eqref{fbsa}-\eqref{sbsa}. Notice that these results were proven for $\LL_{s,\vt}$ applied to functions $f\in H^\kappa$, but the same arguments carry over for $f\in H^\kappa_{\dD''_{\Theta}}$.
\end{proof}

In the prospect of future applications, notably that in~\cite{BettinEtAl3}, it will be useful to prove an analogous result for $\CC_{s,\vt}$ and $\CC^\sharp_{s,\vt}$, where we recall that $\CC^\sharp_{s,\vt}$ is introduced in Section~\ref{sec:moments-conditions}.
Due to the double-average over inverse branches in hypothesis~\ref{hyp:moments-cuspidal}, it is natural to work with norms that incorporate these averages. Given $f:\dD''_\Theta\to\C$, we let
\begin{align*}
  \|f\|^*_{\infty} &:= \sum_{h\in\cH_c}\|h'\|_{\infty}  \|f\|_{\infty, (\dD''\cap\im(h))_\Theta},\\
  \|f\|^*_{(\kappa)} &:= \sum_{h\in\cH_c}\|h'\|_{\infty}  \max_{a,b\in\Sigma,\atop b\neq \hat a}\|f\|_{(\kappa), (\dD''\cap\im(h)\cap[a,b])_\Theta}
\end{align*}
and define the corresponding norms  $\|f\|^*_{\kappa}$ and $\|f\|^*_{1,\tau}$ as above. Finally, we define a variation with the unaveraged infinity-norm
\begin{align*}
  \|f\|^\dagger_{\kappa} &:=\|f\|_{\infty}+\|f\|^*_{(\kappa)},\\ 
  \|f\|^\dagger_{1,\tau} &:= \|f\|_{\infty}+(1+|\tau|)^{-\kappa}\|f\|^*_{(\kappa)},
\end{align*}
and let $H^\dagger$ be the space of functions with $\|f\|^\dagger_{\kappa} < \infty$.
\begin{lemma}\label{gvam2}
  Assume~\ref{hyp:moments-cuspidal}.
  There exists $\delta,\mu>0$, such that for all $(s,\vt)\in\C\times \R^d$ with $\Re(s)>1-\delta$, we have 
  \begin{enumerate}
    \item For all $f\in H^{\dagger}$ we have that $\CC_{s,\vt}[f],\CC^\sharp_{s,\vt}[f]\in H^{\dagger}$ and are holomorphic in $s$.
    \item There exists $\rho\in(0,1)$, $t_0>0$ such that for all $\nvt\ll t_0$, $1-\delta<\Re(s)\ll1$, we have
    \begin{align}
\|\CC_{s,\vt}\|^\dagger_{1,\abs{\Im(s)}}\ll1,\qquad
\|(\CC^\sharp)^r_{s,\vt}\|^\dagger_{1,\abs{\Im(s)}}\ll \rho^r,\quad r\geq0.\label{eqcs}
    \end{align}
    Moreover, for all $f\in H^{\dagger}$
    \begin{align}\label{petu}
      &\|\CC_{s,\vt}[f]-\CC_{1,0}[f]\|^*_{\kappa},\|\CC^\sharp_{s,\vt}[f]-\CC^\sharp_{1,0}[f]\|^*_{\kappa}\ll (\abs{s-1} + \nvt^{\mu})\|f\|^\dagger_{\kappa}.
    \end{align}
  \end{enumerate}
\end{lemma}
\begin{proof}
  As in the previous Lemma, the same arguments used for $\LL_{s,\vt}$ apply to $\CC_{s,\vt}$ and $\CC^\sharp_{s,\vt}$ as well, with the only slightly delicate estimates being the second bound in~\eqref{eqcs} and~\eqref{petu}. The latter follows similarly to the proof of Lemma~\ref{lem:perturbation-LL1t} with the difference that when we bound the terms involving the H\"older seminorm of $f\circ h$, we use $\| e^{i \langle \vt, \vth_1\vphi_h \rangle}-1 \|_{\infty,\im(k)}  \ll \nvt^\mu \| \vphi_{h}\|_{\infty,\im(k)}^\mu $ for some small $\mu>0$, and bound the average of this as $k\in\cH_c$ by 
\begin{align*}
\sum_{k\in\cH_c}\|k'\|_\infty \|\vphi_h\|_{\infty,\im(k)}^\mu \ll \|h'\|_\infty^{-\mu/\alpha}
\end{align*}
which follows  from~\eqref{eq:moment-condition-cuspidal} and H\"older's inequality.
In particular, the ensuing sum contains the factor $\|h'\|_\infty^{\Re(s)+\kappa-\mu/\alpha} \ll\|h'\|_\infty$ for $\delta,\mu$ sufficiently small, as required for the seminorm $\|\cdot\|^*_{(\kappa)}$.

Finally, proceeding as in the proof of~\eqref{asx1}, using
Lemma~\ref{lem:convergence-H-CF}, we obtain
\begin{align}\label{dadca}
  \|(\CC^\sharp_{s,\vt})^r[f]\|_\infty \ll \rho^r\|f\|_\infty,\qquad r\geq0.
\end{align}
Moreover, grouping the compositions by their first branch and using~\eqref{eq:Hc-contraction}, the same argument of Lemma~\ref{est:ly:t-part} gives
\begin{align*}
  \|(\CC^\sharp_{s,\vt})^m[f]\|^*_{(\kappa)}\ll\rho^{m\kappa/2}\|f\|^*_{(\kappa)}
  +C_m(1+|\Im(s)|)^\kappa\|f\|_\infty,
\end{align*}
for some $C_m>0$. Taking $m$ sufficiently large and fixed, and combining this estimate with~\eqref{dadca} as in the proof of Proposition~\ref{prop:dolgopyat-estimates}, we obtain~\eqref{eqcs}.
\end{proof}

\begin{proposition}\label{prop:spectrum-Lsf}
  There exist $t_0,C,\mu,\delta>0$ such that for $\vt\in\R^d$ with $|\vt|\leq t_0$ we have
  \begin{enumerate}
    \item For all $f\in H^{\kappa}_{\dD_{\Theta}''}$ the map~$s\mapsto S_{\LL;s,\vt}[f]\in H^\kappa$ extends meromorphically to $\Re(s)>1-\delta$, with (at most) a unique simple pole at $s=s_0(\vt)$, with $s_0(\vt)$  as described in Section~\ref{lem:approx-partialder}. Moreover, for $|s-s_0(\vt)|\gg1$, $1-\delta<\Re(s)\ll1$ we have 
    \begin{align}\label{bso} 
      \| S_{\LL;s}[f]\|_{1,\abs{\Im(s)}} \ll (1+\abs{\Im(s)}^{C})\|f\| _{1,\abs{\Im(s)},\dD''_\Theta}.
    \end{align}
    The residue $R_{\LL;\vt}[f](y,\vth)$ at $s_0(\vt)$ is
    \begin{align*}
      R_{\LL;0}[f](y,\vth)&=\frac1{\fd}\Big(\int_{\dD_\Theta} f \df\nu \Big) \xi(y),&& (y,\vth)\in K_\Theta,
    \end{align*}
if $\vt=0$, with $\nu$ extended to $\dD_{\Theta}$ by the same formula~\eqref{fdn}, and otherwise it satisfies
    \begin{align}\label{fcacsaa}
      \|R_{\LL;\vt}[f]-R_{\LL;0}[f]\|_{\kappa,K_\Theta}\ll (\nvt+\nvt^{\alpha-\eps})\|f\|_{\kappa,\dD''_\Theta}.
    \end{align}
    \item Assume~\ref{hyp:moments-cuspidal}. For all $f\in H^\dagger$ the map~$s\mapsto S_{\CC;s,\vt}[f]\in H^\dagger$ extends meromorphically to $\Re(s)>1-\delta$, with (at most) a unique simple pole at $s=s_0(\vt)$. Moreover, for $|s-s_0(\vt)|\gg1$, $1-\delta<\Re(s)\ll1$ we have 
    \begin{align}\label{bsoc} 
      \| S_{\CC;s}[f]\|^\dagger_{1,\abs{\Im(s)}} \ll (1+\abs{\Im(s)}^{C})\|f\|^\dagger_{1,\abs{\Im(s)}}.
    \end{align}
    The residue $R_{\CC;\vt}[f](y,\vth)$ at $s_0(\vt)$ is
    \begin{align}\label{expadca}
      R_{\CC;0}[f](y,\vth)&=\frac1{\fd}\Big(\int_{\dD_\Theta} f \df\nu \Big) \varphi(y),&& (y,\vth)\in\dD''_\Theta
    \end{align}
    if $\vt=0$ and otherwise it satisfies
    \begin{align}\label{fcacsaa2}
      \|R_{\CC;\vt}[f]-R_{\CC;0}[f]\|^*_{\infty}\ll \nvt^\mu \|f\|^\dagger_{\kappa}.
    \end{align}

  \end{enumerate}

\end{proposition}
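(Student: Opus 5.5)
The plan is to reduce everything to the quasi-inverse of the single-step operators. For part (1), Lemma~\ref{lem:invbranch-FK} lets me group $h\in\cH^*$ by depth and write, for $\Re(s)>1$,
\[ S_{\LL;s,\vt}[f]=f|_{K_\Theta}+\sum_{n\geq1}\LL_{s,\vt}^{n-1}\LL_{s,\vt}[f]=f|_{K_\Theta}+(\Id-\LL_{s,\vt})^{-1}\LL_{s,\vt}[f]. \]
Here the cocycle relation~\eqref{cascs} makes the twists multiply correctly, and the first application of $\LL_{s,\vt}$ maps $H^\kappa_{\dD''_\Theta}$ into $H^\kappa$ with the bounds of Lemma~\ref{gvam}. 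This reduces the problem to meromorphic continuation of $(\Id-\LL_{s,\vt})^{-1}$ on $H^\kappa$.

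I will cover the region $\Re(s)>1-\delta$ by three subregions.

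\emph{Near $(1,0)$.} In a neighbourhood $U$ as in Proposition~\ref{prop:spectrum-Ls2}(i), write $\LL_{s,\vt}=\lambda(s,\vt)\PP_{s,\vt}+\NN_{s,\vt}$ with $\spr(\NN_{s,\vt})$ uniformly $<1$. Then
\[ (\Id-\LL_{s,\vt})^{-1}=\frac{\PP_{s,\vt}}{1-\lambda(s,\vt)}+(\Id-\NN_{s,\vt})^{-1}. \]
By Lemma~\ref{lem:existence-s0} and Lemma~\ref{lem:approx-partialder}, $1-\lambda$ has a unique simple zero at $s_0(\vt)$ with derivative $\fd+O(\nvt+\nvt^{\alpha-\eps})$. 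So the residue is $\fd^{-1}\PP_{s_0,\vt}\LL_{s_0,\vt}[f]$ up to this factor. At $\vt=0$, \eqref{eq:def-eigenproj} together with $\int\LL[f]\df\nu=\int_{\dD_\Theta}f\df\nu$ (the analogue of \eqref{eq:transferop-invar}) gives the stated $R_{\LL;0}$. The bound \eqref{fcacsaa} follows from the Kato perturbation bound $\|\PP_{s,\vt}-\PP\|_\kappa\ll|s-1|+\nvt+\nvt^{\alpha-\eps}$, which I get from \eqref{sbsa} and \eqref{acdsaaa2}, together with $|s_0(\vt)-1|\ll\nvt+\nvt^{\alpha-\eps}$ from Lemma~\ref{lem:estim-s0-subclt}.

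\emph{$|\Im s|\geq\tau_1$.} Proposition~\ref{prop:dolgopyat-estimates} gives \eqref{bdasc}, so $(\Id-\LL_{s,\vt})^{-1}$ is holomorphic there with norm $\ll|\tau|^C$ in $\|\cdot\|_{1,\tau}$. Combined with the first bound of Lemma~\ref{gvam}, this yields \eqref{bso}.

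\emph{Intermediate range.} For $\tau_0\le|\Im s|\le\tau_1$, and for real $s$ away from $s_0$, I use Proposition~\ref{prop:spectrum-Ls2}(ii) and compactness to get uniform boundedness. Shrinking $\delta,t_0$ if needed, the only pole is therefore $s_0(\vt)$.

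Part (2) follows the same scheme with $\CC$ in place of $\LL$. Since $\cF_c$ is not uniformly a Markov map with spectral gap on $H^\dagger$ a priori, I will not redo the spectral theory. Instead I express $S_{\CC}$ via $S_{\LL}$ using Lemma~\ref{lem:decomp-Hc}. Every $h\in\cH_c^*$ factors as $\cH$-branches followed by $\cH_c^\sharp$-branches, and the twists are compatible by \eqref{eq:ref-PhiK-Phic}. This gives
\[ S_{\CC;s,\vt}=\sum_{m\ge0}(\CC^\sharp_{s,\vt})^m+\CC_{s,\vt}(\Id-\CC^\sharp_{s,\vt})^{-1}\,\widetilde S\,(\Id-\CC^\sharp_{s,\vt})^{-1}\ldots \]
Concretely, I will write $S_{\CC}=(\Id-\CC^\sharp)^{-1}+E\circ S_{\LL}\circ\CC(\Id-\CC^\sharp)^{-1}$, where $E$ transports functions on $K$ out by $\cH_c^\sharp$-tails, i.e.\ $E=(\Id-\CC^\sharp_{s,\vt})^{-1}$ applied on the outside, taking care of the order of composition. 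Lemma~\ref{gvam2}, in particular \eqref{eqcs}, makes $(\Id-\CC^\sharp_{s,\vt})^{-1}$ holomorphic and bounded on $H^\dagger$. The pole structure and the bound \eqref{bsoc} are then inherited from part (1).

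The residue at $\vt=0$ is $E$ applied to $\fd^{-1}(\int\ldots)\xi$. Equation \eqref{eq:rel-sumxi-phi} in the proof of Lemma~\ref{lem:wrap} identifies $\xi$ propagated by $\cH_c^\sharp$-tails with $\varphi$ on $\dD\setminus K$; on $K$ they coincide. The integral is identified via \eqref{eq:transferop-invarc}. This gives \eqref{expadca}. Finally, \eqref{fcacsaa2} follows from \eqref{petu} and \eqref{fcacsaa}.

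The main obstacle is the compatibility of norms in part (2). $S_{\LL}$ is controlled in $\|\cdot\|_{\kappa}$ on $K$, while $H^\dagger$ uses averaged Hölder seminorms near vertices. I must check that $\CC(\Id-\CC^\sharp)^{-1}$ maps $H^\dagger$ into $H^\kappa_{\dD''_\Theta}$ restricted to $K$ (true since domains in $K$ stay away from $V$, so the $\ast$-weights are $\asymp1$), and that the outer $\cH_c^\sharp$-tail map sends $H^\kappa(K)$ into $H^\dagger$, using \eqref{eq:Hc-contraction} and the $\|h'\|$-weights. If this fails, the fallback is to redo the Kato perturbation argument directly for $\CC_{s,\vt}$ on $H^\dagger$ using \eqref{petu}.
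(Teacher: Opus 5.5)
Your overall route is the paper's. For (1) you use $S_{\LL;s,\vt}[f]=f|_{K_\Theta}+(\Id-\LL_{s,\vt})^{-1}\LL_{s,\vt}[f]$, the rank-one decomposition near $(1,0)$, and Proposition~\ref{prop:spectrum-Ls2}(ii) with \eqref{bdasc} elsewhere. For (2) you use the factorization from Lemma~\ref{lem:decomp-Hc}, which is the paper's $\sum_{m,\ell\ge0}(\CC^\sharp_{s,\vt})^m\LL^\ell_{s,\vt}$, and identify the residue at $\vt=0$ through \eqref{eq:rel-sumxi-phi}. This correctly gives the continuation, \eqref{bso}, \eqref{bsoc}, \eqref{fcacsaa} and both residue formulas at $\vt=0$. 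There is one small slip: the resolvent is $\frac{\PP_{s,\vt}}{1-\lambda(s,\vt)}-\PP_{s,\vt}+(\Id-\NN_{s,\vt})^{-1}$. The missing $-\PP_{s,\vt}$ is holomorphic, so nothing changes.

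The genuine gap is \eqref{fcacsaa2}, which does not follow just by combining \eqref{petu} and \eqref{fcacsaa}. Write $A_{s,\vt}:=(\Id-\CC^\sharp_{s,\vt})^{-1}$. The $\CC$-residue is $A_{s_0(\vt),\vt}$ applied to $\chi_K$ times an $\LL$-residue, so you need two things:
\begin{itemize}
\item control of $(A_{s,\vt}-A_{1,0})F$ in $\|\cdot\|^*_\infty$;
\item control of the perturbation of the inner input $\chi_K\LL_{s,\vt}[f]$ for $f\in H^\dagger$.
\end{itemize}
The bound \eqref{petu} makes $\CC^\sharp_{s,\vt}-\CC^\sharp$ small only into the averaged norm $\|\cdot\|^*_\kappa$. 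Your only control on $A_{s,\vt}$, namely \eqref{eqcs} and \eqref{dadca}, is in norms built on the unaveraged $\|\cdot\|_\infty$. Moreover $\CC^\sharp_{s,\vt}$ is not bounded on $\|\cdot\|^*_\infty$ once $\Re(s)<1$, since the weights $\|h'\|_\infty^{\sigma}$ beat $\|h'\|_\infty$ on deep parabolic branches. That is exactly the situation at $s_0(\vt)$, because $\Re s_0(\vt)\le 1$ with strict inequality in general. So the resolvent identity $A_{s,\vt}-A_{1,0}=A_{s,\vt}(\CC^\sharp_{s,\vt}-\CC^\sharp)A_{1,0}$ cannot be closed with the bounds you cite. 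Also, \eqref{fcacsaa}, \eqref{sbsa} and \eqref{acdsaaa2} concern $H^\kappa$-type inputs, not $H^\dagger$.

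The missing step is the paper's. Write $(A_{s,\vt}-A_{1,0})F=g+A_{s,\vt}\CC^\sharp_{s,\vt}[g]$ with $g=(\CC^\sharp_{s,\vt}-\CC^\sharp)A_{1,0}[F]$, and bound $\|g\|^*_\infty$ by \eqref{petu}. Then use Cauchy--Schwarz,
\[ \|\CC^\sharp_{s,\vt}[g]\|_\infty\ll\sum_{h\in\cH_c}\|h'\|_\infty^\sigma\|g\|_{\infty,\im(h)}\ll(\|g\|^*_\infty)^{1/2}\|g\|_\infty^{1/2}, \]
to turn averaged smallness into sup-norm smallness. After that \eqref{dadca} applies, and the square root is absorbed by shrinking $\mu$. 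Separately, rerun the proof of \eqref{petu} for $\LL_{s,\vt}$ itself, grouping branches of $\cH$ by their first cuspidal factor, to get $\|(\LL_{s,\vt}-\LL)[f]\|_\kappa\ll(|s-1|+\nvt^\mu)\|f\|^\dagger_\kappa$.

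For the same reason your fallback would not work either. A Kato perturbation argument for $\CC_{s,\vt}$ on $H^\dagger$ needs $\CC_{s,\vt}-\CC$ small in operator norm on $H^\dagger$, and \eqref{petu} does not give that.
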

\begin{proof}
  We start with (1), for which the argument is a straightforward generalization of \emph{e.g.}~\cite[proof of Proposition~6.9]{BettinDrappeau2022}.
  By Lemma~\ref{gvam}, we have $\LL_{s,t}[f]\in H^\kappa$. Then, for $\Re(s)>1$ we have
  \begin{align}
    S_{\LL;s,\vt}[f]&=\sum_{n\geq 0}\LL_{s,t}^n[f]
    =\Big(f|_{K_\Theta}+ (\id-\LL_{s,t})^{-1}[\LL_{s,t}[f]]\Big).
  \end{align}
  For~$(s,\vt)$ in a neighborhood of~$(1,0)$ with~$\Re(s)>1$, proceeding as~\cite[eq.~(3.35)]{BaladiVallee2005} or~\cite[Proposition~6.9]{BettinDrappeau2022} using Proposition~\ref{prop:spectrum-Ls2} (i),
  we have that $\LL_{s,\vt}$ (acting on $H^\kappa$) can be decomposed as
  \begin{align}
    \LL_{s,t}=\lambda(s,\vt)\PP_{s, \vt}+\NN_{s,\vt},
  \end{align}
  where $\PP_{s, \vt},\NN_{s, \vt}$ are operators on $H^\kappa$ which are analytic in $s$ for $(s,\vt)$ in a neighborhood of $(1,0)$. Also, $\PP_{s, \vt}$ is of rank 1 and one has $\PP_{s, \vt}\NN_{s,\vt}=\NN_{s,\vt}\PP_{s, \vt}=0$, $\PP_{s, \vt}^2=\PP_{s, \vt}$ and $\spr(\NN_{s, \vt})<\rho$. It follows that 
  $$ (\id - \LL_{s, \vt})^{-1} = \frac{\PP_{s, \vt}}{1 - \lambda(s, \vt)}-\PP_{s, \vt} + (\id - \NN_{s, \vt})^{-1}. $$
  Thus 
  \begin{align}
    S_{\LL;s,\vt}[f]&=f|_{K_\Theta}+\Big( \frac{\PP_{s, \vt}\LL_{s,t}}{1 - \lambda(s, \vt)}-\PP_{s, \vt}\LL_{s,t}+(\id - \NN_{s, \vt})^{-1}\LL_{s,t}\Big)[f].
  \end{align}
  This provides the claimed analytic continuation in $s$ for $\abs{\Im(s)}\leq \tau_0$, $\Re(s)>1-\delta_0$, $\nvt<t_0$ for some $\tau_0,\delta_0,t_0>0$, with the possible simple pole as $s=s_0(\vt)$. Moreover, by the expression~\eqref{eq:def-eigenproj} for $\PP_{1,0}$ and~\eqref{eq:transferop-invar} we have
  \begin{align}\label{fspm}
    \PP_{1, 0}\LL_{1,0}[f]= \xi \cdot \int_{K_\Theta} \LL[f]\df \nu= \xi \cdot \int_{\dD_\Theta} f\df\nu.
  \end{align}
  and thus, by~\eqref{eq:partialder-lambda}, we get the claimed expression for the residue of $S_{\LL;s,0}[f]$ as $s_0(0)=1$.

  The analytic continuation for $\abs{\Im(s)}>\tau_0$ and the bound~\eqref{bso} follow by~\eqref{bdasc}, Proposition~\ref{prop:spectrum-Ls2} (ii), and Lemma~\ref{gvam}. Finally~\eqref{fcacsaa} follows from~\eqref{acdsaaa2},~\eqref{sbsa} and~\cite{Kloeckner2019}. This concludes (1).
    \medskip

  We now move to (2).  By Lemma~\ref{lem:decomp-Hc} for $\Re(s)>1$ we have
  \begin{align}
    \sum_{m\geq0}\CC_{s,\vt}^m= \sum_{m,\ell\geq0}(\CC^\sharp_{s,\vt})^m\LL_{s,\vt}^\ell ,
  \end{align}
  where $\LL_{s,\vt}^\ell$ is trivially extended to an operator on functions with domain $\dD_{\Theta}''$ by setting its image to be zero outside $K_\Theta$ if $\ell\geq1$.
  
For $f\in H^\dagger$, since $K_\Theta$ is covered by finitely many branches the same estimates as in
Lemma~\ref{gvam2} give $\|(\CC_{s,\vt}[f])|_{K_\Theta}\|_{1,\abs{\Im(s)}}  \ll \|f\|^\dagger_{1,\abs{\Im(s)}}.$
Thus, by~\eqref{eq:rel-LLs-CCs} and Lemma~\ref{gvam2} we have $\|\LL_{s,\vt}[f]\|_{1,\abs{\Im(s)}}  \ll \|f\|^\dagger_{1,\abs{\Im(s)}}$ and the same bound holds trivially for $\|\LL_{s,\vt}[f]\|_{1,\abs{\Im(s)}}^\dagger$ when $\LL_{s,\vt}[f]$ is extended to $\dD_{\Theta}''$ as above.
The result then follows by the spectral estimates used in (1) and Lemma~\ref{gvam2}, which implies $\sum_{m\geq0}(\CC^\sharp_{s,\vt})^m$ extends analytically to $\Re(s)>1-\delta$, save for the claims~\eqref{expadca}-\eqref{fcacsaa2} on $R_{\CC;\vt}$.

By~\eqref{eq:transferop-invar} and~\eqref{eq:partialder-lambda}, we have
\begin{align*}
  R_{\CC;0}[f]=\frac1{\fd}\Big(\int_{\dD_\Theta}f\df\nu\Big) \sum_{m\ge0}(\CC^\sharp)^m[\xi\chi_K].
\end{align*}
which yields the desired expression for $R_{\CC;0}[f]$ since by~\eqref{eq:rel-sumxi-phi}
  \begin{align*}
    \sum_{m\geq0}(\CC^\sharp_{1,0})^m[\xi\chi_K] = \xi\chi_K + \sum_{m\geq1}(\CC^\sharp_{1,0})^m[\xi\chi_K] = \xi\chi_K + \varphi(1-\chi_K) = \varphi,
  \end{align*}
  as $\varphi=\xi$ on $K$.
  
   Finally, we prove~\eqref{fcacsaa2}. Letting $A_{s,\vt}=(\Id-\CC^\sharp_{s,\vt})^{-1}$, the decomposition above gives
\begin{align*}
  R_{\CC;\vt}[f]  =-\frac{
    A_{s_0(\vt),\vt}[\chi_K\PP_{s_0(\vt),\vt}\LL_{s_0(\vt),\vt}[f]]}{\partial_s\lambda(s_0(\vt),\vt)},
\end{align*}
for $\vt$ sufficiently close to zero. By the perturbation estimates for $\LL_{s,\vt}$, it suffices to estimate $\|(A_{s,\vt}-A_{1,0})[f]\|_\infty^*$ and $\|(\LL_{s,\vt}-\LL)[f]\|_{\kappa}$ for $f\in H^\dagger$ for $s,\vt$ in a neighbourhood of $(1,0)$.
For the latter estimate, the same argument as in the proof of~\eqref{petu}, grouping
the inverse branches by their first cuspidal branch, gives $\|\LL_{s,\vt}[f]-\LL[f]\|_\kappa \ll (|s-1|+\nvt^\mu)\|f\|^\dagger_\kappa$, decreasing $\mu>0$ if needed. For the former, we have
\begin{align*}
  (A_{s,\vt}-A_{1,0})[f]  =g+A_{s,\vt}\CC^\sharp_{s,\vt}[g].
\end{align*}
with $g:=(\CC^\sharp_{s,\vt}-\CC^\sharp)A_{1,0}[f].$ By~\eqref{petu} we have $\|g\|_\infty^*
  \ll (|s-1|+\nvt^\mu)\|f\|^\dagger_\kappa$ and so it suffices to bound the second summand. Applying Cauchy-Schwarz we have
\begin{align*}
  \|\CC^\sharp_{s,\vt}[g]\|_\infty  \ll \sum_{h\in\cH_c}\|h'\|_\infty^\sigma \|g\|_{\infty,\im(h)}\ll (\|g\|_\infty^*)^{1/2}\|g\|_\infty^{1/2}\ll (|s-1|+\nvt^\mu)^{1/2}\|f\|^\dagger_\kappa,
\end{align*}
by~\eqref{eqcs}-\eqref{petu}. By~\eqref{dadca} we have $\|A_{s,\vt}\|_\infty\ll1$ and thus we deduce that the same bound holds for $\|A_{s,\vt}\CC^\sharp_{s,\vt}[g]\|_\infty$, completing the proof.
\end{proof}

\begin{remark}\label{unifo}
  The implicit constants and the parameters $t_0,\delta$ in Proposition~\ref{prop:spectrum-Lsf} can depend on the group $G$, on the choice of the fundamental domains $R$, and on~$\Theta$; for item (1), they may also depend on the values of the series~\eqref{eq:moment-condition}--\eqref{eq:holder-condition}, while for item (2) they may depend on the values of~\eqref{eq:moment-condition-cuspidal}--\eqref{eq:holder-condition-cuspidal} (the expression for $s_0(\vt)$ in Lemma~\ref{lem:estim-s0-subclt} may depend on~\eqref{eq:moment-condition-cuspidal2}). There is no further dependence on~$(\vphi_h)_h$ besides for these bounds. 
\end{remark}

\begin{remark}\label{comple}
  Proposition~\ref{prop:spectrum-Lsf} holds also in the case when~$\vt$ and the values of~$\vphi_h$ are in~$\C^d$ rather than in~$\R^d$.
  This follows by identical arguments, requiring the hypotheses to be adapted as follows:
  for (1) it suffices to replace the conditions~\eqref{eq:moment-condition}--\eqref{eq:holder-condition} of~\ref{hyp:moments} by the stronger condition (finite exponential moments)
  \begin{equation}\label{com1}
    \sum_{h\in\cH} \| h'\|_\infty (1+\|\varpi \vphi_{h} \|^\lambda_{(\kappa_0)} )e^{\varpi \|\vphi_{h}\|_{\infty} } \leq C
  \end{equation}
  for some $\lambda, \varpi, C>0$ (see also~\cite[eq.~(2.5)]{BaladiVallee2005}).
  The conclusion of Proposition~\ref{prop:spectrum-Lsf}, item (1), then holds, uniformly in~$\vphi$ and~$\varpi$ for $\nvt<t_0\varpi$, for some $t_0>0$, so long as~$\lambda$ and~$C$ are fixed, with $\nvt+\nvt^{\alpha-\eps}$ in~\eqref{fcacsaa} replaced by $\nvt/\varpi+(\nvt/\varpi)^{\alpha-\eps}$. For (2) it suffices instead to replace the conditions~\eqref{eq:moment-condition-cuspidal}-\eqref{eq:holder-condition-cuspidal} of~\ref{hyp:moments-cuspidal} with
  \begin{align}\label{com2}
    \ssum{h_1, h_2 \in \cH_c}  \|h_1'\|_{\infty}\|h_2'\|_\infty (1+\| \varpi\vphi_{h_1}|_{\im(h_2)} \|_{(\kappa_0)}^\lambda)e^{\varpi \|\vphi_{h_1}\|_{\infty} } \leq C.
  \end{align}
The same uniformity holds for item (2), with $\nvt^\mu$ in~\eqref{fcacsaa2} replaced by $(\nvt/\varpi)^\mu$.
Notice that these conditions only guarantee the absolute convergence of the series $  S_{\LL}$ and $S_{\CC}$  in a half-plane sufficiently far to the right, cf.~\cite{BaladiVallee2005}.  Because the proof is essentially identical to the real case, we omit the details.
\end{remark}

\section{Meromorphic continuation of the generating series at cusps} \label{sec:est:mgf}

In this section, we fix~$\cb\in C(G)$. We let
$$ \psi : \dD \to \C $$
be a fixed smooth map whose compact support avoids~$1$, and recall the definitions~\eqref{eq:def-Y} and~\eqref{eq:def-qx}. In particular, by Lemma~\ref{lem:conv-sum-q}, we have
$$ \sum_{x\in G\cb\setminus\{1\}} \abs{\psi(x)} q(x)^{-2-\eps} < \infty, \qquad (\eps > 0). $$
We denote also
\begin{equation}
  \psi_s(x, \vth) := \begin{cases} \abs{\frac{x-1}2}^{-2s}\psi(x), & (\vth = 1,\ x\neq1), \\ 0 & (\text{otherwise}). \end{cases}\label{eq:def-psis}
\end{equation}
Finally, we recall that the finite set $W\subset K$ is defined in~\eqref{defW}.

We are interested in the complex-analytic behaviour of the series
\begin{equation}
  \fS(s, \vt) := \sum_{x\in G\cb\setminus \{1\}} \psi(x) q(x)^{-2s} e^{i\inpd{\vt, \qf(x)}},\label{eq:def-fS}
\end{equation}
where $\qf:C(G) \to \R^d$ satisfies the relation
\begin{align} \label{qmfr}
  \qf(h(y)) = \vth_h(y)^{-1}(\vphi_h(y) + \qf(y)),\qquad y\in C(G)\cap\Dom(h),
\end{align}
for all $h\in\cH$ (this is special case of~\eqref{eq:qmf}).

The following lemma expresses~$\fS(s, \vt)$ in terms of the quasi-inverse~$S_{\LL;s,\vt}$ which we analyzed in the previous section.
\begin{lemma}\label{acdw|}
  For~$\Re(s)>1$ and any $\vt\in\R^d$, we have
  \begin{equation}
    \begin{aligned}
      \fS(s, \vt) ={}& \ssum{x\in G\cb \cap V} \Upsilon(x)^{-2s} e^{i\inpd{\vt, \qf(x)}} \psi_s(x, 1) \\
      {}& + \ssum{x\in G\cb \cap W \\ \vth\in \Theta} \Upsilon(x)^{-2s} e^{i \inpd{\vt, \vth \qf(x)}} S_{\LL;s,\vt}[\psi_s](x, \vth).
    \end{aligned}\label{eq:fS-expr-LL}
  \end{equation}
\end{lemma}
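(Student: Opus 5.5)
The plan is to partition the orbit $G\cb\setminus\{1\}$ according to the backward dynamics of $\cF_K$, terminating at the finite set $V\cup W$.

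\textbf{Step 1: the key factorization.} Using the cocycle \eqref{eq:denom-cocycle}, for $x\in G\cb$ and a branch $h$ with $y\in\Dom(h)$, $x=h(y)$, we get
\[
  \Upsilon(h(y))=\abs{j(g_w,y)}\Upsilon(y)
  \quad\text{and}\quad
  \abs{j(g_w,y)}^{-2}=\abs{h'(y)}.
\]
The second identity holds because $g_w\in\SU(1,1)$ and $\abs{y}=1$. Since $q(x)=\abs{\tfrac{x-1}{2}}\Upsilon(x)$, this gives
\[
  \psi(x)q(x)^{-2s}=\Upsilon(y)^{-2s}\abs{h'(y)}^s\psi_s(h(y),1).
\]
The first term on the right of \eqref{eq:fS-expr-LL} corresponds to $h=\id$ with $x\in V$. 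Note that $1\in V$ is excluded, since $\psi_s$ vanishes there.

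\textbf{Step 2: the combinatorial decomposition.} Each cusp $x\in G\cb\setminus V$ is eventually mapped by $\cB$ to a vertex. I would show that there is a unique $r\geq0$ and a unique $h\in\cH^r$ with $x=h(y)$ for some $y\in W\cap\Dom(h)$, with $y\in G\cb$ automatically.

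To do this, write $x=[a_1a_2\dotsb]$ with its eventually extreme coding (two codings exist, which is the main subtlety). Let $n$ be minimal with $\cB^n(x)\in V$; the point $z:=\cB^{n-1}(x)$ then satisfies $\cB(z)\in V$, hence $z\in W$ by \eqref{defW}. Applying Lemma~\ref{lem:invbranch-FK} along the $\cF_K$-orbit gives $x\in\cF_K^{-r}(z)$ with $r$ the number of $K$-visits. The point to check is that $\cF_K$ is defined up to $z$, i.e. that the orbit does not hit $V$ before $z$ and that $z\in K$; both follow from minimality of $n$ together with $W\subset K$.

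Uniqueness then follows because the $\cF_K$-orbit of $x$ is deterministic and $W\cap\cF_K^{-m}(W)=\varnothing$ for $m\geq1$ along cusp orbits. The latter holds because points of $W$ map under $\cB$ into $V$, so their forward $\cF_K$-orbit stops.

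\textbf{Step 3: the multiplier.} By iterating \eqref{qmfr} along $h=h_1\circ\dotsb\circ h_r$ with \eqref{cascs}, we get $\qf(h(y))=\vth_h(y)^{-1}(\vphi_h(y)+\qf(y))$. Reindexing by $\vth:=\vth_h(y)^{-1}$ makes the sum over $\Theta$ absorb the twist. Since $\psi_s(\cdot,\vth')$ vanishes unless $\vth'=1$, the terms of $S_{\LL;s,\vt}[\psi_s](y,\vth)$ contributing are exactly those with $\vth\vth_h(y)=1$. This yields the exponent
\[
  \inpd{\vt,\vth\vphi_h(y)+\vth\qf(y)},
\]
matching \eqref{eq:fS-expr-LL}.

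\textbf{Step 4: convergence and the main obstacle.} Absolute convergence for $\Re(s)>1$ follows from Lemma~\ref{lem:conv-sum-q}, which justifies the rearrangements. The step I expect to be hardest is Step 2: handling cusps lying on $\partial K$ or equal to points of $W$, the double coding of cusps, and making sure every $x\notin V$ is counted exactly once.
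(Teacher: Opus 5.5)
Your proposal is correct, but it argues differently from the paper. You build a global bijection between $G\cb\setminus V$ and the set of pairs $(y,h)$ with $y\in W\cap G\cb$, $h\in\cH^*$ and $y\in\Dom(h)$. You do this as follows:
\begin{enumerate}
\item follow the $\cB$-orbit of a cusp until it first hits $V$, which happens in finite time because cusps have eventually extreme codings;
\item note that the orbit lies in $W$ one step earlier, by \eqref{defW};
\item invoke Lemma~\ref{lem:invbranch-FK} at all depths $r$;
\item obtain uniqueness from $\cF_K(W)\subset V$.
\end{enumerate}
The identity then becomes a rearrangement of the absolutely convergent series defining $\fS$.

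The paper never constructs this global correspondence. It proves only the one-step ``renewal'' identity $T_K=T_W+T_K\circ\LL_{s,\vt}$ (a cusp in $K$ is either in $W$ or returns to $K$ under $\cF_K$), together with $T_O=T_K\circ\LL_{s,\vt}$. It iterates this $J$ times and kills the remainder $T_K(\LL_{s,\vt}^J[\psi_s])$ using the contraction bound \eqref{asx1} for $\Re(s)>1$ and Lemma~\ref{lem:conv-sum-q}.

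\emph{What each route buys.}
\begin{itemize}
\item The paper's route needs neither that every cusp reaches $W$ in finitely many steps nor uniqueness across different depths $r$, since termination is supplied by the spectral estimate. The boundary and double-coding issues you single out as the main obstacle never come up there. They are milder than you fear in your route too: $\cB$ is defined pointwise on $\dD'$, so the two codings of a cusp only matter at the terminal vertex.
\item Your route is more elementary, using no Lasota--Yorke input. As a by-product it shows directly that the series defining $S_{\LL;s,\vt}[\psi_s](y,\vth)$ converges absolutely at $y\in W\cap G\cb$, as a subseries of $\fS$.
\end{itemize}

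One small imprecision: $1$ need not be a vertex of $R$. This is harmless, since $\psi_s(1,\cdot)=0$ by definition, so $x=1$ contributes zero wherever it lands in your bijection.
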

\begin{proof} 
  Define the sets
  $$ X_V := G\cb \cap V, \qquad X_W := G\cb \cap W, \qquad X_K := G\cb \cap K, \qquad X_O := G\cb \setminus (X_V \cup X_W). $$
  For any~$f\in L^{\infty}(\dD_\Theta)$ and $*\in\{V,W,K,O\}$,  we define the functional
  $$ T_\ast(f) := \ssum{(x, \vth) \in X_\ast\times\Theta} f(x, \vth) \Upsilon(x)^{-2s} e^{i\inpd{\vt, \vth\qf(x)}}. $$
  We first show that
  \begin{equation}
    T_K = T_W + T_K\LL_{s, \vt} \qquad \text{on } L^\infty(K_\Theta).\label{eq:SK-SWSKL}
  \end{equation}
  Consider~$x\in X_K$. If~$\cB(x) \in V$, then~$x\in W$, and otherwise we have~$\cF_K(x) \in K$. Thus we have a splitting
  $$ T_K(f) = T_W(f) + T(f), $$
  where
  $$ T(f) = \ssum{(x,\vth)\in X_K\times \Theta\\ \cF_K(x) \in K}  f(x, \vth) \Upsilon(x)^{-2s} e^{i\inpd{\vt, \vth\qf(x)}}. $$
  We wish to show that~$T(f) = T_K(\LL_{s,\vt}[f])$ for~$f \in L^\infty(K_\Theta)$. We sort the sum according to the value of~$y = \cF_K(x) \in K$. The condition~$x\in G\cb$ is equivalent to~$y\in G\cb$; and given~$y$, the associated values of~$x$ are given by~$h(y)$ for~$h\in \cH$ and~$y\in \Dom(h)$. We thus get
  $$ T(f) = \sum_{y\in X_K} \sum_{\vth\in \Theta} \ssum{h\in \cH \\ y \in \Dom(h)} f(h(y), \vth) \Upsilon(h(y))^{-2s} e^{i\inpd{\vt, \vth\qf(h(y))}}. $$
  Let~$w\in \Lambda_K(1)$ be the word which determines~$h = h_{w, 1}$ in the notation~\eqref{eq:def-hw}, and let~$g := g_w$, so that~$h(y) = g_w y$. Then~$\abs{h'(y)} = \abs{j(g, y)}^{-2}$, so that by~\eqref{eq:denom-cocycle}, we get
  $$ \Upsilon(h(y)) = \abs{h'(y)}^{-1/2} \Upsilon(y). $$
  Recalling~\eqref{qmfr} and changing variables in the~$\vth$-sum, we then obtain
  $$ T(f) = \sum_{y\in X_K}  \Upsilon(y)^{-2s}  \sum_{\vth\in \Theta} e^{i\inpd{\vt, \vth\qf(y)}} \ssum{h\in \cH \\ y \in \Dom(h)} \abs{h'(y)}^s e^{i\inpd{\vt, \vth \vphi_h(y)}}f(h(y), \vth\vth_h(y)) . $$
  We recognize the inner sum as~$\LL_{s,\vt}[f](y, \vth)$, which gives~$T(f) = T_K(\LL_{s,\vt}[f])$ as required.

  We now consider~$f\in L^\infty(\dD_\Theta)$; note that~$\LL_{s, \vt}[f] \in L^\infty(K_\Theta)$.
  We observe that, since~$x\in X_O$ is equivalent to~$\cF_K(x) \in K$ for $x\in G\cb$, arguing as above we have
  $$ T_O(f) = T_K(\LL_{s, \vt}[f]). $$
  Recall the definition~\eqref{eq:def-fS}. By~\eqref{eq:def-qx} and~\eqref{eq:def-psis}, we have
  $$ \fS(s, \vt) = \sum_{(x,\vth)\in G\cb\times\Theta} \psi_s(x, \vth) \Upsilon(x)^{-2s} e^{i\inpd{\vt, \vth\qf(x)}} $$
  with~$\psi_s$ defined in~\eqref{eq:def-psis}. We split the sum according to the partition~$G\cb = X_V \cup X_W \cup X_O$. This yields
  $$ \fS(s, \vt) = [T_V + T_W + T_O](\psi_s) = T_V(\psi_s) + T_W(\psi_s) + T_K(\LL_{s,\vt}[\psi_s]). $$
  Since~$1\not\in\supp\psi$, we have~$\psi_s\in L^\infty(\dD_\Theta)$, and therefore by Proposition~\ref{prop:Lst-bounded} we have
  $$ \LL_{s,\vt}[\psi_s] \in L^\infty(K_\Theta). $$
  Iterating the relation~\eqref{eq:SK-SWSKL}, we find for all~$J\geq 2$
  $$ T_K(\LL_{s,\vt}[\psi_s]) = T_W\Big(\sum_{j=1}^{J-1} \LL_{s,\vt}^j[\psi_s]\Big) + T_K(\LL_{s,\vt}^J[\psi_s]). $$
  Since $\Re(s)>1$, by~\eqref{asx1} we deduce that~$\|\LL_{s,\vt}^J[\psi_s]\|_{\infty, K_\Theta} \ll \rho^J$ as~$J\to \infty$. By Lemma~\ref{lem:conv-sum-q}, we deduce~$T_K(\LL_{s,\vt}^J[\psi_s])\ll \rho^J$ as~$J\to \infty$, and therefore
  $$ \fS(s, \vt) = T_V(\psi_s) + T_W\big(\sum_{j\geq0}\LL_{s,\vt}^j[\psi_s]\big), $$
  which is the claimed expression.
\end{proof}

By Proposition~\ref{prop:spectrum-Lsf} and Lemma~\ref{acdw|} we can immediately deduce the meromorphic continuation for $\fS(s, \vt)$ to the left of $\Re(s)=1$.

\begin{proposition}\label{prop:anaprop-genseries}
Assume $\max_{x\in V\cup W}|\qf(x)|\ll 1$.   For some~$t_0, \delta_0>0$, the following holds whenever~$\nvt \leq t_0$.
  \begin{enumerate}
    \item The map~$s\mapsto \fS(s, \vt)$, initially defined and analytic on~$\Re(s)>1$, extends to a meromorphic function of~$s$ in the strip
    \begin{equation}
      \Re(s) \geq 1 - \delta_0.\label{eq:s-half-plane}
    \end{equation}
    \item The map~$s\mapsto \fS(s, \vt)$ is analytic everywhere except, possibly, at a single point~$s_0(\vt)$, with $s_0(\vt)$ as in Lemma~\ref{lem:existence-s0}, where it has (at most) a simple pole.
    \item If $\vt=0$ the possible pole at $s_0(0)=1$ has residue
    \begin{align*}
      \frac{1}{\fd} \int_{\dD} \frac{4 \psi(y)}{|y-1|^2} \frac{\df s(y)}{\fm}\cdot \ssum{x\in G\cb\setminus V\atop \cB(x)\in V} \Upsilon(x)^{-2} \xi(x) =
      \frac{1}{\pi \vol(G \backslash \D)} \int_{\dD} \psi(y) \frac{2\df s(y)}{|y-1|^2}
    \end{align*}
    and in general the residue is equal to this expression up to $O(\nvt+\nvt^{\alpha-\eps})$, where the implicit constant may depend also on $\max_{x\in W}|\qf(x)|$
    \item For some~$C>0$ and all~$s$ in~\eqref{eq:s-half-plane} with~$\abs{s - s_0(\vt)} \gg 1$ and $\Re(s)\ll1$, we have
    $$ \abs{\fS(s, \vt)} \leq (2 + \abs{\Im(s)})^C. $$
  \end{enumerate}
\end{proposition}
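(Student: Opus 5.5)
Starting from the identity~\eqref{eq:fS-expr-LL} of Lemma~\ref{acdw|}, I will split $\fS(s,\vt)$ into its two sums. The first sum is over the finite set $G\cb\cap V$. Since $V$ is finite and $\psi_s(x,1)=\abs{\frac{x-1}{2}}^{-2s}\psi(x)$ with $1\notin\supp\psi$, it is entire in $s$ and bounded polynomially in $\abs{\Im(s)}$ in any vertical strip. The same holds for the weights $\Upsilon(x)^{-2s}e^{i\inpd{\vt,\vth\qf(x)}}$ in the second sum, since $W$ is finite. Here the hypothesis $\max_{V\cup W}\abs{\qf}\ll1$ makes the phases bounded uniformly. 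All the analytic content therefore sits in $S_{\LL;s,\vt}[\psi_s](x,\vth)$ for $x\in W$.

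For items (1), (2) and (4) I apply Proposition~\ref{prop:spectrum-Lsf}(1) to $f=\psi_s$. The difficulty is that $\psi_s$ itself depends on $s$. I handle it by noting that $\psi_s\in H^\kappa_{\dD''_\Theta}$, with $\|\psi_s\|_{1,\abs{\Im s},\dD''_\Theta}\ll (1+\abs{\Im s})^{\kappa}$, and that $s\mapsto\psi_s$ is holomorphic into this space. I then rerun the decomposition from the proof of Proposition~\ref{prop:spectrum-Lsf}:
$$S_{\LL;s,\vt}[\psi_s]=\psi_s|_{K_\Theta}+\frac{\PP_{s,\vt}\LL_{s,\vt}[\psi_s]}{1-\lambda(s,\vt)}+\dotsb,$$
where the remaining terms are holomorphic. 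This gives meromorphic continuation to $\Re(s)\geq1-\delta_0$ with at most a simple pole where $\lambda(s,\vt)=1$, namely at $s_0(\vt)$ by Lemma~\ref{lem:existence-s0}. Away from the pole, the bound~\eqref{bso} combined with the polynomial bound on $\psi_s$ yields item (4). Point evaluation at $x\in W\subset\partial K$ is legitimate because functions in $H^\kappa$ are extended to $\partial K$ by limits.

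For item (3), the residue at $s_0(0)=1$ is obtained by replacing $\psi_s$ by $\psi_1$ and using the residue formula $R_{\LL;0}[\psi_1]=\fd^{-1}\big(\int\psi_1\df\nu\big)\xi$. Since $\int_{\dD_\Theta}\psi_1\df\nu=\fm^{-1}|\Theta|^{-1}\int_{\dD}\frac{4\psi(y)}{|y-1|^2}\df s(y)$, summing over $\vth$ cancels the $|\Theta|^{-1}$. The residue is then
$$\frac1\fd\int\frac{4\psi}{|y-1|^2}\frac{\df s}{\fm}\sum_{x\in G\cb\cap W}\Upsilon(x)^{-2}\xi(x),$$
and $G\cb\cap W$ is exactly the set $\{x\in G\cb\setminus V:\cB(x)\in V\}$ by~\eqref{defW}. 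For $\vt\neq0$, the residue differs from this by $O(\nvt+\nvt^{\alpha-\eps})$, using~\eqref{fcacsaa}, the Lipschitz bound $s_0(\vt)-1\ll\nvt+\nvt^{\alpha-\eps}$ from Lemma~\ref{lem:estim-s0-subclt}, and $|e^{i\inpd{\vt,\vth\qf(x)}}-1|\ll\nvt\max_W|\qf|$.

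The main obstacle is the identification of this residue with $\frac{1}{\pi\vol(G\backslash\D)}\int\psi\frac{2\df s}{|y-1|^2}$, i.e.\ evaluating the constant $\sum_{x\in G\cb\cap W}\Upsilon(x)^{-2}\xi(x)$ in terms of $\fd,\fm$ and the covolume. Rather than computing it directly, I will compare with Lemma~\ref{lem:psi-totalmass}. Take $\vt=0$ and $\psi\ge0$ with nonzero integral. Since $\fS(s,0)=\sum_{x}\psi(x)q(x)^{-2s}$, a standard Tauberian/Perron argument using the continuation, the polynomial bound (4), and positivity shows that the residue $c$ at $s=1$ forces $\sum_{q(x)\le Q}\psi(x)\sim cQ^2$. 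Lemma~\ref{lem:psi-totalmass} then identifies $c$. Because both expressions are linear in $\psi$, with the same functional $\int\psi\frac{\df s}{|y-1|^2}$, this pins down the constant for all $\psi$. As a consistency check, this is also Abramov-type: $\fd$ is the entropy of $\cF_K$ relative to the flow.
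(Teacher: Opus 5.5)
Your proposal is correct and follows the paper's argument: the $V$-sum in \eqref{eq:fS-expr-LL} is entire, and items (1), (2), (4) and the first form of the residue come from Proposition~\ref{prop:spectrum-Lsf}(1) applied to $\psi_s$. Your explicit handling of the $s$-dependence of $\psi_s$ through the operator-valued decomposition is a point the paper leaves implicit. Note also that $W\subset K$, not $W\subset\partial K$ (it contains interior breakpoints of $K_a$); this is harmless, since $S_{\LL;s,\vt}[\psi_s]\in H^\kappa$ is defined on all of $K_\Theta$.

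For the second expression in (3), the paper argues in the Abelian direction. Partial summation with the power-saving asymptotic of Lemma~\ref{lem:psi-totalmass} directly shows that $\fS(s,0)$ has residue $\frac{1}{\pi\vol(G\backslash\D)}\int_{\dD}\psi\frac{2\df s}{|y-1|^2}$ at $s=1$, for every admissible $\psi$. So your Tauberian step, the positivity assumption on $\psi$ and the linearity extension are valid but not needed.
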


\begin{proof}[Proof of Proposition~\ref{prop:anaprop-genseries}]
  We start with the expression~\eqref{eq:fS-expr-LL}.
  Since~$1$ is bounded away from the support of~$\psi$ by assumption, and~$V$ is a finite set, the first term on the right-hand side is an entire function of~$s$ bounded in vertical strips.
  Moreover, we have~$\psi_s \in H^{\kappa}_{\dD_{\Theta}''}$, with~$\norm{\psi_s}_{1,\abs{\Im(s)},\dD''_\Theta} \ll 1$ uniformly for~$s$ in vertical strips with $\Re(s)\ll1$. Thus, since~$W$ is a finite set, the claim follows from Proposition~\ref{prop:spectrum-Lsf}.
  The second expression in (3) is obtained by partial summation from the definition~\eqref{eq:def-fS} using Lemma~\ref{lem:psi-totalmass}.
\end{proof}

\subsection{Estimation of the characteristic function}
\label{sec:main-results}

We are now ready to state and prove our main technical result, from which Theorem~\ref{th:limitlaw:quasi} will follow, and for which we explicitly recall the hypothesis we made throughout the paper. 

\begin{theorem}\label{mtf}
  Let~$G\subset \SU(1, 1)$ satisfy the conditions in Section~\ref{sec:disk-model}, and~$R$ be an ideal fundamental domain as in Section~\ref{sec:fundamental-domain}.
  Let~$\psi:\dD \to \R_+$ be a smooth, not identically zero test function with~$1 \not\in \supp\psi$.
  Let~$\cb$ be a cusp of~$G$, and let~$q(x)$ be the associated denominator function from~\eqref{eq:def-qx}.
  Let~$(\vth_g)_{g\in G}$ be a multiplier system satisfying the conditions in Section~\ref{sec:skew-products}
  and~$(\vphi_h)_{h\in \cH}$ a family of maps as in Section~\ref{cfn} satisfying hypothesis~\ref{hyp:moments}.
  Finally, let~$\qf:C(G) \to \R^d$ satisfy the relations~\eqref{qmfr}.
  Then the conclusions of Theorem~\ref{th:limitlaw:quasi} hold as stated with the following parameters:
  \begin{enumerate}[(i)]
    \item the random variable~$\Phi$ is given almost everywhere by~$\Phi = \Phi_K(x, \vth)$ on the probability space~$K_\Theta$ with measure~$\frac{\xi(x)}{\fm} \df s(x) \df \vth$;
    \item the normalizing quantity~$\fd$ is given by~\eqref{eq:def-fd};
    \item the mean-value vector and covariance matrix are given by~$\mu_\Phi=2\mu_0$ and~$\Sigma_\Phi=2\Sigma_0$, with $\mu_0$ and $\Sigma_0$ as in~\eqref{eq:def-muphi} and~\eqref{eq:def-sigmagam}.
  \end{enumerate}
\end{theorem}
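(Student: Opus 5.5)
The plan is the standard Perron-type argument (as in Baladi--Vallée and Bettin--Drappeau, Theorem~3.1 there), fed by Proposition~\ref{prop:anaprop-genseries}. Fix $\vt$ with $\nvt\le t_0$. First I would check the hypothesis $\max_{x\in V\cup W}|\qf(x)|\ll 1$: $V\cup W$ is finite, and this is exactly the finitely many cusps whose values enter the implicit constants (Remark~\ref{fire}). Since $q(x)^{-2s}$ is the Dirichlet-type weight, I would apply a smoothed Perron formula to $\fS(s,\vt)$:
\[ \sum_{x\in\Omega_Q}\psi(x)\e^{i\inpd{\vt,\qf(x)}}w(q(x)/Q)=\frac1{2\pi i}\int_{(1+\eta)}\fS(s/2,\vt)\hat w(s)Q^{s}\df s, \]
with $w$ a smooth cutoff approximating $\1_{(0,1]}$ at scale $Q^{-\delta}$. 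Then I would shift the contour to $\Re(s)=2-2\delta_0$, picking up the residue at $2s_0(\vt)$. The polynomial bound in Proposition~\ref{prop:anaprop-genseries}(4) against the rapid decay of $\hat w$ controls the shifted integral, giving $O(Q^{2-2\delta_0+\delta'})$. The smoothing error is handled by positivity: bound it by the same expression at $\vt=0$ with a narrow bump, which Lemma~\ref{lem:psi-totalmass} makes $O(Q^{2-\delta})$.

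The main term is $\mathrm{Res}(\vt)\,Q^{2s_0(\vt)}/s_0(\vt)$. Dividing by the total mass (Lemma~\ref{lem:psi-totalmass}, which matches the residue at $\vt=0$ by Proposition~\ref{prop:anaprop-genseries}(3)), I get
\[ \E_Q(\e^{i\inpd{\vt,\qf}})=\exp\big(2(s_0(\vt)-1)\log Q+U_2(\vt)+O(Q^{-\delta_0})\big). \]
So I set $U_1=2(s_0-1)$ and $U_2=\log\big(\mathrm{Res}(\vt)/(s_0(\vt)\mathrm{Res}(0))\big)$. The residue estimate $O(\nvt+\nvt^{\alpha-\eps})$ together with $s_0-1\ll\nvt+\nvt^{\alpha-\eps}$ gives the bound on $U_2$, after shrinking $t_0$ so the logarithm is defined. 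The error term must be relative to $Q^{2\Re s_0}$, which is fine once $t_0$ is small enough that $|\Re s_0-1|<\delta_0/4$.

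The expansions of $U_1$ follow from Lemma~\ref{lem:estim-s0-subclt}(1): $U_1=2\fd^{-1}\cJ_K+O(\nvt^2+\nvt^{2\alpha-2\eps})$, and $\cJ_K=\E(\e^{i\inpd{\vt,\Phi}}-1)$ for $\Phi=\Phi_K(x,\vth)$ under the measure $\xi\df s\df\vth/\fm$. To handle the $\vth$ twist, I would use the change of variable $v\mapsto$ in the $\Theta$-average, with $\Phi_K(x,v)=v\Phi_K(x)$. The moment claim $\E|\Phi|^{\alpha-\eps}<\infty$ is \eqref{eq:moment-condition}, since $\xi\asymp1$ and $\im(h)$ has measure $\ll\|h'\|_\infty$. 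For $\alpha>2$, Lemma~\ref{lem:estim-s0-clt} gives $\mu_\Phi=2\mu_0$ and $\Sigma_\Phi=2\Sigma_0$. The identity $2\fd^{-1}\E\Phi=2\mu_0$ comes from \eqref{eq:def-muphi} by averaging $\vth$.

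The main obstacle I expect is the uniformity of the contour shift. The pole $2s_0(\vt)$ moves with $\vt$, and the bound (4) is valid only for $|s-s_0|\gg1$. I would therefore use a fixed small circle around $s=1$ that contains $s_0(\vt)$ for all $\nvt\le t_0$, and bound $\fS$ on that circle via the meromorphic expression in Proposition~\ref{prop:spectrum-Lsf}, using the fact that $1-\lambda(s,\vt)$ stays bounded away from zero there.
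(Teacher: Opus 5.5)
Your proposal is correct and follows essentially the paper's route: a Perron-type contour shift applied to Proposition~\ref{prop:anaprop-genseries}, with the smoothing (short-interval) error controlled by positivity and Lemma~\ref{lem:psi-totalmass} as in Bettin--Drappeau's Proposition~8.1, and the parameters read off from Lemmas~\ref{lem:estim-s0-subclt} and~\ref{lem:estim-s0-clt}. Two small corrections: the initial Perron line must be $\Re(s)=2+\eta$, since $\fS(s/2,\vt)$ converges absolutely only for $\Re(s)>2$; and no small circle around the pole is needed, because the shifted line $\Re(s)=2-2\delta_0$ already stays at distance $\gg\delta_0$ from $2s_0(\vt)$, so the bound in Proposition~\ref{prop:anaprop-genseries}(4) applies on the whole contour.
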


For instance, we have
$$ \E(\e^{i\inpd{\vt, \Phi}}) = \int_{K_\Theta} \e^{i\inpd{\vt, \Phi_K(x, \vth)}} \xi(x)\frac{\df s(x) \df \vth}{\fm}. $$

\begin{proof}
  This is proved by the same arguments as~\cite[p.~1410, Proposition~8.1]{BettinDrappeau2022}, which is a variant of~\cite[pp.~370--372]{BaladiVallee2005} (see also~\cite{Cesaratto2009,LeeSun2019a}), using Proposition~\ref{prop:anaprop-genseries} and where the required bound for the number of denominators~$q(x)$ in short intervals follows from Lemma~\ref{lem:psi-totalmass}.
\end{proof}

\begin{remark}\label{fire}
  The same considerations in Remarks~\ref{unifo} and~\ref{comple} on Proposition~\ref{prop:spectrum-Lsf}, item (1), hold for Proposition~\ref{prop:anaprop-genseries} and Theorem~\ref{mtf}, with the only difference that the implied constants and $t_0$ may also depend on an upper bound for $\max_{x\in V\cup W}|\qf(x)|$, or for
  $\varpi\max_{x\in V\cup W}|\qf(x)|$ in the complex case.
\end{remark}

\begin{remark}
  We refer to~\cite{Goueezel2010} for conditions under which Birkhoff sums of a Gibbs-Markov map converge to a stable law. We expect a statement similar to~\cite[Theorem~1.5]{Goueezel2010} to hold for~$\qf(x)$, with the constant $c$ replaced by the map~$\log|\cF'(x)|$, see~\cite[Corollary~3.2]{BettinDrappeau2022}.
\end{remark}

\bibliographystyle{alpha}

\bibliography{LLcfrac-ext}

\begin{thebibliography}{{Sag}23}

\bibitem[Aar97]{Aaronson1997}
J.~Aaronson.
\newblock {\em An introduction to infinite ergodic theory}, volume~50 of {\em
  Math. Surv. Monogr.}
\newblock Providence, RI: American Mathematical Society, 1997.

\bibitem[AD01]{AaronsonDenker2001}
J.~Aaronson and M.~Denker.
\newblock Local limit theorems for partial sums of stationary sequences
  generated by {G}ibbs-{M}arkov maps.
\newblock {\em Stochastics Dyn.}, 1(2):193--237, 2001.

\bibitem[AF91]{AdlerFlatto1991}
R.~Adler and L.~Flatto.
\newblock Geodesic flows, interval maps, and symbolic dynamics.
\newblock {\em Bull. Amer. Math. Soc.}, 25(2):229--334, 1991.

\bibitem[AKU26]{AbramsEtAl2026}
A.~Abrams, S.~Katok, and I.~Ugarcovici.
\newblock Reduction theory for {Fuchsian} groups with cusps.
\newblock {\em Geom. Dedicata}, 220(4):29, 2026.

\bibitem[AMU16]{ArtigianiEtAl2016}
M.~Artigiani, L.~Marchese, and C.~Ulcigrai.
\newblock The {Lagrange} spectrum of a {Veech} surface has a {Hall} ray.
\newblock {\em Groups Geom. Dyn.}, 10(4):1287--1337, 2016.

\bibitem[AMU20]{ArtigianiEtAl2020}
M.~Artigiani, L.~Marchese, and C.~Ulcigrai.
\newblock Persistent {Hall} rays for {Lagrange} spectra at cusps of {Riemann}
  surfaces.
\newblock {\em Ergodic Theory Dynam. Systems}, 40(8):2017--2072, 2020.

\bibitem[AS19]{ArnouxSchmidt2019}
P.~Arnoux and T.~A. Schmidt.
\newblock Natural extensions and {Gauss} measures for piecewise homographic
  continued fractions.
\newblock {\em Bull. Soc. Math. France}, 147(3):515--544, 2019.

\bibitem[Bal00]{Baladi2000}
V.~Baladi.
\newblock {\em Positive transfer operators and decay of correlations},
  volume~16 of {\em Advanced Series in Nonlinear Dynamics}.
\newblock World Scientific Publishing Co., Inc., River Edge, NJ, 2000.

\bibitem[BD]{BettinEtAl3}
S.~Bettin and S.~Drappeau.
\newblock Distribution of trigonometric sums formed from coefficients of
  {$GL(2)$} {$L$}-series.

\bibitem[BD22]{BettinDrappeau2022}
S.~Bettin and S.~Drappeau.
\newblock Limit laws for rational continued fractions and value distribution of
  quantum modular forms.
\newblock {\em Proc. London Math. Soc.}, 125(6):1377--1425, 2022.

\bibitem[BD25]{BettinDrappeau2025}
S.~Bettin and S.~Drappeau.
\newblock On quantum modular forms of non-zero weights.
\newblock {\em Compos. Math.}, 161(11):3044--3087, 2025.

\bibitem[BDL]{BettinEtAla}
S.~Bettin, S.~Drappeau, and J.~Lee.
\newblock Central values of additive twists of automorphic {$L$}-series.

\bibitem[Bro96]{Broise1996}
A.~Broise.
\newblock Transformations dilatantes de l'intervalle et théorèmes limites.
\newblock {\em Astérisque}, 238:1--109, 1996.

\bibitem[BS79]{BowenSeries1979}
R.~Bowen and C.~Series.
\newblock Markov maps associated with {Fuchsian} groups.
\newblock {\em Inst. Hautes études Sci. Publ. Math.}, 50:153--170, 1979.

\bibitem[BV05a]{BaladiVallee2005a}
V.~Baladi and B.~Vall\'{e}e.
\newblock Exponential decay of correlations for surface semi-flows without
  finite {M}arkov partitions.
\newblock {\em Proc. Amer. Math. Soc.}, 133(3):865--874, 2005.

\bibitem[BV05b]{BaladiVallee2005}
V.~Baladi and B.~Vallée.
\newblock Euclidean algorithms are {Gaussian}.
\newblock {\em J. Number Theory}, 110(2):331--386, 2005.

\bibitem[Ces09]{Cesaratto2009}
E.~Cesaratto.
\newblock A note on ``{Euclidean} algorithms are {Gaussian}'' by {V}. {Baladi}
  and {B}. {Vall{\'e}e}.
\newblock {\em J. Number Theory}, 129(10):2267--2273, 2009.

\bibitem[CN25]{ConstantinescuNordentoft2025}
P.~Constantinescu and A.~C. Nordentoft.
\newblock Non-vanishing of geodesic periods of automorphic forms.
\newblock {\em Geom. Funct. Anal.}, 35(4):1108--1146, 2025.

\bibitem[Con22]{Constantinescu2022}
P.~Constantinescu.
\newblock Distribution of modular symbols in {{\(\mathbb{H}^3\)}}.
\newblock {\em Int. Math. Res. Not.}, 2022(7):5425--5465, 2022.

\bibitem[DN26]{DrappeauNordentoft2022}
S.~Drappeau and A.~C. Nordentoft.
\newblock Central values of additive twists of {Maaß} forms {$L$}-series.
\newblock To appear at Amer. J. Math., 2026.

\bibitem[Dol98]{Dolgopyat1998}
D.~Dolgopyat.
\newblock On {Decay} of {Correlations} in {Anosov} {Flows}.
\newblock {\em Ann. of Math. (2)}, 147(2):357--390, 1998.

\bibitem[Eva10]{Evans2010}
L.~C. Evans.
\newblock {\em Partial differential equations}, volume~19 of {\em Grad. Stud.
  Math.}
\newblock Providence, RI: American Mathematical Society (AMS), 2nd ed. edition,
  2010.

\bibitem[Gou10]{Goueezel2010}
S.~Gou{\"e}zel.
\newblock Characterization of weak convergence of {Birkhoff} sums for
  {Gibbs}-{Markov} maps.
\newblock {\em Isr. J. Math.}, 180:1--41, 2010.

\bibitem[Hen93]{Hennion1993}
H.~Hennion.
\newblock On a spectral theorem and its application to {Lipschitzian} kernels.
\newblock {\em Proc. Amer. Math. Soc.}, 118(2):627--634, 1993.

\bibitem[Hen94]{Hensley1994}
D.~Hensley.
\newblock The {Number} of {Steps} in the {Euclidean} {Algorithm}.
\newblock {\em J. Number Theory}, 49(2):142--182, 1994.

\bibitem[Iwa97]{Iwaniec1997}
H.~Iwaniec.
\newblock {\em Topics in classical automorphic forms}, volume~17 of {\em
  Graduate {Studies} in {Mathematics}}.
\newblock American Mathematical Society, Providence, RI, 1997.

\bibitem[Iwa02]{Iwaniec2002}
H.~Iwaniec.
\newblock {\em Spectral methods of automorphic forms.}, volume~53 of {\em Grad.
  Stud. Math.}
\newblock Providence, RI: American Mathematical Society (AMS); Madrid: Revista
  Matem{\'a}tica Iberoamericana, 2nd ed. edition, 2002.

\bibitem[Kat92]{Katok1992}
S.~Katok.
\newblock {\em Fuchsian groups}.
\newblock Chicago: The University of Chicago Press, 1992.

\bibitem[Kat95]{Kato1995}
T.~Kato.
\newblock {\em Perturbation theory for linear operators}.
\newblock Classics in Mathematics. Springer-Verlag, Berlin, 1995.

\bibitem[Khi63]{Khintchine1963}
A.~Y. Khintchine.
\newblock {\em Continued fractions}.
\newblock Translated by Peter Wynn. P. Noordhoff, Ltd., Groningen, 1963.

\bibitem[KL99]{KellerLiverani1999}
G.~Keller and C.~Liverani.
\newblock Stability of the spectrum for transfer operators.
\newblock {\em Ann. Sc. Norm. Super. Pisa, Cl. Sci., IV. Ser.}, 28(1):141--152,
  1999.

\bibitem[Klo19]{Kloeckner2019}
B.~R. Kloeckner.
\newblock Effective perturbation theory for simple isolated eigenvalues of
  linear operators.
\newblock {\em J. Operator Theory}, 81(1):175--194, 2019.

\bibitem[KU17]{KatokUgarcovici2017}
S.~Katok and I.~Ugarcovici.
\newblock Structure of attractors for boundary maps associated to {Fuchsian}
  groups.
\newblock {\em Geom. Dedicata}, 191:171--198, 2017.

\bibitem[Kum80]{Kumagai1980}
S.~Kumagai.
\newblock An implicit function theorem: {Comment}.
\newblock {\em J. Optim. Theory Appl.}, 31:285--288, 1980.

\bibitem[LS19]{LeeSun2019a}
J.~Lee and H.-S. Sun.
\newblock Another note on ``{Euclidean} algorithms are {Gaussian}'' by {V}.
  {Baladi} and {B}. {Vall{\'e}e}.
\newblock {\em Acta Arith.}, 188(3):241--251, 2019.

\bibitem[LS25]{LeeSun2019}
J.~Lee and H.-S. Sun.
\newblock Dynamics of continued fractions and distribution of modular symbols.
\newblock {\em J. Eur. Math. Soc. (JEMS)}, 27(9):3527--3582, 2025.

\bibitem[Mar18]{Marchese2018}
L.~Marchese.
\newblock Transfer operators and dimension of bad sets for non-uniform
  {Fuchsian} lattices.
\newblock arXiv preprint, 2018.

\bibitem[May91]{Mayer1991}
D.~H. Mayer.
\newblock The thermodynamic formalism approach to {Selberg}'s zeta function for
  {PSL}(2,{{\({\mathbb{Z}})\)}}.
\newblock {\em Bull. Am. Math. Soc., New Ser.}, 25(1):55--60, 1991.

\bibitem[Mor97]{Morita1997}
T.~Morita.
\newblock Markov systems and transfer operators associated with cofinite
  {Fuchsian} groups.
\newblock {\em Ergodic Theory Dyn. Syst.}, 17(5):1147--1181, 1997.

\bibitem[Nor21]{Nordentoft2021}
A.~C. Nordentoft.
\newblock Central values of additive twists of cuspidal {L}-functions.
\newblock {\em J. Reine Angew. Math.}, 2021(776):255--293, 2021.

\bibitem[PR04]{PetridisRisager2004}
Y.~N. Petridis and M.~S. Risager.
\newblock Modular symbols have a normal distribution.
\newblock {\em Geom. Funct. Anal.}, 14(5):1013--1043, 2004.

\bibitem[PR18]{PetridisRisager2018}
Y.~N. Petridis and M.~S. Risager.
\newblock Arithmetic statistics of modular symbols.
\newblock {\em Invent. Math.}, 212:997--1053, 2018.

\bibitem[PT21]{PeneThomine2021}
F.~P{\`e}ne and D.~Thomine.
\newblock Probabilistic potential theory and induction of dynamical systems.
\newblock {\em Ann. Inst. Henri Poincar{\'e}, Probab. Stat.}, 57(3):1736--1767,
  2021.

\bibitem[PY98]{PollicottYuri1998}
M.~Pollicott and M.~Yuri.
\newblock {\em Dynamical systems and ergodic theory}, volume~40 of {\em Lond.
  Math. Soc. Stud. Texts}.
\newblock Cambridge: Cambridge University Press, 1998.

\bibitem[{Sag}23]{sagemath}
T.~{Sage Developers}.
\newblock {\em {S}ageMath, the {S}age {M}athematics {S}oftware {S}ystem
  ({V}ersion 9.5)}, 2023.

\bibitem[Ser81]{Series1981}
C.~Series.
\newblock The infinite word problem and limit sets in {Fuchsian} groups.
\newblock {\em Ergodic Theory Dynam. Systems}, 1:337--360, 1981.

\bibitem[Ser85]{Series1985}
C.~Series.
\newblock The modular surface and continued fractions.
\newblock {\em J. Lond. Math. Soc. (2)}, 31:69--80, 1985.

\bibitem[Sta04]{Stadlbauer2004}
M.~Stadlbauer.
\newblock The return sequence of the {Bowen}-{Series} map for punctured
  surfaces.
\newblock {\em Fund. Math.}, 182(3):221--240, 2004.

\bibitem[Tuk73]{Tukia1973}
P.~Tukia.
\newblock Extension of boundary homeomorphisms of discrete groups of the unit
  disk.
\newblock {\em Ann. Acad. Sci. Fenn. Ser A I}, 548:16, 1973.

\bibitem[Val03]{Vallee2003}
B.~Vallée.
\newblock Dynamical analysis of a class of {Euclidean} algorithms.
\newblock {\em Theoret. Comput. Sci.}, 297(1-3):447--486, 2003.

\bibitem[Zag10]{Zagier2010}
D.~Zagier.
\newblock Quantum modular forms.
\newblock In {\em Quanta of maths}, volume~11 of {\em Clay Math. Proc.}, pages
  659--675. Amer. Math. Soc., Providence, RI, 2010.

\end{thebibliography}

\end{document}